\numberwithin{equation}{section}
\newcommand\op{\operatorname}
\newcommand\frg{\mathfrak{g}}
\newcommand\frh{\mathfrak{h}}
\newcommand\mf{\mathcal{F}}
\newcommand\Pic{\operatorname{Pic}}
\newcommand\Fl{\operatorname{Fl}}
\newcommand\tensor{\otimes}
\newcommand\ml{\mathcal{L}}
\newcommand\GL{\operatorname{GL}}
\newcommand\mh{\mathcal{H}}
\newcommand\mc{\mathcal{C}}
\newcommand\Hom{\operatorname{Hom}}
\newcommand\SL{\operatorname{SL}}
\newcommand{\leto}[1]{\stackrel{#1}{\to}}
\newcommand\frl{\mathfrak{l}}
\newcommand\Ind{\op{Ind}}
\newtheorem{theorem}{Theorem}[section]
\newtheorem{remark}[theorem]{ Remark}
\newtheorem{corollary}[theorem]{Corollary}
\newtheorem{proposition}[theorem]{Proposition}
\newtheorem{lemma}[theorem]{Lemma}
\newtheorem{definition/lemma}[theorem]{Definition/Lemma}
\newtheorem{defi}[theorem]{Definition}
\begin{document}
\title[Extremal rays of eigencones]{Extremal rays in the Hermitian eigenvalue problem for arbitrary types}
\author{Prakash Belkale and Joshua Kiers}
\begin{abstract}
The Hermitian eigenvalue problem asks for the possible eigenvalues of a sum of Hermitian matrices given the eigenvalues of the summands. This is a problem about  the Lie algebra of the maximal compact subgroup of $G=\op{SL}(n)$ . There is a polyhedral cone (the ``eigencone'') determining the possible answers to the problem. These eigencones can be defined for arbitrary semisimple groups $G$, and also control the (suitably stabilized) problem of existence of non-zero invariants in tensor products of irreducible representations of $G$.

We give a description of the extremal rays of the eigencones for arbitrary semisimple groups $G$ by first observing that extremal rays lie on regular facets, and then classifying extremal rays on an arbitrary regular face.  Explicit formulas are given for some extremal rays, which have an explicit geometric meaning as cycle classes of interesting loci, on an arbitrary regular face, and the remaining extremal rays on that face are understood by a geometric process we introduce, and explicate numerically, called induction from Levi subgroups. Several numerical examples are given. The main results, and methods, of this paper generalize  \cite{BHermit} which handled the case of $G=\SL(n)$.

\end{abstract}
\maketitle
\section{Introduction}
The Hermitian eigenvalue problem asks for the possible eigenvalues of a sum of Hermitian matrices given the eigenvalue of the summands (see e.g., \cite{brion,kumar} for recent surveys). In its Lie theoretic formulation, this is a problem about the Lie algebra of the maximal compact subgroup $K=\op{SU}(n)$ of $G=\op{SL}(n)$. There is a polyhedral cone, $\Gamma(s,K)$, controlling the possible eigenvalues, and the problem can be generalised to an arbitrary semisimple group $G$. The corresponding polyhedral cones are called eigencones, and are  important objects in representation theory which also control saturated versions of the tensor decomposition problem. In this paper we give an inductive determination of extremal rays of these eigencones.

Fix a Cartan decomposition of $G$. It is known that $\Gamma(s,K)\subseteq \frh_{+}^s$ is a polyhedral cone (here $\frh_+$ is the positive Weyl chamber of $G$, see below).  It is known that  $\Gamma(s,K)$ is cut out inside $\frh_{+}^s$
by a system of inequalities controlled by the Schubert calculus of homogenous spaces $G/P$ where
$P$ runs through all standard maximal parabolics of $G$. The regular faces (see definition \ref{rf}) of the polyhedral cone $\Gamma(s,K)$ have also been determined, see \cite{Kly,BLocal,KTW,BeS,KLM,BK,R1,R2}, and the survey \cite{kumar}. The results cited above on regular faces do not include  explicit (i.e., with formulas) procedures of manufacturing elements in $\Gamma(s,K)$ on faces, and do not give information about the extremal rays of $\Gamma(s,K)$, or the structure of regular faces beyond their dimension.

We start by observing that all extremal rays lie on regular facets (Definition \ref{rf}, and Lemma \ref{extreme}). We then give  formulas for some extremal rays, which have an explicit geometric meaning as cycle classes of interesting loci, on an arbitrary regular face (Theorems \ref{Tone} and \ref{timeticks}), and show that the remaining rays on that face can be explicitly understood by a geometric process we introduce, called induction from Levi subgroups (Theorem \ref{IndT}). Theorems \ref{timeticks} and \ref{IndT} produce explicit formulas, featuring intersection numbers, which  generate the regular faces of $\Gamma(s,K)$.

The main results, and methods of this paper generalize  \cite{BHermit} which handled the case of $G=\SL(n)$ (and facets). While the results of \cite{BHermit} used the classical geometry of flag varieties in type A, we rely here on more general Lie theoretic methods. In particular, the ramification results of \cite{BKR} play a crucial role in the construction of the induction operation (this phenomenon does not appear for type A maximal parabolics). The generalized Fulton conjecture proved in \cite{BKR} plays a key role in the proof of Theorem \ref{Tone}. We build upon earlier work on the minimal set of inequalities defining $\Gamma(s,K)$ \cite{BLocal,BK,R1}, and the attendant ramification theory in enumerative problems \cite{belkaleIMRN,BK,BKR}. The induction operation is inspired by \cite{R1}, the basic divisors $D(j,v)$ (\eqref {defDD} below), which are shown here to give extremal rays, appear implicitly in \cite{BKR}. These basic divisors can also be seen to correspond to  $E_j$ considered in \cite[Section 4.1]{R1} for an optimal choice of $X^o$ (as in loc. cit.), given the results of \cite{BKR}.

\subsection{The eigencones}
Let $G$ be a semisimple, connected complex algebraic group, with a Borel subgroup $B$ and a maximal torus $T\subset B$. Let $W=W_G=N_G(T)/T$ be the associated Weyl group, where $N_G(T)$ is the normalizer  of $T$ in $G$.  Our choice of $B$ and $T$ fixes a Cartan decomposition of the Lie algebra $\frg$ of $G$. Let $\frh$ be the Lie algebra of $\frh$ and $\frh_{\Bbb{R}}$  the real vector space spanned by the co-roots of $G$. Let $K$ be the maximal compact subgroup of $G$ with Lie algebra $\mathfrak{k}$, chosen such that $i\frh_{\Bbb{R}}$ is the Lie algebra of a maximal torus of $K$.

Let  $\frh_{+}$ be   the positive Weyl chamber in $\frh_{\Bbb{R}}$. There is a bijection $C:\frh_+\leto{\sim} \mathfrak{k}/K$ where $K$ acts on $\mathfrak{k}$ by the natural action of a Lie group on its Lie algebra. Let $\Gamma(s,K)\subseteq \frh_{+}^s$ be the ``eigencone'', with $s\geq 3$:
\begin{equation}\label{kishori}
\Gamma(s,K)=\{(h_1,\dots,h_s)\mid \exists k_1,\dots,k_s\in \mathfrak{k},\  C(h_j)=\overline{k_j},\ j=1,\dots,s,\ \sum_{j=1}^s k_j=0 \}
\end{equation}
To state the minimal set of inequalities determining $\Gamma(s,K)$, we introduce some notation first.
\subsection{Notation}
Our choice of $T$ and $B$ fixes a Cartan decomposition of $\frg$. Let $R\subset \frh^*$ (resp $R^+,R^-$) be the set of roots (resp. positive roots, negative roots) of $G$. Let $\Delta=\{\alpha_1,\dots,\alpha_{r}\}\subset R^+$ be the set of simple roots. Let $\{x_1,\hdots,x_r\}$ be the basis of $\frh$ dual to $\Delta$; i.e., $\alpha_i(x_j) = \delta_{i,j}$. Let $\{\alpha_1^{\vee},\dots,\alpha_{r}^{\vee}\}\subset \frh$ be the set of simple coroots. Let $\omega_1,\dots,\omega_r\in \frh^*$ (dominant fundamental weights) be the basis dual to the simple coroots, so that $\omega_i(\alpha_j^{\vee})=\delta_{ij}$.

Let $\frh_\Bbb{Q}$ denote the $\Bbb{Q}$-span of the simple coroots, and set $\frh_{+,\Bbb{Q}} = \frh_+\cap \frh_\Bbb{Q}$. We have an isomorphism $\kappa:\frh^*_{\Bbb{Q}}\to \frh_{\Bbb{Q}}$ induced by the Killing form, where, with $\frh^*_\mathbb{Z}$ denoting the integer span of the fundamental weights, $\frh^*_\mathbb{Q}=\frh^*_\mathbb{Z}\otimes \mathbb{Q}$. The mapping takes the rational cone  generated by dominant fundamental weights $\frh^*_{+,\Bbb{Q}}$ to the rational Weyl chamber $\frh_{+,\Bbb{Q}}$. $\Gamma(s,K)\subseteq \frh_{+}^s$ is a rational polyhedral cone, and we denote by  $\Gamma_{\Bbb{Q}}(s,K)\subseteq \frh_{+,\Bbb{Q}}^s$ the corresponding rational cone.

Let $P$ be any standard parabolic of $G$ (not necessarily maximal) and $U=U_P$
be its unipotent radical. Let $L=L_P$ be the Levi subgroup of $P$, which has a Borel subgroup $B_L= B\cap L$. The Lie algebras of $G,B,T,P,U,L,B_L$ and $K$ are denoted by
$\frg,\mathfrak{b},\mathfrak{h},\mathfrak{p},\mathfrak{l}, \mathfrak{b}_L$ and $\mathfrak{k}$ respectively. Let $X(T)$ be the group of multiplicative characters $T\to \Bbb{C}^*$.
Let $R_{\mathfrak{l}}\subseteq R$ (resp $R^+_{\mathfrak{l}}, R^-_{\mathfrak{l}}$) be the subset of roots (resp. positive roots, negative roots) of $L$ and $\Delta(P)$ the set of simple roots in $R_{\mathfrak{l}}$.

The Weyl group of $P$, $W_P$, is by definition the Weyl group of $L$.
In any coset of $W/W_P$, there is a unique element $w$ of minimal length, and it satisfies $wB_Lw^{-1}\subseteq B$. Let $W^P\subseteq W$ be the set of minimal length representatives in the cosets of $W/W_P$.

Let  $w\in W^P$. Define the Schubert cell $C_w\subset G/P$ by
$C_w= BwP/P$. Let $X_w$ be the closure $\overline{C}_w\subseteq G/P$. Let $[X_w]\in H^{2\dim G/P- 2\ell(w)}(G/P,\Bbb{Z})$ be the cycle class of $X_w$.

\subsection{The system of inequalities determining $\Gamma(s,K)$}\label{regular1}
It is known (see \cite{BK}) that $(h_1,\hdots,h_s)\in\Gamma(s,K)$ if and only if, for every standard parabolic $P\subset G$ and $w_1,\hdots,w_s\in W^P$ such that
\begin{equation}\label{kishi}
[X_{w_1}]\odot_0 [X_{w_2}]\odot_0\dots\odot_0[X_{w_s}]=[X_e]\in H^*(G/P),
\end{equation}
the inequality
\begin{equation}\label{fox}
\sum_{j=1}^s \omega_k\left(w_j^{-1} h_j\right) \le 0
\end{equation}
holds, where $\omega_k$ is the fundamental weight corresponding to any simple root $\alpha_k\in \Delta\setminus \Delta(P)$. Here $\odot_0$ is the deformation of the cup product on $H^*(G/P,\Bbb{C})$ introduced in \cite{BK}.
\begin{defi}\label{rf}
A face of $\Gamma(s,K)\subseteq \frh_{+}^s$ is said to be regular if it is not contained  in one of the
Weyl chamber walls on any of the $s$ factors, i.e., the face is not contained in $\{(h_1,\dots,h_s)\mid \alpha_j(h_i)=0\}$ for some $(i,j)$.
\end{defi}
In fact, the inequalities arising only from maximal parabolics $P$ suffice to determine $\Gamma(s,K)$, and these are irredundant: each inequality \eqref{fox} corresponding to the above data with $P$ maximal determines a regular facet of $\Gamma(s,K)$  and all regular facets arise this way \cite{R1}. It will be shown that any extremal ray of $\Gamma(s,K)$ lies on  some such regular facet (Lemma \ref{extreme}). Therefore it suffices to determine the extremal rays of all regular facets. Here $\odot_0$ is the deformation of the usual cohomology product introduced in \cite{BK}.

Fix a (possibly non-maximal) parabolic $P$  in $G$, and $w_1,\dots,w_s\in W^P$ such that \eqref{kishi} holds. Define the face $\mf(\vec w,P)$ of $\Gamma(s,K)\subseteq \frh_{+}^s$ by
\begin{equation}\label{faceF}
\mf(\vec w,P)=\{(h_1,\dots,h_s)\in \Gamma(s,K)\mid \sum_{j=1}^s \omega_{{k}}(w_j ^{-1}h_j) = 0,\  \alpha_{k}\not\in \Delta(P)\}.
\end{equation}
We will often simply write $\mf$ when the context is clear.
We consider the more general problem of determining all extremal rays of $\mf_{\Bbb{Q}}=\mf\cap\Gamma_{\Bbb{Q}}(s,K) \subseteq \Gamma_{\Bbb{Q}}(s,K)$. This problem can be refined as follows: Let $L$ be the Levi subgroup of $P$, and $L^{\op{ss}}=[L,L]\subset L$. Note that $L^{\op{ss}}$ is semisimple and simply connected. Let $K(L^{\op{ss}})\subset L^{\op{ss}}$ be the (standard) maximal compact subgroup. We pose ourselves the following more general, related problems:
\begin{enumerate}
\item Describe all extremal rays of $\mf_{\Bbb{Q}}$.
\item Describe $\mf_{\Bbb{Q}}$ in terms of $\Gamma_{\Bbb{Q}}(s,K(L^{\op{ss}}))$.
\end{enumerate}
\subsection{Tensor cones and Eigencones}
For any $\lambda\in \frh^*_{\Bbb{Z}}$ one can associate a line bundle $\ml_{\lambda}$ on $G/B$ (Briefly: $\ml_{\lambda}=G\times_B \Bbb{C}$ as a line bundle on $G/B$ where $\Bbb{C}$ is the $B$ representation given by $\lambda^{-1}$).
Via the Borel-Weil theorem, this sets up a bijection between $\lambda\in \frh^*_{+,\Bbb{Z}}$, the semigroup generated by the dominant fundamental weights and $\Pic^+(G/B)$, the semigroup of line bundles with non-zero global sections: $H^0(G/B,\mathcal{L}_{\lambda})$ is the dual of the irreducible representation $V_{\lambda}$ with highest weight $\lambda$.
\begin{defi}
We have a cone $$\op{Tens}_{s,G,\Bbb{Q}}\subseteq \Pic^+_{\Bbb{Q}}(G/B)^s=(\frh^*_{+,\Bbb{Q}})^s$$ formed by
tuples $(\lambda_1,\dots,\lambda_s)$ such that for some $N>0$, $H^0((G/B)^s,\ml^N)^G \neq 0$ where
$$\ml=\ml_{\lambda_1}\boxtimes \ml_{\lambda_2}\boxtimes\dots\boxtimes \ml_{\lambda_s}\in \Pic(G/B)^s,$$
equivalently, for some $N>0$, $(V_{N\lambda_1}\tensor V_{N\lambda_2}\tensor\dots\tensor V_{N\lambda_s})^G\neq 0$.

\end{defi}
\begin{proposition}\label{bij}
The (Killing form) bijection $\kappa^s:(\frh^*_{\Bbb{Q}})^s\to (\frh_{\Bbb{Q}})^s$ restricts to a bijection between $\op{Tens}_{s,G,\Bbb{Q}}$ and $\Gamma_{\Bbb{Q}}(s,K)$.
\end{proposition}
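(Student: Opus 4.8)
The idea is to realize both cones as shadows of one geometric object---the diagonal $K$-action on a product of partial flag varieties---and to connect the two shadows by the classical equivalence between geometric invariant theory and symplectic reduction. Since $\kappa$ is induced by the Killing form it is a $\Bbb{Q}$-linear isomorphism $\frh^*_{\Bbb{Q}}\to\frh_{\Bbb{Q}}$ carrying $\frh^*_{+,\Bbb{Q}}$ onto $\frh_{+,\Bbb{Q}}$, so $\kappa^s$ already is a bijection of the ambient monoids and only the equivalence of membership in the two cones remains. Fix a tuple $(\lambda_1,\dots,\lambda_s)\in(\frh^*_{+,\Bbb{Q}})^s$; both cones being stable under positive rational scaling, we may clear denominators and assume each $\lambda_j\in\frh^*_{+,\Bbb{Z}}$. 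Let $P_j\supseteq B$ be the standard parabolic for which $\lambda_j$ extends to a character of $P_j$ and $\ml_{\lambda_j}$ thereby descends to an \emph{ample} line bundle $\overline{\ml}_{\lambda_j}$ on $G/P_j$. Put $X=\prod_j G/P_j$, $\ml=\boxtimes_j\ml_{\lambda_j}$ on $(G/B)^s$, and $\overline{\ml}=\boxtimes_j\overline{\ml}_{\lambda_j}$ on $X$. Since $\ml$ is the pullback of $\overline{\ml}$ along the natural map $(G/B)^s\to X$, one has $H^0((G/B)^s,\ml^N)^G=H^0(X,\overline{\ml}^N)^G$ for all $N$, so $(\lambda_1,\dots,\lambda_s)\in\op{Tens}_{s,G,\Bbb{Q}}$ if and only if $H^0(X,\overline{\ml}^N)^G\neq 0$ for some $N>0$. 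As $X$ is an integral projective variety and $G$ is reductive, $R=\bigoplus_{N\geq 0}H^0(X,\overline{\ml}^N)^G$ is a finitely generated graded domain with $R_0=\Bbb{C}$, so the GIT quotient $X^{ss}(\overline{\ml})\git G=\op{Proj}R$ is nonempty exactly when $R\neq\Bbb{C}$; hence membership in $\op{Tens}_{s,G,\Bbb{Q}}$ is equivalent to $X^{ss}(\overline{\ml})\neq\emptyset$.

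\textbf{The symplectic side and the bridge.} Equip $X$ with a $K$-invariant K\"ahler form in the class $c_1(\overline{\ml})$. By the Kirillov--Kostant--Souriau construction together with the Borel--Weil theorem, $(G/P_j,\overline{\ml}_{\lambda_j})$ is $K$-equivariantly identified with the coadjoint orbit through $\lambda_j$, so the moment map $\mu\colon X\to\mathfrak{k}^*$ for the $K$-action is $\mu(y_1,\dots,y_s)=\sum_j\mu_j(y_j)$, where the image of $\mu_j$ is that orbit. Transporting through the $K$-equivariant isomorphism $\mathfrak{k}^*\cong\mathfrak{k}$ furnished by the Killing form---the same form underlying $\kappa$ and the normalization of the bijection $C$---the coadjoint orbit of $\lambda_j$ is carried to $C(h_j)$, the adjoint $K$-orbit in $\mathfrak{k}$ attached to $h_j=\kappa(\lambda_j)\in\frh_{+,\Bbb{Q}}$. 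Consequently $0\in\mu(X)$ if and only if there exist $k_j\in\mathfrak{k}$ with $\overline{k_j}=C(h_j)$ and $\sum_j k_j=0$---which, by \eqref{kishori} and rationality of the point, is exactly the condition $\kappa^s(\lambda_1,\dots,\lambda_s)\in\Gamma_{\Bbb{Q}}(s,K)$. It remains to quote the Kempf--Ness theorem in the form of Ness and Kirwan (see the expositions in \cite{kumar,BeS}): for a compact group $K$ with complexification $G$ acting on a smooth projective variety with an ample linearization and associated moment map $\mu$, a point is GIT-semistable precisely when the closure of its $G$-orbit meets $\mu^{-1}(0)$; in particular $X^{ss}(\overline{\ml})\neq\emptyset$ if and only if $0\in\mu(X)$. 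Chaining the equivalences established above gives $(\lambda_1,\dots,\lambda_s)\in\op{Tens}_{s,G,\Bbb{Q}}\Leftrightarrow\kappa^s(\lambda_1,\dots,\lambda_s)\in\Gamma_{\Bbb{Q}}(s,K)$, which is the assertion. (An alternative, less self-contained route compares defining inequalities directly: \S\ref{regular1} cuts out $\Gamma(s,K)$ by the system \eqref{fox}--\eqref{kishi}, and \cite{BK} gives the identical description of $\op{Tens}$.)

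\textbf{Main obstacle.} The real work here is bookkeeping rather than conceptual. One must pin down the identification $\mathfrak{k}^*\cong\mathfrak{k}$ and the moment-map sign and scaling conventions so that they are simultaneously compatible with the Killing-form map $\kappa$ and with the specific bijection $C\colon\frh_+\to\mathfrak{k}/K$ used in \eqref{kishori}---in particular, tracking the factors of $i$ that enter because $\mathfrak{k}$ is a compact real form of $\frg$ rather than $\frg$ itself. A secondary point needing attention is that the symplectic picture genuinely requires an ample polarization, which forces the descent from $G/B$ to the $G/P_j$, together with the domain argument that converts ``$X^{ss}(\overline{\ml})\neq\emptyset$'' into ``some positive power of $\overline{\ml}$ carries a nonzero $G$-invariant section.''
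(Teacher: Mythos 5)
Your proof is correct and takes exactly the route the paper intends: the paper's own ``proof'' of Proposition \ref{bij} is only the citation to \cite{Sjamaar}, and the Kempf--Ness/Kirwan bridge you spell out---descend to the ample polarization on $\prod_j G/P_j$, identify that product with a product of coadjoint orbits via Kirillov--Kostant--Souriau and Borel--Weil, and equate nonemptiness of the GIT quotient with $0\in\mu(X)$---is precisely the argument carried out in that reference (and in the standard expositions in \cite{kumar,BeS}). Your reduction to integral weights and your use of finite generation of the invariant ring to translate ``$\op{Proj}R\neq\emptyset$'' into ``some $H^0(X,\overline{\ml}^N)^G\neq 0$'' are both sound, and you correctly flag the only genuinely fiddly point, the sign/$i$-factor conventions relating $\mathfrak{k}^*\cong\mathfrak{k}$, $\kappa$, and the bijection $C$, as bookkeeping rather than a conceptual gap. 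Nothing in the proposal would fail; it is a faithful expansion of the citation.
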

\begin{proof}
See, for example, \cite{Sjamaar}.
\end{proof}

\subsection{Basic extremal rays}
Fix a face $\mf=\mf(\vec w,P)$ as in Section \ref{regular1} (see Equation \eqref{faceF}), with $P$ an arbitrary standard parabolic subgroup of $G$.
\begin{defi}\cite{BGG}
Let $v,w\in W$ (not necessarily in $W^P$) and $\beta\in R^+$. The notation $v\leto{\beta} w$ stands for
the following two (simultaneous) conditions: $w=s_{\beta}v$ and $\ell(w)=\ell(v)+1$. Note that if $v\leto{\beta} w$,
then $w^{-1}\beta\in R^-$ and $v^{-1}\beta\in R^+$.
\end{defi}
Codimension one Schubert cells $C_v\subseteq X_w$ correspond to $v\leto{\beta} w$ with $v,w\in W^P$ and $\beta$ a positive root (not necessarily simple). As was observed in \cite{BKR}, one should divide the set of such $v$ into two types; this division influences ramification behaviour in intersection theoretic problems:
\begin{defi}\label{RAM}
 Let $w\in W^P$. A codimension one Schubert cell $C_v\subseteq X_w$, $v\in W^P$, is said to be simple if $v\leto{\beta} w$ with $\beta$ a simple root.
\end{defi}

\subsubsection{Divisors in $\left(G/B\right)^s$} In order to construct extremal rays on a face $\mf$ given by (\ref{faceF}), (i.e., line bundles on $\left(G/B\right)^s$, see Proposition \ref{bij}), we will identify a series of $G$ invariant divisors on $\left(G/B\right)^s$, with $G$ acting diagonally on $(G/B)^s$. For a pair $j,v$ such that $v\leto{\beta}w_j$ with $\beta$ simple (i.e, $C_v\subset X_w$ is simple, see Definition \ref{RAM}; in this case $v\in W^P$ automatically), first set
\begin{equation}\label{setU}
u_i = \left\{
\begin{array}{cc}
 w_i, & i\ne j\\
 v, & i=j
\end{array}\right..
\end{equation}
Then define
\begin{equation}\label{defDD}
D(j,v) = \{(\bar g_1,\hdots,\bar g_s)\in (G/B)^s\mid \bigcap_{i=1}^s \bar g_iX_{u_i}\ne \emptyset\}\subseteq (G/B)^s,
\end{equation}
which is given the reduced scheme structure, making it a subvariety of $(G/B)^s$. In Theorem \ref{Tone}, we will show that
$D(j,v)$ is codimension one in $(G/B)^s$. In particular, we can express
\begin{equation}\label{ninth}
\mathcal{O}(D(j,v))=\ml_{\lambda_1}\boxtimes \ml_{\lambda_2}\boxtimes\dots\boxtimes \ml_{\lambda_s}\in \Pic(G/B)^s,
\end{equation}
for some $\lambda_i\in \frh^*_{+,\mathbb{Z}}$.
Since $D(j,v)$ is diagonal $G$ invariant, $H^0((G/B)^s, \mathcal{O}(D(j,v)))^G\neq 0$, and under the bijection of Proposition \ref{bij}, we get
\begin{equation}\label{tenth}
[D(j,v)]=(\kappa(\lambda_1),\dots, \kappa(\lambda_s))\in \Gamma_\Bbb{Q}(s,K)
\end{equation}
The main properties of  $D(j,v)$ are laid out in the following
\begin{theorem}\label{Tone}
\begin{enumerate}
\item[(a)] $D(j,v)$ has codimension one\footnote{This fails without the simpleness condition on $C_v\subset X_{w_j}$, see Remark \ref{ramifyy}.} in $(G/B)^s$;
\item[(b)] $\dim H^0((G/B)^s,\mathcal{O}(mD(j,v)))^G=1$ for all $m\ge 0$;
\item[(c)] $\Bbb{Q}_{\geq 0}[D(j,v)] $ is an extremal ray of $\Gamma_\Bbb{Q}(s,K)$;
\item[(d)] $[D(j,v)] \in \mathcal{F}$.
\end{enumerate}
\end{theorem}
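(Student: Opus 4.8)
The plan is to establish the four parts in the order (a), (b), (d), (c), since (c) will follow formally from (b) once we know $D(j,v)$ is an honest divisor. For part (a), I would use the standard transversality/dimension-count machinery for intersections of translated Schubert varieties. The expected codimension of $\{(\bar g_1,\dots,\bar g_s)\mid \bigcap_i \bar g_i X_{u_i}\neq\emptyset\}$ in $(G/B)^s$ is $\sum_i \op{codim} X_{u_i} - \dim G/B$; with the $u_i$ as in \eqref{setU}, we have $\sum_i \op{codim} X_{u_i} = \sum_i \op{codim} X_{w_i} + 1 = \dim G/P + 1$ using \eqref{kishi} (which forces $\sum_i \op{codim}_{G/P} X_{w_i} = \dim G/P$), so after passing from $G/P$ to $G/B$ the expected codimension of $D(j,v)$ is exactly $1$. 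The content is to show this expectation is achieved, i.e. that $D(j,v)$ is nonempty of the expected dimension and not all of $(G/B)^s$. Nonemptiness and the lower bound on dimension are Kleiman-type genericity statements; the key subtle point — that $D(j,v)\ne (G/B)^s$, i.e. a general tuple has empty intersection — is where the \emph{simpleness} hypothesis on $C_v\subset X_{w_j}$ enters, via the ramification results of \cite{BKR}: for a simple Schubert cell the relevant map on the $G/P$ level is birational onto its image (generalized Fulton conjecture), so dropping to $v$ in one factor genuinely cuts the dimension by exactly one and does not leave room for a nonempty generic intersection. Without simpleness the map ramifies and $D(j,v)$ can fail to be a divisor (Remark \ref{ramifyy}).

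For part (b), the point is that $H^0((G/B)^s,\mathcal{O}(mD(j,v)))^G$ is at least one-dimensional (it contains the $m$-th power of the canonical section $\sigma_{D(j,v)}$, which is $G$-invariant up to scalar since $D(j,v)$ is a $G$-invariant divisor), and I must show it is at most one-dimensional. Equivalently, by Borel–Weil and Proposition \ref{bij}, the point $[D(j,v)]\in \Gamma_{\Bbb Q}(s,K)$ has a $G$-invariant section that is \emph{unique} up to scalar for all multiples. The standard way to see this is to exhibit $D(j,v)$ as the vanishing locus of a section whose zero set is irreducible with multiplicity one, and then argue that any two $G$-invariant sections of $\mathcal{O}(mD(j,v))$ have the same (reduced, irreducible) zero divisor $mD(j,v)$, hence differ by a nowhere-vanishing function, hence a constant. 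The irreducibility of $D(j,v)$ is itself something to pin down — I would deduce it from irreducibility of the incidence variety $\{(\bar g_\bullet, x)\mid x\in \bigcap \bar g_i X_{u_i}\}$ (which fibers over a product of Schubert cells with irreducible fibers) together with the birationality coming from \cite{BKR}, so that its image $D(j,v)$ is irreducible and the generic fiber is a single reduced point, giving multiplicity one.

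Part (d), that $[D(j,v)]\in\mf$, is a direct computation: I need to check that the defining equalities of $\mf$ in \eqref{faceF}, namely $\sum_j \omega_k(w_j^{-1}h_j)=0$ for $\alpha_k\notin\Delta(P)$, hold at $(h_1,\dots,h_s)=(\kappa(\lambda_1),\dots,\kappa(\lambda_s))=[D(j,v)]$. This should come down to computing the class $\mathcal{O}(D(j,v))$ on $(G/B)^s$ explicitly — i.e. identifying the $\lambda_i$ in \eqref{ninth} — and this in turn is a Chevalley-formula / adjunction computation: $D(j,v)$ is the pullback under the intersection correspondence of the Schubert divisor, and since we used $u_i=w_i$ for $i\ne j$ and $u_j=v$ with $v\xrightarrow{\beta}w_j$, $\beta$ simple, the line bundle $\mathcal{O}(D(j,v))$ should be expressible in terms of the $w_i$ and the simple root $\beta$ in a way that makes the face equalities manifest. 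I expect $\lambda_i$ to be (a positive multiple of) $w_i\omega_{k_i}$-type weights matching the facet, together with the extra simple-root contribution in the $j$-th slot being absorbed because $\beta\in\Delta(P)$ is forced when $C_v\subset X_{w_j}$ is simple and $v\in W^P$. So (d) reduces to a bookkeeping check once the bundle is computed.

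Finally, for part (c): extremality. Given (b), $\mathcal{O}(D(j,v))$ generates a rank-one invariant subspace for all multiples, which by the dictionary of Proposition \ref{bij} means the ray $\Bbb{Q}_{\ge 0}[D(j,v)]$ supports a \emph{unique} (up to scaling) tuple of invariants. I would invoke the standard principle (going back to the GIT/Ressayre picture, used analogously in \cite{BHermit}) that a point of $\Gamma_{\Bbb Q}(s,K)$ at which the space of invariants is one-dimensional for every multiple must be an extremal ray: if it were an interior point of a higher-dimensional face, one could write it as a sum of two non-proportional points of the cone, and the tensor-product multiplicities would grow, contradicting $\dim H^0(mD(j,v))^G=1$. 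More carefully, I would argue that if $[D(j,v)] = \tfrac12(p_1+p_2)$ with $p_1,p_2\in\Gamma_{\Bbb Q}(s,K)$ non-proportional to it, then (clearing denominators) the multiplicity at a large multiple of $[D(j,v)]$ dominates a product of the multiplicities at $p_1$ and $p_2$ via the natural map on sections $H^0(p_1)\otimes H^0(p_2)\to H^0(p_1+p_2)$, which forces that multiplicity to be $>1$ for large $m$ — contradiction. The main obstacle in the whole argument is part (a) together with the multiplicity-one assertion underlying (b): both hinge on the birationality/ramification input from \cite{BKR}, and making precise that simpleness of $C_v\subset X_{w_j}$ is exactly what guarantees $D(j,v)$ is a reduced irreducible divisor (rather than higher codimension, or non-reduced) is the crux; the rest is formal manipulation of Schubert classes and the invariant-section dictionary.
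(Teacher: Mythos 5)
Your outline reads plausibly at a high level, but there are genuine gaps in (a), (b), and (d), and in each case the mechanism you describe is not the one that actually works.

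For (a), the issue is not showing $D(j,v)\ne (G/B)^s$ --- that is automatic, since for a general $\vec{g}$ the intersection $\bigcap g_i X_{u_i}$ in $G/P$ has negative expected dimension. The real content is ruling out codimension $\ge 2$, which happens precisely when the correspondence $\widetilde D(j,v)\subset\mathcal X$ lies entirely inside the ramification divisor $\mathcal R$ of $\pi:\mathcal X\to (G/B)^s$. Birationality of $\pi$ is \emph{not} where simpleness enters (it holds from \eqref{kishi} alone); simpleness enters because it makes $s_\beta\in Q_{w_j}$, so one can take a point of $\mathcal C-\mathcal R$, translate the $j$-th coordinate by $s_\beta$ (which preserves $X_{w_j}$ but moves the reference flag so that the fixed intersection point now lies in $g_j'C_v$), and produce a point of $\widetilde D\cap\mathcal Y-\mathcal R$. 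Your ``does not leave room for a nonempty generic intersection'' heuristic does not supply this step. For (b), you assert that any $G$-invariant section of $\mathcal O(mD(j,v))$ must have zero divisor exactly $mD(j,v)$, but this is the entire content of the statement, not a consequence of irreducibility of $D$: a priori another $G$-invariant divisor could occur. The paper closes this by proving $H^0(\mathcal C-\mathcal R,\mathcal O)^G=\Bbb C$ (Theorem \ref{equity}, which rests on the generalized Fulton conjecture via $\Fl_L-\mathcal R_L$), together with Lemma \ref{malta} ($\pi(\mathcal C-\mathcal R)\subseteq (G/B)^s-D$), so that an invariant section pulls back to an invariant function on $\mathcal C-\mathcal R$ and hence is constant off $D$. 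Your sketch has no substitute for this.

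For (d), your proposed route via explicitly computing the $\lambda_i$ is in principle available (it is what Theorem \ref{timeticks} ultimately gives), but the specific claim that ``$\beta\in\Delta(P)$ is forced when $C_v\subset X_{w_j}$ is simple and $v\in W^P$'' is false: the only constraint is that $\beta$ is simple with $w_j^{-1}\beta\in R^-$, and one can easily have $\beta\notin\Delta(P)$ (e.g.\ $w_j=s_\beta$ with $\beta\notin\Delta(P)$). So the ``absorption'' mechanism you invoke does not hold, and this route to (d) as you describe it is broken. The paper's proof of (d) is quite different and much softer: by Lemma \ref{ress}, it suffices that $\tilde i^*\mathcal O(D)$ on $\Fl_L$ has a nonzero section, which is immediate because $\Fl_L-\mathcal R_L$ misses $D$, so the section $1$ of $\mathcal O(D)$ pulls back to a nonvanishing, hence nonzero, section. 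Part (c) of your sketch is correct and essentially the paper's argument.
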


The following  gives formulas for $\lambda_1,\dots,\lambda_s$, and hence for $[D(j,v)]$:
\begin{theorem}\label{timeticks}
Write $\lambda_k=\sum c_{k,\ell} \omega_\ell$. The coefficient $c_{k,\ell}=\lambda_k(\alpha_{\ell}^{\vee})$ is computed as follows.
Fix $k$ and $\ell$ and let $\hat{u}_k=s_{\alpha_\ell}u_k$.
\begin{enumerate}
\item If $\hat{u}_k\in W^P$ and $u_k\leto{\alpha_\ell}\hat{u}_k$, set
$\hat{u}_i=u_i$ for $i\neq k$. Then $c_{k,\ell}$ is the (possibly zero) intersection number
$c$ in
\begin{equation}\label{calc3}
\prod_{i=1}^s [X_{\hat{u}_i}]=c[pt]\in H^*(G/P)
\end{equation}
where the product is in the usual cohomology (and not in the deformed product $\odot_0$, see the example in Section \ref{ex1}).
\item If $\hat{u}_k\not\in W^P$, or $u_k\leto{\alpha_\ell}\hat{u}_k$ is false,
then $c_{k,\ell}=0$.
\end{enumerate}
\end{theorem}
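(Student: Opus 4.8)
The plan is to extract each $c_{k,\ell}=\lambda_k(\alpha_\ell^\vee)$ as the degree of a line bundle on a line in the $k$-th factor of $(G/B)^s$. Let $P_\ell\supset B$ be the minimal parabolic attached to $\alpha_\ell$, and let $C_0=\overline{P_\ell/B}=X_{s_{\alpha_\ell}}\subset G/B$ be the corresponding Schubert curve; for any $\lambda\in\frh^*_{\Bbb Z}$ the restriction of $\ml_\lambda$ to $C_0$ --- or to any $G$-translate $gC_0$ --- has degree $\lambda(\alpha_\ell^\vee)$. Writing $\mathcal O(D(j,v))=\ml_{\lambda_1}\boxtimes\cdots\boxtimes\ml_{\lambda_s}$ as in \eqref{ninth}, it follows that $c_{k,\ell}$ equals the degree of $\mathcal O(D(j,v))$ restricted to the curve $\{\bar g^0_1\}\times\cdots\times C\times\cdots\times\{\bar g^0_s\}$ sitting in the $k$-th factor, where $C=g^0_kC_0$ and the points $\bar g^0_i$ ($i\ne k$) together with $g^0_k$ are chosen generically. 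Since $D(j,v)$ is a proper subvariety by Theorem \ref{Tone}(a), for generic choices $C$ meets $D(j,v)$ properly, and this degree is the number, counted with multiplicity, of points in
\[
D(j,v)\cap\Big(\prod_{i\ne k}\{\bar g^0_i\}\times C\Big)
=\Big\{\,\bar g_k\in C\ \Big|\ \bigcap_{i\ne k}\bar g^0_iX_{u_i}\ \cap\ \bar g_kX_{u_k}\ne\emptyset\,\Big\}.
\]
The engine of the computation is the sweep identity $\bigcup_{\bar g_k\in C}\bar g_kX_{u_k}=g^0_k\bigl(P_\ell\cdot X_{u_k}\bigr)$, combined with the standard dichotomy for $P_\ell\cdot X_{u_k}$ governed by the position of $\hat u_k=s_{\alpha_\ell}u_k$: either $P_\ell\cdot X_{u_k}=X_{\hat u_k}$, of dimension $\ell(u_k)+1$, which happens precisely when $\hat u_k\in W^P$ and $u_k\leto{\alpha_\ell}\hat u_k$; or else $P_\ell\cdot X_{u_k}=X_{u_k}$.

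Consider first the vanishing case, where $\hat u_k\notin W^P$ or $u_k\leto{\alpha_\ell}\hat u_k$ fails. Then $P_\ell\cdot X_{u_k}=X_{u_k}$, so $\bar g_kX_{u_k}\subseteq g^0_kX_{u_k}$ for every $\bar g_k\in C$, and the intersection displayed above is contained in $\bigcap_{i\ne k}\bar g^0_iX_{u_i}\cap g^0_kX_{u_k}$. Because $\sum_i\ell(u_i)=\bigl(\sum_i\ell(w_i)\bigr)-1=(s-1)\dim G/P-1$ --- using $\ell(v)=\ell(w_j)-1$ and the fact that the $\odz$-product of the classes $[X_{w_i}]$ equals $[X_e]$ --- we get $\sum_i\codim X_{u_i}=\dim G/P+1$, so this intersection of $s$ generic translates in $G/P$ is empty by Kleiman transversality. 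Hence $D(j,v)\cap C=\emptyset$ and $c_{k,\ell}=0$; this is part (2).

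For part (1), suppose $\hat u_k\in W^P$ and $u_k\leto{\alpha_\ell}\hat u_k$, so $P_\ell\cdot X_{u_k}=X_{\hat u_k}$. Put $Z=\bigcap_{i\ne k}\bar g^0_iX_{u_i}$; for generic $\bar g^0_i$ this is a transverse intersection of dimension $\dim G/P-1-\ell(u_k)=\codim X_{\hat u_k}$, so for generic $g^0_k$ the set $Z\cap g^0_kX_{\hat u_k}$ consists of exactly $c$ reduced points, where $c$ is the intersection number appearing in \eqref{calc3} --- an honest cup-product number, whose possible disagreement with the corresponding $\odz$-product is exactly what the example in Section \ref{ex1} records. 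It remains to match these $c$ points with the points of $D(j,v)\cap C$ bijectively and with multiplicity one. This rests on the birationality of the minimal-parabolic sweep map $P_\ell\times^BX_{u_k}\to X_{\hat u_k}$, $(p,x)\mapsto px$: since $\ell(\hat u_k)=\ell(u_k)+1$, prepending $s_{\alpha_\ell}$ to a reduced word for $u_k$ produces a reduced word for $\hat u_k$, and the associated Bott--Samelson resolution of $X_{\hat u_k}$ factors through this map, forcing it to be of degree one. For generic $g^0_k$ the $c$ points of $Z\cap g^0_kX_{\hat u_k}$ avoid the lower-dimensional locus over which this sweep is not injective, so each such point $x$ determines a unique $\bar g_k\in C$ with $x\in\bar g_kX_{u_k}$, which is the desired bijection. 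Transversality of $D(j,v)$ with $C$ at each of these $\bar g_k$ --- that is, multiplicity one --- then yields $c_{k,\ell}=c$.

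The main obstacle lies in the two transversality/multiplicity statements in part (1): that the correspondence $x\leftrightarrow\bar g_k$ is a genuine bijection --- this is where birationality of the sweep, hence the simpleness of $\alpha_\ell$ in the sense of Definition \ref{RAM} and the ramification results of \cite{BKR}, enter essentially --- and that $D(j,v)$, with its reduced structure, meets the generic line $C$ transversally, so that each contributing $\bar g_k$ is counted exactly once. The latter is a Kleiman--Bertini argument over the parameter space of pairs $\bigl((\bar g^0_i)_{i\ne k},C\bigr)$, of the same nature as the genericity arguments underpinning Theorem \ref{Tone}(b). Without the simpleness hypothesis the sweep can fail to be birational --- even fail to be generically finite of the expected dimension --- which is why this computation, like the codimension-one statement for $D(j,v)$ itself, breaks down in that setting; cf.\ Remark \ref{ramifyy}.
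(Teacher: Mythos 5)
Your proof is correct, and it takes a genuinely different (though cousin) route from the paper's. The paper computes the cycle class of $\widetilde D(j,v)\subset (G/B)^s\times G/P$ as a product of pullbacks of the universal Schubert classes $[S_{u_i}]\in A^*(G/B\times G/P)$, pushes forward along the birational $\pi$, and reduces everything to the K\"unneth decomposition of a single $[S_u]$, which is Proposition~\ref{culture}. You bypass the universal intersection and the cycle--class formalism entirely, restricting $\mathcal O(D(j,v))$ to a generic $\Bbb P^1$ in the $k$-th $G/B$-factor and counting intersection points directly. The geometric engine is nevertheless the same in both: the $P_\ell$-sweep and the birationality $P_\ell\times^B X_{u_k}\to X_{\hat u_k}$, which is the paper's Lemma~\ref{ash} in different clothing; your $\bigcup_{\bar g_k\in C}\bar g_kX_{u_k}=g^0_k\,P_\ell\cdot X_{u_k}$ is precisely what the paper exploits when it intersects $S_u$ with $[\overline{Bs_{\alpha_\ell}B/B}]\times g[X_w]$. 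What the paper's packaging buys is that the pushforward $\pi_*$ and Corollary~\ref{nom} absorb all multiplicity bookkeeping, and the choice of $w$ in Proposition~\ref{culture} can be made freely, so no generic translates of $s$ Schubert varieties are needed at once. What you buy is a more elementary, self-contained argument that never leaves $(G/B)^s$; the cost is that your bijection and multiplicity-one claims require a layered Kleiman--Bertini argument (generic transversality of a family of lines against the fixed divisor $D(j,v)$, plus genericity of the $c$ points relative to the ramification locus of the sweep), which you correctly flag but do not fully carry out. One small misattribution: the birationality of $P_\ell\times^B X_{u_k}\to X_{\hat u_k}$ is classical Bott--Samelson and does not depend on \cite{BKR}; the ramification results of \cite{BKR} are what the paper needs to know that $D(j,v)$ is a divisor at all (Theorem~\ref{Tone}(a)) and that $\pi_*\widetilde D(j,v)=D(j,v)$ with multiplicity one (Corollary~\ref{nom}), which are the analogues in the paper's architecture of the transversality statements you would need to nail down.
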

\begin{remark}
We could also have expressed the coefficient $c_{k,\ell}$ as follows: If $\hat{u}_k\in W^P$ and $u_k^{-1}\alpha_\ell\in R^+$
(which is equivalent to $\ell(u_k)<\ell(\hat{u}_k)$), then the coefficient $c_{k,\ell}$ is the number $c$ in \eqref{calc3} and zero otherwise.
\end{remark}
\begin{remark}\label{correspondence}
The pairs $(j,v)$ are in one-one correspondence with simple roots $\beta=\alpha_{\ell}$ and $j$ such that
$w_j^{-1}\alpha_{\ell}\in R^-$. This is because if we set $v=s_{\alpha_{\ell}}w$ then $\ell(w)>\ell(v)$ but $\ell(w)\leq l(v)+1$ since $\alpha_{\ell}$ is a simple root, and hence $\ell(w)=\ell(v)+1$. We have assumed that $w\in W^P$; this implies $v\in W^P$.
\end{remark}

\subsubsection{An example}\label{ex1}
Let $G$ be of type $D_4$, with simple roots $\alpha_1,\alpha_2,\alpha_3,\alpha_4$ (using standard  notation \cite{Bourbaki} here and elsewhere), and corresponding simple reflections $s_i$. Let $P = P_2$ be the standard maximal parabolic for which $\Delta(P) = \Delta\setminus\{\alpha_2\}$. Let $u=s_4s_3s_1s_2$, $v=s_3s_1s_2s_4s_3s_1s_2$, and $w=s_1s_2s_4s_2s_3s_1s_2$; one verifies that $u,v,w\in W^P$, and that
$$
[X_u]\odot_0[X_v]\odot_0[X_w] = [X_e] \in H^*(G/P).
$$
This can be calculated using the multiplication table for $(G/P_2,\odot_0)$ found in \cite{KKM} (in their notation, $[X_u]=\epsilon_{\theta^Pu}^P=b_5^2$, $[X_v]=\epsilon_{\theta^Pv}^P=b_2^3$, $[X_w]=\epsilon_{\theta^Pw}^P=b_2^2$, and $[X_e]=b_9$) and was also verified by computer. Therefore $u,v,w,P$ give rise to a regular facet $\mathcal{F}$.

Observe that, for instance, $s_3v\leto{\alpha_3}v$. According to Theorem \ref{Tone}, $D(2,s_3v)$ is a divisor on $(G/B)^s$, and we now compute the $\lambda_i$ appearing in $\mathcal{O}(D(2,s_3v)) = \ml_{\lambda_1}\boxtimes \ml_{\lambda_2} \boxtimes \ml_{\lambda_3}$ using Theorem \ref{timeticks}. First, $\lambda_1$: testing each of $s_1u,s_2u,s_3u,s_4u$, we see that only $s_2u$ satisfies $\ell(s_2u) = \ell(u)-1$ and $s_2u\in W^P$. As $[X_{s_2u}]\cdot[X_{s_3v}]\cdot[X_w]=[X_e]$, $\lambda_1 = \omega_2$. On may check that this product would equal $0$ if $\odot_0$ were used instead.

For $\lambda_2$: $s_3(s_3v)$ and $s_4(s_3v)$  satisfy the two required conditions (the first is obvious). As $[X_u]\cdot [X_{s_4s_3v}]\cdot[X_w]=0$, $\lambda_2=\omega_3$. Finally $\lambda_3$: only $s_3w$ satisfies the requirements, and $[X_u]\cdot[X_{s_3v}]\cdot[X_{s_3w}]=[X_e]$, so $\lambda_3=\omega_3$. Indeed, $(\omega_2,\omega_3,\omega_3)$ is an extremal ray of the tensor cone for $D_4$, cf. \cite{KKM}, and lies on $\mathcal{F}$. All standard cup product calculations here were done by computer.
\subsection{Other extremal rays}
\begin{defi}
A ray $\Bbb{Q}_{\geq 0}(h_1,\dots,h_s)$ is a type I ray of $\mf_{\Bbb{Q}}$ if there is  a pair $(j,v)$ such that $v\leto{\beta}w_j$ with $\beta$ simple, $v\in W^P$ (i.e, $C_v\subset X_{w_j}$ is simple, see Definition \ref{RAM}) such that $\beta(h_j)>0$.

Rays of  $\mf_{\Bbb{Q}}$ which are not type I are called type II rays of  $\mf_{\Bbb{Q}}$; they span a face $\mf_{2,\Bbb{Q}}$ of  $\mf_{\Bbb{Q}}$: They are defined inside $\mf_{\Bbb{Q}}$ by the system of equalities $\beta(h_j)=0$ whenever $(j,v)$  is a pair such that $v\leto{\beta}w_j$ with $\beta$ simple, $v\in W^P$ (note that $\beta(h_j)\geq 0$ on $\mf$).
\end{defi}
It is an easy consequence of Theorem \ref{timeticks} that the extremal rays $D=D(j,v)$ of Theorem \ref{Tone} are type I (see Corollary \ref{correspondence2} (1)).

Let $\Bbb{Q}_{\geq 0}\delta_1,\dots,\Bbb{Q}_{\geq 0}\delta_q$ be the type I extremal rays of $\mf$ produced by Theorem \ref{Tone}, with $q$ the number of possible $(j,v)$ with $v\leto{\beta}w_j$ and $\beta$ simple. We have a natural cone map
\begin{equation}\label{until}
\prod_{b=1}^q \Bbb{Q}_{\geq 0}\delta_b \times \mathcal{F}_{2,\Bbb{Q}}\to \mathcal{F}_{\Bbb{Q}}
\end{equation}
\begin{theorem}\label{conebij}
The mapping \eqref{until} is an isomorphism of pointed rational cones.
\end{theorem}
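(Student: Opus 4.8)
The plan is to establish that the cone map \eqref{until} is both injective and surjective on rays, and moreover an isomorphism of pointed rational cones. The structure of the argument is dictated by the decomposition of rays of $\mf_\Bbb{Q}$ into type I and type II. First I would record the basic numerical mechanism: for each pair $(j,v)$ with $v\leto{\beta}w_j$ and $\beta=\alpha_\ell$ simple, the linear functional $(h_1,\dots,h_s)\mapsto \beta(h_j)=h_j(\alpha_\ell^\vee)$, which is nonnegative on $\mf$ (as noted in the definition of type II, since $\mf\subseteq\frh_+^s$ forces $\alpha_\ell(h_j)\ge 0$), should be paired against the basic divisors. The key computational input is Theorem \ref{timeticks} applied to $\delta_b=[D(j,v)]$: one reads off from \eqref{calc3} that the $\omega_\ell$-coefficient of $\lambda_j$ equals $1$ (the intersection number is $[X_v]\cdot[X_{w_j}]\cdots = [X_e]$, giving $c=1$, using that $C_v\subset X_{w_j}$ is codimension one and simple so that $\hat u_j = w_j$ and the product is $[X_{w_1}]\odot_0\cdots = [X_e]$, which by \cite{BK} has the same top coefficient as the ordinary product), while for any \emph{other} pair $(j',v')$ with $v'\leto{\beta'}w_{j'}$, $\beta'=\alpha_{\ell'}$ simple, the coefficient of $\omega_{\ell'}$ in $(\delta_b)_{j'}$ vanishes: indeed if $j'\ne j$ then $u_{j'}=w_{j'}$ and $s_{\alpha_{\ell'}}w_{j'} = v'\leto{\beta'}w_{j'}$ means $\ell(v') < \ell(w_{j'})$, i.e. $w_{j'}^{-1}\alpha_{\ell'}\in R^-$, forcing $u_{j'}^{-1}\alpha_{\ell'}\in R^-$ and hence $c=0$ by case (2) of Theorem \ref{timeticks}; and if $j'=j$ one checks $\hat u_j = s_{\alpha_{\ell'}} v$ either leaves $W^P$ or has $\ell(\hat u_j)<\ell(u_j)$. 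In short, writing $\phi_b$ for the functional $\beta(h_j)$ attached to the $b$-th pair, we get $\phi_b(\delta_{b'}) = \delta_{bb'}$ (Kronecker), and $\phi_b$ vanishes identically on $\mf_{2,\Bbb{Q}}$ by definition.

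With this "dual basis" of functionals in hand, surjectivity of \eqref{until} is the substantive point. Given any ray $\Bbb{Q}_{\ge 0}(h_1,\dots,h_s)$ of $\mf_\Bbb{Q}$, set $t_b = \phi_b(h)\ge 0$ and put $h' = h - \sum_{b=1}^q t_b\,\delta_b$. Then $\phi_{b'}(h') = 0$ for all $b'$, so if I can show $h'\in\mf_\Bbb{Q}$, it lies in $\mf_{2,\Bbb{Q}}$ and we are done; the decomposition $h = \sum t_b\delta_b + h'$ is then manifestly in the image. To prove $h'\in\mf_\Bbb{Q}$, note first that $h'$ satisfies all the equalities \eqref{faceF} cutting out $\mf$ inside $\Gamma_\Bbb{Q}(s,K)$ (these hold for $h$, for each $\delta_b\in\mf$ by Theorem \ref{Tone}(d), and are linear), so it remains to check $h'\in\Gamma_\Bbb{Q}(s,K)$, i.e. that $h'$ satisfies \eqref{fox} for every parabolic $Q$ and every $(u_1,\dots,u_s)$ with $[X_{u_1}]\odot_0\cdots\odot_0[X_{u_s}]=[X_e]$ in $H^*(G/Q)$, and also lies in $\frh_+^s$. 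The chamber condition $\alpha_i(h_i')\ge 0$: for simple roots $\alpha_i=\beta$ arising from some pair $(i,v)$ this is exactly $\phi_b(h')\ge 0$, in fact $=0$; for the remaining simple roots $\alpha_i$ with $w_i^{-1}\alpha_i\in R^+$, one argues that $\delta_b(\alpha_i^\vee\text{-component at slot }i)$ already vanishes by Theorem \ref{timeticks}(2), so $\alpha_i(h_i') = \alpha_i(h_i)\ge 0$. The inequalities \eqref{fox}: here the point is that $\delta_b$ lies on $\mf$, hence saturates the inequality defining the facet containing $\mf$; for a general inequality of $\Gamma(s,K)$ one must show subtracting the $t_b\delta_b$ cannot violate it. This is where I expect the main obstacle, and the natural route is the characterization of type I/II via the ramification results of \cite{BKR} together with the inductive structure: a type I ray is detected by $\beta(h_j)>0$ for an appropriate simple $(j,v)$, and the content of Theorems \ref{Tone}–\ref{timeticks} (Fulton-type rigidity, $\dim H^0 = 1$) is precisely that the $\delta_b$ are "corner" rays at which exactly the functional $\phi_b$ becomes positive while all of $\Gamma(s,K)$'s other defining inequalities can only improve. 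Concretely I would show $h'$ is a nonnegative combination of the extremal rays of $\mf_\Bbb{Q}$ none of which is type I, by downward induction on $\sum_b t_b$-type quantities, peeling off one basic divisor at a time: if $h$ is not already type II, some $\phi_b(h)>0$, and $h - \epsilon\delta_b$ stays in $\mf_\Bbb{Q}$ for small $\epsilon>0$ (using $\dim H^0((G/B)^s,\mo(m\delta_b))^G = 1$, so the ray $\Bbb{Q}_{\ge 0}\delta_b$ is a vertex that can be subtracted along an edge of $\mf_\Bbb{Q}$); iterating drives all $\phi_b$ to zero.

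For injectivity and the "isomorphism" claim: the map \eqref{until} is linear and defined over $\Bbb{Q}$, so it suffices to see it is injective, which is immediate from $\phi_b(\delta_{b'}) = \delta_{bb'}$ and $\phi_b|_{\mf_{2,\Bbb{Q}}} = 0$ — from a point $(\sum t_b\delta_b) + h'$ in the image one recovers each $t_b = \phi_b(\text{image point})$ and then $h'$ by subtraction. Pointedness of the source is clear ($\prod\Bbb{Q}_{\ge 0}\delta_b$ is an orthant, $\mf_{2,\Bbb{Q}}$ is a face of the pointed cone $\Gamma_\Bbb{Q}(s,K)$), and pointedness of the target is inherited from $\Gamma_\Bbb{Q}(s,K)$. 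Finally, to conclude it is an \emph{isomorphism} of cones — i.e. the inverse is also a cone map — I would observe that the inverse is given by the explicit affine-linear formula $x\mapsto\big((\phi_1(x),\dots,\phi_q(x)),\, x-\sum_b\phi_b(x)\delta_b\big)$, which is visibly linear with nonnegative-cone-preserving components on $\mf_\Bbb{Q}$ by the surjectivity analysis above; combined with injectivity this yields the bijection of pointed rational cones. The one delicate point deserving care throughout is keeping the $\odot_0$ versus ordinary cup product distinction straight — as the $D_4$ example warns — so in computing the coefficients via Theorem \ref{timeticks} I must use the ordinary product in \eqref{calc3} while the membership $h,\delta_b\in\mf$ refers to $\odot_0$-data; the two are reconciled by the fact that $\odot_0$ and $\cup$ agree on the top class coefficient, which is all that enters the computation $\phi_b(\delta_{b'})=\delta_{bb'}$.
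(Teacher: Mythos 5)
Your proposal correctly identifies the overall architecture — a dual basis of functionals $\phi_b$ attached to the pairs $(j,v)$, injectivity from $\phi_b(\delta_{b'})=\delta_{bb'}$, and surjectivity by peeling off basic divisors — and the injectivity argument and the computation $\phi_b(\delta_{b'})=\delta_{bb'}$ via Theorem \ref{timeticks} match the paper (where these appear as Corollary \ref{correspondence2}). But there is a genuine gap in the surjectivity step.

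The claim that, given $h\in\mf_{\Bbb{Q}}$ with $\phi_b(h)>0$, one has $h-\epsilon\delta_b\in\mf_{\Bbb{Q}}$ for small $\epsilon>0$ ``because $\dim H^0((G/B)^s,\mathcal{O}(m\delta_b))^G=1$, so $\Bbb{Q}_{\ge 0}\delta_b$ is a vertex that can be subtracted along an edge'' is not a proof. Extremality of the ray $\delta_b$ says nothing about whether, from an arbitrary point $h$ of the cone with $\phi_b(h)>0$, you can move in the $-\delta_b$ direction without leaving the cone; in general polyhedral cones this fails. You have in effect restated what needs to be proved, and the one-dimensionality of the invariant sections is not the mechanism. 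The chamber condition $\alpha_m(h'_i)\geq 0$ can also not simply be asserted: once you know $h'$ lies in $\Gamma_{\Bbb{Q}}(s,K)$ via Proposition \ref{bij} it is automatic, but you cannot establish it independently of the inequalities \eqref{fox} that you leave unproved.

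The missing idea, which is the content of the paper's Proposition \ref{egale}, is a vanishing statement: if $\ml=\ml_{\mu_1}\boxtimes\cdots\boxtimes\ml_{\mu_s}$ has a nonzero $G$-invariant section, $x=(\kappa(\mu_1),\dots,\kappa(\mu_s))\in\mf$, and $\mu_j(\alpha_\ell^\vee)>0$, then \emph{every} invariant section of $\ml$ vanishes along $D(j,v)$. Hence $\ml(-D(j,v))$ still has a nonzero invariant section (and by Theorem \ref{Tone}(d) lies on $\mf$), and one can iterate exactly $\mu_j(\alpha_\ell^\vee)$ times. The vanishing is proved by a Hilbert--Mumford type analysis (Section \ref{necessary}): at a general point of $D(j,v)$ the intersection $\bigcap g_iC_{u_i}$ is nonempty with $u_j=v$, $u_i=w_i$ otherwise, and the inequality \eqref{inegalite} must hold wherever an invariant section is nonzero; one shows \eqref{inegalite} fails by computing $w_j^{-1}\mu_j - v^{-1}\mu_j = -\mu_j(\alpha_\ell^\vee)v^{-1}\alpha_\ell$, noting $v^{-1}\alpha_\ell\in R^+\setminus R^+_{\mathfrak{l}}$ so that some coefficient along $\alpha_k\notin\Delta(P)$ is strictly positive. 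This computation, rather than any rigidity of $H^0$, is what lets you subtract $D(j,v)$ and stay in the cone. Your proposal does not contain this argument or a substitute for it, so the surjectivity remains unestablished.
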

Therefore, the general problems enumerated above reduce to the problem of describing $\mathcal{F}_{2,\Bbb{Q}}$:
\begin{enumerate}
\item Describe all extremal rays of $\mf_{2,\Bbb{Q}}$.
\item Describe $\mf_{2,\Bbb{Q}}$ in terms of $\Gamma_{\Bbb{Q}}(s,K(L^{\op{ss}}))$.
\end{enumerate}
We will do this by the process of induction.
\subsection{Induction}
Recall that $L$ is the Levi subgroup of $P$.
\begin{defi}
Given $(h_1,\dots,h_s)\in \frh^s_{L^{\op{ss}},\Bbb{Q}}\subseteq \frh^s_{\Bbb{Q}}$, define
$(y_1,\dots,y_s)=(w_1h_1,\dots,w_s h_s)\in \frh^s_{\Bbb{Q}}$. Finally define $\Ind_{L}^G: \frh^s_{L^{\op{ss}},\Bbb{Q}}\to\frh^s_{\Bbb{Q}}$  by the following formula
$$\Ind_{L}^G(h_1,\dots,h_s)=(y_1,\dots,y_s)-\sum'_{j,v} \frac{2}{(\alpha_{\ell},\alpha_{\ell})}\alpha_{\ell}(y_j)[D(j,v)]\in \frh^s_{\Bbb{Q}}$$
 where the sum is over triples $(j,v)$ such that $v\leto{\alpha_{\ell}}w_j$ with $v\in W^P$ ($[D(j,v)]$ is defined in \eqref{tenth}).
\end{defi}
The induction operation $\Ind_{L}^G$ is constructed geometrically, and in a more general setting. The formulas have
a simpler form in the setting of the equivalent (saturated) tensor invariant problem (as in Proposition \ref{bij}), see Theorem \ref{general_Induction}.
\begin{theorem}\label{IndT}
The restriction of $\Ind_{L}^G$ to $\Gamma_{\Bbb{Q}}(s,K(L^{\op{ss}}))$ defines a surjective mapping of cones
\begin{equation}\label{indsurj}
\Ind_{L}^G:\Gamma_{\Bbb{Q}}(s,K(L^{\op{ss}}))\twoheadrightarrow \mf_{2,\Bbb{Q}}\subseteq \mf_{\Bbb{Q}}\subseteq \Gamma_{\Bbb{Q}}(s,K)
\end{equation}
\end{theorem}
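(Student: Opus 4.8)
The plan is to establish two things: first, that $\Ind_{L}^G$ maps $\Gamma_{\Bbb{Q}}(s,K(L^{\op{ss}}))$ into $\mf_{2,\Bbb{Q}}$, and second, that this map is surjective onto $\mf_{2,\Bbb{Q}}$. For the first part, I would interpret the formula for $\Ind_{L}^G$ in terms of the equivalent tensor-invariant problem via Proposition \ref{bij}, where (as the text promises in Theorem \ref{general_Induction}) it takes a cleaner form. Concretely, given a tuple $(\lambda_1,\dots,\lambda_s)$ of dominant weights for $L^{\op{ss}}$ with a nonzero $L^{\op{ss}}$-invariant in the tensor product of the $V_{\lambda_i}$, one should build an associated $G$-invariant section of a line bundle on $(G/B)^s$ by pushing forward / inducing along the geometry of the flag bundle $P/B_L \hookrightarrow G/B$ and using the divisors $D(j,v)$ as correction terms. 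The key input here is the ramification theory of \cite{BKR}: the $D(j,v)$ record exactly the ramification divisors of the relevant projection maps in the enumerative problem attached to $(\vec w, P)$, so subtracting the $\frac{2}{(\alpha_\ell,\alpha_\ell)}\alpha_\ell(y_j)[D(j,v)]$ terms precisely cancels the discrepancy between the Levi-level invariant and a genuine $G$-level invariant. That the image lands in $\mf$ (i.e.\ saturates the defining equalities \eqref{faceF}) should follow from the construction: the induced section is supported on the locus where the Schubert intersection $\bigcap \bar g_i X_{w_i}$ is nonempty, which is exactly the incidence variety whose class gives the facet. That the image lands in the subface $\mf_{2,\Bbb{Q}}$ (the type II face, cut out by $\beta(h_j)=0$ for all simple $(j,v)$) should be arranged by construction as well — the correction terms are chosen so that the "type I directions" are killed.

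For surjectivity, I would argue as follows. By Theorem \ref{conebij}, $\mf_{\Bbb{Q}} \cong \prod_b \Bbb{Q}_{\ge 0}\delta_b \times \mf_{2,\Bbb{Q}}$, so $\mf_{2,\Bbb{Q}}$ is a genuine face and it suffices to hit every extremal ray of $\mf_{2,\Bbb{Q}}$. Given a point $p\in\mf_{2,\Bbb{Q}}$, it corresponds under Proposition \ref{bij} to a tuple of dominant weights $(\mu_1,\dots,\mu_s)$ with a nonzero $G$-invariant in $\bigotimes V_{N\mu_i}$ for some $N$, and lying on the facet means the corresponding invariant "comes from" the Schubert-intersection geometry of $(\vec w, P)$. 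The ramification/Levi-reduction results of \cite{BKR} (generalizing the Levi-movability analysis in \cite{BK}, \cite{R1}) say that such an invariant factors through an $L^{\op{ss}}$-invariant on the Levi flag variety — this is the geometric content of the induction being "reversible" on the relevant locus. Untwisting by the $w_j$ (the $(h_1,\dots,h_s)\mapsto (w_1h_1,\dots,w_sh_s)$ step) and adding back the ramification corrections recovers a preimage in $\Gamma_{\Bbb{Q}}(s,K(L^{\op{ss}}))$. I would make this precise by checking directly on weights: write $p$ in fundamental-weight coordinates, solve for the $L^{\op{ss}}$-dominant tuple $(h_1,\dots,h_s)$ using the explicit formula for $\Ind_{L}^G$ (it is affine-linear and, on the type II face, the $\alpha_\ell(y_j)$ terms that would obstruct invertibility vanish or are determined), and then verify the resulting tuple actually lies in $\Gamma_{\Bbb{Q}}(s,K(L^{\op{ss}}))$ by checking the Schubert inequalities for $L^{\op{ss}}$ — which should reduce, via the known relation between Schubert calculus on $G/P$-type data and on the smaller flag varieties of $L$, to the inequalities already satisfied by $p$ on $\mf$.

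The main obstacle I anticipate is the surjectivity direction, specifically showing that an arbitrary point of $\mf_{2,\Bbb{Q}}$ (not just an extremal ray, and not just an integral/polarized point) genuinely lifts to the Levi eigencone. This is where the full strength of \cite{BKR} — the generalized Fulton conjecture and the ramification structure of the enumerative problems — has to be invoked carefully: one needs that the relevant invariant space on $(G/B)^s$ restricted to the facet is one-dimensional (cf.\ Theorem \ref{Tone}(b) for the $D(j,v)$, and an analogous dimension count for the general type II situation) and that this one-dimensionality is inherited from, and in fact equals, the corresponding invariant dimension for $L^{\op{ss}}$. Handling the cone-theoretic (as opposed to semigroup-theoretic) statement cleanly — passing between $H^0((G/B)^s,\ml^N)^G$ for varying $N$ and the rational cone — will require a saturation-type argument, but since we are working with $\Gamma_{\Bbb{Q}}$ throughout, this should be routine once the geometric core is in place. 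A secondary technical point is verifying that $\Ind_{L}^G$ is well-defined independent of the choices implicit in the $D(j,v)$ and that it is indeed a cone map (preserves $\Bbb{Q}_{\ge 0}$-linear combinations), which is immediate from the affine-linear formula but should be stated.
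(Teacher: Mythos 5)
Your proposal has the right high-level ingredients — the BKR ramification theory, the Levification retraction, the comparison of $G$-invariants with $L$-invariants — but it misses the structural device that actually makes the paper's proof work, and your proposed route to surjectivity is not the paper's and is not fleshed out enough to check.

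The key ingredient you do not identify is the stack $\Fl'_G = \prod_{i=1}^s (G/Q'_{w_i})/G$, where $Q'_{w_i}$ is the parabolic with $\Delta'_{w_i} = \Delta\cap w_iR^-$. Line bundles on $\Fl'_G$ are precisely those on $\Fl_G$ whose weights annihilate all the type I directions, so $\Pic^{+,\deg=0}_{\Bbb{Q}}(\Fl'_G)$ is exactly the face $\mf_{2,\Bbb{Q}}$. The crucial technical fact about this setup (Lemma \ref{codim2}) is that, after removing the ramification divisor, the universal intersection $\mathcal{C}' - \mathcal{R}'$ maps to $\prod(G/Q'_{w_i})$ as an open immersion with complement of codimension at least two. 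This codimension-two statement — which in turn uses Lemma \ref{codimensioon}, which uses \cite[Prop 8.1]{BKR} — is what gives the isomorphism $\Pic(\Fl'_G) \cong \Pic(\hat{\mc}' - \hat{\mathcal{R}'})$, and hence (after Levification, Proposition \ref{comparison2}) the isomorphism $\Pic(\Fl_L - \mathcal{R}_L) \cong \Pic(\Fl'_G)$. The surjectivity of $\Ind_L^G$ is then a formal consequence: restriction $\Pic^+(\Fl_L)\to \Pic^+(\Fl_L - \mathcal{R}_L)$ is surjective with a linear section, the isomorphism above carries $\Pic^+(\Fl_L - \mathcal{R}_L)$ to $\Pic^{+,\deg=0}(\Fl'_G)$, and $\Pic^{+,\deg=0}_{\Bbb{Q}}(\Fl_L)\cong\Pic^+_{\Bbb{Q}}(\Fl_{L^{\op{ss}}})=\Gamma_{\Bbb{Q}}(s,K(L^{\op{ss}}))$ by Lemmas \ref{threes} and \ref{lisse}. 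No weight-by-weight verification of Schubert inequalities for $L^{\op{ss}}$ is needed, and indeed the paper's surjectivity argument does not reduce to extremal rays at all.

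Your proposed alternative for surjectivity — solve explicitly for the $L^{\op{ss}}$-dominant tuple and then verify the Levi Schubert inequalities — would require showing that the resulting tuple lies in $\Gamma_{\Bbb{Q}}(s,K(L^{\op{ss}}))$, and the most plausible way to do that is precisely the Picard-group/sections argument the paper uses, not a direct check of inequalities; nothing in your sketch indicates how those inequalities would be derived from the assumption that $p\in\mf_{2,\Bbb{Q}}$. Also, while your reduction to extremal rays is salvageable (the induction map is $\Bbb{Q}$-linear and images of cones are convex), it is a detour: once you have the linear section of the restriction map on $\Pic^+(\Fl_L)$ the surjectivity is immediate. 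Finally, Theorem \ref{Tone}(b) (the one-dimensionality of invariants for the $D(j,v)$) is used for extremality of the basic rays, not for the surjectivity of induction; the one-dimensionality relevant here is $H^0(\Fl_L-\mathcal{R}_L,\mathcal{O})=\Bbb{C}$, which underlies the construction but is not invoked directly in the surjectivity step.
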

Therefore all extremal rays of $\mf_{2,\Bbb{Q}}$ can be obtained by induction \eqref{indsurj} from extremal rays of
$\Gamma_{\Bbb{Q}}(s,K(L^{\op{ss}}))$. Not  all extremal rays of $\Gamma_{\Bbb{Q}}(s,K(L^{\op{ss}}))$ induct to extremal rays; some may even map to zero. For examples of these phenomena, see Section \ref{app1}.

The surjection  \eqref{indsurj} is of a special type, and the kernel of the associated mapping of vector spaces is controlled by ramification divisors in the associated enumerative problem, see Remark \ref{especial}.
\subsection{Acknowledgements}
We thank Shrawan Kumar for useful discussions, and for his comments and corrections.

\section{Basic extremal rays}
Let $P$ be an arbitrary standard parabolic of $G$. For $w\in W^P$, let $Z_w$ denote the smooth locus of $X_w$. There is a largest group $Q_w\subseteq G$, a standard parabolic, that acts on the closed Schubert variety $X_w$.  Let $Y_w\subseteq X_w$ be the open $Q_w$ orbit in $X_w$. Then $C_w\subseteq Y_w\subseteq Z_w\subseteq X_w$. By \cite{BPo} (also see \cite[Lemma 7.1]{BKR}),
\begin{equation}\label{delta}
\Delta(Q_w)=\Delta_w=\Delta\cap w(R_\frl^+\sqcup R^-).
\end{equation}
Note that if $\alpha\in \Delta(Q_w)$, then $s_{\alpha}\in Q_w$.

By \cite[\S 2.6] {BPo}, (also see \cite[Proposition 7.2]{BKR})
\begin{lemma}\label{Roma}
Let $v\leto{\beta} w$ with $v,w\in W^P$.
The following are equivalent:
\begin{enumerate}
\item $C_v\subseteq Y_w$.
\item $C_v\subseteq X_w$ is simple (i.e., $\beta$ is a simple root).
\end{enumerate}
\end{lemma}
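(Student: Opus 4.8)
The plan is to reduce both statements to a question about the single point $vP/P$, to deduce $(2)\Rightarrow(1)$ directly from the description of $\Delta(Q_w)$ in \eqref{delta}, and to prove $(1)\Rightarrow(2)$ by showing that $C_v\subseteq Y_w$ forces the Schubert variety $X_v:=\overline{C_v}$ to fail to be $Q_w$-stable, and then pushing $X_v$ back up to $X_w$ by a minimal parabolic.

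First, two reductions. Since $C_v=B\cdot(vP/P)$ is a single $B$-orbit and $Y_w$ is $B$-stable (because $B\subseteq Q_w$), we have $C_v\subseteq Y_w$ if and only if $vP/P\in Y_w$; and since $C_w\subseteq Y_w$, the open $Q_w$-orbit satisfies $Y_w=Q_w\cdot(wP/P)$. For $(2)\Rightarrow(1)$: if $\beta$ is simple then, as $v\leto{\beta}w$ gives $w^{-1}\beta\in R^-$, we have $\beta=w(w^{-1}\beta)\in w(R^-)\subseteq w(R_\frl^+\sqcup R^-)$; since also $\beta\in\Delta$, \eqref{delta} gives $\beta\in\Delta_w=\Delta(Q_w)$, hence $s_\beta\in Q_w$. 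From $w=s_\beta v$ we get $v=s_\beta w$, so $vP/P=s_\beta\cdot(wP/P)\in Q_w\cdot(wP/P)=Y_w$, i.e.\ $C_v\subseteq Y_w$.

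For $(1)\Rightarrow(2)$, suppose $C_v\subseteq Y_w$. I would first argue that $X_v$ is not $Q_w$-stable: otherwise $X_v\cap Y_w$ is a $Q_w$-stable subset of the single orbit $Y_w$, hence $\emptyset$ or $Y_w$, and since $\dim X_v=\ell(w)-1<\ell(w)=\dim X_w=\dim Y_w$ it must be $\emptyset$, contradicting $C_v\subseteq Y_w$. As $Q_w$ is generated by $B$ and the reflections $s_\alpha$ with $\alpha\in\Delta(Q_w)$, and $X_v$ is $B$-stable, some simple $\alpha\in\Delta(Q_w)$ satisfies $s_\alpha\cdot X_v\ne X_v$. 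Here I would invoke the standard analysis of the action of the minimal parabolic $P_\alpha\supseteq B$ on Schubert varieties of $G/P$ (cf.\ \cite{BPo}) to conclude $v^{-1}\alpha\in R^+\setminus R_\frl^+$, whence $s_\alpha v\in W^P$ and $\ell(s_\alpha v)=\ell(v)+1$. Then $P_\alpha\cdot X_v$, being the image in $G/P$ of the proper $\Bbb{P}^1$-bundle $P_\alpha\times_B X_v$, is closed, irreducible and $B$-stable; it properly contains $X_v$, so it has dimension $\ell(v)+1$, and it contains $C_{s_\alpha v}$ as a dense open subset, so $P_\alpha\cdot X_v=X_{s_\alpha v}$. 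But $P_\alpha\subseteq Q_w$ and $X_v\subseteq X_w$, and $Q_w$ preserves $X_w$, so $X_{s_\alpha v}\subseteq X_w$; comparing dimensions ($\ell(s_\alpha v)=\ell(w)$) and using irreducibility of $X_w$ gives $X_{s_\alpha v}=X_w$, so $s_\alpha v=w$. Combined with $w=s_\beta v$ this forces $s_\alpha=s_\beta$ and hence $\beta=\alpha$, which is simple.

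The step I expect to be the main obstacle is the implication ``$s_\alpha\cdot X_v\ne X_v\Rightarrow v^{-1}\alpha\in R^+\setminus R_\frl^+$ and $P_\alpha\cdot X_v=X_{s_\alpha v}$''. In $G/P$, unlike $G/B$, one must exclude the case $v^{-1}\alpha\in R_\frl^+$, in which $s_\alpha=v\,s_{v^{-1}\alpha}\,v^{-1}$ fixes $vP/P$ (as $s_{v^{-1}\alpha}\in W_P$) and $X_v$ is in fact $P_\alpha$-stable. This is precisely where the fine structure of $Q_w$ enters; I would either quote \cite{BPo} (equivalently \cite[Proposition 7.2]{BKR}), or argue by hand using $v=s_\beta w$, the identity $v^{-1}\alpha=w^{-1}\alpha-\langle\alpha,\beta^\vee\rangle\,w^{-1}\beta$, and the facts $w^{-1}\beta\in R^-$ and $\alpha\in\Delta(Q_w)\Leftrightarrow w^{-1}\alpha\in R_\frl^+\sqcup R^-$, to determine the sign of $v^{-1}\alpha$. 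The remaining steps are routine manipulations of orbits, closures and dimensions.
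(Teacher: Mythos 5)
Your proof is correct, but it takes a genuinely more self-contained route than the paper. The paper's own proof is essentially two citations: both directions are quoted as restatements of \cite[Proposition 7.2]{BKR} (itself drawn from \cite{BPo}), with the only explicit step being the observation that a simple $\beta$ with $w^{-1}\beta\in R^-$ lies in $\Delta(Q_w)$ by \eqref{delta}. You instead reprove the substance. For $(2)\Rightarrow(1)$ you avoid the citation entirely: the chain $vP/P = s_\beta\cdot(wP/P)\in Q_w\cdot(wP/P)=Y_w$, using $s_\beta\in Q_w$, is cleaner and shorter than invoking the reference. For $(1)\Rightarrow(2)$ you supply an orbit-theoretic argument (non-$Q_w$-stability of $X_v$ forces some $P_\alpha$, $\alpha\in\Delta(Q_w)$, to move $X_v$, and then dimension plus irreducibility pin down $s_\alpha v=w$, hence $\beta=\alpha$ simple) that is longer than the paper's one-line citation but makes the mechanism visible. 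The one ingredient you flag as a potential obstacle --- that $P_\alpha X_v\ne X_v$ in $G/P$ forces $v^{-1}\alpha\in R^+\setminus R_\frl^+$ and $P_\alpha X_v=X_{s_\alpha v}$ --- is indeed exactly where \cite{BPo}/\cite{BKR} do real work, and your proposed ``by-hand'' sign computation via $v^{-1}\alpha=w^{-1}\alpha-\langle\alpha,\beta^\vee\rangle w^{-1}\beta$ is a reasonable sketch but would need to be carried out with some care; citing the reference at that single point, as you suggest as an alternative, is perfectly fine and matches what the authors do (globally) anyway. In short: same conclusion, but your argument buys transparency (especially in $(2)\Rightarrow(1)$), while the paper's buys brevity by outsourcing both implications to \cite{BKR}.
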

\begin{proof}
This is just a restatement of the formulation in \cite{BKR}. If $C_v\subset Y_w$, \cite[Proposition 7.2]{BKR}
 shows that $\beta$ is a simple root.

From $v\leto{\beta} w$, we know $w^{-1}\beta\in R^-$. If $\beta$ is a simple root then
 $\beta\in \Delta(Q_w)$ (by Equation \ref{delta}), and $C_v\subseteq Y_w$ by \cite[Proposition 7.2]{BKR}.
\end{proof}
\subsection{The geometry}
Define the universal intersection locus
$$
\mathcal{X} = \left\{(\bar g_1,\hdots\bar g_s,\bar{z})\in \left(G/B\right)^s\times G/P \mid \bar{z}\in g_iX_{w_i}~\forall i\right\},
$$
and similarly define subloci $\mathcal{Z}\supseteq\mathcal{Y}\supseteq\mathcal{C}$ using the $Z_{w_i}, Y_{w_i}, C_{w_i}$, respectively, in place of the $X_{w_i}$. (For the scheme structures, see \cite{BKR}.) As shown in \cite{BKR}, $\mathcal{X}$ is irreducible and the projection map $\pi:\mathcal{X} \to \left(G/B\right)^s$ is birational.
Moreover, because the $X_w$ are all normal, each $X_{w_i}\setminus Z_{w_i}$ has codimension $\ge 2$ in $X_{w_i}$; accordingly, $\mathcal{X}\setminus\mathcal{Z}$ has codimension $\ge 2$ in $\mathcal{X}$.

Let $\mathcal{R}\subseteq \mathcal{Z}$ be the ramification divisor of the map $\pi$. It follows from the birationality of $\pi$ that $\overline{\pi(\mathcal{R})}\subseteq (G/B)^s$ is codimension $\geq 2$ in $(G/B)^s$.
Furthermore, it was demonstrated in \cite{BKR} that $\mathcal{Z}\setminus\mathcal{Y}\subset \mathcal{R}\cup A$ for some $A\subset \mathcal{Z}$ of codimension at least $2$.

\begin{defi}\label{gradechange}
With notation as in \eqref{setU},  let
$$\widetilde{D}(j,v) = \{(\bar g_1,\hdots,\bar g_s,\bar{z})\in (G/B)^s\times G/P\mid \bar{z}\in \bigcap_{i=1}^s \bar g_iX_{u_i}\}\subseteq \mathcal{X}\subseteq (G/B)^s \times G/P.
$$
Note that $\widetilde{D}(j,v)$ has a natural scheme structure. Clearly, as sets, ${D}(j,v)=\pi(\widetilde{D}(j,v))$. We will see in Corollary \ref{nom} that this is an equality of divisors. It is easy to see that $\widetilde{D}(j,v)$ is irreducible by the same argument as for the irreducibility of $\mathcal{X}$ (see e.g., \cite{BKR}).
\end{defi}

\subsection{Proof of Part (a) of Theorem \ref{Tone}}\label{refer}
Assume
\begin{itemize}
\item $j=1$ and $w_1=s_{\beta}v$ with $\beta$ a simple root. Set $\widetilde{D}=\widetilde{D}(1,v)$.
\end{itemize}
First choose a point $(\bar{g}_1,\dots,\bar{g}_s,\bar{z})\in \mathcal{C}-\mathcal{R}$. This implies that
(by birationality of $\pi$ and Zariski's main theorem)
\begin{equation}\label{ZM}
\bigcap_{i=1}^s g_iX_{w_i}=\{\bar{z}\}.
\end{equation}

Assume first $\bar{z}= w_1P$, by translations. We get $w_1= g_1 bw_1p$ for some $b\in B$ and $p\in P$. Assume $b=e$ without changing $\bar{g}_1$. Therefore $g_1\in w_1Pw_1^{-1}$, translate $(\bar{g}_1,\dots,\bar{g}_s,\bar{z})$ by $g_1^{-1}$ which fixes $w_1P$. Therefore we may assume $g_1=e$ and $\bar{z}=w_1P$.

Let $g_1'=s_{\beta}$. Then $g_1'X_{w_1}= g_1 X_{w_1}$ since $s_{\beta}\in Q_{w_1}$.
Now $\bar{z}=w_1P= s_{\beta} vP$. Therefore
$\bar{z}\in g'_1 C_{v}$ and $(\bar{g}'_1,\bar{g}_2,\dots,\bar{g}_s,\bar{z})\in \widetilde{D}\cap \mathcal{Y}$. Now,
$$g'_1 X_{w_1}\cap \bigcap_{i=2}^s g_iX_{w_i}=\bigcap_{i=1}^s g_iX_{w_i}=\{\bar{z}\}$$
which shows that $(\bar{g}'_1,\bar{g}_2,\dots,\bar{g}_s,\bar{z})\in \widetilde{D}\cap \mathcal{Y}-\mathcal{R}$. The divisor $\widetilde{D}$ is therefore not contained in $\mathcal{R}$. This finishes the proof of (a). The above argument has the following corollary:
\begin{corollary}\label{nom}
As Weil divisors, ${D}(j,v)=\pi_*(\widetilde{D}(j,v))$. Here $\pi_*$ is the push forward operation on cycles under proper morphisms \cite[Section 1.4]{Fint}.
\end{corollary}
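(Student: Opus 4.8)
The plan is to read the corollary off from the argument just given together with the birationality of $\pi$, using nothing more than the definition of proper pushforward of cycles \cite[Section 1.4]{Fint}. Write $\widetilde{D}=\widetilde{D}(j,v)$ and $D=D(j,v)$. Since $\widetilde{D}$ is irreducible and $\pi(\widetilde{D})=D$ as sets (Definition \ref{gradechange}), the image $D$ is an irreducible subvariety, which has codimension one by part (a) of Theorem \ref{Tone}; and by \cite[Section 1.4]{Fint} one has $\pi_*[\widetilde{D}]=\deg(\widetilde{D}/D)\,[D]$ provided $\dim\widetilde{D}=\dim D$, where $\deg(\widetilde{D}/D)$ is the degree of the corresponding extension of function fields. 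So the whole statement reduces to showing that $\pi$ restricts to a \emph{birational} morphism $\widetilde{D}\to D$.

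To that end I would first isolate the locus over which $\pi$ is an isomorphism. The map $\pi:\mathcal{X}\to(G/B)^s$ is proper ($G/P$ being complete) and birational, and $(G/B)^s$ is smooth, hence normal. Let $U\subseteq(G/B)^s$ be the set of points with finite $\pi$-fibre; it is open, since the locus in $\mathcal{X}$ where the fibre dimension is positive is closed and $\pi$ is proper. Over $U$ the morphism $\pi$ is proper and quasi-finite, hence finite by Zariski's main theorem; being finite, birational, and with normal integral target, it restricts to an isomorphism $\pi^{-1}(U)\to U$.

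Next I would feed in the point produced in the argument above: it gives a point $p=(\bar{g}_1',\bar{g}_2,\dots,\bar{g}_s,\bar{z})\in\widetilde{D}$ with $g_1'X_{w_1}\cap\bigcap_{i=2}^s g_iX_{w_i}=\{\bar{z}\}$, i.e.\ with $\pi^{-1}(\pi(p))=\{p\}$; in particular $\pi(p)\in D\cap U$. Restricting the isomorphism $\pi^{-1}(U)\to U$ to $\widetilde{D}$ and using $\pi(\widetilde{D})=D$, we obtain an isomorphism from the nonempty open subset $\widetilde{D}\cap\pi^{-1}(U)$ of $\widetilde{D}$ onto the nonempty open subset $D\cap U$ of $D$; as $\widetilde{D}$ and $D$ are irreducible, both opens are dense, so $\pi|_{\widetilde{D}}:\widetilde{D}\to D$ is proper and birational. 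Hence $\dim\widetilde{D}=\dim D$ and $\deg(\widetilde{D}/D)=1$, so $\pi_*[\widetilde{D}]=[D]$, which is the asserted identity of Weil divisors since $D$ is reduced and a prime divisor by part (a).

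The one thing to be careful about — and the only real obstacle, modest as it is — is to observe that the argument proving part (a) yields not merely a point of $\widetilde{D}$ lying off the ramification divisor $\mathcal{R}$, but a point whose \emph{entire} $\pi$-fibre is the single point $\{\bar{z}\}$, because the intersection $\bigcap_i g_iX_{w_i}$ was arranged to be exactly $\{\bar{z}\}$. Granting that, normality of $(G/B)^s$ together with Zariski's main theorem forces the degree to be one, and no further computation is needed.
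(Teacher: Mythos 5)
Your proof is essentially the same as the paper's, and it is correct in its main thrust: you reduce the corollary to showing that $\pi|_{\widetilde{D}}$ is birational onto $D$, and you establish this by locating the constructed point of part (a) inside the open locus where $\pi$ has finite fibres, invoking Zariski's main theorem and normality of $(G/B)^s$. This is a careful unpacking of the paper's one-line appeal to ``$\pi$ is an isomorphism on $\mathcal{Z}-\mathcal{R}$.''

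There is one point the paper addresses that you elide. The Weil divisor $\pi_*(\widetilde{D}(j,v))$ is the pushforward of the \emph{cycle} $[\widetilde{D}(j,v)]$ attached to the scheme $\widetilde{D}(j,v)$ with its natural scheme structure (Definition \ref{gradechange}); a priori this cycle could be $m[\widetilde{D}(j,v)_{\operatorname{red}}]$ with $m>1$. Your invocation of the formula $\pi_*[\widetilde{D}]=\deg(\widetilde{D}/D)[D]$ from \cite[\S1.4]{Fint} implicitly treats $\widetilde{D}(j,v)$ as integral, and your conclusion of birationality only controls $\deg$, not $m$. The paper's proof explicitly notes that $\widetilde{D}(j,v)$ is \emph{generically smooth} (as follows from the fibre-bundle structure $\widetilde{D}(j,v)\to G/P$ over the base $G/P$, the fibres being products of Schubert varieties), precisely to ensure the cycle is reduced. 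Adding this one sentence would close the gap; everything else you wrote is sound and matches the paper's approach.
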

\begin{proof}
Since $\pi$ is an isomorphism on $\mathcal{Z}-\mathcal{R}$, we only need to observe that $\widetilde{D}(j,v)$ is irreducible and generically smooth. Both are standard  (and follow by studying the fiber bundle $\widetilde{D}(j,v)\to G/P$ as in \cite{BKR}).
\end{proof}

\subsection{A theorem on  functions on the universal intersection}
\begin{theorem}\label{equity}
$$H^0(\mathcal{C}-\mathcal{R},\mathcal{O})^G=\Bbb{C}.$$
\end{theorem}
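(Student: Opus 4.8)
The plan is to exploit the birationality of $\pi:\mx\to (G/B)^s$ together with the structure of $\mc$ as a fiber bundle over $G/P$. First I would observe that a $G$-invariant regular function $f$ on $\mc-\mrr$ extends, because of normality/codimension estimates, to a $G$-invariant function on a large open subset of $(G/B)^s$: indeed $\pi$ restricts to an isomorphism $\mz-\mrr\xrightarrow{\sim}(G/B)^s-\overline{\pi(\mrr)}$, the target complement having codimension $\geq 2$ by the remark after the definition of $\mrr$, and $\mc\subseteq\mz$ is open and dense (its complement $\mz\setminus\mc$ sits inside $\mrr\cup A$ with $A$ of codimension $\geq 2$ in $\mz$, so $\mc-\mrr$ is dense in $\mz-\mrr$). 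Hence $f$ pulls back to a $G$-invariant rational function on $(G/B)^s$ regular away from codimension $\geq 2$; since $(G/B)^s$ is smooth, it is in fact a global regular function, hence (by properness/connectedness of $(G/B)^s$) constant. This already shows $H^0(\mc-\mrr,\mo)^G\subseteq \Bbb{C}$, and the reverse inclusion is trivial, giving the theorem.

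The one point requiring care is the passage from ``$f$ is $G$-invariant on $\mc-\mrr$'' to ``$f$ descends to a function on an open subset $V\subseteq (G/B)^s$ with complement of codimension $\geq 2$.'' Here the argument is: $\pi|_{\mz-\mrr}$ is an isomorphism onto its image $U:=(G/B)^s\setminus\overline{\pi(\mrr)}$ (this is what birationality plus Zariski's main theorem, as already used in the excerpt in deriving \eqref{ZM}, provides, since $\pi$ is proper with finite, hence trivial, fibers over $U$ after shrinking by the codimension-$\geq 2$ image of the non-isomorphism locus — and that image is contained in $\overline{\pi(\mrr)}$ by the cited results of \cite{BKR}). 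Under this isomorphism the dense open subset $\mc-\mrr$ of $\mz-\mrr$ maps to a dense open subset of $U$, so $f$, viewed via $(\pi|_{\mz-\mrr})^{-1}$, is a regular function on that dense open subset of $(G/B)^s$; being regular on a dense open subset whose complement in $(G/B)^s$ has codimension $\geq 2$ (it is contained in $\overline{\pi(\mrr)}\cup\overline{\pi(\mz\setminus\mc)}$, both codimension $\geq 2$), and $(G/B)^s$ being smooth (hence normal), Hartogs' extension applies: $f$ extends to a global regular function $\tilde f\in H^0((G/B)^s,\mo)=\Bbb{C}$.

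The potentially subtle step — and the one I would expect to be the main obstacle — is justifying that the non-isomorphism locus of $\pi$ (the locus over which $\pi$ fails to be a local isomorphism, equivalently where Zariski's main theorem does not already force a single-point fiber) really has image of codimension $\geq 2$ in $(G/B)^s$, so that $G$-invariance of $f$ on $\mc-\mrr$ is genuinely enough and one does not need invariance on a larger locus. The excerpt supplies exactly the needed inputs: $\overline{\pi(\mrr)}$ is codimension $\geq 2$, $\mx\setminus\mz$ has codimension $\geq 2$ in $\mx$ (so its image is codimension $\geq 2$), and $\mz\setminus\mc\subseteq\mrr\cup A$ with $A$ of codimension $\geq 2$. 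I would therefore assemble these three facts to conclude that there is an open $G$-stable $V\subseteq(G/B)^s$ with $(G/B)^s\setminus V$ of codimension $\geq 2$ and $\pi^{-1}(V)\cap\mz = \pi^{-1}(V)\cap\mc$ mapping isomorphically to $V$; then $f|_{\pi^{-1}(V)\cap\mc}$ transports to a regular function on $V$, which Hartogs extends to a constant. Conversely constants clearly lie in $H^0(\mc-\mrr,\mo)^G$, completing the proof.
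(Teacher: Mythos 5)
Your argument has a genuine gap, and it sits exactly where you flagged potential trouble. You write that ``$\mz\setminus\mc$ sits inside $\mrr\cup A$ with $A$ of codimension $\geq 2$'' --- but the paper asserts this only for $\mz\setminus\mathcal{Y}$, not $\mz\setminus\mc$. Here $\mathcal{Y}$ is built from the open $Q_{w_i}$-orbits $Y_{w_i}\supsetneq C_{w_i}$, and $\mathcal{Y}\setminus\mc$ is \emph{not} of codimension $\geq 2$: by Lemma \ref{Roma}, $Y_{w_i}\setminus C_{w_i}$ contains exactly the ``simple'' codimension-one Schubert cells, and the corresponding loci $\widetilde{D}(j,v)\subseteq\mathcal{Y}\setminus\mc$ map to divisors $D(j,v)$ of codimension one in $(G/B)^s$ --- that is precisely Theorem \ref{Tone}(a). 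Lemma \ref{malta} makes the consequence explicit: $\pi(\mc-\mrr)\subseteq (G/B)^s-D(j,v)$, so the complement of $\pi(\mc-\mrr)$ contains every $D(j,v)$. Hartogs therefore cannot be applied: the image of $\mc-\mrr$ misses codimension-one divisors, and a regular $G$-invariant function on $\mc-\mrr$ is a priori only a rational function on $(G/B)^s$ with possible poles along the $D(j,v)$.

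A useful sanity check: if your argument worked, $H^0(\mc-\mrr,\mathcal{O})^G=\mathbb{C}$ would follow from Hartogs alone, and (as the paper notes after Lemma \ref{c2}) this statement essentially contains the generalized Fulton conjecture of \cite{BKR} --- so a two-line Hartogs proof cannot be right. The paper's actual route is: reduce to the stack $\hat{\mc}-\hat{\mrr}$, use Corollary \ref{carol} to identify $\hat{\mrr}=\tau^{-1}\mrr_L$, apply the Levification comparison (Proposition \ref{comparison}(2)) to descend to $\Fl_L-\mrr_L$, and then invoke Proposition \ref{fultonc} (Fulton's conjecture for the Levi) as the genuine input. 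Your codimension-$\geq 2$ Hartogs idea does become correct, but only after replacing $(G/B)^s$ by $\prod_i G/Q'_{w_i}$, which quotients out exactly the directions responsible for the offending divisors $D(j,v)$; this is the content of Lemma \ref{codimensioon} and Lemma \ref{codim2}, and even there the codimension-$\geq 2$ bound is not formal --- it rests on \cite[Proposition 8.1]{BKR}, i.e.\ on the ramification analysis that underlies the Fulton conjecture in the first place.
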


This will be proved in Section \ref{fin1}.

\subsection{Proof of Parts (b) and (c) of Theorem \ref{Tone}}
Part (c) follows from part (b) as in \cite[Lemma 2.1]{BHermit} (briefly): Suppose some multiple $\mathcal{O}(mD)$ is a tensor product of two line bundles $\ml_1$ and $\ml_2$ with non-zero invariants $s_1$ and $s_2$. Clearly the zero sets of $s_1$ and $s_2$ should be supported on $D$, since $\mathcal{O}(mD)$ has only one non-zero invariant section (by (b)) up to scaling, which has this property. Since $D$ is reduced, and $(G/B)^s$ is smooth, $\ml_1$ and $\ml_2$ are both multiples of $\mathcal{O}(D)$, as desired.

We deduce part (b) from Theorem \ref{equity}. For this we use
\begin{lemma}\label{malta}
$\pi(\mathcal{C}-\mathcal{R})\subseteq (G/B)^s-{D}(j,v)$
\end{lemma}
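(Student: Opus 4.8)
Throughout, recall that the statement is set‑theoretic: a point of $\pi(\mathcal{C}-\mathcal{R})$ is a tuple $(\bar g_1,\dots,\bar g_s)$ for which there exists $\bar z$ with $(\bar g_1,\dots,\bar g_s,\bar z)\in\mathcal{C}-\mathcal{R}$, and $(\bar g_1,\dots,\bar g_s)\notin {D}(j,v)$ means precisely $\bigcap_{i=1}^s g_iX_{u_i}=\emptyset$, where $u_i=w_i$ for $i\ne j$ and $u_j=v$ as in \eqref{setU}. So the plan is: fix $(\bar g_1,\dots,\bar g_s,\bar z)\in\mathcal{C}-\mathcal{R}$ and show $\bigcap_{i=1}^s g_iX_{u_i}=\emptyset$.

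The first step is to extract the uniqueness of the intersection point. Since $(\bar g_1,\dots,\bar g_s,\bar z)$ lies off the ramification divisor $\mathcal{R}$ of the birational map $\pi\colon\mathcal{X}\to(G/B)^s$, Zariski's main theorem applies exactly as in the proof of part (a) of Theorem \ref{Tone}, yielding \eqref{ZM}: $\bigcap_{i=1}^s g_iX_{w_i}=\{\bar z\}$.

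Next I would argue by contradiction: suppose $(\bar g_1,\dots,\bar g_s)\in {D}(j,v)$, so there is a point $\bar z'\in\bigcap_{i=1}^s g_iX_{u_i}$. Since $v\leto{\beta}w_j$ forces $v<w_j$ in the Bruhat order on $W^P$ — recall $v\in W^P$ automatically here — we have $X_v\subseteq X_{w_j}$, and $X_{u_i}=X_{w_i}$ for $i\ne j$; hence $X_{u_i}\subseteq X_{w_i}$ for every $i$, and therefore $\bar z'\in\bigcap_{i=1}^s g_iX_{w_i}=\{\bar z\}$ by \eqref{ZM}. Thus $\bar z'=\bar z$, and in particular $\bar z\in g_jX_v$.

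Finally I would exploit the cell constraint built into $\mathcal{C}$ (this is the one point that must be used with care): membership in $\mathcal{C}$ gives the sharper condition $\bar z\in g_jC_{w_j}$, not merely $\bar z\in g_jX_{w_j}$. But $C_{w_j}\cap X_v=\emptyset$: a point of $X_v$ lies in some Schubert cell $C_{v'}$ with $v'\le v$, so if it also lay in $C_{w_j}$ then $w_j=v'\le v$, contradicting $\ell(w_j)=\ell(v)+1>\ell(v)$. Translating by $g_j$, the two conditions $\bar z\in g_jX_v$ and $\bar z\in g_jC_{w_j}$ are incompatible. This contradiction shows $(\bar g_1,\dots,\bar g_s)\notin {D}(j,v)$, proving the lemma. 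I do not anticipate any real obstacle here — once \eqref{ZM} is in hand, everything reduces to the elementary disjointness $C_{w_j}\cap X_v=\emptyset$ together with $X_v\subseteq X_{w_j}$.
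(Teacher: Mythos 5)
Your proof is correct and follows essentially the same route as the paper's: both hinge on the Zariski's main theorem observation that $\pi^{-1}(\pi(q))$ is a singleton for $q\in\mathcal{C}-\mathcal{R}$, together with the elementary disjointness $C_{w_j}\cap X_v=\emptyset$. You merely organize the bookkeeping a bit differently — deriving $\bar z'=\bar z$ from \eqref{ZM} and then hitting the contradiction, whereas the paper first notes $z'$ must be a \emph{different} point in the fiber (implicitly using the same disjointness) and then invokes Zariski's main theorem to forbid a disconnected fiber.
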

\begin{proof}Set $$
u_i = \left\{
\begin{array}{cc}
 w_i, & i\ne j\\
 v, & i=j
\end{array}\right..
$$
If $p=(\bar{g}_1,\dots,\bar{g}_s)\in D(j,v)$ is the image of a point $q=(\bar{g}_1,\dots,\bar{g}_s,\bar{z})\in \mathcal{C}-\mathcal{R}$, then $\pi^{-1}(p)$ is isolated at $q$ (here $\pi:\mathcal{X}\to (G/B)^s$), and contains a different point $(\bar{g}_1,\dots,\bar{g}_s,z')$ with $z'\in \bigcap_{i=1}^s \bar g_iX_{u_i}$. This contradicts Zariski's main theorem.
\end{proof}

We now prove part (b) of Theorem \ref{Tone}: $f\in H^0((G/B)^s,\mathcal{O}(mD))^G$ gives a $G$ invariant function, $f$, on  $(G/B)^s-D$, and hence one on $\mathcal{C}-\mathcal{R}$ (using Lemma \ref{malta}): namely, $f\circ \pi$. By Theorem \ref{equity}, $f\circ\pi$ is constant on $\mathcal{C}-\mathcal{R}$; thus $f$ is constant on $\pi(\mathcal{C}-\mathcal{R})$, a dense open subset of $(G/B)^s-D$. Therefore $f$ must be constant on all of $(G/B)^s-D$, which proves the claim.

Theorem \ref{Tone}, part (d) is proved in Section \ref{fin2}.

\section{Some parameter spaces}\label{sps}
\subsection{Principal $G$-spaces}
We first point out why a stack theoretic approach is convenient. Suppose $G=\op{SL}(n)$ and $P$ a maximal parabolic with $G/P=\op{Gr}(r,n)$, the Grassmannian of $r$-dimensional subspaces of $\Bbb{C}^n$. Then $\mathcal{C}$ parameterizes the set of $r$-dimensional subspaces $V$ of $\Bbb{C}^n$ and $s$ full flags of subspaces such that $V$ is in prescribed Schubert cells with respect to these flags. Now one can consider induced flags on such a $V$, and on $Q=\Bbb{C}^n/V$. But $V$ and $\Bbb{C}^n/V$ are not $\Bbb{C}^r$ and $\Bbb{C}^{n-r}$ canonically; therefore we do not get a map to a product of flag varieties. Stacks provide a convenient setting to still make this work to pull back objects defined invariantly. We may just study pairs $V$ and $Q$ with $s$ full flags on each. This is a stack and we do get a map from  $\mathcal{C}$ to this stack. But we will have to work with objects without trivializations, hence with principal $G$ spaces (principal bundles over a point).

\subsection{Principal spaces and relative positions}
A principal $G$ space is a variety $E$ with a right $G$ action that is principal homogenous for the action of $G$ (i.e., for any $x\in E$ the map $G\to E$ given by $g\mapsto xg$ is an isomorphism). If $\phi:G\to G'$ is a map of affine algebraic groups and $E$ a principal $G$-space then $E\times_G {G'}=E\times_{\phi} G'$ is a principal $G'$ space.

Suppose $\bar{g}\in E/B$ and $\bar{z}\in E/P$. We define the relative position $[\bar{g},\bar{z}]\in W^P$ as follows. It is the element $w\in W^P$ such that
\begin{equation}\label{indicates1}
z=gbwp^{-1}, \text{ for some } b\in B, p\in P.
\end{equation}
Here $g,z\in E$ are coset representatives of $\bar{g}\in E/B$ and $\bar{z}\in E/P$.
It is easy to see that $w$ is independent of choices. If we choose a trivialization $e\in E$, we get corresponding elements $\bar{g}\in G/B$ and
$\bar{z}\in G/P$. Equation \eqref{indicates1}  indicates that $\bar{z}\in G/P$
is in the Schubert cell $gBwP/P$.

\subsection{Good representatives}
Throughout this paper, we choose a set theoretic lifting $W\to N(T)$ of $W\leto{\sim} N(T)/T$.
Suppose $\bar{g}\in E/B$ and $\bar{z}\in E/P$ with $w=[\bar{g},\bar{z}]\in W^P$. Consider $p$ as in \eqref{indicates1}. Write $z=gbwp^{-1}$ as $zp=gbw$, and change $z$ to $zp$ and $g$ to $gb$. The equation simplifies to $z=gw$. Therefore we may choose a (``good'') representative $(g,z)$ of $(\bar{g},\bar{z})$ so that $z=gw$.
The choice of ``good representative" is unique up to the action of   $(w^{-1}Bw\cap P)$: If $(zp,gb)$ is  another choice of a good representative then $z=gw$ and $zp=gbw$ and hence $gwp=gbw$ and hence $p=w^{-1}bw\in (w^{-1}Bw\cap P)$.
\begin{defi}\label{three}
Let $E$ be a principal $G$-space. An element $\bar{z}\in E/P$ defines a principal $P$-space $E_P(\bar{z})$ (the coset in $E/P$), and hence a principal $L$ space $E_L(\bar{z})=E_P(\bar{z})\times_P L$, using the  quotient map $P\to L=P/U$.
\end{defi}

\begin{lemma}\label{else}
Under the map $P\to L$, the subgroup $w^{-1}Bw\cap P$ maps to $B_L\subseteq L$, in fact onto it.
\end{lemma}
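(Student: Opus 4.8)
The plan is to establish the two inclusions $\pi(w^{-1}Bw\cap P)\subseteq B_L$ and $\pi(w^{-1}Bw\cap P)\supseteq B_L$ separately, where $\pi\colon P\to L=P/U_P$ is the quotient map. The surjectivity (``onto'') half is immediate: since $w\in W^P$ we have $wB_Lw^{-1}\subseteq B$, hence $B_L\subseteq w^{-1}Bw$; as also $B_L\subseteq L\subseteq P$, we get $B_L\subseteq w^{-1}Bw\cap P$, and because the composition $L\hookrightarrow P\xrightarrow{\pi}L$ is the identity, $\pi(B_L)=B_L$, giving $B_L\subseteq\pi(w^{-1}Bw\cap P)$.

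For the inclusion $\pi(w^{-1}Bw\cap P)\subseteq B_L$ I would first reduce it to the group-level statement $w^{-1}Bw\cap P\subseteq B$. Granting this, $\pi(w^{-1}Bw\cap P)\subseteq\pi(B)=B_L$, the last equality because $B=B_L\,U_P$ (the Levi-type factorization of the Borel $B$ inside $P$, valid since $U_P\subseteq B$ is normal in $P$) and $U_P=\ker\pi$.

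To prove $w^{-1}Bw\cap P\subseteq B$ I would pass to Lie algebras, which is legitimate since we work over $\Bbb{C}$: using that taking Lie algebras commutes with finite intersections of closed subgroups in characteristic zero, together with the root decompositions $\frb=\frh\oplus\bigoplus_{\alpha\in R^+}\frg_\alpha$ and $\frp=\frh\oplus\bigoplus_{\alpha\in R^+\cup R^-_\frl}\frg_\alpha$, one finds
\[
\op{Lie}(w^{-1}Bw\cap P)=\frh\oplus\bigoplus_{\alpha\in\, w^{-1}(R^+)\cap(R^+\cup R^-_\frl)}\frg_\alpha .
\]
The combinatorial core is then the claim $w^{-1}(R^+)\cap R^-_\frl=\emptyset$: if $\gamma$ were in this set, then $-\gamma\in R^+_\frl$, so $w(-\gamma)\in R^+$ because $w\in W^P$ forces $w(R^+_\frl)\subseteq R^+$; thus $w\gamma\in R^-$, contradicting $\gamma\in w^{-1}(R^+)$. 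Hence the index set displayed above lies in $R^+$, so $\op{Lie}(w^{-1}Bw\cap P)\subseteq\frb$, and a connected subgroup whose Lie algebra lies in $\frb$ must be contained in $B$.

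The one step that needs genuine care is the passage from the Lie algebra inclusion back to the group inclusion: it requires knowing $w^{-1}Bw\cap P$ is connected, so that controlling its Lie algebra controls the whole group rather than merely its identity component. For this I would invoke the standard fact that the intersection of two parabolic subgroups of $G$ containing a common maximal torus is connected (a Borel is parabolic, and $w^{-1}Bw$ and $P$ both contain $T$ since $w\in N(T)$). Alternatively one can sidestep Lie algebras entirely by identifying $w^{-1}Bw\cap P$ with the subgroup generated by $T$ and the root subgroups $U_\alpha$ for $\alpha\in w^{-1}(R^+)\cap(R^+\cup R^-_\frl)$, and running the same root computation directly; either way the remaining content is routine root bookkeeping.
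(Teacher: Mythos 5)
Your proof is correct and follows essentially the same route as the paper: you reduce the surjectivity to the observation $wB_Lw^{-1}\subseteq B$ (valid for $w\in W^P$), and you reduce the other inclusion to the same combinatorial fact, namely $w^{-1}(R^+)\cap R^-_{\frl}=\emptyset$ (equivalently $w(R^-_{\frl})\subset R^-$), after a connectedness argument lets you pass to Lie algebras. The only substantive divergence is how connectedness of $w^{-1}Bw\cap P$ is established: you cite the general fact that the intersection of two parabolics containing a common maximal torus is connected, whereas the paper gives a more elementary argument, writing $B\cap wPw^{-1}=T\cdot(U\cap wPw^{-1})$ and applying Borel's result (Section 14.4, Proposition (2) of \cite{Borel}) about $T$-stable closed subgroups of the unipotent radical $U$ to conclude $U\cap wPw^{-1}$ is connected; both are valid, and your fallback (describing the group directly as $T$ times the relevant root subgroups) is also fine and sidesteps the issue entirely.
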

\begin{proof}
First we show that $w^{-1}Bw\cap P$ is connected: $B\cap wPw^{-1}= T\cdot (U\cap wPw^{-1})$ since both
$B$ and $wPw^{-1}$ contain $T$. Now $T$ acts on both $U$ and $wPw^{-1}$ by conjugation.
By \cite[Section 14.4, Proposition (2)]{Borel} applied to $U\cap wPw^{-1}\subset U$, we see that $U\cap wPw^{-1}$, and hence $B\cap wPw^{-1}$ is connected.

Clearly $wB_Lw^{-1}\subseteq B$. Therefore $B_L\subseteq w^{-1}Bw$ and $B_L\subseteq P$, therefore
$B_L\subseteq w^{-1}Bw\cap P$. Since $w^{-1}Bw\cap P$ is connected we may prove the mapping property at the
level of Lie algebras, which is easy because $w\alpha<0$ for any negative root $\alpha$ in $R_L$.
\end{proof}

\subsection{Universal Schubert intersection stacks}\label{parallel}

We introduce the following stacks:

\begin{enumerate}
\item The stack $\op{Fl}_G$ parameterizes principal $G$-spaces $E$ together with elements $\bar{g}_i\in E/B, i=1,\dots,s$ (i.e., to consider families of such objects over a scheme $X$, we consider principal $G$-bundles $E$ on $X$ locally trivial  in the fppf topology, and sections $\bar{g}_i$ of $E/B$ over $X$). It is easy to see that $\op{Fl}_G=(G/B)^s/G$ (the stack quotient here, and below). Let $\op{Fl}_L=(L/B_L)^s/L$, which parameterizes principal $L$-spaces $F$ together with elements $\bar{q}_i\in F/B_L, i=1,\dots,s$.

\item The moduli stack $\hat{\mathcal{C}}$ which parameterizes principal $G$-spaces $E$ together with elements $\bar{g}_i\in E/B$ and a single element $\bar{z}\in E/P$ so that $[\bar{g}_i,\bar{z}]=w_i$ for all $i$. It is easy to see that $\hat{\mathcal{C}}=\mathcal{C}/G$. There is a natural map $\pi:\hat{\mathcal{C}}\to \op{Fl}_G.$
\end{enumerate}

\begin{lemma}\label{four}
Let $E$ be a principal $G$-space, and $\bar{g}\in E/B$ and $\bar{z}\in E/P$ with $w=[\bar{g},\bar{z}]\in W^P$. Consider $p$ as in \eqref{indicates1}.
The element $zp\in E_P(\bar{z})/(w^{-1}Bw\cap P)$ is well defined. As a result, the corresponding element in $E_L(\bar{z})/B_L$ is well defined (see Lemma \ref{else}).
\end{lemma}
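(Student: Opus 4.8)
The plan is to unwind the definitions. We are given $\bar g \in E/B$ and $\bar z \in E/P$ with $w = [\bar g, \bar z] \in W^P$, and a $p \in P$ satisfying $z = gbwp^{-1}$ for some $b \in B$ (equation \eqref{indicates1}); we must show the class of $zp$ in $E_P(\bar z)/(w^{-1}Bw\cap P)$ is independent of the choices made. The choices entering the construction are: the coset representatives $g \in E$ of $\bar g$ and $z \in E$ of $\bar z$, and the auxiliary elements $b \in B$, $p \in P$ realizing \eqref{indicates1}. Note first that $zp$ lies in $E_P(\bar z)$ since $p \in P$, so it does make sense to ask about its class in $E_P(\bar z)/(w^{-1}Bw\cap P)$. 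By the ``good representative'' discussion preceding the lemma, after replacing $(g,z)$ by $(gb, zp)$ we may always arrange $z = gw$, and the element under consideration becomes precisely this good representative; so the content of the lemma is exactly the uniqueness statement already recorded there, namely that any two good representatives $(g,z)$ and $(gb', z p')$ of $(\bar g, \bar z)$ differ (in the $z$-slot) by an element of $w^{-1}Bw\cap P$ acting on the right.

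Concretely, the first step is to reduce to good representatives: given any $(g, z, b, p)$ realizing \eqref{indicates1}, the element $zp \in E$ satisfies $zp = gbw$, i.e. $(gb, zp)$ is a good representative of $(\bar g, \bar z)$. So it suffices to show that the good representative of $(\bar g, \bar z)$ in the $z$-coordinate is well defined up to right multiplication by $w^{-1}Bw \cap P$. The second step is to invoke the computation already made in the text: if $(g, z)$ and $(g\tilde b, z\tilde p)$ are two good representatives — so $z = gw$ and $z\tilde p = g\tilde b w$ — then $gw\tilde p = g\tilde b w$, hence $\tilde p = w^{-1}\tilde b w \in w^{-1}Bw \cap P$. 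This shows any two good representatives of $\bar z$ differ on the right by an element of $w^{-1}Bw\cap P$, which is exactly the claim that $zp$ is well defined in $E_P(\bar z)/(w^{-1}Bw\cap P)$.

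For the second assertion, we push forward along $P \to L = P/U$. By Lemma \ref{else}, $w^{-1}Bw\cap P$ surjects onto $B_L$, so the image of the well-defined class in $E_P(\bar z)/(w^{-1}Bw\cap P)$ under $E_P(\bar z) \to E_P(\bar z)\times_P L = E_L(\bar z)$ descends to a well-defined class in $E_L(\bar z)/B_L$: two representatives differing by $q \in w^{-1}Bw\cap P$ map to representatives in $E_L(\bar z)$ differing by the image of $q$ in $L$, which lies in $B_L$. Hence the element of $E_L(\bar z)/B_L$ is well defined, as asserted.

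I do not anticipate a genuine obstacle here — the statement is essentially a bookkeeping consequence of the ``good representative'' paragraph and Lemma \ref{else}. The only point requiring a little care is to keep straight which side the group acts on: one must check that the ambiguity $zp \rightsquigarrow zpq$ (with $q \in w^{-1}Bw\cap P$) is a \emph{right} multiplication compatible with the right $P$-space structure on $E_P(\bar z)$, so that quotienting by $w^{-1}Bw\cap P$ on the right is the correct operation and the passage to $E_L(\bar z) = E_P(\bar z)\times_P L$ is legitimate.
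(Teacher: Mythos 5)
Your proof is correct and takes essentially the same approach as the paper: the paper states Lemma~\ref{four} without a separate proof, treating it as an immediate consequence of the preceding ``good representatives'' paragraph (which you formalize verbatim) together with Lemma~\ref{else} for the descent to $E_L(\bar z)/B_L$.
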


\begin{lemma}\label{zusammen}
The stack $\hat{\mc}$ parameterizes principal $P$ spaces $E'$ together with $\bar{z}_i\in E'/(w_i^{-1}Bw_i\cap P), i=1,\dots,s.$
\end{lemma}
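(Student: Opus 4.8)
The plan is to exhibit an equivalence of stacks by first unwinding the definition of $\hat{\mc}$ and then assembling the claimed data out of the ``good representative'' machinery of the previous subsection. Recall $\hat{\mc}$ parameterizes a principal $G$-space $E$ together with $\bar g_i\in E/B$ and a single $\bar z\in E/P$ with $[\bar g_i,\bar z]=w_i$ for all $i$. First I would produce the functor in one direction: given such data, set $E'=E_P(\bar z)$, the principal $P$-space given by the coset $\bar z\in E/P$ (Definition \ref{three}). By Lemma \ref{four}, for each $i$ the element $z p_i$ of \eqref{indicates1} is well-defined in $E_P(\bar z)/(w_i^{-1}Bw_i\cap P)=E'/(w_i^{-1}Bw_i\cap P)$, since the ambiguity in $z$ (as a representative of $\bar z$) and the residual ambiguity in $p_i$ exactly cancel. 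This gives the required $\bar z_i\in E'/(w_i^{-1}Bw_i\cap P)$, $i=1,\dots,s$. One checks this is compatible with base change in the fppf topology, so it defines a morphism of stacks $\hat{\mc}\to \mathcal{S}$, where $\mathcal{S}$ denotes the stack of the right-hand side.

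Next I would build the inverse functor. Given a principal $P$-space $E'$ with $\bar z_i\in E'/(w_i^{-1}Bw_i\cap P)$, form $E=E'\times_P G$, a principal $G$-space. The structure map $E'\hookrightarrow E$ picks out a distinguished $\bar z\in E/P$ (the image of $E'$), and one has canonically $E_P(\bar z)\cong E'$. For each $i$, lift $\bar z_i$ to an honest point; since $w_i^{-1}Bw_i\cap P\subseteq B$ (as $w_iB_Lw_i^{-1}\subseteq B$ forces $B_L\subseteq w_i^{-1}Bw_i$, and more to the point $w_i^{-1}Bw_i\cap P$ is a subgroup of $w_i^{-1}Bw_i$, hence conjugate into $B$), the point $\bar z_i$ determines a point $\bar g_i\in E/B$ via $g_i=z w_i^{-1}$ for a good representative $z$ of $\bar z_i$. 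By construction $z=g_iw_i$, so $[\bar g_i,\bar z]=w_i$. The two constructions are visibly mutually inverse: starting from $\hat{\mc}$-data, passing to $E'$ and then back reconstructs $E$ up to the canonical isomorphism $(E_P(\bar z))\times_P G\cong E$, and the $\bar g_i$ are recovered because a good representative $(g_i,z)$ with $z=g_iw_i$ is unique up to $(w_i^{-1}Bw_i\cap P)$, which is exactly the ambiguity in $\bar z_i$.

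Finally I would verify that these functors are inverse as morphisms of stacks, i.e.\ that the natural transformations between the round-trip compositions and the identities are isomorphisms compatible with morphisms in the groupoids; this is a routine diagram chase using that all the identifications ($E\mapsto E_P(\bar z)$, $E'\mapsto E'\times_P G$, the good-representative normal form) are functorial in $E$ (resp.\ $E'$) and equivariant for the respective automorphism groups.

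The main obstacle I anticipate is purely bookkeeping: making precise that the element ``$zp$'' of Lemma \ref{four} glues in families, i.e.\ that the fppf-local choices of trivialization of $E$ and of the representatives $g,z,p$ patch to a genuine section of the $(w_i^{-1}Bw_i\cap P)$-bundle $E_P(\bar z)/(w_i^{-1}Bw_i\cap P)$ over the base; once one is comfortable that $w_i^{-1}Bw_i\cap P$ is a (connected, by Lemma \ref{else}) closed subgroup and that the cancellation of ambiguities is as in the remark preceding Definition \ref{three}, the rest is formal. No deep input beyond the already-established Lemmas \ref{else} and \ref{four} is needed.
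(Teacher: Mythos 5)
Your proposal is correct and matches the paper's own (very terse) argument: both directions use precisely $E=E'\times_P G$ with $g_i=z_i\times w_i^{-1}$, and Lemma~\ref{four} for the reverse map $E'=zP$, $\bar z_i=\overline{zp_i}$. One small wording slip: $w_i^{-1}Bw_i\cap P$ is not literally contained in $B$ (only its $w_i$-conjugate is), but you immediately give the correct formulation, so the argument as a whole is sound.
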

\begin{proof}
This follows from Lemma \ref{four}: From $(E',\bar{z}_1,\dots,\bar{z}_s)$, we let $E=E'\times_{P} G$ and
$g_i=z_i\times w_i^{-1}$. The point $\bar{z}\in E/P$ is the tautological point. Lemma \ref{four} gives the reverse correspondence, where $E':=zP$ and $\bar{z_i}:=\overline{zp_i}$.
\end{proof}

\subsection{A diagram of spaces}\label{gram}
Lemmas \ref{zusammen} and \ref{else} and the map $P\to L$ result in a map   $\tau:\hat{\mc}\to \Fl_L$. The inclusion
$L\to P$ gives rise to $i:\op{Fl}_L\to \hat{\mathcal{C}}$, so that $\tau\circ i$ is the identity on $\Fl_L$. The map $\tilde i$ is $\pi\circ i$.
\begin{equation}\label{diagrammo}
\xymatrix{
\mathcal{C}\ar[r]\ar[d]^{\pi} & \hat{\mathcal{C}}\ar[d]^{\pi}\ar[dr]_{\tau}  \\
(G/B)^s\ar[r] & \op{Fl}_G & \op{Fl}_L\ar@/_/[ul]_i\ar[l]^{\tilde{i}}}
\end{equation}

Note that $\tilde{i}$ sends a tuple $(F,\bar{l}_1,\dots,\bar{l}_s)$ to $(E,\bar{g}_1,\dots,\bar{g}_s)$ where $E=F\times_L G$, and $g_i=l_i \times w_i^{-1}$

\subsection{Levification}\label{secLev}
\begin{defi}
Let $Z^0(L)\subseteq Z(L)$ be the connected component of the identity of the center $Z(L)$ of $L$.
\end{defi}

The space $\hat{\mc}$ retracts to $\Fl_L$ by a process called Levification (cf. \cite[Section 3.8]{BKq}, also \cite[Prop 3.5]{Ram}). Consider an element $x_L=\sum'_{k} N_kx_k$ where the sum is over $k$ such that $\alpha_k\not\in \Delta(P)$, with $N_k$ such that $N_kx_k$ is in the co-root lattice. Then $t^{x_L}=\exp((\ln t) x_L)$ topologically generates $Z^0(L)$ as $t$ varies  in $\Bbb{C}^*$.

For $t\in\Bbb{C}^*$ we have an automorphism $\phi_t:P\to P$ given by (cf. \cite[Section 3.8]{BKq} and \cite[Lemma 3.1.12]{Ram}) $\phi_t(p)=t^{x_L}pt^{-x_L}$, with  $\phi_1$ the identity on $P$. This extends to a group homomorphism $\phi_0:P\to L$ (which coincides with the standard projection of $P$ to $L$) giving rise to a morphism $\hat{\phi}:P\times \Bbb{A}^1\to P$. Clearly, $\phi_t:L\to L$ is the identity on $L$ for all $t$.

\begin{defi}\label{levity}
Let $(E',\bar{z}_1,\dots,\bar{z}_s)$ be a point of $\hat{\mc}$ where $E'$ is a principal $P$-space and $\bar{z}_i\in
E'/(w_i^{-1}Bw_i)\cap P$. Define the Levification family $E_t'=E'\times_{\phi_t} P$ for $t\in \Bbb{A}^1$, and $\bar{z}_i(t)=\bar{z}_i\times_ {\phi_t} e$. Clearly, at $t=0$, $(E_t',\bar{z}_1(t),\dots,\bar{z}_s(t))$ is in the image of $i:\Fl_L\to \hat{\mc}$, and equals $i\circ\tau((E',\bar{z}_1,\dots,\bar{z}_s))$.
\end{defi}
Consider a point $(\bar{g}_1,\dots,\bar{g}_s,\bar{z})\in \mathcal{C}$, which gives a point of $\hat{\mc}$. Write equations
$g_ib_iwp_i^{-1}=z$, or $g_ib_iw= zp_i$. We get the principal $P$ space $E'=zP$ (independent of the lift of $\bar{z}$) and well defined points $z_i=zp_i\in E'/(w_i^{-1}Bw_i\cap P)$. All the spaces $E_t'=E'\times_{\phi_t} P$ are trivialized by $z$. Under this trivialization $(E_t',\bar{z}_1(t),\dots,\bar{z}_s(t))$ is the point
$$(P,\phi_t(\bar{z}_1),\phi_t(\bar{z}_2),\dots, \phi_t(\bar{z}_s))= (P, t^{x_L}p_1t^{-x_L}, \dots, t^{x_L}p_st^{-x_L})$$
The corresponding $E$ spaces are also trivial, and hence we obtain a lifting of this part in $\mc$: the
points  $(t^{x_L}p_1 w_1^{-1}, \dots, t^{x_L}p_s w_{s}^{-1})$:  at $t=0$ we get
$(p_1 w_1^{-1}, \dots, p_s w_{s}^{-1})=z^{-1}(g_1b_1,\dots,g_sb_s)$

\subsection{Comparison of line bundles and sections}

\begin{defi}\label{invariant}
Let $\mathcal{M}$ be a line bundle on $\Fl_L$.  $Z(L)$ acts on fibers  of $\mathcal{M}$ and gives rise to a (multiplicative) character $\gamma_{\mathcal{M}}:Z^0(L)\to \Bbb{C}^*$ (note that the group of characters of $Z^0(L)$ is discrete).  More generally, this map can be defined if $\mathcal{M}$ is defined over an open substack  of $\Fl_L$ since $\gamma_{\mathcal{M}}$ is constant over connected families.
\end{defi}
\begin{proposition}\label{comparison}
Let $U$ be a non-empty open substack of $\Fl_L$,  $\ml$ be a line bundle on $\tau^{-1}(U)$ and $\mathcal{M}=i^*\mathcal{L}$, a line bundle on $U$, where $i:\Fl_L\to \hat{\mc}$ and $\tau:\hat{\mc}\to \Fl_L$.
Then
\begin{enumerate}
\item $\ml=\tau^*\mathcal{M}$. Therefore $\tau^*$ and $i^*$ set up isomorphisms $\Pic(U)\leto{\sim}\Pic(\tau^{-1}(U))$.
\item If $\gamma_{\mathcal{M}}$ is trivial then
$H^0(\tau^{-1}(U),\ml)\to H^0(U,\mathcal{M})$ is an isomorphism.
\end{enumerate}
\end{proposition}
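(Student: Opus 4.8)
The plan is to exploit the retraction structure furnished by Levification. First I would establish part (1), the Picard group isomorphism. Since $\tau\circ i = \op{id}$ on $\Fl_L$, we automatically have $i^*\circ\tau^* = \op{id}$ on $\Pic(\Fl_L)$, so $\tau^*$ is split injective and $i^*$ is split surjective; it remains to show $i^*$ is injective, or equivalently that $\tau^*\circ i^* = \op{id}$ on $\Pic(\tau^{-1}(U))$. The key input is the Levification family of Definition \ref{levity}: over $\tau^{-1}(U)\times\Bbb{A}^1$ there is a family of maps $\hat{\phi}$ interpolating between the identity (at $t=1$) and $i\circ\tau$ (at $t=0$), realized concretely on trivializations by $p_i\mapsto t^{x_L}p_it^{-x_L}$. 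Pulling back $\ml$ along this family gives a line bundle on $\tau^{-1}(U)\times\Bbb{A}^1$ whose restrictions to $t=1$ and $t=0$ are $\ml$ and $\tau^*i^*\ml$ respectively; since $\Pic$ is a deformation invariant (the restriction map $\Pic(\tau^{-1}(U)\times\Bbb{A}^1)\to\Pic(\tau^{-1}(U)\times\{t\})$ is independent of $t$, using $\Pic(Y\times\Bbb{A}^1)=\Pic(Y)$ for $Y$ a reasonable variety or stack), these two line bundles agree. This proves $\ml = \tau^*\mathcal{M}$ and hence (1).

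For part (2), I would use the same Levification family to compare sections. Given $\ml = \tau^*\mathcal{M}$, a section $\sigma\in H^0(\tau^{-1}(U),\ml)$ restricts along $i$ to $i^*\sigma\in H^0(U,\mathcal{M})$, and conversely $\tau^*$ produces a section of $\ml$ from one of $\mathcal{M}$; these are inverse on the level of the underlying vector spaces provided $\tau^*i^*\sigma = \sigma$ for every $\sigma$. To see this, pull $\sigma$ back along $\hat\phi$ to get a family of sections $\sigma_t$ of $\ml$ over $\tau^{-1}(U)$, with $\sigma_1 = \sigma$ and $\sigma_0 = \tau^*i^*\sigma$. The obstruction to $\sigma_t$ being independent of $t$ is exactly the $Z^0(L)$-weight: the automorphism $\phi_t$ is conjugation by $t^{x_L}\in Z^0(L)$, so the discrepancy between $\sigma_t$ and $\sigma$ is governed by how $Z^0(L)$ acts on $\ml$ — which by (1) and Definition \ref{invariant} is the character $\gamma_{\mathcal{M}}$. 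When $\gamma_{\mathcal{M}}$ is trivial, $t^{x_L}$ acts trivially on the relevant fibers, so $\sigma_t$ extends to a section over $\tau^{-1}(U)\times\Bbb{A}^1$ constant in $t$ (again using that global functions, hence sections of a pulled-back bundle, on $Y\times\Bbb{A}^1$ are pulled back from $Y$ — a Künneth/base-change statement), forcing $\sigma = \tau^*i^*\sigma$. Hence $\tau^*$ and $i^*$ are mutually inverse isomorphisms on $H^0$.

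I expect the main obstacle to be the stacky bookkeeping rather than any deep geometry: one must check that $\Pic(Y\times\Bbb{A}^1)\cong\Pic(Y)$ and the analogous statement for sections of a line bundle hold for the Artin stacks $\tau^{-1}(U)$ in play (these are global quotient stacks, so the statements reduce to equivariant analogues, but the $\Bbb{A}^1$-homotopy invariance of $\Pic$ needs the stack to be, say, smooth or at least reasonably behaved, which holds here as $\hat{\mc}=\mc/G$ with $\mc$ smooth along the open locus of interest). The second subtlety is making precise the claim that the $t$-dependence of $\sigma_t$ is controlled \emph{exactly} by $\gamma_{\mathcal{M}}$ and not merely bounded by it — this is where the identification $\ml=\tau^*\mathcal{M}$ from part (1) is essential, since it lets us transport the $Z^0(L)$-action on $\mathcal{M}$ (a bundle on $\Fl_L$, where $\phi_t$ acts trivially, so the action is ``constant'' in the naive sense) to $\ml$ and read off that the family $\sigma_t$ is literally obtained by scaling by $\gamma_{\mathcal{M}}(t^{x_L})$, which is $1$ under the hypothesis. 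Everything else — the interpolation property of $\hat\phi$, the compatibility of trivializations — is already recorded in Section \ref{secLev}.
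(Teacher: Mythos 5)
Your proposal is correct, and the overall geometry (Levification) is the same engine the paper uses, but your routing through the two parts is genuinely different. The paper proves part (2) first — propagating a section along a Levification family $E_t$, observing that triviality of $\gamma_{\mathcal M}$ rules out a pole or zero at $t=0$ — and then \emph{derives} part (1) by applying (2) to $\mathcal L' = \ml\otimes\tau^*\mathcal M^{-1}$ (whose $i^*$-pullback is trivial, so $\gamma_{\mathcal M'}=1$ automatically), producing a nowhere-vanishing section witnessing $\mathcal L'\cong\mathcal O$. You instead establish (1) up front by $\Bbb A^1$-homotopy invariance of $\Pic$ applied to the Levification morphism $\hat\psi:\tau^{-1}(U)\times\Bbb A^1\to\tau^{-1}(U)$, and only then attack (2). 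This ordering is cleaner in one respect: it eliminates the $\ml\otimes\tau^*\mathcal M^{-1}$ bookkeeping and gives a formal, deformation-theoretic reason for the Picard isomorphism, at the cost of needing the $\Bbb A^1$-invariance of $\Pic$ for the (smooth, quotient) stack $\tau^{-1}(U)$, which you correctly flag.

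There is one place your write-up is more gestural than precise, and it is exactly the crux of (2). You say the ``discrepancy between $\sigma_t$ and $\sigma$ is governed by $\gamma_{\mathcal M}$,'' but to make that quantitative one needs to compare two \emph{different} isomorphisms $\hat\psi_t^*\ml\cong\ml$ over $t\neq 0$. The first comes from the $2$-isomorphism $\hat\psi_t\cong\op{id}$ (each Levified object $E'_t$ is abstractly isomorphic to $E'$ via right translation by $t^{-x_L}$); under this one, $\hat\psi_t^*\sigma\mapsto\sigma$. The second is the one you actually use to build the global family: once $\ml=\tau^*\mathcal M$, the identity $\tau\circ\hat\psi = \tau\circ\op{pr}_1$ (true because $\phi_t|_L = \op{id}$ for all $t$) gives a canonical isomorphism $\hat\psi^*\ml\cong\op{pr}_1^*\ml$ over all of $\Bbb A^1$. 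These two identifications over $t\neq 0$ disagree by precisely the automorphism of $E_L$ given by the $t^{x_L}$-action, hence by the scalar $\gamma_{\mathcal M}(t^{x_L})^{\pm 1}$ on fibers. That is the exact mechanism by which $\gamma_{\mathcal M}$ enters; when it is trivial the scalar is $1$ for all $t$, so $\hat\psi^*\sigma$ is the constant family $\op{pr}_1^*\sigma$, and restriction at $t=0$ gives $\tau^*i^*\sigma=\sigma$. Your proof sketch has the right shape but glosses over this bifurcation of identifications, which is where a careless reader could mistakenly conclude (as I briefly did on first pass) that the $\gamma_{\mathcal M}$ hypothesis is superfluous. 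With that one point made explicit, your argument is complete and self-contained.
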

\begin{proof}
The second part is essentially  \cite[Theorem 15 and Remark 31(a)]{BK}, and the main point is that if $E_t$ is a Levification family then a section of $\ml$ (under the assumption of (2)) at $E_1$ can be propagated in a unique way to all $E_t$, $t\neq 0$ (since $E_1$ is isomorphic to $E_t$ for $t\neq 0$), and there are no poles or zeroes of this extended section at $t=0$.  For the surjectivity we can extend any section of $\ml$ at $E_0$ to all of $E_t$ since the corresponding $\Bbb{C}^*$-equivariant line bundle on $\Bbb{A}^1$ is trivial.

For the first part consider $\ml'=\ml\tensor \tau^*{\mathcal{M}}^{-1}$. Note that $\mathcal{M}'= i^* \mathcal{L}'$ is trivial, and
$\gamma_{\mathcal{M}'}$ is trivial. We can apply (2) to $(\ml',\mathcal{M}')$. The nowhere vanishing global section of $H^0(\Fl_L,\mathcal{M}')$ gives a global section of $H^0(\hat{\mc},\ml)$. It can be seen that this is nowhere vanishing as  well (see \cite[Lemma 3.17]{BKq}: Consider the corresponding Levification family (Definition \ref{levity}), if a global section vanishes at $E_t$ then it will also vanish at $E_0$.
\end{proof}
\subsection{Notation for Picard groups of stacks}
Let $\mathcal{X}$ be a stack (for us $\mathcal{X}$ will be a quotient stack)
\begin{itemize}
\item $\Pic(\mathcal{X})$ will denote the Picard group of line bundles on $\mathcal{X}$.
\item $\Pic_{\Bbb{Q}}(\mathcal{X})=\Pic(\mathcal{X})\tensor\Bbb{Q}$.
\item $\Pic^+(\mathcal{X})\subseteq \Pic(\mathcal{X})$ is the monoid of line bundles with non-zero global sections.
\item $\Pic^+_{\Bbb{Q}}(\mathcal{X})\subseteq \Pic_{\Bbb{Q}}(\mathcal{X})$ is the subset of elements such that some multiple has a non-zero global section.
\end{itemize}
\subsection{Ramification divisors}\label{ramify}
\begin{defi} (cf. \cite[Section 4]{BKR})
Consider a linear map $p : V \to W$ between vector spaces of the same dimension $m$. Let
$$\det(p) := (\bigwedge^{m}V)^* \tensor (\bigwedge^{m} W) = Hom(\bigwedge^{m} V,\bigwedge^{m}W).$$
Denote by $\theta(p)$ (``the theta section") the canonical element of $\det(p)$ induced by the top exterior power of p.
\end{defi}
At a point $a=(\bar{g}_1,\bar{g}_2,\dots,\bar{g}_s,\bar{z})$ of $\mathcal{C}$, we may consider two maps
$$T\mathcal{C}_a\to T(G/B)^s_{(\bar{g}_1,\bar{g}_2,\dots,\bar{g}_s)}$$
and
$$T(G/P)_{\bar{z}}\to \bigoplus \frac{T(G/P)_{\bar{z}}}{T(g_iC_{w_i})_{\bar{z}}}$$
The theta sections and determinant lines of the two maps above are isomorphic \cite[Lemmas 4.1 and 4.2]{BKR}, and describe the ramification divisor and the associated line bundle of the birational $\pi:\mathcal{C}\to (G/B)^s$.
They give rise to the line bundle $\mathcal{O}(\mathcal{R})$ and the divisor $\mathcal{R}$ on $\mathcal{C}$.

The line bundle $\mathcal{O}(\mathcal{R})$ is the pull back of a natural line bundle on $\hat{\mc}$: Consider a
$P$ bundle $E'$ together with $\bar{z}_i\in E'/(w_i^{-1}Bw_i\cap P)$. This gives a point of $\hat{\mc}$ as in Lemma \ref{zusammen}. Consider the map of vector spaces
\begin{equation}\label{eseq}
E'\times_P T(G/P)_{\dot e}\to \bigoplus \frac{E'\times_P T(G/P)_{\dot{e}}}{z_i\times T(w_i^{-1}C_{w_i})_{\dot{e}}}
\end{equation}
The determinant line and theta section for this map give a line bundle and a section on  $\hat{\mc}$, denoted
by $\mathcal{O}(\hat{\mathcal{R}})$ and  $\hat{\mathcal{R}}$. Recall that if $E$ is a principal $P$ space and $P\to\GL(V)$ a homomorphism ($V$ a vector space), then $E\times_P V$ is associated vector space (a quotient of $E\times P$ by an appropriate action of $P$).

We claim that $\mathcal{O}(\hat{\mathcal{R}})$ and  $\hat{\mathcal{R}}$ pull back to $\mathcal{O}(\mathcal{R})$ and the divisor $\mathcal{R}$ on $\mathcal{C}$. To see this consider a $(\bar{g}_1,\dots,\bar{g}_s,\bar{z})\in \mathcal{C}$. Let $E'=zP$ and find good pairs $(g_i,z_i)$  lifting $(\bar{g_i},\bar{z})$. There is a natural map
$xP\times_P  T(G/P)_{\dot{e}}\to T(G/P)_{\bar{z}}$ and similarly maps
$$ \frac{E'\times_P T(G/P)_{\dot{e}}}{z_i\times T(w_i^{-1}C_{w_i})_{\dot{e}}}\to \frac{T(G/P)_{\bar{z}}}{T(g_iC_{w_i})_{\bar{z}}}.$$
The claim now follows from the functoriality of the determinant line and its theta section  \cite[Lemmas 4.1 and 4.2]{BKR}.

Pulling back $\mathcal{O}(\hat{\mathcal{R}})$ and  $\hat{\mathcal{R}}$ via $i$ we get a divisor $\mathcal{R}_L$  and line bundle $\mathcal{O}(\mathcal{R}_L)$ on $\Fl_L$.

\begin{remark}\label{deform}
 Under the assumption (\ref{kishi}), $for \mathcal{M}=\mathcal{O}(\mathcal{R}_L)$, $\gamma_{\mathcal{M}}$ is trivial (see Definition \ref{invariant}):  This follows from the definition of the deformed product given in \cite{BK}. Start with $(\bar{l}_1,\dots,\bar{l}_s)\in (L/B_L)^s$; one gets a point of $\Fl_L$. Then the fiber of $\mathcal{O}(\mathcal{R}_L)$ is identified (using \eqref{eseq}, $E'$ trivial) with the determinant line of
\begin{equation}
T(G/P)_{\dot e}\to \bigoplus \frac{T(G/P)_{\dot{e}}}{T(l_i w_i^{-1}C_{w_i})_{\dot{e}}}
\end{equation}
The center of $L$ acts on the trivial $E'$ by automorphisms, and therefore we have an action of $Z^0(L)$ on the determinant line of this morphism. The triviality of this action is therefore implied by the assumption \eqref{kishi}.
\end{remark}

Since we have assumed non-zeroness in the deformed product, by Proposition  \ref{comparison},
\begin{corollary}\label{carol}
\begin{enumerate}
\item $\mathcal{O}(\hat{\mathcal{R}})=\tau^* \mathcal{O}(\mathcal{R}_L)$
 \item $\hat{\mathcal{R}}=\tau^{-1} \mathcal{R}_L$ so that  $i:\Fl_L-\mathcal{R}_L\to \hat{\mathcal{C}}-\hat{\mathcal{R}}$.
\end{enumerate}
\end{corollary}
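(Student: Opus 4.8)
The plan is to deduce both parts formally from Proposition~\ref{comparison}, applied with $U=\Fl_L$ (so that $\tau^{-1}(U)=\hat{\mc}$), with $\ml=\mathcal{O}(\hat{\mathcal{R}})$ and $\mathcal{M}=i^*\ml$; by construction the latter is precisely $\mathcal{O}(\mathcal{R}_L)$, and likewise $i^*\hat{\mathcal{R}}=\mathcal{R}_L$ is the definition of $\mathcal{R}_L$. For part (1): $\mathcal{O}(\hat{\mathcal{R}})$ is a genuine line bundle on all of $\hat{\mc}=\tau^{-1}(\Fl_L)$, arising from the explicit determinant-of-theta construction of Section~\ref{ramify}, so Proposition~\ref{comparison}(1) applies verbatim and yields $\mathcal{O}(\hat{\mathcal{R}})=\tau^*\mathcal{O}(\mathcal{R}_L)$.

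For part (2), the extra ingredient needed is the triviality of the central character $\gamma_{\mathcal{M}}$ for $\mathcal{M}=\mathcal{O}(\mathcal{R}_L)$, which is exactly Remark~\ref{deform} (it rests on the non-vanishing in the deformed product, assumption~\eqref{kishi}). Granting this, Proposition~\ref{comparison}(2) says that $i^*\colon H^0(\hat{\mc},\mathcal{O}(\hat{\mathcal{R}}))\to H^0(\Fl_L,\mathcal{O}(\mathcal{R}_L))$ is an isomorphism, with inverse $\tau^*$ (using part (1)). The theta section $\hat{\mathcal{R}}$ lies in the source, and $\tau^*\mathcal{R}_L$ also lies in the source since $\mathcal{O}(\hat{\mathcal{R}})=\tau^*\mathcal{O}(\mathcal{R}_L)$; the two have the same image $\mathcal{R}_L$ under $i^*$ (for the first by definition, for the second because $i^*\tau^*=(\tau\circ i)^*=\mathrm{id}$). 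Injectivity of $i^*$ then forces $\hat{\mathcal{R}}=\tau^*\mathcal{R}_L$, i.e.\ $\hat{\mathcal{R}}=\tau^{-1}\mathcal{R}_L$ as divisors. The final clause is immediate: if $x\in\Fl_L-\mathcal{R}_L$ then $\tau(i(x))=x\notin\mathcal{R}_L$, hence $i(x)\notin\tau^{-1}\mathcal{R}_L=\hat{\mathcal{R}}$, so $i$ restricts to $\Fl_L-\mathcal{R}_L\to\hat{\mc}-\hat{\mathcal{R}}$.

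The only genuine obstacle is verifying that the hypotheses of Proposition~\ref{comparison} truly apply here, principally the triviality of $\gamma_{\mathcal{O}(\mathcal{R}_L)}$; this is the substantive point and it has already been isolated in Remark~\ref{deform}, so once it is granted the corollary is purely formal and requires no further computation. One should also note in passing that taking $U$ to be all of $\Fl_L$ is legitimate precisely because the line bundle $\mathcal{O}(\hat{\mathcal{R}})$ and its section $\hat{\mathcal{R}}$ are defined over the whole stack $\hat{\mc}$.
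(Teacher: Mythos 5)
Your proof is correct and follows essentially the same route as the paper: the paper's own justification is a terse citation of Proposition~\ref{comparison} (with $\gamma_{\mathcal{O}(\mathcal{R}_L)}$ trivial by Remark~\ref{deform}), and you have simply spelled out the application with $U=\Fl_L$, using $i^*\tau^*=\mathrm{id}$ and the injectivity from Proposition~\ref{comparison}(2) to identify the theta section of $\hat{\mathcal{R}}$ with $\tau^*$ of the theta section of $\mathcal{R}_L$.
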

Finally we recall the generalization of Fulton's conjecture proved in \cite{BKR} in two equivalent forms:
\begin{proposition}\label{fultonc}
\begin{enumerate}
\item $\dim H^0(\Fl_L,\mathcal{O}(m\mathcal{R}_L))=1$ for all $m\geq 0$.
\item $H^0(\Fl_L-\mathcal{R}_L,\mathcal{O})=\Bbb{C}$.
\end{enumerate}
\end{proposition}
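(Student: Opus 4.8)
These are two forms of the generalized Fulton conjecture of \cite{BKR}; here is the plan. First I would dispose of the equivalence of (1) and (2). The bundle $\mathcal{O}(\mathcal{R}_L)$ carries the canonical theta section $\theta$, the pullback along $i\colon\Fl_L\to\hat{\mc}$ of the theta section of the determinant map \eqref{eseq}, whose zero divisor is the reduced effective divisor $\mathcal{R}_L$. Since $\Fl_L$ is smooth (it is the quotient stack $(L/B_L)^s/L$), multiplication by $\theta$ exhibits $\bigoplus_{m\ge 0}H^0(\Fl_L,\mathcal{O}(m\mathcal{R}_L))$ as a graded subring of the function field of $\Fl_L$ whose localisation at $\theta$ is exactly $H^0(\Fl_L-\mathcal{R}_L,\mathcal{O})$. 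Every graded piece contains the nonzero section $\theta^m$, hence has dimension $\ge 1$; it has dimension exactly $1$ for all $m$ if and only if that localisation is $\Bbb{C}$, which is the equivalence (1)$\Leftrightarrow$(2).

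Next I would reduce to a tensor multiplicity for $L^{\op{ss}}$. Using $L/B_L=L^{\op{ss}}/(L^{\op{ss}}\cap B_L)$, the pullback of $\mathcal{O}(\mathcal{R}_L)$ along $(L/B_L)^s\to\Fl_L$ is $\ml_{\mu_1}\boxtimes\dots\boxtimes\ml_{\mu_s}$, where $\mu_i$ is the $L$-dominant weight by which $B_L$ acts on $\wedge^{\op{top}}$ of the $i$th cokernel term $T(G/P)_{\dot e}/(z_i\times T(w_i^{-1}C_{w_i})_{\dot e})$ of \eqref{eseq}, a sum of roots of $\frg$ outside $\frl$. By Remark \ref{deform} the character $\gamma_{\mathcal{O}(\mathcal{R}_L)}$ of $Z^0(L)$ is trivial, so the $\mu_i$ descend to $L^{\op{ss}}$ and
\[\dim H^0(\Fl_L,\mathcal{O}(m\mathcal{R}_L))=\dim\bigl(V^{L^{\op{ss}}}_{m\mu_1}\tensor\dots\tensor V^{L^{\op{ss}}}_{m\mu_s}\bigr)^{L^{\op{ss}}}.\]
It therefore suffices to prove this multiplicity equals $1$ for every $m\ge 0$; it is $\ge 1$ since $\theta^m$ is a section, i.e.\ $\vec\mu=(\mu_1,\dots,\mu_s)\in\op{Tens}_{s,L^{\op{ss}},\Bbb{Q}}$.

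The remaining step — which I expect to be the main obstacle — is precisely where the determinantal nature of $\mathcal{R}_L$ must be used: $\Fl_L-\mathcal{R}_L$ is the locus where the linear map \eqref{eseq} is an isomorphism, and $\theta$ is its determinant. I would establish multiplicity one, for all $m$, by the degeneration argument of \cite{BKR}, analogous to the geometric proof of Fulton's conjecture: degenerate the Schubert problem on the $L^{\op{ss}}$-flag varieties underlying $\mathcal{R}_L$ (equivalently, pass to the deformed product $\odot_0$) to a transverse configuration whose relevant intersection is a single reduced point, and show that this reducedness persists under the scaling $\mu_i\mapsto m\mu_i$, forcing $\bigoplus_m H^0(\Fl_L,\mathcal{O}(m\mathcal{R}_L))$ to be generated by $\theta$. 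Quantifying the transversality under scaling — equivalently, controlling the section ring of a divisor cut out by a ``determinant of a linear map'' — is the crux, and is exactly the ramification analysis carried out in \cite{BKR}.
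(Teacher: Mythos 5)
The paper does not prove this proposition; it is stated as a direct citation to \cite{BKR} (``we recall the generalization of Fulton's conjecture proved in \cite{BKR} in two equivalent forms''). No argument is given, and none is needed for the paper's purposes. Your sketch is therefore trying to prove something the paper simply quotes, and indeed you end by deferring the ``crux'' to the ramification analysis of \cite{BKR}: so in substance your ``proof'' is a longer citation, not an independent proof.

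That said, the content you do supply is largely correct. The equivalence (1)$\Leftrightarrow$(2) works exactly as you say: $H^0(\Fl_L-\mathcal{R}_L,\mathcal{O})$ is the direct limit (under the natural inclusions, which after trivialising by $\theta$ become multiplication by $\theta$) of the spaces $H^0(\Fl_L,\mathcal{O}(m\mathcal{R}_L))$, each of which contains $\theta^m\ne 0$, so either all are one-dimensional and the limit is $\Bbb{C}$, or the limit is bigger. Your reformulation of (1) as a statement that a certain $L^{\op{ss}}$-tensor multiplicity stays $1$ under rescaling is also correct and is precisely the usual ``Fulton conjecture'' form of the BKR theorem; the $L$-dominance of the weights $\mu_i$ follows, as you need, from the existence of the nonzero invariant section $\theta$, and the triviality of the central character from the Remark \ref{deform} hypothesis \eqref{kishi}. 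Two small cautions: you assert $\mathcal{R}_L$ is \emph{reduced}, which the paper never claims (and the later discussion of irreducible components with multiplicities $N_i>0$ suggests one should not assume it); fortunately your argument does not use it. And the paper itself notes, after Lemma \ref{c2}, that one can ``recover'' this statement within the paper's framework but that ``this proof is not really a different proof'' --- i.e.\ the logical dependence on \cite{BKR} is genuine, which matches your assessment that the determinantal/ramification analysis there is where the theorem actually lives.

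Bottom line: your proposal is not a gap-free proof, and it is not meant to be one --- but it does correctly identify what is being cited, why the two forms are equivalent, and which theorem of \cite{BKR} is doing the work. If the goal were to match the paper, a one-line citation would suffice.
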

\subsection{Proof of Theorem \ref{equity}}\label{fin1}
Since the pull-back of $\hat{\mathcal{R}}$ to $\mathcal{C}$ is $\mathcal{R}$,
$$
H^0(\mathcal{C}-\mathcal{R},\mathcal{O})^G=H^0(\hat{\mathcal{C}}-\hat{\mathcal{R}},\mathcal{O}),
$$
and the theorem reduces to showing $H^0(\hat{\mathcal{C}}-\hat{\mathcal{R}},\mathcal{O}) = \mathbb{C}$.

Then note that from Corollary \ref{carol},
$H^0(\hat{\mc}-\hat{\mathcal{R}},\mathcal{O})= H^0(\hat{\mc}-\tau^{-1}\mathcal{R}_L,\mathcal{O})$.
Applying Proposition \ref{comparison} (2) on $\ml=\mathcal{O}$ ($\gamma_{\mathcal{O}}$ is trivial) and $U=
\Fl_L-\mathcal{R}_L$, we see that $H^0(\hat{\mc}-\tau^{-1}\mathcal{R}_L,\mathcal{O})=H^0(\Fl_L-\mathcal{R}_L,\mathcal{O})$. The last space is
$\Bbb{C}$ by Proposition \ref{fultonc}.
\subsection{Proof of Theorem \ref{Tone}(d)}\label{fin2}
For (d), we follow arguments of Ressayre \cite{R1}: First it is enough to show (see Lemma \ref{ress} below) that the pull back of $\mathcal{O}(D)$ along $\Fl_L\to \Fl_G$ has a non-zero global section on $\Fl_L$. But this is clear since the pull back section $1$ does not vanish along $\Fl_L-\mathcal{R}_L$  because $\Fl_L-\mathcal{R}_L$ does not meet $D$ by Proposition \ref{malta}.

\begin{lemma}\label{ress}
Suppose $\ml=\ml_{\lambda_1}\boxtimes \dots \boxtimes \ml_{\lambda_s}\in \Pic(G/B)^s=\Pic(\Fl_G)$. Let $\mathcal{M}$ be the pull back to $\Fl_L$ of $\ml$ (via $\tilde{i}$). Then the following are equivalent:
\begin{enumerate}
\item $x$ satisfies the linear equalities defining the face $\mf$, i.e., setting $x=(\kappa(\lambda_1)\dots,\kappa(\lambda_s))= (h_1,\dots,h_s)\in \frh^s$, we have
$$\sum_{j=1}^s \omega_{{k}}(w_j ^{-1}h_j) = 0,\ \ \forall \alpha_{k}\not\in \Delta(P).$$
\item $\gamma_{\mathcal{M}}:Z^0(L)\to \Bbb{C}^*$ is trivial.
\end{enumerate}
Furthermore, if $H^0(\Fl_L,\mathcal{M})\neq 0$ then the equivalent conditions above hold.
\end{lemma}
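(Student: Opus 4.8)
The plan is to compute the character $\gamma_{\mathcal{M}}$ explicitly in terms of the $\lambda_j$ and the element $x_L$ generating $Z^0(L)$, and to recognize that the resulting expression is exactly (a positive multiple of) the left-hand side of the face equation. First I would recall that $\tilde i$ sends $(F,\bar l_1,\dots,\bar l_s)$ to $(E,\bar g_1,\dots,\bar g_s)$ with $E=F\times_L G$ and $g_i=l_i\times w_i^{-1}$; hence $\mathcal{M}=\tilde i^*\mathcal{L}$ is, fiber by fiber, the tensor product over $i$ of the fibers of $\ml_{\lambda_i}$ at the points $g_iB=\bar l_i w_i^{-1}$ of $E/B$. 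So to read off how $Z^0(L)$ acts on the fiber of $\mathcal{M}$ over a point coming from the trivial bundle $F=L$, I would let $t^{x_L}\in Z^0(L)$ act: it multiplies the $i$-th tautological point $\bar l_i w_i^{-1}$ by $t^{x_L}$ on the left, i.e. replaces $l_i$ by $t^{x_L}l_i$, and then I track the induced scaling on the line $\ml_{\lambda_i}$.

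The key computation is then the weight by which $t^{x_L}$ scales the fiber of $\ml_{\lambda_i}$ at the point $w_i^{-1}B\in G/B$ (the $l_i$-dependence drops out of the character since $Z^0(L)$ is central in $L$ and $L$ acts transitively downstairs — more precisely one computes at the distinguished point). Since $\ml_{\lambda_i}=G\times_B\Bbb{C}_{-\lambda_i}$, the fiber at $w_i^{-1}B$ is acted on through the character $\lambda_i$ composed with conjugation by $w_i^{-1}$, giving the weight $\lambda_i(w_i\,\cdot\,w_i^{-1})$ evaluated on $x_L$, i.e. $-\lambda_i(w_i x_L w_i^{-1})$ up to the sign convention in the definition of $\ml_\lambda$; using $\kappa(\lambda_i)=h_i$ and the standard identity $\lambda_i(w_i x_L w_i^{-1}) = (w_i^{-1}h_i)$ paired appropriately, this becomes $\sum_i \langle w_i^{-1}h_i, x_L\rangle$ in the Killing form. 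Now recall $x_L=\sum'_k N_k x_k$ runs over $k$ with $\alpha_k\notin\Delta(P)$, and $\{x_k\}$ is the basis dual to the simple roots, so pairing against $x_L$ picks out precisely $\sum'_k N_k \omega_k(w_i^{-1}h_i)$ — wait, more carefully: in the Killing-form pairing $\langle w_i^{-1}h_i, x_k\rangle$ is $\omega_k(w_i^{-1}h_i)$ by the duality between fundamental weights and the $x_k$ (after transporting through $\kappa$). Summing over $i$ and over $k$ with positive coefficients $N_k$, the character $\gamma_{\mathcal{M}}(t^{x_L})$ is $t$ raised to $\sum'_k N_k\big(\sum_{j=1}^s\omega_k(w_j^{-1}h_j)\big)$. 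Since each $N_k>0$ and each inner sum is $\le 0$ on $\Gamma(s,K)$ (the face inequalities), this exponent vanishes iff every $\sum_j \omega_k(w_j^{-1}h_j)=0$, which is condition (1); and triviality of the character is condition (2). This proves (1)$\iff$(2).

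For the last assertion, suppose $H^0(\Fl_L,\mathcal{M})\ne 0$; pick a nonzero section $\sigma$. Because $Z^0(L)$ is central and acts on $\Fl_L$ trivially (it acts only on bundles, scaling them), it acts on the one-dimensional-modulo-nothing space of sections through the character $\gamma_{\mathcal{M}}$ — that is, $\sigma$ must be a $\gamma_{\mathcal{M}}$-eigensection, but a global section of a line bundle on which a torus acts equivariantly with a nontrivial character and which has no $Z^0(L)$-fixed structure can only be nonzero if that character is trivial (otherwise $\sigma$ and its translate $z\cdot\sigma=\gamma_{\mathcal{M}}(z)\sigma$ would force, after evaluating at any point, $\gamma_{\mathcal{M}}(z)=1$). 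Hence $\gamma_{\mathcal{M}}$ is trivial and (2) holds. I expect the main obstacle to be bookkeeping the sign/normalization conventions — the $\lambda\mapsto\lambda^{-1}$ twist in $\ml_\lambda$, the direction of the $Z^0(L)$-action on $F\times_L G$, and the precise form of the Killing-form identification $\kappa$ — so that the computed exponent comes out to $+\sum'_k N_k\sum_j\omega_k(w_j^{-1}h_j)$ with the right (harmless, since $N_k>0$) overall sign; once the conventions are pinned down the rest is a direct unwinding of definitions, essentially the argument in Remark \ref{deform} applied to $\mathcal{M}=\tilde i^*\ml$ in place of $\mathcal{O}(\mathcal{R}_L)$.
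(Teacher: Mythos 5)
Your computation of the character is on the right track, and the handling of the last assertion (about nonzero sections) matches the paper's argument. But the proof of (1) $\iff$ (2) has a genuine gap: you compute $\gamma_{\mathcal{M}}$ only along the single one-parameter subgroup $t\mapsto t^{x_L}$ with $x_L=\sum'_k N_k x_k$, obtaining the exponent $\sum'_k N_k\bigl(\sum_{j}\omega_k(w_j^{-1}h_j)\bigr)$. To conclude that this combined quantity vanishes if and only if \emph{each} inner sum $\sum_j\omega_k(w_j^{-1}h_j)$ vanishes, you appeal to the fact that each inner sum is $\le 0$ ``on $\Gamma(s,K)$.'' But the lemma does not assume $(h_1,\dots,h_s)\in\Gamma(s,K)$ — it is a statement about arbitrary line bundles $\ml$ on $(G/B)^s$ — so the sign argument is not available. (There is also a secondary issue: even if the signs were controlled, you would still need to justify that triviality of $\gamma_{\mathcal{M}}$ on the single cocharacter $t\mapsto t^{x_L}$ is equivalent to triviality on all of $Z^0(L)$; for $|\Delta\setminus\Delta(P)|>1$ the image of that cocharacter is a proper one-dimensional subtorus, not Zariski dense.)

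The paper avoids both issues by exploiting connectedness of $Z^0(L)$ directly: a character of a connected group is trivial if and only if its Lie algebra differential is trivial, and the Lie algebra of $Z^0(L)$ has basis $\{x_k:\alpha_k\notin\Delta(P)\}$. Evaluating the differential at each $x_k$ separately produces exactly the $|\Delta\setminus\Delta(P)|$ equations $\sum_j\lambda_j(w_jx_k)=0$, which one checks (via the normalization $x_k=\tfrac{2}{(\alpha_k,\alpha_k)}\kappa(\omega_k)$, as you essentially note) agree up to a positive constant with the face equalities of condition (1). Your proof becomes correct if you simply replace your single test element $x_L$ by the individual $x_k$ — that is, test the differential on each basis element of $\mathrm{Lie}\,Z^0(L)$ rather than on a fixed positive combination — and drop the appeal to the inequalities defining $\Gamma(s,K)$.
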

\begin{proof}
Since $Z^0(L)$ is connected (2) is equivalent to $\sum\lambda_j(w_jx_k)=0$ for all $\alpha_k\in \Delta-\Delta(P)$ which is equivalent to (1).

If $s\neq 0\in H^0(\Fl_L,\mathcal{M})$, then the action of the center of $L$ on $s$ is trivial on $\mathcal{M}$ at any point where $s$ is not zero, and hence $\gamma_{\mathcal{M}}:Z^0(L)\to \Bbb{C}^*$ is trivial.
\end{proof}

\section{Divisor classes}
\subsection{Schubert cells are affine spaces}
Let $v\in W^P$, let $U_v= \{u\in U| v^{-1}u v\in  U^-\}=(vU^{-}v^{-1})\cap U$. Then the map
$U_v\to C_v$ which takes $u\to uv\dot{e}\in G/P$ is an isomorphism (see \cite[Proposition 5.1]{BGG} and the references therein).

Let $\Phi_v$ be the set of positive roots $\alpha$ such $ v^{-1}\alpha\in R^-$. Then the product mapping (in any order, different orders give different mappings)
$$\prod_{\alpha\in \Phi_v} U_{\alpha} \leto{\sim} U_v \leto{\sim} C_v$$
is an isomorphism of varieties \cite[Section 14.4]{Borel}. Here $U_{\alpha}\leto{\sim}\Bbb{G}_a\leto{\sim}\Bbb{A}^1$ is the subgroup corresponding to the positive root $\alpha$.

Assume in the lemmas below that $v,w\in W^P$ and $v\leto{\beta} w$, with $\beta=\alpha_{\ell}$ a simple root.
\begin{lemma}\label{super}
\begin{enumerate}
\item $s_{\beta}\Phi_v\subseteq \Phi_w$.
\item  $\Phi_w-s_{\beta}(\Phi_v)=\{\beta\}$.
\end{enumerate}
\end{lemma}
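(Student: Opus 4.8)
The plan is to deduce both parts from three elementary inputs: the defining consequences of the covering relation $v\leto{\beta}w$ (namely $v^{-1}\beta\in R^+$ and $w^{-1}\beta\in R^-$), the fact that a \emph{simple} reflection $s_\beta$ permutes $R^+\setminus\{\beta\}$ and sends $\beta\mapsto-\beta$, and the standard identity $\#\Phi_u=\ell(u)$ for the inversion set $\Phi_u=\{\alpha\in R^+\mid u^{-1}\alpha\in R^-\}$ of any $u\in W$. I would first record preliminaries: since $v\leto{\beta}w$ we have $v^{-1}\beta\in R^+$, hence $\beta\notin\Phi_v$; and $w^{-1}\beta\in R^-$, hence $\beta\in\Phi_w$. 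Because $\beta=\alpha_\ell$ is simple and $\beta\notin\Phi_v$, the reflection $s_\beta$ carries $\Phi_v$ into $R^+$ and restricts to a bijection $\Phi_v\leto{\sim}s_\beta(\Phi_v)$.

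\textbf{Part (1).} Take $\alpha\in\Phi_v$, so $\alpha\in R^+$, $v^{-1}\alpha\in R^-$, and (by the above) $s_\beta\alpha\in R^+$. Then $w^{-1}(s_\beta\alpha)=(s_\beta v)^{-1}s_\beta\alpha=v^{-1}s_\beta s_\beta\alpha=v^{-1}\alpha\in R^-$, so $s_\beta\alpha\in\Phi_w$. This gives $s_\beta(\Phi_v)\subseteq\Phi_w$.

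\textbf{Part (2).} Now count: $\#\Phi_v=\ell(v)$ and $\#\Phi_w=\ell(w)=\ell(v)+1$, while $\#s_\beta(\Phi_v)=\#\Phi_v=\ell(v)$ by the bijection above. Combined with the inclusion from Part (1), this forces $\#\bigl(\Phi_w\setminus s_\beta(\Phi_v)\bigr)=1$. Finally $\beta\in\Phi_w$ as noted, whereas $\beta\notin s_\beta(\Phi_v)$: indeed $\beta\in s_\beta(\Phi_v)$ would mean $-\beta=s_\beta\beta\in\Phi_v\subseteq R^+$, which is impossible. Hence the one-element set $\Phi_w\setminus s_\beta(\Phi_v)$ is exactly $\{\beta\}$.

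\textbf{Main point.} There is no genuine obstacle here; every step is a routine manipulation of root systems and inversion sets. The only thing to be careful about is to use $v^{-1}\beta\in R^+$ (not merely $w^{-1}\beta\in R^-$), so that $\beta\notin\Phi_v$ and therefore $s_\beta(\Phi_v)$ remains inside $R^+$ — this is precisely where the simpleness of $\beta$ and the covering relation $v\leto{\beta}w$ enter.
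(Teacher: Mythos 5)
Your proof is correct and takes essentially the same approach as the paper: part (1) via the direct computation $w^{-1}(s_\beta\alpha)=v^{-1}\alpha$ together with the fact that $s_\beta$ preserves $R^+\setminus\{\beta\}$, and part (2) via the length count $|\Phi_v|=\ell(v)$, $|\Phi_w|=\ell(v)+1$ combined with $\beta\in\Phi_w$ and $\beta\notin s_\beta(\Phi_v)$. The only difference is that yours spells out a few intermediate steps the paper leaves implicit.
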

\begin{proof}
For (1): If $\alpha\in \Phi_v$, then we need to show that
$\gamma=s_{\beta}\alpha\in \Phi_w$. We know $\alpha\neq \beta$ since
$v^{-1}\beta\in R^+$. Therefore $\gamma=s_{\beta}\alpha\in R^+$ (see \cite[Lemma D.25]{FH}), and $w^{-1}\gamma = v^{-1}s_{\beta}s_{\beta}\alpha=v^{-1}\alpha\in R^-$ since $\alpha\in \Phi_v$.

For (2): From $v\leto{\beta} w$, we find that $w^{-1}\beta\in R^-$, and hence $\beta\in \Phi_w$. We claim
$\beta\not\in s_{\beta}\Phi_v$.  If $\beta=s_{\beta}\alpha$, then $\alpha=s_{\beta}\beta=-\beta\in R^-$, and therefore we are done using $|\Phi_v|=\ell(v)$ (similarly for $w$) and $\ell(w)=\ell(v)+1$.
\end{proof}

\begin{lemma}\label{ash}
\begin{enumerate}
\item  $s_{\beta} C_v\subseteq C_w$.
\item The map $\Bbb{A}^1\times s_{\beta}C_v\to G/P$ which sends $(t,s_{\beta}x)$ to $\exp(tE_{\beta})s_{\beta}x$ sets up an isomorphism $\Bbb{A}^1\times s_{\beta} C_v\to C_w$.
\end{enumerate}
\end{lemma}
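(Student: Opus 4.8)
The plan is to prove Lemma \ref{ash} by leveraging the combinatorial facts about root systems established in Lemma \ref{super}, together with the explicit product decomposition of Schubert cells recalled just before. Recall $\Phi_v$ (resp.\ $\Phi_w$) parametrizes $C_v$ (resp.\ $C_w$) as an affine space via $\prod_{\alpha\in\Phi_v}U_\alpha\xrightarrow{\sim}C_v$, and by Lemma \ref{super} we have $\Phi_w = \{\beta\}\sqcup s_\beta(\Phi_v)$.

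For part (1), I would argue as follows. A point of $C_v$ has the form $u v\dot e$ with $u\in U_v$, i.e.\ $v^{-1}uv\in U^-$. Then $s_\beta (uv\dot e) = (s_\beta u s_\beta^{-1})(s_\beta v)\dot e$. Since $\beta=\alpha_\ell$ is simple, conjugation by $s_\beta$ permutes $U^+\setminus\{U_\beta\}$ (sends positive non-$\beta$ roots to positive roots, cf.\ the cited \cite[Lemma D.25]{FH}), and here $\beta\notin\Phi_v$ because $v^{-1}\beta\in R^+$; so $s_\beta u s_\beta^{-1}\in U$. Moreover one checks $(s_\beta v)^{-1}(s_\beta u s_\beta^{-1})(s_\beta v) = v^{-1}uv\in U^-$, so $s_\beta u s_\beta^{-1}\in U_w$ (using $w=s_\beta v$), and hence $s_\beta(uv\dot e)=(s_\beta u s_\beta^{-1}) w\dot e\in C_w$. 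This proves $s_\beta C_v\subseteq C_w$.

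For part (2), the idea is that the extra root $\beta\in\Phi_w\setminus s_\beta(\Phi_v)$ accounts exactly for the $\Bbb{A}^1$ factor. Fix an order on $\Phi_v$ giving $\prod_{\alpha\in\Phi_v}U_\alpha\xrightarrow{\sim}U_v$; conjugating by $s_\beta$ gives $\prod_{\alpha\in\Phi_v}U_{s_\beta\alpha}\xrightarrow{\sim}s_\beta U_v s_\beta^{-1}\subseteq U_w$, with image of dimension $\ell(v)$. Prepending the factor $U_\beta = \exp(\Bbb{A}^1 E_\beta)$ and using the decomposition of $C_w$ attached to the ordered set $\{\beta\}\sqcup s_\beta(\Phi_v)$ (the product map in any order is an isomorphism onto $C_w$), the composite
$$\Bbb{A}^1\times s_\beta C_v\;\cong\; U_\beta\times s_\beta U_v s_\beta^{-1}\xrightarrow{\ \mathrm{mult}\ }U_w\xrightarrow{\sim}C_w,\qquad (t,s_\beta x)\mapsto \exp(tE_\beta)s_\beta x,$$
is an isomorphism; here one uses part (1) to know $s_\beta C_v\subseteq C_w$ so that $s_\beta U_v s_\beta^{-1}$ really is the sub-product of $U_w$ indexed by $s_\beta(\Phi_v)$, and that $\exp(tE_\beta)$ normalizes things appropriately so the map is well-defined into $C_w$ rather than just $G/P$.

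The main obstacle I anticipate is bookkeeping around the non-uniqueness of the product decompositions: the isomorphism $\prod_{\alpha\in\Phi_w}U_\alpha\xrightarrow{\sim}C_w$ depends on the chosen order of roots, so to identify the map in (2) with an honest product decomposition I must pick the order on $\Phi_w$ that places $\beta$ first and then the $s_\beta$-image of the chosen order on $\Phi_v$, and verify that $s_\beta U_v s_\beta^{-1}$ coincides with $\prod_{\alpha\in s_\beta\Phi_v}U_\alpha$ as a subset of $U_w$ (not merely that both have the right dimension). Equivalently, one must check $s_\beta U_v s_\beta^{-1}\subseteq U_w$ and that it is complementary to $U_\beta$ inside $U_w$; the dimension count $\ell(w)=\ell(v)+1$ from Lemma \ref{super}(2) then forces the product map to be an isomorphism of varieties. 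Injectivity and surjectivity reduce to the corresponding statements for the unipotent groups, which follow from the fact that the product map on root subgroups (in a fixed order) is an isomorphism onto $U_w$.
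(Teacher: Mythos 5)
Your proposal is correct and is essentially a fleshed-out version of the paper's one-line proof, which simply invokes Lemma \ref{super}(1) for part (1) and \cite[Section 14.4]{Borel} for part (2). Your element-by-element verification of $s_\beta C_v\subseteq C_w$ (conjugating by $s_\beta$ and checking membership in $U_w$) and your reduction of part (2) to the ordered-product decomposition $\Phi_w=\{\beta\}\sqcup s_\beta(\Phi_v)$ of $U_w$ are exactly what those citations supply.
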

\begin{proof}
The first statement follows from the first part of Lemma \ref{super}.  The second part follows from \cite[Section 14.4]{Borel}.
\end{proof}
Recall that $Q_w$ is the largest subgroup of $G$ that preserves $X_w$. Since $w^{-1}\beta\in R^-$ and $\beta$ is simple, $s_{\beta}$ in $Q_w$ and $s_\beta C_w\subseteq X_w$. The inclusion $s_{\beta} C_v\subseteq C_w$ therefore yields a factorization of the canonical inclusion: $C_v\subseteq s_{\beta}C_w\subseteq X_w$.

\subsection{Universal Schubert varieties and their cycle classes}
Let $u\in W^P$ and consider the universal Schubert variety
$$S_u=\{(\bar{g},\bar{z})\mid z\in \bar{g}X_u\}\subseteq G/B\times G/P.$$
Recall that $X_u\subseteq G/P$ is the closure of the Schubert cell $C_u$.
Let $m=\dim G/P- \ell(u)$, the codimension of $X_u$ in $G/P$.

We want to determine the first two terms ($j=0,1$ below)
of the cycle class $[S_u]\in A^*(G/B\times G/P)$ of $S_u$ in the
decomposition
$$A^m(G/B\times G/P)=\bigoplus_{j=0}^m A^{j}(G/B)\tensor A^{m-j}(G/P)$$

We may intersect with $[\dot{e}]\times g[X_w]$, with $g$ general, and $w\in W^P$ arbitrary such that
$\ell(w)=\dim G/P-\ell(u)$ and see that
the $j=0$ term is $1 \tensor [X_u]$.

Write  the $j=1$ term as
$\sum_{\ell} \ml_{\omega_{\ell}}\tensor \beta_{\ell}$.

\begin{proposition}\label{culture}
Let $\hat{u}_{\ell}=s_{\alpha_{\ell}}u$.
\begin{enumerate}
\item If $\hat{u}_{\ell}\not\in W^P$ or $u\leto{\alpha_{\ell}}\hat{u}_{\ell}$ is false then $\beta_{\ell}=0$.
\item If $\hat{u}_{\ell}\in W^P$ and $u\leto{\alpha_{\ell}}\hat{u}_{\ell}$ then $\beta_{\ell}=[X_{\hat{u}}]$.
\end{enumerate}
\end{proposition}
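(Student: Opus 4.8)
The plan is to isolate $\beta_\ell$ by intersecting $S_u$ with a Schubert curve dual to $\mathcal L_{\omega_\ell}$, to turn the resulting class into a concrete geometric cycle, and then to evaluate it by a dimension count together with Lemma~\ref{ash}. Let $P_\ell\supseteq B$ be the minimal parabolic with $\Delta(P_\ell)=\{\alpha_\ell\}$, and let $C_\ell=\overline{Bs_{\alpha_\ell}B/B}=P_\ell/B\subseteq G/B$ be the associated Schubert curve; it is a smooth rational curve with $\deg\bigl(\mathcal L_{\omega_k}|_{C_\ell}\bigr)=\omega_k(\alpha_\ell^\vee)=\delta_{k\ell}$, so $\{[C_\ell]\}_\ell$ is the basis of $A_1(G/B)$ dual to $\{[\mathcal L_{\omega_k}]\}_k\subseteq A^1(G/B)$. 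Decomposing $[S_u]$ into its K\"unneth components and applying the projection formula to the two projections from $G/B\times G/P$, only the codimension-one part of $[S_u]$ on the $G/B$ factor survives the pairing with $C_\ell$, and one gets
$$\beta_\ell=(\pr_{G/P})_*\bigl([S_u]\cdot\pr_{G/B}^{*}[C_\ell]\bigr)\in A^{m-1}(G/P).$$
By Kleiman transversality for the transitive $G\times G$-action on $G/B\times G/P$, a general translate of $S_u$ meets $C_\ell\times G/P$ generically transversally; such a translate can be taken to be $S_u^{g}=\{(\bar g,\bar z)\mid\bar z\in g\bar gX_u\}$ for general $g\in G$ (and $[S_u^{g}]=[S_u]$), so $\beta_\ell=(\pr_{G/P})_*[S_u^{g}\cap(C_\ell\times G/P)]$. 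Translating back by $g^{-1}$ on the $G/P$-factor identifies $S_u^{g}\cap(C_\ell\times G/P)$, compatibly with $\pr_{G/P}$ up to a translation of the target, with $V:=S_u\cap(C_\ell\times G/P)=\{(\bar h,\bar z)\mid\bar h\in P_\ell/B,\ \bar z\in\bar hX_u\}$, which, $X_u$ being $B$-stable, is a fiber bundle over $P_\ell/B\cong\Bbb P^1$ with fiber $X_u$ — hence irreducible of dimension $\ell(u)+1$ — with image $\pr_{G/P}(V)=\overline{P_\ell C_u}=P_\ell X_u\subseteq G/P$. Thus $\beta_\ell=(\pr_{G/P})_*[V]$ as classes.

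Next comes the dichotomy. By \eqref{delta}, $\alpha_\ell\in\Delta_u$ iff $u^{-1}\alpha_\ell\in R_{\frl}^+\sqcup R^-$; since $\alpha_\ell$ is simple and $u\in W^P$, a short check in the root system shows this is exactly the negation of the condition ``$\hat u_\ell\in W^P$ and $u\leto{\alpha_\ell}\hat u_\ell$''. Hence case~(1) of the Proposition is precisely the case $\alpha_\ell\in\Delta_u$, and case~(2) is $\alpha_\ell\notin\Delta_u$. In case~(1), $s_{\alpha_\ell}\in Q_u$, so $P_\ell\subseteq Q_u$ and $\overline{P_\ell C_u}=X_u$, of dimension $\ell(u)<\dim V$; since $\pr_{G/P}$ is proper and maps the irreducible $V$ onto a variety of strictly smaller dimension, $(\pr_{G/P})_*[V]=0$, i.e.\ $\beta_\ell=0$. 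In case~(2), Lemma~\ref{ash}(1) gives $s_{\alpha_\ell}C_u\subseteq C_{\hat u_\ell}$, whence $C_{\hat u_\ell}\subseteq P_\ell C_u\subseteq X_{\hat u_\ell}$, so $\overline{P_\ell C_u}=X_{\hat u_\ell}$ has dimension $\ell(u)+1=\dim V$; therefore $\pr_{G/P}|_V$ is generically finite onto $X_{\hat u_\ell}$ and $\beta_\ell=\bigl(\deg\pr_{G/P}|_V\bigr)\,[X_{\hat u_\ell}]$.

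It remains to show $\deg\pr_{G/P}|_V=1$ in case~(2), which is the crux. Restrict to the dense open $V'=\{(\bar h,\bar z)\in V\mid\bar z\in\bar hC_u\}$; it suffices to check that over each $\bar z\in C_{\hat u_\ell}$ there is exactly one $\bar h\in P_\ell/B$ with $\bar z\in\bar hC_u$. Set $u_{\alpha_\ell}(t)=\exp(tE_{\alpha_\ell})$, so that $P_\ell/B=\{eB\}\sqcup\{\,u_{\alpha_\ell}(t)s_{\alpha_\ell}B\mid t\in\Bbb A^1\,\}$, where distinct values of $t$ give distinct cosets because $s_{\alpha_\ell}^{-1}U_{\alpha_\ell}s_{\alpha_\ell}=U_{-\alpha_\ell}$ meets $B$ trivially. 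The coset $eB$ contributes nothing, since $C_u\cap C_{\hat u_\ell}=\emptyset$. For $\bar h=u_{\alpha_\ell}(t)s_{\alpha_\ell}B$ one has $\bar hC_u=u_{\alpha_\ell}(t)\,s_{\alpha_\ell}C_u\subseteq C_{\hat u_\ell}$, and Lemma~\ref{ash}(2) — applied with $v=u$, $w=\hat u_\ell$, $\beta=\alpha_\ell$, which is legitimate exactly because we are in case~(2) — asserts that $(t,y)\mapsto u_{\alpha_\ell}(t)y$ is an isomorphism $\Bbb A^1\times(s_{\alpha_\ell}C_u)\to C_{\hat u_\ell}$. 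Hence $\bar z=u_{\alpha_\ell}(t)y$ determines $t$ (and $y\in s_{\alpha_\ell}C_u$) uniquely, so the $\bar h$ above $\bar z$ is unique. Working over $\Bbb C$, this generic injectivity on the generically reduced variety $V$ forces $\deg\pr_{G/P}|_V=1$, and therefore $\beta_\ell=[X_{\hat u_\ell}]$.

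Everything except the step giving $\deg=1$ is essentially bookkeeping: the K\"unneth and projection-formula manipulation of the first paragraph, the Kleiman translation, and the elementary root-system combinatorics identifying the two cases with $\alpha_\ell\notin\Delta_u$ and $\alpha_\ell\in\Delta_u$. The genuine obstacle is the degree-one assertion: a priori $\pr_{G/P}|_V$ could be multisheeted over $X_{\hat u_\ell}$, and what prevents this is precisely Lemma~\ref{ash}(2), namely that $C_{\hat u_\ell}$ is a \emph{trivial} $\Bbb A^1$-bundle over $s_{\alpha_\ell}C_u$ whose $\Bbb A^1$-coordinate is the affine coordinate along the curve $C_\ell$.
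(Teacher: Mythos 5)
Your proof is correct and, despite the different packaging, follows the same route as the paper's: both isolate $\beta_\ell$ by pairing $[S_u]$ against the Schubert curve $C_\ell$ dual to $\mathcal L_{\omega_\ell}$, both identify the two cases of the Proposition with $\alpha_\ell\notin\Delta_u$ versus $\alpha_\ell\in\Delta_u$ (that equivalence is exactly the unnumbered lemma the paper proves immediately after the statement), and both hinge on Lemma~\ref{ash}(2) to get the degree one in case~(2). The presentational difference is that the paper tests $\beta_\ell$ against arbitrary Schubert classes $[X_w]$ with $\ell(w)+1=\dim G/P-\ell(u)$ and computes intersection numbers, while you compute $\beta_\ell$ directly as $(\pr_{G/P})_*[V]$ for the concrete cycle $V=P_\ell\times_B X_u$ and determine the degree of $\pr_{G/P}|_V$; the latter has the mild advantage of dispensing with auxiliary test cycles and recasting case~(1) as a pure dimension count, but the mathematical content (Lemma~\ref{ash}) is identical. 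One point worth making explicit: when you assert it ``suffices'' to count, for each $\bar z\in C_{\hat u_\ell}$, the $\bar h$ with $\bar z\in\bar h C_u$, you are implicitly using that a general fiber of $\pr_{G/P}|_V$ over $C_{\hat u_\ell}$ lies entirely in $V'$. This holds because $V\setminus V'=P_\ell\times_B(X_u\setminus C_u)$ has dimension at most $\ell(u)$, so the closure of its image in $G/P$ has codimension $\ge 1$ in $X_{\hat u_\ell}$ and misses a general $\bar z$; your phrase ``generic injectivity'' gestures at this, but spelling out the dimension count would close the loop.
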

\subsection{Proof of Proposition \ref{culture}}

For every simple root $\beta=\alpha_{\ell}$ there is an associated $\Bbb{P}^1\to G/B$ which sends $t\in \Bbb{A}^1$ to $\exp(tE_{\beta})s_{\beta}\dot{e}\in G/B$ and $t=\infty$ to $\dot{e}$. In fact the entire Schubert cell $Bs_{\beta}B/B$ is the image of $\Bbb{A}^1$, and the degree of the line bundle $\mathcal{L}_{\omega_{k}}$ along this curve is $\delta_{k,\ell}$, and so  $\mathcal{L}_{\omega_{\ell}}$ is dual to this curve in the Chow group of $G/B$ (in the Schubert basis).

To prove Proposition \ref{culture}, we first intersect $S_u$ with $\left[\overline {Bs_{\alpha_{\ell}}B/B}\right]\times  g[X_w]$ for a general $g\in G$, and $w\in W^P$ arbitrary such that $\ell(w)+1=\dim G/P-\ell(u)$. This shows that
\begin{itemize}
\item The intersection number $\mathcal{I}$ of $\beta_{\ell}$ and $[X_w]$ will equal the number of points of the form $(t,\bar{z})$ where (note that $G/B\times G/P$ has a transitive action of a group, and we can use general position arguments):
\begin{enumerate}
\item[(a)] $\bar{z}\in t X_u\in G/P$ (since ($(t,\bar{z})\in S_u$),
\item[(b)] $t\in {Bs_{\alpha_{\ell}}B/B}=\Bbb{A}^1$, and
\item[(c)] $\bar{z}\in X_w$.
\end{enumerate}
\end{itemize}

If ${\alpha_{\ell}}\in \Delta(Q_u)$, then $tX_u$ does not vary with $t$ (since $s_{\alpha_{\ell}}X_u\subseteq X_u$), and by general position arguments, the intersection number $\mathcal{I}$ is zero. Therefore if $u^{-1}\alpha_{\ell}\in R_{\mathfrak{l}}^+$ or  $u^{-1}\alpha_{\ell}\in R^-$, then
$tX_u$ does not vary with $t$. Therefore unless  $u^{-1}\alpha_{\ell}\in R^+ - R_{\mathfrak{l}}^+$, the intersection number $\mathcal{I}$ is zero (independently of $w$).
The following Lemma therefore shows the first part of Proposition \ref{culture}.
\begin{lemma}
The following are equivalent:
\begin{enumerate}
\item $u^{-1}\alpha_{\ell}\in R^+ - R_{\mathfrak{l}}^+$;
\item $\hat{u}_{\ell}\in W^P$ and $u\leto{\alpha_{\ell}}\hat{u}_{\ell}$.
\end{enumerate}
\end{lemma}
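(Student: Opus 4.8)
The plan is to translate each side of the equivalence into a statement about the location of the single root $u^{-1}\alpha_\ell$, using only standard length combinatorics for Coxeter groups together with the characterization $w\in W^P \iff w(R^+_{\mathfrak{l}})\subseteq R^+$ recalled in the notation section (equivalently $wB_Lw^{-1}\subseteq B$).

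First I would dispose of the length condition hidden inside (2). Since $\hat u_\ell=s_{\alpha_\ell}u$ with $\alpha_\ell$ simple, one has $\ell(\hat u_\ell)=\ell(u)\pm 1$, so the relation $u\leto{\alpha_\ell}\hat u_\ell$ --- which by definition demands $\ell(\hat u_\ell)=\ell(u)+1$ --- holds precisely when $\ell(s_{\alpha_\ell}u)>\ell(u)$, i.e.\ (the standard criterion for left multiplication by a simple reflection) precisely when $u^{-1}\alpha_\ell\in R^+$.

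Next I would analyze the membership $\hat u_\ell\in W^P$. Because $u\in W^P$ we have $u(R^+_{\mathfrak{l}})\subseteq R^+$, and the simple reflection $s_{\alpha_\ell}$ permutes $R^+\setminus\{\alpha_\ell\}$ while sending $\alpha_\ell\mapsto -\alpha_\ell$. Hence $\hat u_\ell(R^+_{\mathfrak{l}})=s_{\alpha_\ell}\bigl(u(R^+_{\mathfrak{l}})\bigr)\subseteq R^+$ if and only if $\alpha_\ell\notin u(R^+_{\mathfrak{l}})$, that is, if and only if $u^{-1}\alpha_\ell\notin R^+_{\mathfrak{l}}$. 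So $\hat u_\ell\in W^P$ is equivalent to $u^{-1}\alpha_\ell\notin R^+_{\mathfrak{l}}$.

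Combining the last two paragraphs: condition (2) says exactly that $u^{-1}\alpha_\ell\in R^+$ and $u^{-1}\alpha_\ell\notin R^+_{\mathfrak{l}}$, i.e.\ $u^{-1}\alpha_\ell\in R^+-R^+_{\mathfrak{l}}$, which is condition (1). There is no serious obstacle here --- the argument is purely formal --- and the only point demanding care is keeping conventions straight: that $\leto{\alpha_\ell}$ already encodes the length increase, and that the $W^P$-condition is best read through $w(R^+_{\mathfrak{l}})\subseteq R^+$, so that left-multiplication by $s_{\alpha_\ell}$ perturbs it by exactly one root.
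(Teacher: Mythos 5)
Your proof is correct and uses the same essential ingredients as the paper's (the length criterion for left multiplication by a simple reflection, and the fact that $s_{\alpha_\ell}$ permutes $R^+\setminus\{\alpha_\ell\}$ applied to $u(R^+_{\mathfrak{l}})\subseteq R^+$); you merely reorganize the paper's two one-directional implications into two independent biconditionals, which is a modest streamlining rather than a different route.
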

\begin{proof} Assume (1). We first show that $\hat{u}_{\ell}\in W^P$. We need to show that $\hat{u}_{\ell} R^+_{\frl}\subset R_+$. Assume the contrary. Now $\hat{u}_{\ell}R^+_{\frl}=s_{\alpha_{\ell}} u R^+_{\frl}\subseteq s_{\alpha_{\ell}}R^+$.
The only positive root which $s_{\alpha_{\ell}}$ takes to a negative root is $\alpha_{\ell}$, so we will have $\alpha_{\ell}\in u R^+_{\frl}$ which contradicts our assumptions.  Therefore we have shown that $\hat{u}_{\ell}\in W^P$. From $u^{-1}\alpha_{\ell}\in R^+$, we get $\ell(\hat{u}_{\ell})\geq \ell(u)+1$, which should be an equality since $\hat{u}_{\ell}=s_{\alpha_{\ell}}u$. Therefore (2) holds.

Now assume (2). The length condition in $u\leto{\alpha_{\ell}}\hat{u}_{\ell}$ implies that $u^{-1}\alpha_{\ell}\in R^+$.
If $u^{-1}\alpha_{\ell}\in R_{\mathfrak{l}}^+$, then $\alpha_{\ell}\in u R_{\mathfrak{l}}^+$, then $\hat{u}_{\ell}R_{\mathfrak{l}}^+$
contains $-\alpha_{\ell}$ which is a negative root. This  contradicts $\hat{u}_{\ell}\in W^P$.
\end{proof}

Now assume  $\hat{u}_{\ell}\in W^P$ and $u\leto{\alpha_{\ell}}\hat{u}_{\ell}$. We need to show that $\beta_{\ell}=[X_{\hat{u}_{\ell}}]$. We will show that the intersection number $\mathcal{I}$ of $\beta_{\ell}$ and $[X_w]$ is the same as the intersection number of $[X_{\hat{u}_{\ell}}]$ and $[X_w]$. This will finish the proof of Proposition \ref{culture}, since $w$ was arbitrary.

The intersection number $\mathcal{I}$  is the count of pairs $(\bar{z},t)$ satisfying conditions (a), (b), (c) above.
By Lemma \ref{ash}, the sets $tC_u$ are distinct and have $C_{\hat{u}_{\ell}}$ for their union. Therefore $\mathcal{I}$ equals  the intersection number of $X_{\hat{u}_{\ell}}$ and $gX_w$, as desired (we can assume that the intersection takes place in the open Schubert cells in each by dimension counting).

\subsection{Proof of Theorem \ref{timeticks}}
Let $(u_1,\dots,u_s)$ be as in Proposition 1.7. Recall that
$$\sum_{i=1}^s (\dim G/P-\ell(u_i)) =\dim G/P +1.$$
Let  $\widetilde{D}(j,v)\subseteq (G/B)^s\times G/P$ be as defined in Definition \ref{gradechange}. Note that ${D}(j,v)=\pi_*(\widetilde{D}(j,v))$ (use the fact that $\widetilde{D}(j,v)$ is not contained in $\mathcal{R}$ as proved in Section \ref{refer}).

We have $s$ morphisms $p_i: (G/B)^s\times G/P \to (G/B) \times (G/P)$. The scheme theoretic intersection of
$p_i^{-1}S_{u_i}$ equals $\widetilde{D}(j,v))$. This intersection is proper because the codimension of $\widetilde{D}(j,v)$ in $(G/B)^s\times G/P$ is the sum of  codimensions of $X_{u_i}$. The cycle class of
$\widetilde{D}(j,v))$ is therefore the cup product of the pull backs of cycle classes of $S_{u_i}$. Theorem
\ref{timeticks} now follows from ${D}(j,v)=\pi_*(\widetilde{D}(j,v))$.
\begin{remark}\label{ramifyy}
Suppose we consider a codimension one Schubert cell $C_{v}\subseteq X_{w_j}$ with $v\leto{\beta}w_j$, and $\beta$ not simple, $v\in W^P$. Define $u_1,\dots,u_s$ as in Equation \eqref{setU}, and let $D$ be the right hand side of \eqref{defDD}. Let $\widetilde{D}\subseteq \mathcal{Z}$ be the right hand side of \eqref{gradechange}. Then by \cite[Proposition 8.1]{BKR}, $\widetilde{D}$ lies in the closure of $\mathcal{R}$, and hence $D$, the image of $\widetilde{D}$ is of codimension $\geq 2$ in $(G/B)^s$. The element $\pi_*(\widetilde{D})\in A^1((G/B)^s)$ is zero, and the formulas of Theorem \ref{timeticks} apply also in this case. Therefore one gets vanishing of several intersection numbers.
\end{remark}
\begin{remark}
The proof of Theorem \ref{timeticks} shows that one obtains formulas for a divisor class supported on
the locus (with possible multiplicities) given by the right side of \eqref{defDD} (as a suitable push forward)
for arbitrary $u_1,\dots,u_s$ satisfying $\sum (\dim(G/P)-\ell(u_i))=\dim G/P$ and $u_i\in W^P$. This divisor class is zero if and only if the right side of \eqref{defDD} is not codimension one in $(G/B)^s$ (it is always irreducible).
\end{remark}
\section{Faces of the eigencone}\label{regular}

\subsection{The face $\mf$ as a product}
Our aim in this section is to prove Theorem \ref{conebij}. Consider the map \eqref{until}. We first show that it is an injection (actually we prove a stronger statement with $\Bbb{Q}$ coefficients rather than $\Bbb{Q}_{\geq 0}$ coefficients). Suppose
\begin{equation}\label{coffee}
\sum a_b\delta_b +f=\sum a'_b\delta_b +f',\ a_b,a'_b\in \Bbb{Q}.
\end{equation}
It suffices to show $a_b=a'_b$ for all $b$. Fix $b$ and suppose $\delta_b=[D(j,v)]$ where $v\leto{\alpha_{\ell}}w_j$. Then we may apply $\alpha_{\ell}$ to the  $j$th coordinate of  \eqref{coffee}. This gives $a_b=a'_b$ by using Corollary \ref{correspondence2} below.

The following is an easy corollary of Theorem \ref{timeticks}.
\begin{corollary}\label{correspondence2}
Consider a  pair $(j,v)$ with $v\leto{\alpha_{\ell}}w_j$, and set
$$\mathcal{O}(D(j,v))=\ml_{\lambda_1}\boxtimes \ml_{\lambda_2}\boxtimes\dots\boxtimes \ml_{\lambda_s}\in \Pic(G/B)^s.$$
Then,
\begin{enumerate}
\item $\lambda_{j}(\alpha_{\ell}^{\vee})=1$, and hence $\alpha_{\ell}(\kappa(\lambda_j))>0$.
\item Suppose $(j',v')\neq (j,v)$ with  $v'\leto{\alpha_{\ell'}}w_{j'}$.
Then, $\lambda_{j'}(\alpha_{\ell'}^{\vee})=0$, and hence $\alpha_{\ell'}(\kappa(\lambda_{j'}))=0$.
\end{enumerate}
\end{corollary}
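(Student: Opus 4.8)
The plan is to read everything off Theorem~\ref{timeticks}. For the face data $(\vec w,P)$ fixed here one has $u_i=w_i$ for $i\neq j$ and $u_j=v$; write $\mathcal{O}(D(j,v))=\ml_{\lambda_1}\boxtimes\cdots\boxtimes\ml_{\lambda_s}$, so that $c_{k,\ell}:=\lambda_k(\alpha_\ell^\vee)$ is the coefficient of $\omega_\ell$ in $\lambda_k$. By Theorem~\ref{timeticks} one computes $c_{k,\ell}$ by forming $\hat u_k=s_{\alpha_\ell}u_k$: the coefficient is a specific ordinary intersection number in $H^*(G/P)$ when $\hat u_k\in W^P$ and $u_k\leto{\alpha_\ell}\hat u_k$, and is $0$ otherwise. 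Throughout I will use the standard length criterion: for a simple root $\alpha_\ell$ the condition $u_k\leto{\alpha_\ell}s_{\alpha_\ell}u_k$ (equivalently $\ell(s_{\alpha_\ell}u_k)=\ell(u_k)+1$) holds if and only if $u_k^{-1}\alpha_\ell\in R^+$. I will also use that, $\kappa$ being the Killing-form isomorphism, $\alpha_\ell(\kappa(\mu))=\tfrac{(\alpha_\ell,\alpha_\ell)}{2}\mu(\alpha_\ell^\vee)$ for $\mu\in\frh^*_{\Bbb{Q}}$, so every sign or vanishing claim for $\alpha_\ell(\kappa(\lambda_k))$ follows from the one for $\lambda_k(\alpha_\ell^\vee)$.

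For part~(1), take $k=j$ and $\ell$ the index with $v\leto{\alpha_\ell}w_j$, so $w_j=s_{\alpha_\ell}v$ and $\ell(w_j)=\ell(v)+1$. Then $\hat u_j=s_{\alpha_\ell}u_j=s_{\alpha_\ell}v=w_j\in W^P$ and $u_j=v\leto{\alpha_\ell}w_j=\hat u_j$, putting us in case~(1) of Theorem~\ref{timeticks} with $\hat u_i=u_i=w_i$ for all $i\neq j$; hence $c_{j,\ell}$ is the ordinary intersection number $c$ defined by $\prod_{i=1}^s[X_{w_i}]=c\,[pt]\in H^*(G/P)$. The hypothesis \eqref{kishi} on $(\vec w,P)$ gives $[X_{w_1}]\odot_0\cdots\odot_0[X_{w_s}]=[X_e]=[pt]$, i.e.\ the corresponding $\odot_0$-intersection number equals $1$; since a top-degree structure constant of the Belkale--Kumar product either vanishes or equals the ordinary one (\cite{BK}), we conclude $c=1$. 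Therefore $\lambda_j(\alpha_\ell^\vee)=1$ and $\alpha_\ell(\kappa(\lambda_j))>0$.

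For part~(2), fix $(j',v')\neq(j,v)$ with $v'\leto{\alpha_{\ell'}}w_{j'}$; I will show $u_{j'}^{-1}\alpha_{\ell'}\in R^-$. By the length criterion this makes the condition $u_{j'}\leto{\alpha_{\ell'}}s_{\alpha_{\ell'}}u_{j'}$ fail, so Theorem~\ref{timeticks}(2) yields $c_{j',\ell'}=\lambda_{j'}(\alpha_{\ell'}^\vee)=0$ and hence $\alpha_{\ell'}(\kappa(\lambda_{j'}))=0$. If $j'\neq j$, then $u_{j'}=w_{j'}$ and $v'\leto{\alpha_{\ell'}}w_{j'}$ gives $w_{j'}^{-1}\alpha_{\ell'}\in R^-$ at once. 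If $j'=j$, then $v'\neq v$ forces $\alpha_{\ell'}\neq\alpha_\ell$, and $u_j=v=s_{\alpha_\ell}w_j$, so
$$
u_j^{-1}\alpha_{\ell'}=w_j^{-1}s_{\alpha_\ell}\alpha_{\ell'}=w_j^{-1}\alpha_{\ell'}-\langle\alpha_{\ell'},\alpha_\ell^\vee\rangle\,w_j^{-1}\alpha_\ell,
$$
with $-\langle\alpha_{\ell'},\alpha_\ell^\vee\rangle\ge 0$ (distinct simple roots); since $w_j^{-1}\alpha_{\ell'}\in R^-$ (from the pair $(j,v')$) and $w_j^{-1}\alpha_\ell\in R^-$ (from the pair $(j,v)$), the right-hand side is a nonnegative integer combination of negative roots, hence lies in the negative root cone, and being a root it is in $R^-$.

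The only non-formal ingredient is the property of the deformed product invoked to get $c=1$ in part~(1); the rest is a direct reading of Theorem~\ref{timeticks} together with elementary root-system combinatorics, which is why this is an easy corollary.
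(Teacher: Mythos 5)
Your proof is correct and takes essentially the same route as the paper's. In part (1) you identify the coefficient as the ordinary top-degree intersection number $c$ in $\prod_i[X_{w_i}]=c[pt]$ and use the fact that a Belkale--Kumar structure constant either vanishes or agrees with the ordinary one to force $c=1$ from hypothesis \eqref{kishi}; the paper says the same in one line. In part (2) your case split $j'\neq j$ / $j'=j$ and the computation $v^{-1}\alpha_{\ell'}=w_j^{-1}(\alpha_{\ell'}+m\alpha_\ell)$ with $m\ge 0$ reproducing a sum of two negative roots is exactly the paper's argument, just spelled out more explicitly via the length criterion from the remark following Theorem~\ref{timeticks}.
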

\begin{proof}
Set $$
u_i = \left\{
\begin{array}{cc}
 w_i, & i\ne j\\
 v, & i=j
\end{array}\right.
$$
Using Theorem \ref{timeticks}, for (1), the coefficient $c_{j,\ell}=\lambda_{j}(\alpha_{\ell}^{\vee})$ is just the multiplicity in the intersection product \eqref{kishi} in ordinary cohomology product which is one, since it is one in the deformed product $\odot_0$ by assumption.

For (2), consider the case $j=j'$ first: We start by showing that $v^{-1}\alpha_{\ell'}$ is not a positive root. Now $v^{-1}\alpha_{\ell'}=w^{-1}(s_{\alpha_{\ell}}\alpha_{\ell'})=w^{-1}(\alpha_{\ell'}+m\alpha_{\ell})$ with $m\ge 0$ since $\ell\neq \ell'$. Now both $w^{-1}(\alpha_{\ell})$ and $w^{-1}(\alpha_{\ell'})$ are negative roots by assumption and hence  $v^{-1}\alpha_{\ell'}$  is not in $R^+$, and hence $\lambda_{j}(\alpha_{\ell'}^{\vee})=0$ using Theorem\ref{timeticks}.

If $j\neq j'$ then we need to show that $w_{j'}^{-1}\alpha_{\ell'}$ is not a positive root, which follows from $v'\leto{\alpha_{\ell'}}w_{j'}$.
\end{proof}

The surjection part of Theorem \ref{conebij} follows from Corollary \ref{correspondence2},
and the following:
\begin{proposition}\label{egale}
Suppose
$$\ml=\ml_{\mu_1}\boxtimes\ml_{\mu_2}\boxtimes\dots\boxtimes\ml_{\mu_s}\in \Pic(G/B)^s$$
and $x=(\kappa(\mu_1),\dots,\kappa(\mu_s))$. Let $(j,v)$  with $v\leto{\alpha_{\ell}}w_j$ and $v\in W^P$.
Assume
\begin{enumerate}
\item  $H^0((G/B)^s,\ml)^G\neq 0$. Assume also that $x\in \mf$.
\item $\alpha_{\ell}(\kappa(\mu_j))>0$, i.e., $\mu_j(\alpha_{\ell}^{\vee})>0$.
\end{enumerate}
Let $m=\mu_j({\alpha_{\ell}}^{\vee})\in \Bbb{Z}_{>0}$, and
$$\ml'=\ml(-mD(j,v))=\ml_{\mu'_1}\boxtimes\ml_{\mu'_2}\boxtimes\dots\boxtimes\ml_{\mu'_s}\in \Pic(G/B)^s$$
and $x'=(\kappa(\mu'_1),\dots,\kappa(\mu'_s))$.
Then
\begin{enumerate}
\item $H^0((G/B)^s,\ml')^G\neq 0$. Thus all $\mu'_i$ are dominant and  $x'\in \mf$.
\item $\mu_j'({\alpha_{\ell}}^{\vee})=0$.
\end{enumerate}
\end{proposition}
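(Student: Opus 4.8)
The plan is to produce a non-zero $G$-invariant section of $\ml'$ directly from a section of $\ml$, by dividing by the equation of $D(j,v)$ with the correct multiplicity, and then to compute the effect on the $j$th weight. First I would recall from \eqref{ninth}--\eqref{tenth} that $\mathcal{O}(D(j,v)) = \ml_{\lambda_1}\boxtimes\dots\boxtimes\ml_{\lambda_s}$ with all $\lambda_i$ dominant, so that $\ml' = \ml(-mD(j,v))$ has $\mu_i' = \mu_i - m\lambda_i$; in particular, by Corollary \ref{correspondence2}(1), $\lambda_j(\alpha_\ell^\vee) = 1$, whence $\mu_j'(\alpha_\ell^\vee) = \mu_j(\alpha_\ell^\vee) - m = 0$, which is item (2). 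The point $x'$ is $x - m[D(j,v)]$; since $x\in\mf$ and $[D(j,v)]\in\mf$ (Theorem \ref{Tone}(d)) and $\mf$ is a cone stable under the operations relevant here, the linear equalities defining $\mf$ are automatically satisfied by $x'$ — so the content of item (1) is really the non-vanishing $H^0((G/B)^s,\ml')^G\neq 0$, from which dominance of the $\mu_i'$ follows by Borel--Weil (a line bundle on $(G/B)^s$ with a non-zero section has all factors dominant).

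For the non-vanishing I would argue as follows. Let $s_0\in H^0((G/B)^s,\ml)^G$ be non-zero (hypothesis (1)), and let $\sigma_D\in H^0((G/B)^s,\mathcal{O}(D(j,v)))^G$ be the canonical section cutting out $D(j,v)$, which exists and is unique up to scalar by Theorem \ref{Tone}(b) with $m=1$. I claim $\sigma_D^m$ divides $s_0$ in $H^0((G/B)^s,\ml)$. This is where the ramification input enters: restrict everything to the open set $\pi(\mathcal{C}-\mathcal{R})$, which by Lemma \ref{malta} is disjoint from $D(j,v)$ and (by birationality of $\pi$ and the codimension $\geq 2$ bounds on $\mathcal{X}\setminus\mathcal{Z}$, $\overline{\pi(\mathcal{R})}$, $\mathcal{Z}\setminus\mathcal{Y}$) is a dense open subset of $(G/B)^s$ whose complement is supported on $D(j,v)$ together with a codimension $\geq 2$ set. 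Pulling back to $\mathcal{C}-\mathcal{R}$, the function $s_0/\sigma_D^m$ — a priori a rational function — is regular and $G$-invariant there, hence (Theorem \ref{equity}, via the stack-theoretic identification in Section \ref{fin1}) is a \emph{constant} times... no: more carefully, $s_0/\sigma_D^m$ is a $G$-invariant rational \emph{section} of $\ml'$, and I must show it has no poles. Its polar locus, if nonempty, is a $G$-invariant divisor supported on $D(j,v)$; but $\ml'$ restricted to the complement of $D(j,v)$ agrees with $\ml$ there, and on $\pi(\mathcal{C}-\mathcal{R})$ the section $s_0/\sigma_D^m$ is regular because $\sigma_D$ is nowhere zero on that set. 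Since the complement of $D(j,v)$ in $(G/B)^s$ differs from $\pi(\mathcal{C}-\mathcal{R})$ only in codimension $\geq 2$, and $(G/B)^s$ is smooth (hence normal), Hartogs' extension shows $s_0/\sigma_D^m$ is regular on all of $(G/B)^s\setminus D(j,v)$; and the choice of the multiplicity $m=\mu_j(\alpha_\ell^\vee)$ together with the order-of-vanishing computation below guarantees that the section actually extends across $D(j,v)$ as a \emph{section of $\ml'$} (not merely of $\ml$). This extended section is non-zero and $G$-invariant, giving item (1).

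The remaining point, and the one I expect to be the genuine obstacle, is to verify that $m = \mu_j(\alpha_\ell^\vee)$ is exactly the order of vanishing of $s_0$ along $D(j,v)$ — or at least a lower bound for it — so that the division is legitimate and lands in $H^0(\ml')$ rather than in $\ml$ twisted by an \emph{effective} correction. The natural way to pin this down is to restrict to a generic point of $D(j,v)$ and analyze the section transversally: along the $\Bbb{P}^1$-curve in the $j$th factor associated to the simple root $\alpha_\ell$ (as in the proof of Proposition \ref{culture}), the line bundle $\ml$ has degree $\mu_j(\alpha_\ell^\vee) = m$, and the intersection-theoretic description of $D(j,v)$ from Theorem \ref{timeticks} — specifically that $\lambda_j(\alpha_\ell^\vee)=1$ — says $D(j,v)$ meets this curve once; invariance of $s_0$ under the one-parameter unipotent subgroup $\exp(tE_{\alpha_\ell})$ forces $s_0$ to vanish to order at least $m$ at that intersection point, hence to order $\geq m$ along $D(j,v)$. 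Combined with the Hartogs argument above, this finishes (1). The delicate step is making the ``order $\geq m$'' claim rigorous at a generic, rather than special, point of $D(j,v)$; I would handle it by a local coordinate computation in the Schubert cell $C_{w_j}\cong \prod_{\alpha\in\Phi_{w_j}}U_\alpha$ using the factorization $C_v\subset s_{\alpha_\ell}C_{w_j}\subset X_{w_j}$ from Lemma \ref{ash}, which exhibits the $\alpha_\ell$-direction as a coordinate transverse to $D(j,v)$, so that $G$-invariance (in fact invariance under the relevant $\mathfrak{sl}_2$) pins down the leading term.
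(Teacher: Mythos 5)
Your computation of item (2) via Corollary \ref{correspondence2}(1) is correct, as is the observation that $x'\in\mf$ is automatic once the section exists, and you have correctly isolated where the real work lies: the claim that $s_0$ vanishes to order at least $m = \mu_j(\alpha_\ell^\vee)$ along $D(j,v)$. (Incidentally the Hartogs detour is unnecessary: $s_0/\sigma_D^m$ is \emph{automatically} a regular rational section of $\ml'$ off $D(j,v)$, since $\sigma_D$ vanishes only there; the only thing to control is the pole order along $D(j,v)$.) But the argument you propose for the order-of-vanishing estimate does not close the gap, and I don't think it can be repaired in the form you describe. The section $s_0$ is invariant under the \emph{diagonal} $G$-action on $(G/B)^s$, not under $\exp(tE_{\alpha_\ell})$ acting on the $j$th factor alone; so there is no invariance to invoke along the $\mathbb{P}^1$ you have in mind. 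Moreover, even granting that $\ml$ has degree $m$ on that curve and $D(j,v)$ meets it once, this only tells you that $s_0$ restricted to the curve has total zero divisor of degree $m$; nothing forces all those zeros to sit at the single point where the curve crosses $D(j,v)$. The ``$m$ zeros concentrate at one point'' claim is exactly the content that needs proof, and the curve degree does not give it.

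The paper avoids this difficulty entirely by not trying to get the full order of vanishing in one step. Instead it proves only that $s_0$ vanishes (to order $\geq 1$) along $D(j,v)$, then observes that the quotient section of $\ml(-D(j,v))$ is again a nonzero invariant section whose $j$th weight pairs with $\alpha_\ell^\vee$ to give $m-1$ (by Corollary \ref{correspondence2}(1)), and whose image under $\kappa$ still lies on $\mf$ (by Theorem \ref{Tone}(d) and linearity), so the hypotheses of the proposition reproduce themselves and one iterates $m$ times. The single vanishing step is proved by the ``necessary inequalities'' mechanism of Section \ref{necessary}: at a general point $x$ of $D(j,v)$ one has $\bigcap g_i C_{u_i}\neq\emptyset$ with $u_j = v$ and $u_i = w_i$ otherwise, so if $s_0(x)\neq 0$ then $\sum_i u_i^{-1}\mu_i(x_k)\leq 0$ for every $\alpha_k\notin\Delta(P)$. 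Using $x\in\mf$ (so $\sum_i w_i^{-1}\mu_i(x_k)=0$), the difference is $(v^{-1}\mu_j - w_j^{-1}\mu_j)(x_k) = \mu_j(\alpha_\ell^\vee)\,(v^{-1}\alpha_\ell)(x_k)$; since $v^{-1}\alpha_\ell\in R^+\setminus R^+_\mathfrak{l}$, some $\alpha_k\notin\Delta(P)$ appears with positive coefficient, making the inequality fail strictly. Hence $s_0$ must vanish on $D(j,v)$. This is a genuinely different mechanism from your curve argument — it is a one-parameter-subgroup limit argument in the sense of the Hilbert–Mumford criterion, applied at points of $D(j,v)$, rather than a degree count on a rational curve — and it is what makes the iteration (and hence the full order $m$ vanishing) go through cleanly.
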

\begin{proof}
Start with a non-zero invariant section  $s\in H^0((G/B)^s,\ml)^G$. We will show that $s$ vanishes on $D(j,v)$:
This will show that $\ml(-D(j,v))$ has invariant sections and lies on $\mf$ (also use Theorem \ref{Tone} (d)). Writing
$$\ml(-D(j,v))=\ml_{\nu_1}\boxtimes\ml_{\nu_2}\boxtimes\dots\boxtimes\ml_{\nu_s}\in \Pic(G/B)^s$$
we see using Corollary \ref{correspondence2}  that $\nu_j({\alpha_{\ell}}^{\vee})=\mu_j({\alpha_{\ell}}^{\vee})-1$, and we can iterate this procedure to get the desired result.

For the vanishing of $s$ on $D(j,v)$, start with a general point
 $x=(\bar{g}_1,\bar{g}_2,\dots,\bar{g}_s)\in D(j,v)$.
 Applying the considerations of Section \ref{necessary} below,
 set $$
u_i = \left\{
\begin{array}{cc}
 w_i, & i\ne j\\
 v, & i=j
\end{array}\right..
$$
We will show that inequality \eqref{inegalite} below fails: i.e., show that for a suitable $\alpha_{k}\not\in\Delta(P)$,
\begin{equation}\label{inequity}
\sum_{i=1}^s u_i^{-1}\mu_i(x_{k})>0
\end{equation}
However, we know that the point $x$ is on the face $\mf$, and hence
\begin{equation}
\sum_{i=1}^s w_i^{-1}\mu_i(x_{k})=0.
\end{equation}

Therefore it suffices to show that $(w_j^{-1}\mu_j - v^{-1}\mu_j)(x_{k})\leq 0$ for some  $\alpha_{k}\not\in\Delta(P)$, with a strict inequality for at least one $\alpha_{k}\not\in\Delta(P)$. Now
\begin{equation}\label{carnatic3}
w_j^{-1}\mu_j-v^{-1}\mu_j=v^{-1}(s_{\alpha_{\ell}}\mu_j-\mu_j) =-\mu_j({\alpha_{\ell}}^{\vee})v^{-1}\alpha_{\ell}
\end{equation}
By assumption $\mu_j({\alpha_{\ell}}^{\vee})>0$. Also we know $\beta=v^{-1}\alpha_{\ell}\in R^+$. Therefore the inequality holds. We now show that at least one inequality holds strictly.

We claim that  $\beta=v^{-1}\alpha_{\ell}\not\in R^+_{\mathfrak{l}}$ because if $\beta\in R^+_{\mathfrak{l}}$, then
$\alpha_{\ell}=v\beta$, and $-\alpha_{\ell}=s_{\alpha_{\ell}}v\beta=w_j\beta$, but $w_j\beta$ is a positive root since $w\in W^P$.

Therefore in the expression of the positive root $\beta$ as a sum of simple roots, at least one root $\alpha_{k}\in \Delta-\Delta(P)$ appears with a non-zero coefficient, and $v^{-1}\alpha_{\ell}(x_{k})>0$. For this $k$, by \eqref{carnatic3}, $(w_j^{-1}\mu_j - v^{-1}\mu_j)(x_{k})<0$, as desired.
\end{proof}

\subsection{Necessary inequalities}\label{necessary}
Suppose
\begin{enumerate}
\item $x=(\bar{g}_1,\bar{g}_2,\dots,\bar{g}_s)$ is an arbitrary point of $(G/B)^s$,
\item  $s\in H^0((G/B)^s,\ml_{\mu_1}\boxtimes\dots\boxtimes\ml_{\mu_s})^G$ with $s(x)\neq 0$. Assume further that
\item $\cap g_i C_{u_i}\neq \emptyset\subseteq G/P$. Here $P$ is a standard parabolic of $G$.
\end{enumerate}
We want to recall the (standard) proof of
\begin{equation}\label{inegalite}
\sum_{i=1}^s u_i^{-1}\mu_i(x_k)\leq 0
\end{equation}
whenever $\alpha_k\in \Delta-\Delta(P)$, under these conditions.

First assume that $\dot{e}\in\cap g_i C_{u_i}$ by translations in $G$. Next write down equations
$g_iu_i=p_i$ or $g_i=p_i u_i^{-1}$. Consider the (rational) one parameter subgroup $t^{x_k}$
and the limit point
$$\lim_{t\to 0}t^{x_k}(\bar{g}_1,\bar{g}_2,\dots,\bar{g}_s)=(\bar{h}_1,\dots,\bar{h}_s)\in (G/B)^s.$$
The action of $t^{x_k}$ on the fiber of $\ml_{\mu_1}\boxtimes\dots\boxtimes\ml_{\mu_s}$ over the limit point (since this measures the order of vanishing of $s$ as $t\to 0$) should be $\le 0$.

The desired inequality \eqref{inegalite} follows from
\begin{lemma}
The action of the (rational) one parameter subgroup $t^{x_k}$ on the fiber of $\ml_{\bar{\mu}_i}$ at $\bar{h}_i$ is given by the exponent  $-\mu_i(u_i x_{k})$.
\end{lemma}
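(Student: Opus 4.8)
The statement to prove is the final Lemma: for a point $x=(\bar g_1,\dots,\bar g_s)\in(G/B)^s$ and the limit point $(\bar h_1,\dots,\bar h_s)=\lim_{t\to0}t^{x_k}(\bar g_1,\dots,\bar g_s)$, the action of the one-parameter subgroup $t^{x_k}$ on the fiber of $\ml_{\bar\mu_i}$ at $\bar h_i$ is given by the exponent $-\mu_i(u_i x_k)$. The plan is to reduce everything to a single coordinate (drop the index $i$, write $u=u_i$, $\mu=\mu_i$, $\bar g=\bar g_i$, $\bar h=\bar h_i$), since the line bundle is an external tensor product and $t^{x_k}$ acts diagonally, so the total exponent is the sum of the per-coordinate exponents.

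First I would recall the setup from the proof of \eqref{inegalite}: after translating so that $\dot e\in\bigcap g_i C_{u_i}$, we have $g_i u_i\in P$, i.e. $g_i = p_i u_i^{-1}$ with $p_i\in P$, hence $\bar g_i = p_i u_i^{-1} B/B = p_i\,\overline{u_i^{-1}}\in G/B$. Then $t^{x_k}\bar g_i = (t^{x_k} p_i t^{-x_k})\, t^{x_k}\,\overline{u_i^{-1}}$. Because $\alpha_k\notin\Delta(P)$, the cocharacter $x_k$ is central in the Levi $L_P$ up to the action on root groups in $U_P$; more precisely $\langle\alpha_j,x_k\rangle=\delta_{jk}\ge 0$ for simple $\alpha_j$, so conjugation by $t^{x_k}$ preserves $P$ and the limit $\lim_{t\to0}t^{x_k}p_i t^{-x_k}$ exists and lies in $P$. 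The interesting behaviour is in the factor $t^{x_k}\,\overline{u_i^{-1}}B/B = \overline{u_i^{-1}}\,\bigl(u_i t^{x_k} u_i^{-1}\bigr)B/B = \overline{u_i^{-1}}\,t^{\,u_i x_k}B/B$ (using the chosen lift $W\to N(T)$, valid on the open cell, and that $t^{x_k}$ normalizes $B$ up to the $T$-part which is absorbed). Since $t^{u_ix_k}\in T\subset B$, this point is just $\overline{u_i^{-1}}$ itself, so in the limit $\bar h_i$ lies in the $B$-orbit of $\overline{u_i^{-1}}$, and the $T$-weight governing the $t^{x_k}$-action on the fiber of $\ml_\mu=G\times_B\Bbb C_{-\mu}$ at a point of the form $\overline{u_i^{-1}}\cdot(\text{unipotent})$ is read off from the weight $-\mu$ twisted by $u_i^{-1}$, evaluated against $x_k$.

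The heart of the computation is therefore: the fiber of $\ml_\mu$ over $n B/B$ for $n\in N(T)$ representing $w\in W$ is the line $\Bbb C$ on which $T$ acts by the character $t\mapsto t^{-w\mu}$ (equivalently the one-parameter subgroup $t^{x_k}$ acts with exponent $-(w\mu)(x_k)=-\mu(w^{-1}x_k)$). Applying this with $w^{-1}=u_i^{-1}$, i.e. $w=u_i$... wait — here one must track carefully which Weyl element appears: the point is $\overline{u_i^{-1}}$, so $w = u_i^{-1}$ and the exponent is $-\mu(w^{-1}x_k)=-\mu(u_i x_k)$, exactly as claimed. So the key identification is $\bar h_i\in B\cdot\overline{u_i^{-1}}$ together with the $T$-weight of $\ml_\mu$ at $\overline{u_i^{-1}}$, and the $T$-invariance (in fact $U$-invariance) of this weight along the Schubert cell, which lets us ignore the unipotent factor $p_i$.

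\textbf{Main obstacle.} The technical friction I expect is in handling the conjugate $t^{x_k}p_it^{-x_k}$ and justifying that it does not contribute to the limiting weight: one must decompose $p_i$ according to root groups of $P$ and check that $x_k$ pairs nonnegatively with all roots of $P$ (true precisely because $\alpha_k\in\Delta\setminus\Delta(P)$ and all roots of $P$ are nonnegative combinations of simple roots not involving... no — of simple roots, with the $U_P$-part possibly involving $\alpha_k$ with positive coefficient, which is fine since we take $t\to0$), so that the limit stays in $P$ and acts trivially on the fiber line up to the $T$-character already accounted for. A clean way to finesse this is to observe that the fiber of $\ml_\mu$ at the limit point, together with its $t^{x_k}$-action, depends only on the $T$-fixed point in the closure of the $t^{x_k}$-orbit, namely the attracting fixed point of $\bar g_i$ under $t^{x_k}$; by the Bialynicki-Birula description this fixed point is $\overline{u_i^{-1}}$ (the Schubert cell $p_i\overline{u_i^{-1}}$ lies in the stratum attracted to $\overline{u_i^{-1}}$), and the weight of $T$ on $(\ml_\mu)_{\overline{u_i^{-1}}}$ is $-u_i^{-1}\mu$, whose pairing with $x_k$ is $-\mu(u_ix_k)$. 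This routes around the explicit conjugation computation entirely.
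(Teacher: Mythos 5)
Your proof is correct, and its main line --- write $g_i=p_iu_i^{-1}$, pass to the limit under $t^{x_k}$, and read off the $T$-weight $-u_i^{-1}\mu_i$ at $\overline{u_i^{-1}}$ --- is the paper's. The ``technical friction'' you worry about, controlling $\lim_{t\to 0}t^{x_k}p_it^{-x_k}$, is precisely where the paper does the small amount of explicit bookkeeping you tried to route around: it writes $p_i=l_iv_i'$ with $l_i\in L$ and $v_i'\in U_P$ a product of root subgroups; since $\alpha_k\notin\Delta(P)$ forces $x_k$ to lie in the center of $\frl$, the factor $l_i$ commutes with $t^{x_k}$, while the only factors $\exp(cE_\alpha)$ of $v_i'$ surviving the limit are those with $\alpha(x_k)=0$, and they too commute. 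Hence $\bar h_i=(l_iv_i)\,\overline{u_i^{-1}}$ with $l_iv_i$ in the centralizer of $t^{x_k}$, and translating it off reduces to the standard computation at $\overline{u_i^{-1}}$ that you carried out correctly (note the centralizing factor sits to the left, not $\overline{u_i^{-1}}\cdot(\text{unipotent})$ as written in your draft). Your Bialynicki--Birula reformulation is a valid alternative: the $t^{x_k}$-weight of $\ml_{\mu_i}$ is locally constant on the fixed locus of $t^{x_k}$, and both $\bar h_i$ and $\overline{u_i^{-1}}$ lie in the component $Z_G(x_k)\cdot\overline{u_i^{-1}}$ (because $l_iv_i\in Z_G(x_k)$), so one may skip the explicit conjugation; this is cleaner conceptually, though establishing that $\bar h_i$ lies in that component is the same commutativity fact in disguise.
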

\begin{proof}
This is because $\bar{h}_i$ is of the form $l_i v_iu_i^{-1}$ where $l_i$ is in the Levi subgroup $L$ and $v_i$ is in the
unipotent radical and commutes with $t^{x_k}$ (start with $p_i=l_i v'_i$ and write $v'_i$ as a product of one parameter subgroups). Therefore we need to compute the action of $t^{x_{k}}$ on the fiber of $\ml_{\bar{\mu}_i}$
at $u_i^{-1}\dot{e}$, which is a standard computation.
\end{proof}
\subsection{Extremal rays lie on regular facets of the eigencone}
\begin{lemma}\label{extreme}
Suppose that $\vec r = \Bbb{Q}_{\ge 0}(h_1,\hdots,h_s)$ is an extremal ray of $\Gamma_{\Bbb{Q}}(s,K)$. Then $(h_1,\hdots,h_s)$ satisfies some inequality (\ref{fox}) with equality (and maximal parabolic $P$).
\end{lemma}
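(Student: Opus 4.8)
The plan is to read the result off the explicit inequality description of $\Gamma(s,K)$ together with the elementary facial calculus of polyhedral cones. Regard $\Gamma(s,K)$ (equivalently $\Gamma_{\Bbb{Q}}(s,K)$, which has the same extremal rays) as a rational polyhedral cone in $\frh_{\Bbb{R}}^s$ cut out by two finite families of homogeneous inequalities: the Weyl chamber walls $\alpha_j(h_i)\ge 0$ for $1\le i\le s$, $1\le j\le r$ (these define $\frh_+^s$), and the inequalities \eqref{fox} attached to maximal standard parabolics $P$ and tuples $\vec w\in (W^P)^s$ satisfying \eqref{kishi}; that the maximal-parabolic inequalities suffice is the result of \cite{R1} recalled above. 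Writing $\pi_i\colon\frh_{\Bbb{R}}^s\to\frh_{\Bbb{R}}$ for the $i$-th projection, the Weyl-wall functionals are the $\alpha_j\circ\pi_i$, and these are linearly independent in $(\frh_{\Bbb{R}}^s)^*$ as $(i,j)$ ranges over all pairs. I would then invoke the standard fact that, for a polyhedron presented by an explicit finite list of inequalities, the affine hull of any face equals the common zero locus of the inequalities tight along that face. Applied to the one-dimensional face $\vec r$, whose affine hull is the line $\Bbb{R}(h_1,\dots,h_s)$, this says that the functionals among the two families that vanish at $(h_1,\dots,h_s)$ span a subspace of $(\frh_{\Bbb{R}}^s)^*$ of codimension exactly one.

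Next I would argue by contradiction. Suppose no inequality \eqref{fox} with maximal $P$ is tight at $(h_1,\dots,h_s)$; then all tight functionals are Weyl walls. Setting $S_i=\{\,j : \alpha_j(h_i)=0\,\}$ for each $i$, linear independence of the $\alpha_j\circ\pi_i$ shows the span of the tight functionals has dimension $\sum_{i=1}^s|S_i|$, which must equal $sr-1$. Since $|S_i|\le r$ for every $i$, this forces $|S_{i_0}|=r-1$ for a single index $i_0$ and $|S_i|=r$ for all $i\ne i_0$. Hence $h_i=0$ for $i\ne i_0$, and $h_{i_0}$ lies in the one-dimensional space $\{\,h : \alpha_j(h)=0\ \text{for all}\ j\ne\ell_0\,\}=\Bbb{R}\,x_{\ell_0}$, where $\alpha_{\ell_0}$ is the unique simple root with $\ell_0\notin S_{i_0}$. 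So a generator of $\vec r$ is of the form $(0,\dots,0,c\,x_{\ell_0},0,\dots,0)$ with $c\ne 0$.

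The last step is to observe that no such point lies in $\Gamma(s,K)$, contradicting $(h_1,\dots,h_s)\in\vec r\subseteq\Gamma(s,K)$. This is immediate from \eqref{kishori}: if $C(h_j)=\overline{k_j}$ with $\sum_j k_j=0$, then for each $j\ne i_0$ we have $\overline{k_j}=C(0)=\overline{0}$, so $k_j$ lies in the $K$-orbit of $0$, i.e.\ $k_j=0$; hence $k_{i_0}=-\sum_{j\ne i_0}k_j=0$, so $C(c\,x_{\ell_0})=\overline{k_{i_0}}=\overline{0}=C(0)$, and since $C$ is a bijection $c\,x_{\ell_0}=0$, contradicting $c\ne 0$. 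Therefore some inequality \eqref{fox} with maximal parabolic $P$ is tight at $(h_1,\dots,h_s)$, which is the assertion of the lemma.

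I expect the only point needing care to be the polyhedral input in the first paragraph — passing from ``$\vec r$ is one-dimensional'' to ``the tight functionals span a codimension-one subspace.'' This rests on having an honest finite inequality presentation of $\Gamma(s,K)$, and so genuinely uses the cited theorem that the maximal-parabolic Schubert inequalities suffice; once that is granted the remainder is linear algebra in the simple-coweight basis plus a one-line appeal to the definition \eqref{kishori}. (Note that the argument does not require $\Gamma(s,K)$ to be full-dimensional, only that it is cut out by the stated finite list of inequalities.)
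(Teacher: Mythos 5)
Your proof is correct, and it takes a genuinely different route from the paper's. The paper also argues by contradiction, assuming every \eqref{fox} inequality is strict at $\vec r$, but it then splits into two observations: (i) the case where exactly one $h_i\ne 0$ is impossible by \eqref{kishori} (same one‑line appeal to the definition that you use at the end), and (ii) if at least two $h_i\ne 0$, then—since the \eqref{fox} inequalities are strict, $\Gamma(s,K)$ coincides with $\frh_+^s$ near $\vec r$, so $\vec r$ must be extremal in $\frh_+^s$—one can perturb a single coordinate by $(1\pm\epsilon)$ to produce two non‑proportional points of $\frh_+^s$ summing to a generator of $\vec r$, contradicting extremality. You instead feed the full finite inequality presentation (Weyl walls plus Ressayre's maximal‑parabolic inequalities) into the standard ``affine hull of a face equals the zero locus of the tight constraints'' fact, which immediately pins down the only possible shape for $\vec r$: a single nonzero coordinate that is a positive multiple of one fundamental coweight $x_{\ell_0}$. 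Then the same appeal to \eqref{kishori} finishes. What your approach buys is a more precise structural conclusion before the final step and avoids the explicit perturbation, at the modest cost of invoking the polyhedral affine‑hull lemma; the paper's version needs less polyhedral machinery and handles the ``two or more nonzero'' case directly. Both proofs silently but essentially use the fact (from \cite{BK}, \cite{R1}) that the listed inequalities cut out $\Gamma(s,K)$, and both are sound.
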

\begin{proof}
Assume not. By symmetry, we may assume $h_1,\hdots,h_m$ are all nonzero for some $1\le m\le s$, and that $h_{m+1},\hdots,h_s$ are all $0$. By definition there exist $k_1,\hdots,k_s\in \mathfrak{k}$ such that $C(h_j) = \overline{k_j} \in \mathfrak{k}/K$ and $\sum k_j = 0$. If $m=1$, then $k_1\ne 0$ but $k_j = 0$ for each $j> 1$, contradicting the sum condition. So $m$ is at least $2$.

Since each inequality (\ref{fox}) holds with strict inequality, $\vec r$ must be an extremal ray of $\frh_+^s$. However, the condition $h_1\in \frh_+$ is invariant under multiplication by $\Bbb{R}_{>0}$; thus for arbitrarily small values of $\epsilon>0$,
$$
((1+\epsilon)h_1,h_2,\hdots,h_s) \text{ and } ((1-\epsilon)h_1,h_2,\hdots,h_s)
$$
are elements of $\frh_+^s$ which are not proportional (since $h_2\ne 0$), but their sum gives the same ray $\vec r$. This contradicts the extremality of $\vec r$.
\end{proof}
\section{Building blocks for induction}

We need to find the remaining extremal rays on $\mf_{\Bbb{Q}}$, i.e., the extremal rays of $\mf_{2,\Bbb{Q}}$ under the killing form bijection of Proposition \ref{bij}, we are interested in the extremal rays of the $\Bbb{Q}$ cone generated by line bundles
$$\ml=\ml_{\lambda_1}\boxtimes \ml_{\lambda_2}\boxtimes\dots\boxtimes \ml_{\lambda_s}\in \Pic(G/B)^s$$
such that
\begin{enumerate}
\item $H^0((G/B)^s,\ml^N)^G \neq 0$ for some $N>0$
\item For each of the $q$ pairs $(j,v)$  with $v\leto{\alpha_{\ell}}w_j$ and $v\in W^P$, we have $\lambda_j(\alpha_{\ell}^{\vee})=0$
\end{enumerate}
We want to replace $(G/B)^s$ by a product of partial flag varieties $\prod_{i=1}^s G/Q'_{w_i}$ so that line bundles
on the latter pull back to line bundles $$\ml=\ml_{\lambda_1}\boxtimes\dots\boxtimes \ml_{\lambda_s}$$ on $(G/B)^s$ which satisfy all the linear equalities required in (2) above.

\begin{defi}
For $w\in W^P$, define
 $$\Delta'_w=\{\alpha\in \Delta\mid s_{\alpha} w< w\}=\Delta\cap wR^-\subseteq \Delta_w$$
and let ${Q}'_w\subseteq Q_w$ be the corresponding standard parabolic subgroup.
\end{defi}

\begin{lemma}\label{medic}
Let $\ml_{\lambda}$ be the pullback to $G/B$ of a line bundle on $G/Q'_w$, and  $v\leto{\alpha_{\ell}} w$, $v,w\in W^P$. Then,
$\lambda({\alpha_{\ell}}^{\vee})=0$.
\end{lemma}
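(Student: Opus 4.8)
The plan is to reduce the statement to the definition of $\Delta'_w$ together with the standard identification of $\Pic(G/Q'_w)$ with the character group of $Q'_w$. First I would unwind the hypothesis: by the definition of $\leto{\alpha_\ell}$, the relation $v\leto{\alpha_\ell}w$ means $w=s_{\alpha_\ell}v$ and $\ell(w)=\ell(v)+1$; hence $s_{\alpha_\ell}w=v$ and $\ell(s_{\alpha_\ell}w)=\ell(w)-1$, so $s_{\alpha_\ell}w<w$. Since $\alpha_\ell$ is a simple root, this is exactly the condition defining membership in $\Delta'_w$ (equivalently $w^{-1}\alpha_\ell\in R^-$, so $\alpha_\ell\in\Delta\cap wR^-$); thus $\alpha_\ell\in\Delta'_w$, the set of simple roots of the Levi of $Q'_w$.

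Next I would invoke the description of line bundles on $G/Q'_w$. Such a line bundle is $G\times_{Q'_w}\Bbb{C}_{-\lambda}$ for a character $\lambda$ of $Q'_w$, and its pullback along $G/B\to G/Q'_w$ is precisely $\ml_\lambda$; so the hypothesis forces $\lambda\in X(Q'_w)\subseteq X(T)$. A $T$-weight $\lambda$ extends to a character of the standard parabolic $Q'_w$ if and only if $\lambda(\alpha^\vee)=0$ for every $\alpha\in\Delta'_w$ (every character of $Q'_w$ factors through its Levi quotient and kills the commutator subgroup, which contains the images of the coroot subgroups $\alpha^\vee(\Bbb{C}^*)$ for $\alpha\in\Delta'_w$). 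Applying this with $\alpha=\alpha_\ell$ gives $\lambda(\alpha_\ell^\vee)=0$, which is the assertion of the lemma.

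I do not expect any genuine obstacle. The only points needing a moment's care are the bookkeeping in $v\leto{\alpha_\ell}w$ --- that it is $s_{\alpha_\ell}w$, not $s_{\alpha_\ell}v$, whose length drops, and that $w$ is the longer of the two --- and the standard fact that the characters of a standard parabolic are exactly the $T$-weights orthogonal to the coroots of its Levi subgroup; both are immediate, so the lemma is essentially formal once the definitions are matched up.
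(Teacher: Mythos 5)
Your proof is correct and takes the same route the paper does: deduce $\alpha_\ell\in\Delta'_w$ from $v\leto{\alpha_\ell}w$ (equivalently $w^{-1}\alpha_\ell\in R^-$, i.e.\ $s_{\alpha_\ell}w<w$), then conclude $\lambda(\alpha_\ell^\vee)=0$ because a pullback from $G/Q'_w$ must come from a character of $Q'_w$. The paper simply compresses this to one line; you have spelled out the standard facts that it leaves implicit.
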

\begin{proof}
This follows from $\alpha_{\ell}\in \Delta'_w$ since $w^{-1}\alpha_{\ell}\in R^-$.
\end{proof}

The group $w^{-1}Bw\cap P$ played a key role in various constructions in the previous sections. It was important in those arguments that it mapped onto $B_L$ under the projection to $L$. The group $Q'_w$ has the same property (but not $Q_w$). The following proof was communicated to us by S. Kumar:
\begin{lemma}\label{sk}
$w^{-1}{Q}'_w w\cap L =B_L.$
\end{lemma}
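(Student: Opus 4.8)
The plan is to show both inclusions, working mostly at the level of root subgroups and the torus. Since both $Q'_w$ and $L$ contain $T$, and $w\in N(T)$ normalizes $T$, the group $w^{-1}Q'_w w\cap L$ contains $T$ and is therefore determined by which root subgroups $U_\alpha$ (for $\alpha\in R$) it contains; equivalently, by the set of roots $\alpha$ such that $w\alpha\in R(Q'_w)$, intersected with $R_\frl$. Here $R(Q'_w) = R^- \sqcup (R^+\cap \sum_{\alpha\in\Delta'_w}\mathbb{Z}\alpha)$ is the root system of the standard parabolic $Q'_w$, where $\Delta'_w = \Delta\cap wR^-$ as in the preceding definition. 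So the lemma reduces to the combinatorial identity
$$
\{\alpha\in R_\frl \mid w\alpha\in R(Q'_w)\} = R^-_\frl,
$$
after which one upgrades from root data to the connected groups (both sides are connected: $B_L$ obviously, and $w^{-1}Q'_w w\cap L$ because $Q'_w\cap wLw^{-1}$ is an intersection of a parabolic with a Levi, and such intersections are connected by the same Borel reference, \cite[Section 14.4]{Borel}, used in Lemma \ref{else}).

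For the inclusion $B_L\subseteq w^{-1}Q'_w w\cap L$: clearly $B_L\subseteq L$, so it suffices to check $wB_Lw^{-1}\subseteq Q'_w$. But $w\in W^P$ means $wB_Lw^{-1}\subseteq B\subseteq Q'_w$, which is exactly the minimal-length-representative property recalled in the Notation section; this is the easy half. For the reverse inclusion, take $\alpha\in R_\frl$ with $w\alpha\in R(Q'_w)$; I want to conclude $\alpha\in R^-_\frl$, i.e. $\alpha\in R^-$. Suppose instead $\alpha\in R^+_\frl$. Since $w\in W^P$ sends $R^+_\frl$ into $R^+$, we have $w\alpha\in R^+$, so $w\alpha\in R^+\cap \sum_{\gamma\in\Delta'_w}\mathbb{Z}\gamma =: R^+(L')$, the positive roots of the Levi $L'=L_{Q'_w}$ of $Q'_w$. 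The goal is to derive a contradiction, i.e. to show that no positive root of $L$ is carried by $w$ into the span of $\Delta'_w = \Delta\cap wR^-$.

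The heart of the matter is this last step, and it is where I expect the real work to be. The key point is that $\Delta'_w$ consists of \emph{simple} roots $\delta$ with $w^{-1}\delta\in R^-$, so $w^{-1}R^+(L')$ — being the positive roots of a standard Levi inside $L'$, all nonnegative integer combinations of the $w^{-1}\delta$ — lies in the cone spanned by $\{w^{-1}\delta : \delta\in\Delta'_w\}\subseteq R^-$. One shows this cone, intersected with $R$, lies in $R^-$: indeed each $w^{-1}\delta\in R^-$, and a nonnegative combination of negative roots that is itself a root must be a negative root. Hence $w^{-1}R^+(L')\subseteq R^-$, so $\alpha = w^{-1}(w\alpha)\in R^-$, contradicting $\alpha\in R^+_\frl\subseteq R^+$. (One must be slightly careful that $w^{-1}R^+(L')$ really lands in the $\mathbb{Z}_{\ge 0}$-span of the $w^{-1}\delta$, not merely the $\mathbb{R}$-span; but this is immediate because the coefficients in $R^+(L')\subseteq \sum_{\delta\in\Delta'_w}\mathbb{Z}_{\ge0}\delta$ are already nonnegative integers and $w^{-1}$ is linear.) This contradiction forces $\alpha\in R^-\cap R_\frl = R^-_\frl$, completing the reverse inclusion and hence, after passing from root subgroups plus $T$ back to the (connected) algebraic groups, the proof. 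The main obstacle is organizing the root-combinatorial bookkeeping cleanly — in particular keeping straight the three parabolics $P$, $Q_w$, $Q'_w$ and correctly identifying $R(Q'_w)$ — but no deep input beyond $w\in W^P$ and the structure of standard parabolics is needed.
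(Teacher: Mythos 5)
Your strategy is the paper's strategy: reduce the group equality to a root-combinatorial identity, handle one inclusion by the group fact $wB_Lw^{-1}\subseteq B\subseteq Q'_w$, and for the other, write the offending root as a sum over $\Delta'_w$ and derive a sign contradiction after applying $w^{-1}$. This matches the proof given in the text (which exhibits $\gamma=-\sum\gamma_i$, $\gamma_i\in\Delta'_w$, and observes $w^{-1}\gamma_i\in R^-$ to force $\beta=w^{-1}\gamma$ positive).

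However, you have a sign/convention error that runs through the write-up. In this paper's setup, $B$ contains $U_\alpha$ for $\alpha\in R^+$: this is forced by $C_w=BwP/P$ having dimension $\ell(w)$, and by $wB_Lw^{-1}\subseteq B$ for $w\in W^P$, which relies on $wR^+_\frl\subseteq R^+$. Hence $B_L$ has roots $R^+_\frl$, and the \emph{standard} parabolic $Q'_w\supseteq B$ has
$$R(Q'_w)=R^+\ \sqcup\ \Bigl(R^-\cap\sum_{\delta\in\Delta'_w}\mathbb{Z}\delta\Bigr),$$
not $R^-\sqcup(R^+\cap\mathrm{span}\,\Delta'_w)$ as you wrote (that would be the opposite parabolic). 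Accordingly, your target identity should read $\{\alpha\in R_\frl\mid w\alpha\in R(Q'_w)\}=R^+_\frl$, not $R^-_\frl$. Your easy-direction argument is fine as a group statement, but note that what it actually proves is the $R^+_\frl\subseteq$ containment. For the reverse, you should suppose $\alpha\in R^-_\frl$ (not $R^+_\frl$): then $w\alpha\in R^-\cap R(Q'_w)$, so $w\alpha=-\sum n_\delta\,\delta$ with $\delta\in\Delta'_w$, $n_\delta\ge 0$; since $w^{-1}\delta\in R^-$, $\alpha=-\sum n_\delta\,w^{-1}\delta$ is a nonnegative combination of positive roots and hence positive, contradicting $\alpha\in R^-$. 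As written, with the correct $R(Q'_w)$ your hypothesis $\alpha\in R^+_\frl$ gives $w\alpha\in R^+\subseteq R(Q'_w)$ automatically and yields no contradiction, so the argument would genuinely fail. Once the signs are swapped throughout, your proof becomes the paper's proof.
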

\begin{proof}
The inclusion $w^{-1}Q'_w w\cap L \supset B_L$ is easy because $wB_Lw^{-1}\subset B\subseteq Q'_w$.
For the other direction, we are reduced to proving that (also see Lemma \ref{else}),
\begin{equation}\label{shew}
w^{-1} R(Q'_w)\cap R^{-}(L)=\emptyset
\end{equation}
 Pick a $w^{-1}\gamma=\beta$ in the intersection. Clearly $\gamma$ is a negative root, since $w\beta$ is a negative root (use $wR^{-}(L)\subset R^-$). Write $\gamma=-\sum \gamma_i$ with $\gamma_i\in \Delta'_w$ and simple. Now $w^{-1}\gamma_i$ are negative, and hence $w^{-1}\gamma$ is a positive root, a contradiction.
\end{proof}
\subsection{Enlargement of Schubert cells}
Let $C'_w\subseteq  Y_w\subseteq X_w$ be the open $Q'_w$ orbit in $X_w$.
The following lemma relates $C'_w$ to simple codimension one Schubert cells in $X_w$ (see Lemma \ref{Roma}).
\begin{lemma}\label{outside}
Let $v\leto{\beta} w$ with $v,w\in W^P$. The following are equivalent:
\begin{enumerate}
\item $C_v\subseteq Y_w$.
\item $\beta\in \Delta'_w$ (in particular, $\beta$ is a simple root).
\item $C_v\subseteq C'_w$.
\end{enumerate}
Therefore $Y_w-C'_w$ is codimension $\geq 2$ in $Y_w$.
\end{lemma}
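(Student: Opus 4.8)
The plan is to prove the cyclic chain $(1)\Rightarrow(2)\Rightarrow(3)\Rightarrow(1)$ and then read off the codimension assertion.

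For $(1)\Leftrightarrow(2)$: by Lemma \ref{Roma}, $C_v\subseteq Y_w$ is equivalent to $\beta$ being a simple root, so it suffices to note the elementary fact that, under the standing hypothesis $v\leto{\beta}w$, one has ``$\beta$ simple'' if and only if $\beta\in\Delta'_w$. Indeed $v\leto{\beta}w$ means $s_\beta w=v$ with $\ell(s_\beta w)=\ell(w)-1$, so $s_\beta w<w$ holds unconditionally, whence $\beta\in\Delta'_w=\{\alpha\in\Delta\mid s_\alpha w<w\}$ amounts exactly to $\beta\in\Delta$. The implication $(3)\Rightarrow(1)$ is immediate from $C'_w\subseteq Y_w$.

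The substantive step is $(2)\Rightarrow(3)$, where I would use Lemma \ref{ash}. First observe $C_w\subseteq C'_w$: both $C_w$ and $C'_w$ are (nonempty) open, hence dense, in the irreducible variety $X_w$, so they meet; if $x$ lies in the intersection then $C_w$ is the $B$-orbit of $x$ and $C'_w$ is the $Q'_w$-orbit of $x$, so $B\subseteq Q'_w$ gives $C_w\subseteq C'_w$. Now assume $\beta=\alpha_\ell\in\Delta'_w$; then $\beta$ is simple and $s_\beta\in Q'_w$, so, $C'_w$ being $Q'_w$-stable, $s_\beta C_w\subseteq s_\beta C'_w=C'_w$. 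Lemma \ref{ash}(1) gives $s_\beta C_v\subseteq C_w$, hence $C_v\subseteq s_\beta C_w\subseteq C'_w$, which is $(3)$.

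For the last assertion I would do a dimension count on Schubert cells. Using $C_w\subseteq C'_w$ we get $Y_w\setminus C'_w\subseteq X_w\setminus C_w=\bigsqcup_{v\in W^P,\ v<w}C_v$, whose cells of top dimension $\dim X_w-1$ are precisely the $C_v$ with $v\leto{\beta}w$ ($v\in W^P$, $\beta\in R^+$). None of these meets $Y_w\setminus C'_w$: if $\beta$ is simple, then $\beta\in\Delta'_w$ as above and $(2)\Rightarrow(3)$ yields $C_v\subseteq C'_w$; if $\beta$ is not simple, then Lemma \ref{Roma} gives $C_v\not\subseteq Y_w$, and since $C_v$ is a single $B$-orbit while $Y_w$ is $B$-stable (as $B\subseteq Q_w$), necessarily $C_v\cap Y_w=\emptyset$. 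Thus $Y_w\setminus C'_w$ meets only cells of dimension $\le\dim X_w-2$, so it is codimension $\ge2$ in $X_w$, and a fortiori in the open subvariety $Y_w$. I do not anticipate a genuine difficulty; the points meriting care are the inclusion $C_w\subseteq C'_w$ and, in the final count, the upgrade from ``$C_v\not\subseteq Y_w$'' to ``$C_v\cap Y_w=\emptyset$'' using that $C_v$ is a $B$-orbit. The essential external inputs are Lemma \ref{Roma} and Lemma \ref{ash}(1).
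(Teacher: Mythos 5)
Your proof is correct and, for the cycle $(1)\Rightarrow(2)\Rightarrow(3)\Rightarrow(1)$, follows essentially the same route as the paper: $(1)\Leftrightarrow(2)$ via Lemma \ref{Roma} plus the observation that $s_\beta w<w$ is automatic from $v\leto{\beta}w$, and $(2)\Rightarrow(3)$ via $s_\beta\in Q'_w$ and $Q'_w$-stability of $C'_w$. The only stylistic difference in $(2)\Rightarrow(3)$ is that you first prove $C_w\subseteq C'_w$ by a dense-orbit argument and then invoke Lemma \ref{ash}(1), whereas the paper works directly with the base point $\dot w$; both rest on the same facts. You also supply a proof of the final codimension assertion, which the paper states but does not argue; your Schubert-cell decomposition argument is correct, and the upgrade from ``$C_v\not\subseteq Y_w$'' to ``$C_v\cap Y_w=\emptyset$'' via $B$-stability is exactly the point that needs to be made.
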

\begin{proof}
If $C_v\subseteq Y_w$ then $\beta\in \Delta_w=\Delta\cap w(R_\frl^+\sqcup R^-)$, and hence $\beta\in \Delta$.  We will also have $w^{-1}\beta\in R^-$, and hence $\beta\in \Delta'_w=\Delta\cap wR^{-}$.  Therefore (1) implies (2).

Under the assumption (2), $s_{\beta}\in Q'_w$, which implies $s_\beta \dot w\in C'_w$ and hence $C_v\subseteq C'_w$. Therefore (2) implies (3). Since $C'_w\subseteq Y_w$, it follows that (3) implies (1).
\end{proof}
\subsection{Universal families over $\prod_{i=1}^s (G/Q'_{w_i})$}\label{wsmm}

Define the universal intersection locus
$$
\mathcal{X}' = \left\{(\bar g_1,\hdots\bar g_s,\bar{z})\in \prod_{i=1}^s (G/Q'_{w_i} \times G/P )\mid \bar{z}\in g_iX_{w_i}~\forall i\right\},
$$
and similarly define subloci $\mathcal{Z}'\supseteq\mathcal{C}'$ using the $Z_{w_i}, C'_{w_i}$, respectively, in place of the $X_{w_i}$. (For the scheme structures, see \cite{BKR}.) As shown in \cite{BKR}, the projection map $\pi':\mathcal{X}' \to\prod_{i=1}^s (G/Q'_{w_i})$ is birational.
Moreover, because the $X_w$ are all normal, each $X_{w_i}\setminus Z_{w_i}$ has codimension $\ge 2$ in $X_{w_i}$; accordingly, $\mathcal{X}'\setminus\mathcal{Z}'$ has codimension $\ge 2$ in $\mathcal{X}'$. Let $\pi':\mathcal{X}'\to \prod_{i=1}^s (G/Q'_{w_i})$.

\begin{lemma}\label{codimensioon}
$\overline{\pi'(\mathcal{X}'-\mc')}\subseteq \prod_{i=1}^s (G/Q'_{w_i})$ is of codimension $\geq 2$.
\end{lemma}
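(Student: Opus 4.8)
The plan is to reduce the statement to the two facts we have already assembled: first, that $\overline{\pi(\mathcal{R})}$ is codimension $\geq 2$ in $(G/B)^s$ (where $\pi:\mathcal{X}\to (G/B)^s$), and second, that $\mathcal{Z}\setminus\mathcal{Y}\subseteq \mathcal{R}\cup A$ with $A$ of codimension $\geq 2$ in $\mathcal{X}$; combined with $\mathcal{X}\setminus\mathcal{Z}$ being codimension $\geq 2$ in $\mathcal{X}$ and Lemma \ref{outside} (which gives $Y_w\setminus C'_w$ codimension $\geq 2$ in $Y_w$, hence $\mathcal{Y}\setminus\mathcal{C}$ codimension $\geq 2$ in $\mathcal{X}$). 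Chaining these, $\mathcal{X}\setminus\mathcal{C} = (\mathcal{X}\setminus\mathcal{Z})\cup(\mathcal{Z}\setminus\mathcal{Y})\cup(\mathcal{Y}\setminus\mathcal{C})$, and the middle piece is contained in $\mathcal{R}\cup A$. So $\mathcal{X}\setminus\mathcal{C}$ is contained in the union of $\mathcal{R}$ with a set of codimension $\geq 2$ in $\mathcal{X}$. Since $\pi$ is birational with $\overline{\pi(\mathcal{R})}$ of codimension $\geq 2$ in $(G/B)^s$, and the proper image of a codimension $\geq 2$ subset under a birational morphism has codimension $\geq 2$ (the generic fiber of $\pi$ is a point, so $\dim\overline{\pi(S)} = \dim S$ for any irreducible $S$ not dominating via a positive-dimensional fiber — and $\pi$ has no positive-dimensional fibers over a dense open set), we conclude $\overline{\pi(\mathcal{X}\setminus\mathcal{C})}$ is codimension $\geq 2$ in $(G/B)^s$.

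Next I would transfer this from $(G/B)^s$ to $\prod_i (G/Q'_{w_i})$. The natural projection $q: (G/B)^s \to \prod_i (G/Q'_{w_i})$ is a smooth fibration (a product of flag-bundle projections), and by construction of $\mathcal{X}'$, $\mathcal{C}'$ and their analogues there is a commuting square: $\mathcal{X}$ (resp. $\mathcal{C}$) maps to $\mathcal{X}'$ (resp. $\mathcal{C}'$) compatibly with $\pi,\pi'$ and $q$. Concretely, $\mathcal{X}$ is the preimage of $\mathcal{X}'$ under $q\times\mathrm{id}_{G/P}$, because the condition $\bar z\in g_i X_{w_i}$ depends only on the image of $\bar g_i$ in $G/Q'_{w_i}$ (as $Q'_{w_i}\subseteq Q_{w_i}$ preserves $X_{w_i}$); similarly $\mathcal{C}$ is the preimage of $\mathcal{C}'$, using Lemma \ref{outside} part (3) which identifies $C_v\subseteq C'_w$ for the relevant cells and more generally shows $C'_{w_i}$ is the appropriate $Q'_{w_i}$-orbit whose preimage under $q$ recovers $C_{w_i}$. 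Therefore $\mathcal{X}\setminus\mathcal{C}$ is the preimage of $\mathcal{X}'\setminus\mathcal{C}'$ under $q\times\mathrm{id}$, hence $\overline{\pi(\mathcal{X}\setminus\mathcal{C})}$ is the preimage under $q$ of $\overline{\pi'(\mathcal{X}'\setminus\mathcal{C}')}$ (using that $q$ is flat with irreducible fibers, so preimage commutes with closure and surjects properly). Since $q$ is a smooth surjective fibration, a closed subset of $\prod_i(G/Q'_{w_i})$ has codimension $\geq 2$ if and only if its $q$-preimage does; combined with the previous paragraph this yields the claim.

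The main obstacle I anticipate is the careful bookkeeping in the second paragraph: verifying precisely that $\mathcal{C}$ (not just $\mathcal{X}$) is the $q$-preimage of $\mathcal{C}'$, which requires knowing that the "small" Schubert cell $C_{w_i}$ in $G/B$ maps onto $C'_{w_i}$ in $G/Q'_{w_i}$ with fibers exactly the $B$-orbits inside $Q'_{w_i}/B$ — this is where Lemma \ref{sk} ($w^{-1}Q'_w w\cap L = B_L$) and Lemma \ref{outside} are doing the real work, and one must confirm the scheme structures (as defined in \cite{BKR}) match under pullback, not merely the underlying sets. The birational-image codimension estimate and the flatness/preimage-of-closure manipulations are standard, but the compatibility of the two universal families across $q$ is the point that needs genuine care.
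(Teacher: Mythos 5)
There is a genuine gap, and it sits at exactly the point you flagged as needing ``genuine care.'' You claim that $\mathcal{Y}\setminus\mathcal{C}$ has codimension $\geq 2$ (invoking Lemma \ref{outside}) and, dually, that $\mathcal{C}$ is the $(q\times\mathrm{id})$-preimage of $\mathcal{C}'$. Both are false: $\mathcal{C}$ is cut out by the $B$-orbits $C_{w_i}\subseteq G/P$, whereas $\mathcal{C}'$ is cut out by the strictly larger $Q'_{w_i}$-orbits $C'_{w_i}$. Lemma \ref{outside} controls $Y_w\setminus C'_w$, not $Y_w\setminus C_w$; the latter has codimension-one components (namely $C'_w\setminus C_w$) whenever $Q'_w\neq B$, so $\mathcal{Y}\setminus\mathcal{C}$ is not codimension $\geq 2$. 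Concretely, $\overline{\pi(\mathcal{X}\setminus\mathcal{C})}$ contains every basic divisor $D(j,v)$: for $v\leto{\beta}w_j$ with $\beta$ simple, a generic $\bar g\in D(j,v)$ is the image of $(\bar g,\bar z)$ with $\bar z\in g_jC_v\subseteq g_jC'_{w_j}\setminus g_jC_{w_j}$, which lies in $\mathcal{Y}\setminus\mathcal{C}$. So your first paragraph's conclusion already fails. Your second paragraph makes the compensating mistake of identifying $\mathcal{C}$ with $\tilde\phi^{-1}(\mathcal{C}')$ (the justification ``$C'_{w_i}$ is the orbit whose preimage under $q$ recovers $C_{w_i}$'' does not parse: both are subsets of $G/P$, on which $q$ does not act), so the two errors cancel and you land on a true statement, but neither step is correct.

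The repair is to replace $\mathcal{C}$ everywhere by $\tilde\phi^{-1}(\mathcal{C}')$, the locus $\{\bar z\in g_iC'_{w_i}\ \forall i\}\subseteq\mathcal{X}$; then Lemma \ref{outside} does give that $\mathcal{Y}\setminus\tilde\phi^{-1}(\mathcal{C}')$ has codimension $\geq 2$, the identification $\tilde\phi^{-1}(\mathcal{C}')$ with the preimage of $\mathcal{C}'$ is a tautology, and your fiber-product and smooth-fibration reductions go through unchanged. This is exactly what the paper does: it works with $\tilde\phi^{-1}(\mathcal{X}'-\mathcal{C}')$ from the start, observes that it contains $\mathcal{X}\setminus\mathcal{Y}$ with complement of codimension $\geq 2$, and then applies the base-change identity $\phi^{-1}(\pi'(\mathcal{X}'-\mathcal{C}'))=\pi(\tilde\phi^{-1}(\mathcal{X}'-\mathcal{C}'))$ together with the ramification bound on $\mathcal{Z}\setminus\mathcal{Y}$. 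Your overall strategy (birational codimension estimate on $(G/B)^s$, then descent along the smooth fibration $q$) matches the paper's; the error is purely in which open locus upstairs is the preimage of $\mathcal{C}'$.
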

\begin{proof}
It suffices to show that ${\pi'(\mathcal{X}'-\mc')}\subseteq \prod_{i=1}^s (G/Q'_{w_i})$ is of codimension $\geq 2$.
Consider the fiber product diagram
\begin{equation}\label{diagrammo3}
\xymatrix{
\mathcal{X}\ar[r]^{\tilde{\phi}}\ar[d]^{\pi} & \mathcal{X}'\ar[d]^{\pi'} \\
(G/B)^s\ar[r]^{\phi} & \prod_{i=1}^s(G/Q'_{w_i})}
\end{equation}
Now $\phi$ is a smooth fiber bundle over a smooth base, and it suffices to show that $\phi^{-1}(\pi'(\mathcal{X}'-\mc'))$ is of codimension $\geq 2$ in $(G/B)^s$. We have (see the remark below)
\begin{equation}\label{fiber}
\phi^{-1}({\pi'(\mathcal{X}'-\mc')}))={\pi({\tilde{\phi}^{-1}(\mathcal{X}'-\mc')})}
\end{equation}
and  $\tilde{\phi}^{-1}(\mathcal{X}'-\mathcal{C}')\supseteq \mathcal{X}-\mathcal{Y}$ with complement of codimension $\geq 2$ in $\mathcal{X}$ (by Lemma \ref{outside}, note  also that $\mathcal{X}-\mathcal{Z}$ is codimension $\geq 2$ in $\mathcal{X}$). The desired statement follows from \cite[Proposition 8.1]{BKR}, which shows that up to codimension two, $\mathcal{Z}-\mathcal{Y}$ lies in the ramification divisor of $\pi$.
\begin{remark}
The equality \eqref{fiber} can be verified (analytic) locally on $(G/B)^s$. Let $\mathcal{U}$ be an open subset of $\prod_{i=1}^s(G/Q'_{w_i})$, such that $\mathcal{U}'=\phi^{-1}(\mathcal{U})=\mathcal{U}\times \Lambda$ for a suitable $\Lambda$. Then if $\Gamma=(\mathcal{X}'-\mathcal{C}')\cap \pi'^{-1}\mathcal{U}$  then the left hand side looks (over $\mathcal{U}'$) like ${\pi'(\Gamma)}\times \Lambda$
and the right hand side like $\pi(\Gamma\times \Lambda)$.
\end{remark}

\end{proof}

\section{Parameter spaces for induction}

\begin{remark}
The constructions of Section \ref{parallel} can be generalized: let $\psi:P\to L=P/U$ be the quotient map, and let $M_1,\hdots,M_s$ be standard parabolic subgroups satisfying
$$
\psi\left(w_i^{-1}M_iw_i \right) = B_L
$$
for each $i=1,\hdots,s$. For brevity, write $\vec M = (M_1,\hdots,M_s)$. Then one can define $\op{Fl}_G(\vec M) = \prod_{i=1}^s (G/M_i)/G$, which parameterizes $G$-spaces $E$ together with elements $\bar{g}_i\in E/M_i, i=1,\dots,s$, and $\mathcal{C}(\vec M)$, which parameterizes principal $G$-spaces $E$ together with elements $\bar{g}_i\in E/M_i$ and a single element $\bar{z}\in E/P$ so that $\bar z\in g_i M_iw_iP$ for all $i$.

Analogues of Lemma \ref{zusammen}, the diagram in Section \ref{gram}, the Levification process, Proposition \ref{comparison}, the ramification loci of Section \ref{ramify}, and Corollary \ref{carol} all exist/hold in this general setting. The particular case in which we are now interested is where $M_i = Q'_{w_i}$ for each $i$, as we now convey in detail.

\end{remark}

We introduce the following stacks:
\begin{defi}
The stack $\op{Fl}'_G$ parameterizes principal $G$-spaces $E$ together with elements $\bar{g}_i\in E/Q'_{w_i}$; i.e.,
$$\Fl'_G=\prod_{i=1}^s (G/Q'_{w_i})/G.$$
The spaces $\op{Fl}_L$ are left unchanged in the induction operation.
\end{defi}

\begin{defi}
The moduli stack $\hat{\mathcal{C}'}$ parameterizes principal $G$-spaces $E$ together with elements $\bar{g}_i\in E/Q'_{w_i}$ and a single element $\bar{z}\in E/P$ so that $\bar{z}\in g_i C'_{w_i}$ for $i=1,\dots,s$.
There is a natural map $\pi':\hat{\mathcal{C}'}\to \op{Fl}'_G,$ and it is easy to see that $\hat{\mathcal{C}'}=\mathcal{C}'/G$ ($\mc'$ was defined in Section \ref{wsmm})
\end{defi}
Similar to Lemma \ref{zusammen}, we have the following
\begin{lemma}
The stack $\hat{\mc'}$ parameterizes principal $P$ bundles $E'$ together with $\bar{z}_i\in E'/(w_i^{-1}Q'_{w_i}w_i\cap P), i=1,\dots,s.$
\end{lemma}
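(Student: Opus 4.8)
The plan is to mimic the proof of Lemma \ref{zusammen} verbatim, replacing the group $w_i^{-1}Bw_i\cap P$ everywhere by $w_i^{-1}Q'_{w_i}w_i\cap P$ and replacing the relative-position data $[\bar g_i,\bar z]=w_i$ by the (looser) incidence condition $\bar z\in g_iC'_{w_i}$. The essential input that makes this work is exactly the analogue of Lemma \ref{else} for $Q'_w$, which is already available: by Lemma \ref{sk} the subgroup $w^{-1}Q'_w w\cap L$ equals $B_L$; moreover $w^{-1}Q'_w w\cap P$ is connected (the argument of Lemma \ref{else} applies, since $Q'_w\supseteq B$ contains $T$, so $Q'_w\cap wPw^{-1}=T\cdot(U_{Q'_w}\cap wPw^{-1})$ and one invokes \cite[Section 14.4, Proposition (2)]{Borel} again), and it surjects onto $B_L$ under $P\to L$. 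Hence $\bar z_i\in E'/(w_i^{-1}Q'_{w_i}w_i\cap P)$ is well-posed data, and one also recovers the associated point of $E_L(\bar z)/B_L$ just as in Lemma \ref{four}.

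Concretely, I would argue as follows. Given a point $(E,\bar g_1,\dots,\bar g_s,\bar z)$ of $\hat{\mc}'$, so $E$ is a principal $G$-space, $\bar g_i\in E/Q'_{w_i}$, $\bar z\in E/P$, and $\bar z\in g_iC'_{w_i}$, first observe that the condition $\bar z\in g_iC'_{w_i}$ says that, after choosing representatives, $z=g_i q_i\dot w_i p_i^{-1}$ for some $q_i\in Q'_{w_i}$ and $p_i\in P$ — this is precisely the statement that the relative position of $\bar g_i\in E/Q'_{w_i}$ and $\bar z\in E/P$, suitably interpreted via the open $Q'_{w_i}$-orbit $C'_{w_i}\subseteq X_{w_i}$, is $w_i$. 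Setting $E'=zP\in E/P$ (independent of the lift of $\bar z$), the element $z_i:=zp_i$ is well-defined in $E'/(w_i^{-1}Q'_{w_i}w_i\cap P)$: if $(zp,g_iq)$ is another good-representative choice with $z=g_i q_i\dot w_i$, $zp=g_iq\dot w_i$, then $p=w_i^{-1}(q_i^{-1}q)w_i$ with $q_i^{-1}q\in Q'_{w_i}$, so $p\in w_i^{-1}Q'_{w_i}w_i\cap P$, exactly as in the discussion preceding Definition \ref{three} and in Lemma \ref{four}. This produces the functor $\hat{\mc}'\to \{(E',\bar z_1,\dots,\bar z_s)\}$.

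For the inverse functor, given a principal $P$-space $E'$ and $\bar z_i\in E'/(w_i^{-1}Q'_{w_i}w_i\cap P)$, set $E=E'\times_P G$, let $\bar z\in E/P$ be the tautological point coming from $E'\subseteq E$, and define $\bar g_i\in E/Q'_{w_i}$ by $g_i=z_i\times \dot w_i^{-1}$ (i.e. $\bar g_i$ is the image in $E/Q'_{w_i}$ of a representative $z_i\dot w_i^{-1}$; this is well-defined because changing $z_i$ by $w_i^{-1}Q'_{w_i}w_i\cap P$ changes $z_i\dot w_i^{-1}$ by an element of $Q'_{w_i}$). Then $\bar z\in g_iC'_{w_i}$ by construction. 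One checks the two constructions are mutually inverse, and that they are compatible with families/pullback along maps of schemes, so they define an isomorphism of stacks. Since everything is $G$-equivariant and $\hat{\mc}'=\mc'/G$, the same description descends correctly.

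The main obstacle is purely bookkeeping: making sure the ``good representative'' normalization behaves well when $B$ is enlarged to $Q'_{w_i}$ on the $i$-th factor, so that the residual ambiguity is precisely $w_i^{-1}Q'_{w_i}w_i\cap P$ and not something larger; this is where the surjectivity part of the $Q'_w$-analogue of Lemma \ref{else} (namely $w^{-1}Q'_w w\cap P \twoheadrightarrow B_L$, via Lemma \ref{sk} and connectedness) is used, exactly as in the original Lemma \ref{zusammen}. Everything else is a routine transcription of Section \ref{parallel}.
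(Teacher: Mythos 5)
Your proof is correct and follows the approach the paper intends: the paper states this lemma without proof, saying only that it is ``Similar to Lemma \ref{zusammen}'', and your verbatim transcription of that argument with $B$ replaced by $Q'_{w_i}$ on the $i$-th factor, combined with $C'_{w_i}=Q'_{w_i}w_iP/P$, is exactly what is meant. One small misattribution in your last paragraph: identifying the residual good-representative ambiguity as $w_i^{-1}Q'_{w_i}w_i\cap P$ is a direct computation (if $z=g_i\dot w_i$ and $zp=g_iq\dot w_i$ with $q\in Q'_{w_i}$ then $p=\dot w_i^{-1}q\dot w_i$) that does not invoke Lemma \ref{sk} or any surjectivity onto $B_L$; Lemma \ref{sk} is needed only afterward, to pass to $E_L(\bar z)/B_L$ and construct $\tau':\hat{\mc'}\to\Fl_L$.
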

This lemma, Lemma \ref{sk} and the map $P\to L$ give rise to ${\tau'}:\hat{C}'\to \Fl_L$. The map $L\to P$ gives a map $i': \Fl_L\to \hat{C}'$, and the map $\tilde{i}'=\pi'\circ i'$. Similar to \eqref{diagrammo} we have the following diagram
\begin{equation}\label{diagrammo2}
\xymatrix{
\mathcal{C}\ar[r]\ar[d]& \mathcal{C}'\ar[r]\ar[d]^{\pi'} & \hat{\mathcal{C}'}\ar[d]^{\pi'}\ar[dr]_{{\tau'}}  \\
(G/B)^s\ar[r] & \prod_{i=1}^s(G/Q'_{w_i})\ar[r] & \op{Fl}'_G & \op{Fl}_L\ar@/_/[ul]_{i'}\ar[l]^{\tilde{i}'}}
\end{equation}
\subsection{Levification in the new setting}
$\hat{\mc'}$ retracts to $\Fl_L$ by  Levification, generalising the constructions in Section \ref{secLev}: Let  $\phi_t:P\to P$ be as in Section \ref{secLev}.

\begin{defi}\label{levity2}
Let $(E',\bar{z}_1,\dots,\bar{z}_s)$ be a point of $\hat{\mc'}$ where $E'$ is a principal $P$-space and $\bar{z}_i\in
E'/(w_i^{-1}Q'_{w_i}w_i)\cap P$. Define the Levification family $E_t'=E'\times_{\phi_t} P$ for $t\in \Bbb{A}^1$, and $\bar{z}_i(t)=\bar{z}_i\times_ {\phi_t} e$. Clearly at $t=0$, $(E_t,\bar{z}_1(t),\dots,\bar{z}_s(t))$ is in the image of $i:\Fl_L\to \hat{\mc}$.
\end{defi}
Proposition \ref{comparison} generalises (with the same proof) to this new setting with ${\tau'}:\hat{\mc'}\to\Fl_L$.
\begin{proposition}\label{comparison2}
Let $U$ be a non-empty open substack of $\Fl_L$,  $\ml$ be a line bundle on ${\tau'}^{-1}(U)$ and $\mathcal{M}=i'^*\mathcal{L}$, a line bundle on $U$, where $i':\Fl_L\to \hat{\mc}$.
Then
\begin{enumerate}
\item $\ml={\tau'}^*\mathcal{M}$. Therefore $\tau'^*$ and $i'^*$ set up isomorphisms $\Pic(U)\leto{\sim}\Pic(\tau'^{-1}(U))$.
\item If $\gamma_{\mathcal{M}}:Z^0(L)\to \Bbb{C}^*$ is trivial then
$i'^*:H^0(\tau'^{-1}(U),\ml)\to H^0(U,\mathcal{M})$ is an isomorphism.
\end{enumerate}
\end{proposition}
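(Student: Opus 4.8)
The plan is to transcribe the proof of Proposition \ref{comparison} essentially word for word, with the Levification family of Definition \ref{levity2} in place of that of Definition \ref{levity}, with Lemma \ref{sk} (the identity $w^{-1}Q'_w w\cap L=B_L$) playing the role that Lemma \ref{else} played before, and with the diagram \eqref{diagrammo2} replacing \eqref{diagrammo}. The first thing I would check is the purely structural input: that $i':\Fl_L\to\hat{\mc'}$ satisfies ${\tau'}\circ i'=\mathrm{id}$, that for a point $(E',\bar z_1,\dots,\bar z_s)$ of $\hat{\mc'}$ the Levification family $(E'_t,\bar z_i(t))$, $t\in\Bbb{A}^1$, of Definition \ref{levity2} lies in the image of $i'$ at $t=0$ and has $E'_t\cong E'$ for $t\neq 0$, and that the $Z^0(L)$-action on fibers, hence the character $\gamma$ of Definition \ref{invariant}, is functorial along the maps of \eqref{diagrammo2}. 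All of this rests on Lemma \ref{sk}, which is precisely the analogue of the property of $w^{-1}Bw\cap P$ used in Section \ref{secLev}; granting it, the new Levification is ``the same kind of object'' as the old one.

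Given that, part (2) goes as follows. Start with $s\in H^0({\tau'}^{-1}(U),\ml)$. Along a Levification family over $U$, for $t\neq 0$ we have $E'_t\cong E'_1$, so $s$ restricts to a $\Bbb{C}^*$-equivariant section of the pullback of $\ml$ to the family over $\Bbb{C}^*$; the hypothesis that $\gamma_{\mathcal{M}}$ is trivial means $Z^0(L)$ acts trivially on the fibers of $\mathcal{M}=i'^*\ml$, so --- as in \cite[Theorem 15 and Remark 31(a)]{BK} --- the associated $\Bbb{C}^*$-equivariant line bundle on $\Bbb{A}^1$ is equivariantly trivial, and the extended section acquires neither a zero nor a pole at $t=0$. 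This gives injectivity of $i'^*$; surjectivity follows because any section of $\mathcal{M}$ at $E'_0$ likewise extends uniquely across all of $\Bbb{A}^1$. For part (1) I would apply part (2) to $\ml'=\ml\otimes{\tau'}^*\mathcal{M}^{-1}$: then $\mathcal{M}'=i'^*\ml'$ is trivial, $\gamma_{\mathcal{M}'}$ is trivial, part (2) applies, and the nowhere-vanishing section $1$ of $\mathcal{M}'$ pulls back to a global section of $\ml'$ on ${\tau'}^{-1}(U)$ which is itself nowhere vanishing (a section of a line bundle vanishing somewhere on a Levification family vanishes at $t=0$, cf. \cite[Lemma 3.17]{BKq}). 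Hence $\ml'$ is trivial, i.e. $\ml={\tau'}^*\mathcal{M}$, and ${\tau'}^*$, $i'^*$ are mutually inverse isomorphisms $\Pic(U)\cong\Pic({\tau'}^{-1}(U))$.

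The main obstacle is not in these steps --- they are formal once the framework is in place --- but in confirming that the general construction of Section \ref{parallel} really does carry over to the parabolics $Q'_{w_i}$; that is, that Lemma \ref{zusammen}, the retraction of Section \ref{gram}, and the ramification-divisor comparison of Section \ref{ramify} all hold with $w_i^{-1}Q'_{w_i}w_i\cap P$ in place of $w_i^{-1}Bw_i\cap P$. The key identity making this possible is Lemma \ref{sk}; the verification is routine but should be carried out carefully, since it is exactly what licenses the assertion that the proposition holds ``with the same proof''.
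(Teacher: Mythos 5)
Your proposal matches the paper's own treatment: the paper simply states that Proposition \ref{comparison} generalizes to this new setting ``with the same proof,'' with Lemma \ref{sk} and the Levification family of Definition \ref{levity2} playing precisely the roles you assign them. Your more detailed unwinding of the argument (checking ${\tau'}\circ i'=\mathrm{id}$, the Levification family landing in the image of $i'$ at $t=0$, and the reduction of part (1) to part (2) via $\ml'=\ml\otimes{\tau'}^*\mathcal{M}^{-1}$) is correct and faithful to the proof of Proposition \ref{comparison}.
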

\subsection{Ramification divisors in the new setting}
Let $\mathcal{R}'$ be the ramification divisor of the map $\pi':\mathcal{C}'\to \prod_{i=1}^s(G/Q'_{w_i})$. Similar to Section \ref{ramify},
The line bundle $\mathcal{O}(\mathcal{R}')$ is the pull back of a natural line bundle on $\hat{\mc'}$: Consider a
$P$ bundle $E'$ together with $\bar{z}_i\in E'/(w_i^{-1}Q'_{w_i}w_i\cap P)$. Consider the map of vector spaces
\begin{equation}\label{eseq2}
E'\times_P T(G/P)_{\dot e}\to \bigoplus \frac{E'\times_P T(G/P)_{\dot{e}}}{z_i\times_P T(w_i^{-1}C'_{w_i})_{\dot{e}}}
\end{equation}
The determinant line and theta section for this map give a line bundle and a section on  $\hat{\mc'}$, denoted
by $\mathcal{O}(\hat{\mathcal{R}'})$ and  $\hat{\mathcal{R}'}$.

As in Section \ref{ramify},  $\mathcal{O}(\hat{\mathcal{R}'})$ and  $\hat{\mathcal{R}'}$ pull back to $\mathcal{O}(\mathcal{R}')$ and the divisor $\mathcal{R}'$ on $\mathcal{C}'$. Pulling back $\mathcal{O}(\hat{\mathcal{R}'})$ and  $\hat{\mathcal{R}'}$ via $i'$ we get the same divisor $\mathcal{R}_L$  and line bundle $\mathcal{O}(\mathcal{R}_L)$ on $\Fl_L$ as in Section \ref{ramify} (by looking at the complex computing the pull back determinant line, for example).

Similar to  Corollary \ref{carol},
\begin{corollary}\label{carol2}
\begin{enumerate}
\item $\mathcal{O}(\hat{\mathcal{R}'})={\tau'}^{*} \mathcal{O}(\mathcal{R}_L)$
 \item $\hat{\mathcal{R'}}={\tau'}^{-1} \mathcal{R}_L$ so that  $i':\Fl_L-\mathcal{R}_L\to \hat{\mathcal{C}'}-\hat{\mathcal{R}'}$.
\end{enumerate}
\end{corollary}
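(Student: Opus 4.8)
The plan is to reproduce the proof of Corollary \ref{carol} essentially verbatim, substituting Proposition \ref{comparison2} for Proposition \ref{comparison}, the maps $\tau'$, $i'$, $\pi'$ and the stacks $\hat{\mc'}$, $\Fl'_G$ for their unprimed analogues, and the parabolics $Q'_{w_i}$ for $w_i^{-1}Bw_i\cap P$. This replacement is legitimate because the only property of $w_i^{-1}Bw_i\cap P$ used in Section \ref{parallel} and in Remark \ref{deform} was that it surjects onto $B_L$ under $P\to L$, and Lemma \ref{sk} establishes exactly this for $Q'_{w_i}$, namely $w^{-1}Q'_w w\cap L=B_L$.

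Concretely, for (1) I would apply Proposition \ref{comparison2}(1) with $U=\Fl_L$ and $\ml=\mathcal{O}(\hat{\mathcal{R}'})$, a line bundle on ${\tau'}^{-1}(\Fl_L)=\hat{\mc'}$; since $\mathcal{M}=i'^*\mathcal{O}(\hat{\mathcal{R}'})=\mathcal{O}(\mathcal{R}_L)$ as recalled just before the statement, the conclusion $\ml={\tau'}^*\mathcal{M}$ is precisely $\mathcal{O}(\hat{\mathcal{R}'})={\tau'}^*\mathcal{O}(\mathcal{R}_L)$. For (2) I would compare the two canonical ($\theta$-)sections: by (1), both $\hat{\mathcal{R}'}$ and ${\tau'}^*\mathcal{R}_L$ are global sections of $\mathcal{O}(\hat{\mathcal{R}'})$, and since ${\tau'}\circ i'=\op{id}_{\Fl_L}$ and $i'^*\hat{\mathcal{R}'}=\mathcal{R}_L$ one has $i'^*\bigl({\tau'}^*\mathcal{R}_L\bigr)=\mathcal{R}_L=i'^*\hat{\mathcal{R}'}$. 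Now, under the hypothesis \eqref{kishi} the character $\gamma_{\mathcal{O}(\mathcal{R}_L)}\colon Z^0(L)\to\Bbb{C}^*$ is trivial; this is Remark \ref{deform}, which applies word for word because the divisor $\mathcal{R}_L$ and bundle $\mathcal{O}(\mathcal{R}_L)$ obtained via $i'$ coincide with those of Section \ref{ramify}. Hence Proposition \ref{comparison2}(2) makes $i'^*\colon H^0(\hat{\mc'},\mathcal{O}(\hat{\mathcal{R}'}))\to H^0(\Fl_L,\mathcal{O}(\mathcal{R}_L))$ an isomorphism, so the two sections, having equal image, agree; therefore $\hat{\mathcal{R}'}={\tau'}^{-1}\mathcal{R}_L$ as divisors. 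The final clause is then formal: $i'^{-1}(\hat{\mathcal{R}'})=({\tau'}\circ i')^{-1}(\mathcal{R}_L)=\mathcal{R}_L$, so $i'$ restricts to $\Fl_L-\mathcal{R}_L\to\hat{\mathcal{C}'}-\hat{\mathcal{R}'}$.

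I do not expect a real obstacle. The single piece of genuine content — triviality of $\gamma_{\mathcal{O}(\mathcal{R}_L)}$ — is already in hand from Remark \ref{deform} together with the deformed-product nonvanishing \eqref{kishi}; everything else is the bookkeeping needed to confirm that Levification (Definition \ref{levity2}), the induced isomorphism on Picard groups, and the ``no new zeros or poles at $t=0$'' argument underlying Proposition \ref{comparison} transfer to the $Q'_{w_i}$-setting, which they do precisely because each step used only the surjectivity onto $B_L$ guaranteed by Lemma \ref{sk}. The only place where care is warranted is in the scheme structures on $\mathcal{C}'$, $\hat{\mathcal{C}'}$ and $\mathcal{R}'$ and the birationality of $\pi'$, but these are supplied by the general framework of \cite{BKR} already invoked in Sections \ref{wsmm}--\ref{ramify}.
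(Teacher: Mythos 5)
Your proposal is correct and follows the same strategy the paper intends: the paper presents Corollary \ref{carol2} as the ``similar'' analogue of Corollary \ref{carol}, obtained by applying Proposition \ref{comparison2} in place of Proposition \ref{comparison}, with the triviality of $\gamma_{\mathcal{O}(\mathcal{R}_L)}$ supplied by Remark \ref{deform} (via the deformed-product hypothesis \eqref{kishi}) and with Lemma \ref{sk} doing the work of making the $Q'_{w_i}$-version of Section \ref{parallel} go through. You simply make explicit the step that the paper leaves as a tacit consequence of Proposition \ref{comparison2}(2), namely comparing the theta section $\hat{\mathcal{R}'}$ with ${\tau'}^*\mathcal{R}_L$ by pulling both back under $i'$.
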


The restricted flag setting has one new feature, which follows from Lemma \ref{codimensioon} and Zariski's main theorem:
\begin{lemma}\label{codim2}
$\pi':\mathcal{C}'-\mathcal{R}'\to \prod_{i=1}^s(G/Q'_{w_i})$ is an open immersion whose complement has codimension $\geq 2$.
\end{lemma}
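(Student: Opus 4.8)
The plan is to run an argument parallel to the unrestricted case, using Lemma~\ref{codimensioon} in place of the earlier codimension bound on the image of the ramification divisor. Write $W=\prod_{i=1}^s(G/Q'_{w_i})$. This $W$ is smooth (hence normal) and irreducible; $\mathcal{X}'\subseteq W\times(G/P)$ is closed, so $\pi'\colon\mathcal{X}'\to W$ is projective, in particular proper, and being birational it is dominant, hence surjective. Recall that $\mathcal{C}'$ is a smooth dense open subvariety of the irreducible variety $\mathcal{X}'$ (cf.\ \cite{BKR}: it fibers over $G/P$ with fibers that are products of homogeneous spaces), that $\mathcal{R}'\subseteq\mathcal{C}'$ is a divisor, and that by its construction via the theta section (as in Section~\ref{ramify}, generalized to the restricted-flag setting) $\mathcal{R}'$ is exactly the locus in $\mathcal{C}'$ where $d\pi'$ fails to be an isomorphism.

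First I would show that $\pi'$ restricts to an open immersion on $\mathcal{C}'-\mathcal{R}'$. On $\mathcal{C}'-\mathcal{R}'$ the differential $d\pi'$ is everywhere an isomorphism and both source and target are smooth, so $\pi'|_{\mathcal{C}'-\mathcal{R}'}$ is étale, in particular separated and quasi-finite; it is also birational, being the restriction of $\pi'$ to a dense open subvariety of $\mathcal{X}'$. By Zariski's main theorem it factors as an open immersion followed by a finite morphism, and that finite morphism is birational onto the normal variety $W$, hence an isomorphism; therefore $\pi'|_{\mathcal{C}'-\mathcal{R}'}$ is an open immersion onto an open set $V\subseteq W$.

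It then remains to see that $W-V$ has codimension $\geq 2$, i.e.\ that the closed set $W-V$ contains no divisor. Suppose $\Sigma\subseteq W-V$ is irreducible of codimension one, with generic point $\eta$, so $\mathcal{O}_{W,\eta}$ is a discrete valuation ring whose fraction field is the common function field of $W$ and $\mathcal{X}'$. Since $\pi'$ is proper and birational, the valuative criterion of properness (with the generic point of $\operatorname{Spec}\mathcal{O}_{W,\eta}$ sent to the generic point of $\mathcal{X}'$) lifts $\operatorname{Spec}\mathcal{O}_{W,\eta}\to W$ to a morphism $\operatorname{Spec}\mathcal{O}_{W,\eta}\to\mathcal{X}'$; let $\xi\in\mathcal{X}'$ be the image of the closed point. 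Comparing the local rings $\mathcal{O}_{W,\eta}$ and $\mathcal{O}_{\mathcal{X}',\xi}$ inside that function field forces $\mathcal{O}_{\mathcal{X}',\xi}=\mathcal{O}_{W,\eta}$, so $\pi'$ is a local isomorphism at $\xi$ and $d\pi'$ is an isomorphism there. Because $\pi'(\xi)=\eta$ has codimension one in $W$, Lemma~\ref{codimensioon} gives $\eta\notin\overline{\pi'(\mathcal{X}'-\mathcal{C}')}$, hence $\xi\in\mathcal{C}'$; and since $d\pi'$ is an isomorphism at $\xi$, we get $\xi\notin\mathcal{R}'$. Thus $\xi\in\mathcal{C}'-\mathcal{R}'$ and $\eta=\pi'(\xi)\in V$, contradicting $\eta\in W-V$. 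This proves Lemma~\ref{codim2}. The two steps needing care are the Zariski's-main-theorem argument identifying an étale birational morphism to a normal variety with an open immersion, and the valuative-criterion extraction of a point $\xi\in\mathcal{C}'-\mathcal{R}'$ lying over the generic point of any hypothetical divisor in $W-V$; the rest is bookkeeping with the already-established facts about $\pi'$, $\mathcal{C}'$, $\mathcal{R}'$, and Lemma~\ref{codimensioon}.
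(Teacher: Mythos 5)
Your proof is correct and is essentially a careful fleshing-out of the paper's own (unproved, one-line) justification, which simply says the lemma "follows from Lemma~\ref{codimensioon} and Zariski's main theorem." The first half (ZMT applied to the \'etale, separated, birational map $\pi'|_{\mathcal{C}'-\mathcal{R}'}$) and the second half (valuative criterion of properness applied to $\pi'\colon\mathcal{X}'\to W$, plus the maximality of DVRs, combined with Lemma~\ref{codimensioon} to force the lifted point $\xi$ into $\mathcal{C}'$) are exactly the two places the cited ingredients enter, so this is the intended argument rather than a genuinely different route.
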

\section{Picard groups}
\begin{corollary}\label{restricto}
$\pi'^*:\Pic(\Fl_G')\to \Pic(\hat{\mc}'-\hat{\mathcal{R}'})$ is an isomorphism.
\end{corollary}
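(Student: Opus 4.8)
The plan is to reduce the corollary to the elementary fact that on a smooth variety the Picard group is unchanged by removing a closed subset of codimension $\geq 2$, and then to transport this through the quotient-stack descriptions of the spaces involved.

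First I would unwind the definitions. Both $\hat{\mc}'=\mathcal{C}'/G$ and $\Fl'_G=\bigl(\prod_{i=1}^s G/Q'_{w_i}\bigr)/G$ are quotient stacks; the morphism $\pi'$ is the one induced by the $G$-equivariant morphism $\pi':\mathcal{C}'\to Y:=\prod_{i=1}^s G/Q'_{w_i}$; and $\mathcal{R}'$, being the ramification locus of this $G$-equivariant $\pi'$, is $G$-stable, with $\hat{\mathcal{R}'}=\mathcal{R}'/G$, so that $\hat{\mc}'-\hat{\mathcal{R}'}=(\mathcal{C}'-\mathcal{R}')/G$. The crucial input is Lemma \ref{codim2}: via $\pi'$, the scheme $U:=\mathcal{C}'-\mathcal{R}'$ is identified $G$-equivariantly with an open subscheme of $Y$ whose complement $Y\setminus U$ has codimension $\geq 2$. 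Since the Picard group of a quotient stack $[X/G]$ is the group $\Pic^G(X)$ of $G$-linearized line bundles on $X$, and $\pi'^*$ is then the $G$-equivariant restriction map, the corollary becomes the assertion that $\Pic^G(Y)\to\Pic^G(U)$ is an isomorphism.

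I would establish this in two stages. On underlying line bundles, $\Pic(Y)\to\Pic(U)$ is an isomorphism: $Y$ is a product of partial flag varieties, hence smooth and locally factorial, so $\Pic(Y)=\mathrm{Cl}(Y)$ is computed from codimension-one points and is unaffected by deleting $Y\setminus U$ (surjectivity: extend a divisor on $U$ by taking closure in $Y$; injectivity: a nowhere-vanishing section on $U$ extends by Hartogs to a section on $Y$ whose zero divisor lies inside $Y\setminus U$, hence is empty). For the $G$-linearizations, a linearization of a line bundle $\mathcal{L}$ on $U$ is an isomorphism $a^*\mathcal{L}\leto{\sim} p_2^*\mathcal{L}$ over $G\times U$ satisfying the cocycle identity, where $a$ is the action and $p_2$ the projection. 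Because $G\times Y$ is smooth and $G\times(Y\setminus U)$ still has codimension $\geq 2$ in it, this isomorphism extends uniquely across $G\times Y$ (apply the codimension-$\geq 2$ extension principle to the line bundle $a^*\mathcal{L}^{\vee}\otimes p_2^*\mathcal{L}$ on $G\times Y$), and the cocycle identity, being a closed condition that already holds on the dense open $G\times G\times U$, persists on $G\times G\times Y$. This yields surjectivity onto $\Pic^G(Y)$; injectivity is automatic since a linearization is determined by its restriction to a dense open subset.

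The step I expect to need the most care is the bookkeeping in passing between the stack-theoretic statement and the $G$-equivariant statement on the schemes $U\subseteq Y$ — in particular pinning down that $\Pic([X/G])=\Pic^G(X)$ in the generality needed and that an open substack with codimension-$\geq 2$ complement of a smooth quotient stack has the same Picard group. All of this is standard, and can be handled cleanly by simply working $G$-equivariantly on schemes throughout and invoking the dictionary only at the very end; everything else is a direct application of Lemma \ref{codim2} together with the smoothness of $Y$.
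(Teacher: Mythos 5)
Your proposal is correct and follows essentially the same route as the paper: both reduce the statement to comparing $G$-equivariant line bundles on $\prod_i G/Q'_{w_i}$ and on the codimension-$\geq 2$-complemented open subset $\mathcal{C}'-\mathcal{R}'$ via Lemma~\ref{codim2}, and both extend the linearization data across the boundary using Hartogs-type extension of sections of the relevant $\shom$ line bundle on the smooth ambient space. You are somewhat more explicit than the paper about working over $G\times Y$ and verifying the cocycle condition by density, but this is only a matter of exposition, not of substance.
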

\begin{proof}
We need to compare the set of $G$-equivariant line bundles on $\prod_{i=1}^s(G/Q'_{w_i})$ and on the
open subset $U=\mathcal{C}'-\mathcal{R}'$. If a $G$ equivariant line bundle on $\prod_{i=1}^s(G/Q'_{w_i})$ becomes trivial on $U$, then it is trivial as a line bundle on $\prod_{i=1}^s(G/Q'_{w_i})$ (by Lemma \ref{codim2}) and hence trivial as a $G$-bundle.

To show surjectivity, extend a $G$-equivariant line bundle $\ml$ on $U$ first as a line bundle to
all of $\prod_{i=1}^s(G/Q'_{w_i})$. We have isomorphisms $\ml\to\phi_g^*\ml$ on $U$ (here $\phi_g$ is
the action of $G$ on $\prod_{i=1}^s(G/Q'_{w_i})$). These actions extend to all of $\prod_{i=1}^s(G/Q'_{w_i})$, since a section of the line bundle $\Hom(\ml,\phi_g^*\ml)$ on $U$ will extend to the whole space by codimension considerations.
\end{proof}
\begin{defi}\label{doc}
Let $U$ be any non-empty open substack of $\Fl_L$. Let $\Pic^{\deg=0}(U)\subset \Pic(U)$ denote the subgroup of line bundles $\mathcal{M}$ with $\gamma_{\mathcal{M}}$ trivial (i.e., center of $L$ acts trivially).
\begin{enumerate}
\item Let $\Pic^{\deg =0}(\Fl_G)$ denote the subgroup of line bundles whose pull back under $\tilde{i}$ is in
$\Pic^{\deg=0}(\Fl_L)$.
\item Let $\Pic^{\deg =0}(\Fl'_G)$ denote the subgroup of line bundles whose pull backs under the natural map $\Fl_G\to \Fl'_G$ are in $\Pic^{\deg=0}(\Fl_G)$.
\item Finally, $\Fl_{L^{\op{ss}}}$ denotes the stack parametrizing principal $L^{\op{ss}}$-spaces $F$ together with elements $\bar{q}_i\in F/B_{L^{\op{ss}}}$; i.e., $\Fl_{L^{\op{ss}}} = \left(L^{\op{ss}}/B_{L^{\op{ss}}}\right)^s/L^{\op{ss}}$.
\end{enumerate}
\end{defi}

\subsection{Picard groups for the Levi subgroup}
\begin{lemma}\label{lemmE}
\begin{enumerate}
\item $\Pic^L(L/B_L)=\op{Hom}(T,\Bbb{C}^*)=\Pic(G/B)=\Pic^G(G/B)$.
\item There is a surjection $(\Pic^L(L/B_L))^s\to \Pic(\Fl_L)$ whose kernel is the set of tuples
$(\mu_1,\dots,\mu_s)$ (using (1)) such that $\mu_i$ are trivial on $T(L^{\op{ss}})$, the maximal torus of $L^{\op{ss}}$, and $\sum \mu_i=0$ (i.e.,  given the triviality of $\mu_i$ on $T(L^{\op{ss}})$, equivalent to $\sum \mu_i(x_k)=0$ for $\alpha_k\not\in\Delta(P)$).
\end{enumerate}
\end{lemma}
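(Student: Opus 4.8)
The plan is to unwind both statements as concrete computations with $G$-equivariant (respectively $L$-equivariant) line bundles on flag varieties, using the general principle that equivariant line bundles on $H/H'$ are classified by characters of $H'$.

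For part (1), I would argue as follows. The stack $(L/B_L)/L$ is the classifying object whose line bundles are $L$-equivariant line bundles on $L/B_L$; these are classified by characters of $B_L$, hence by $\op{Hom}(T,\Bbb{C}^*)=X(T)$ since $B_L$ has unipotent radical and maximal torus $T$ (the reductive group $L$ and $G$ share the same maximal torus). The same reasoning identifies $\Pic^G(G/B)$ with $X(T)$, and by the Borel--Weil dictionary recalled in Section~1.5 this is just $\Pic(G/B)$ itself (every line bundle on $G/B$ is $G$-linearizable, uniquely up to nothing since $G$ is semisimple simply connected — here I should be a touch careful, but $G$ being semisimple means it has no nontrivial characters, so a $G$-linearization is unique when it exists, and existence holds because $G/B$ is a projective homogeneous space for the semisimple simply connected $G$). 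Assembling these four identifications gives the chain of equalities in (1). This part is essentially bookkeeping and I do not expect difficulty.

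For part (2), recall $\Fl_L=(L/B_L)^s/L$, so a line bundle on $\Fl_L$ is the data of an $L$-equivariant line bundle on $(L/B_L)^s$. An $L$-equivariant line bundle on $(L/B_L)^s$ is a tensor product $\boxtimes_i \Ml_{\mu_i}$ of line bundles on the factors (pulled back via the projections) together with an $L$-linearization; the underlying line bundle is classified by $(\mu_1,\dots,\mu_s)\in X(T)^s$ via part (1), and the obstruction/ambiguity in the $L$-linearization is a character of $L$. Since $L$ is reductive with derived group $L^{\op{ss}}$, the characters of $L$ are exactly those $\chi\in X(T)$ that are trivial on $T(L^{\op{ss}})=T\cap L^{\op{ss}}$. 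Concretely: the map $(\Pic^L(L/B_L))^s = X(T)^s \to \Pic(\Fl_L)$ sends $(\mu_i)$ to the bundle $\boxtimes_i\Ml_{\mu_i}$ with its canonical ($L$-quotient) linearization; this is surjective because every $L$-equivariant bundle on $(L/B_L)^s$ arises this way up to twisting by a character of $L$, and a character of $L$ is itself of the form $\boxtimes_i \Ml_{\nu_i}$-with-trivial-total-weight for suitable $\nu_i$. The kernel consists of $(\mu_i)$ for which $\boxtimes_i \Ml_{\mu_i}$ with the $L$-quotient linearization descends to the \emph{trivial} bundle on the stack, i.e. is $L$-equivariantly trivial on $(L/B_L)^s$: this forces each $\mu_i$ restricted to $T(L^{\op{ss}})$ to be trivial (so that $\Ml_{\mu_i}$ is trivial on $L^{\op{ss}}/B_{L^{\op{ss}}}$, using part (1) for $L^{\op{ss}}$) and forces the total character $\sum \mu_i$ of $L$ acting on the fiber over the base point to vanish. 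The final parenthetical equivalence is just the observation that on $T(L^{\op{ss}})^\perp$ the character $\sum\mu_i$ is determined by its values $\sum\mu_i(x_k)$ on the fundamental coweights $x_k$ with $\alpha_k\notin\Delta(P)$, since those $x_k$ span a complement to $\frh(L^{\op{ss}})$ in $\frh$.

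The one genuinely delicate point — and the step I expect to be the main obstacle — is pinning down precisely \emph{which} pairs $(\mu_i)$ give an $L$-equivariantly trivial bundle, i.e. computing the kernel exactly rather than up to finite index. The subtlety is that a line bundle being trivial as an ordinary bundle on $(L/B_L)^s$ (which by part (1) means each $\mu_i=0$ on $T(L^{\op{ss}})$, so $\Ml_{\mu_i}$ pulls back trivially from $L/B_L$ to $L^{\op{ss}}/B_{L^{\op{ss}}}$ but may still be a nontrivial character bundle of the center $Z^0(L)$) is weaker than being $L$-equivariantly trivial; the extra condition is exactly the vanishing of the induced central character, which is the $\sum\mu_i=0$ condition. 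I would make this rigorous by factoring the quotient $(L/B_L)^s/L$ through $(L/B_L)^s/L^{\op{ss}}$ followed by $B(Z^0(L))$ (or rather the residual $L/L^{\op{ss}}$-action), reducing to: (a) $L^{\op{ss}}$-equivariant line bundles on $(L^{\op{ss}}/B_{L^{\op{ss}}})^s$ are given by $X(T(L^{\op{ss}}))^s$ with no kernel (as $L^{\op{ss}}$ is semisimple, hence has no characters), which gives the "$\mu_i$ trivial on $T(L^{\op{ss}})$" part of the kernel; and (b) once each $\mu_i$ is trivial on $T(L^{\op{ss}})$, the bundle is a character bundle pulled back from $BL$, and such a thing is trivial iff the character $\sum\mu_i$ of $L$ is trivial, which gives $\sum\mu_i=0$ (again using that $L$ has no further characters killing all of $X(T)$ beyond those trivial on $T(L^{\op{ss}})$). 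This two-step factorization is, I believe, the cleanest way to get the kernel on the nose.
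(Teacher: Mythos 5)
Your proposal is correct and follows essentially the same route as the paper's own proof: part~(1) via classifying $L$-equivariant bundles by characters of $B_L$, hence of $T$; and part~(2) by restricting to $(L^{\op{ss}}/B_{L^{\op{ss}}})^s$ to read off the $\mu_i$ modulo characters of $L$, then computing the kernel using that $L^{\op{ss}}$ and $Z^0(L)$ together generate $L$. The paper's version of the surjectivity step is a bit more explicit than yours --- it first passes to $\Pic((L^{\op{ss}}/B_{L^{\op{ss}}})^s)$ to extract well-defined $\mu_i'\in X(T')$, extends them to $X(T)$ using $T=T'\times(\Bbb{C}^*)^a$, and only then absorbs the residual character of $L$ into $\mu_1$ --- whereas you assert "every $L$-equivariant bundle arises this way up to twisting by a character of $L$" without that intermediate restriction; filling that in would bring your write-up to exactly the paper's argument.
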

\begin{proof}
For (1), note that $L$ equivariant line bundles on $L/B_L$ are in one-one correspondence with characters of $B_L$, which coincide with characters of $T$.

For (2), $\Pic(\Fl_L)$ is the set of (diagonal) $L$-equivariant bundles on $(L/B_L)^s$. Therefore there is a map
$(\Pic^L(L/B_L))^s\to \Pic(\Fl_L)$. This is surjective because an $L$-equivariant line bundle on $(L/B_L)^s$, as a line bundle on $(L^{\op{ss}}/B_{L^{\op{ss}}})^s$, is of the form $\ml=\ml_{\mu_1}\boxtimes\dots \boxtimes \ml_{\mu_s}$, where $\mu_i$ are characters of $T'$. These can be extended to characters of $T$ since $T=T'\times\left(\Bbb{C}^*\right)^a$, where $a$ corresponds to the number of simple coroots $\alpha_k^{\vee}, \alpha_k\not\in\Delta(P)$, and we can view
$\ml_{\mu_1}\boxtimes\dots \boxtimes \ml_{\mu_s}$ as an element of $(\Pic^L(L/B_L))^s$. This gives rise to a diagonal $L$-equivariant line bundle on $(L/B_L)^s$, call it $\ml'$. As line bundles on $(L/B_L)^s$, $\ml$ and $\ml'$ coincide, and therefore as equivariant line bundles, differ by a character $\lambda$ of $L$. We replace $\mu_1$ by $\mu_1+\lambda$ (and leave other $\mu_i$ unchanged) and then have $\ml=\ml'\in\Pic(\Fl_L)$.

The kernel of the map in (2) is identified similarly: A tuple $(\ml_{\mu_1},\dots,\ml_{\mu_s})$ which maps to zero gives the trivial line bundle on $(L^{\op{ss}}/B_{L^{\op{ss}}})^s$, hence $\mu_i$  are trivial restricted to $T(L^{\op{ss}})$. The center of $L$ should also under the diagonal action act trivially, so $\sum \mu_i$ is trivial on $Z^0(L)$. Now $L^{\op{ss}}$ and $Z^0(L)$
generate $L$, and we get (2).
\end{proof}

\subsection{Comparison of Picard groups of flag varieties for $L$ and for $L^{\op{ss}}$}
\begin{lemma}\label{threes}
\begin{enumerate}
\item The natural mapping (see Definition \ref{doc})
$\Pic^{\deg =0}(\Fl_L)\to \Pic(\Fl_{L^{\op{ss}}})$ is an injection of semigroups, which is an isomorphism
$\tensor \Bbb{Q}$:
\begin{equation}\label{tread}
\Pic^{\deg=0}_{\Bbb{Q}}(\Fl_L)\leto{\sim} \Pic_{\Bbb{Q}}(\Fl_{L^{\op{ss}}}).
\end{equation}
\item  $\Pic^{+}_{\Bbb{Q}}(\Fl_L)\subseteq \Pic^{\deg=0}_{\Bbb{Q}}(\Fl_L)$ and \eqref{tread}
gives a cone bijection
\begin{equation}\label{tread2}
\Pic^{+, \deg=0}_{\Bbb{Q}}(\Fl_L)\leto{\sim} \Pic^+_{\Bbb{Q}}(\Fl_{L^{\op{ss}}}).
\end{equation}
\end{enumerate}
\end{lemma}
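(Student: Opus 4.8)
The plan is to reduce both statements to the explicit lattice descriptions of Picard groups furnished by Lemma \ref{lemmE}. Using that lemma, $\Pic(\Fl_L)$ is the quotient of $X(T)^s$ by the subgroup $\Lambda$ of tuples $(\mu_1,\dots,\mu_s)$ with every $\mu_i$ trivial on $T(L^{\op{ss}})$ and $\sum_i\mu_i=0$. The same reasoning, applied with $L^{\op{ss}}$ in place of $L$ (so that there are no simple roots outside the Levi and the connected center is trivial), identifies $\Pic(\Fl_{L^{\op{ss}}})$ with $X(T(L^{\op{ss}}))^s$. The isomorphism $L/B_L\cong L^{\op{ss}}/B_{L^{\op{ss}}}$ together with $L^{\op{ss}}\hookrightarrow L$ gives a morphism $\Fl_{L^{\op{ss}}}\to\Fl_L$, and the natural map of Definition \ref{doc} is pullback along it; in these coordinates it is simply $(\mu_i)_i\mapsto(\mu_i|_{T(L^{\op{ss}})})_i$. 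Finally, the class of $(\mu_i)_i$ lies in $\Pic^{\deg=0}(\Fl_L)$ precisely when $\gamma_{\mathcal{M}}$ is trivial, i.e. when $\sum_i\mu_i$ is trivial on $Z^0(L)$; since $Z^0(L)$ is connected with Lie algebra spanned by the coweights $x_k$, $\alpha_k\notin\Delta(P)$, this is the condition $\sum_i\mu_i(x_k)=0$ for all such $k$. These identifications, together with the fact that $T=T(L^{\op{ss}})\cdot Z^0(L)$ with $T(L^{\op{ss}})\cap Z^0(L)$ finite, will do all the work.

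For part (1): if a tuple $(\mu_i)_i$ representing a $\deg=0$ class is killed by the natural map, then each $\mu_i$ is trivial on $T(L^{\op{ss}})$ and $\sum_i\mu_i$ is trivial on $Z^0(L)$; as these two subgroups generate $T$, this forces $\sum_i\mu_i=0$, so $(\mu_i)_i\in\Lambda$ and the class is zero. Hence the map is injective. For surjectivity after $\tensor\Bbb{Q}$: given $(\nu_i)_i\in X(T(L^{\op{ss}}))^s$, extend each $\nu_i$ to $\widetilde{\mu}_i\in X(T)\tensor\Bbb{Q}$ (possible since the restriction $X(T)\tensor\Bbb{Q}\to X(T(L^{\op{ss}}))\tensor\Bbb{Q}$ is surjective, $T(L^{\op{ss}})$ being a subtorus), and then replace $\widetilde{\mu}_1$ by $\widetilde{\mu}_1+c$, where $c\in X(Z^0(L))\tensor\Bbb{Q}$ (so that $c$ is trivial on $T(L^{\op{ss}})$) is the unique element with $c(x_k)=-\sum_i\widetilde{\mu}_i(x_k)$ for every $\alpha_k\notin\Delta(P)$; such a $c$ exists and is unique because $c\mapsto(c(x_k))_k$ is a linear isomorphism on $X(Z^0(L))\tensor\Bbb{Q}$, the $x_k$ ($\alpha_k\notin\Delta(P)$) forming a basis of $\operatorname{Lie}(Z^0(L))$. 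The new tuple lies in the $\deg=0$ subgroup and still restricts to $(\nu_i)_i$. This proves (1) and the isomorphism \eqref{tread}.

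For part (2), first $\Pic^+_{\Bbb{Q}}(\Fl_L)\subseteq\Pic^{\deg=0}_{\Bbb{Q}}(\Fl_L)$: a global section of a line bundle $\mathcal{M}$ on $\Fl_L$ is an $L$-invariant section of the corresponding equivariant bundle on $(L/B_L)^s$; since $Z^0(L)$ acts trivially on $(L/B_L)^s$ and scales the fibres of $\mathcal{M}$ by $\gamma_{\mathcal{M}}$, the existence of a nonzero invariant section of some positive multiple of $\mathcal{M}$ forces $\gamma_{\mathcal{M}}$ to be trivial, using that $X(Z^0(L))$ is torsion-free. For the cone bijection it then suffices to compare spaces of sections: for $\mathcal{M}\in\Pic^{\deg=0}(\Fl_L)$ with image $\mathcal{N}$ under the natural map, the identity $L=L^{\op{ss}}\cdot Z^0(L)$, the triviality of $\gamma_{\mathcal{M}}$, and the $L^{\op{ss}}$-equivariant identification $(L/B_L)^s\cong(L^{\op{ss}}/B_{L^{\op{ss}}})^s$ (under which $\mathcal{M}$ restricts to $\mathcal{N}$) give $H^0(\Fl_L,\mathcal{M})=H^0(\Fl_{L^{\op{ss}}},\mathcal{N})$. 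Applying this to positive multiples shows that, under the isomorphism of (1), $\mathcal{M}$ lies in $\Pic^+_{\Bbb{Q}}(\Fl_L)$ exactly when $\mathcal{N}$ lies in $\Pic^+_{\Bbb{Q}}(\Fl_{L^{\op{ss}}})$, which is the asserted cone bijection \eqref{tread2}.

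The main point requiring care is the surjectivity step in (1): because $X(T)$, $X(T(L^{\op{ss}}))$ and $X(Z^0(L))$ are related only by finite-index inclusions (the torus $T$ being the almost-direct, not direct, product of $T(L^{\op{ss}})$ and $Z^0(L)$), one really does need $\Bbb{Q}$-coefficients there, and the correction by $c$ must be arranged so as not to disturb the restrictions to $T(L^{\op{ss}})$ while killing the obstruction on $Z^0(L)$. Beyond that, everything is bookkeeping around Lemma \ref{lemmE} and the action of $Z^0(L)$ on fibres, so I anticipate no serious difficulty.
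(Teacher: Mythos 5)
Your argument is correct and follows essentially the same route as the paper: both reduce to the explicit lattice description of Lemma \ref{lemmE}, prove injectivity by noting that triviality on $T(L^{\op{ss}})$ together with the $\deg=0$ condition forces the defining relation of the kernel, achieve surjectivity over $\Bbb{Q}$ by correcting a lift using a rational character supported on $Z^0(L)$, and deduce (2) by matching global sections via $L=L^{\op{ss}}\cdot Z^0(L)$ when the $Z^0(L)$-action is trivial. Your write-up just spells out a few steps the paper calls ``easy to check.''
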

\begin{proof}
The injection statement follows from Lemma \ref{lemmE}. For the surjection, given a line bundle $\ml'$ on $\Fl_{L^{\op{ss}}}$ we can find a line bundle $\ml$ on $\Fl_L$ which maps to  $\ml'$ under the natural restriction map. The action of $Z^0(L)$ may not be trivial, but we can tensor
$\ml$ by a line bundle of the form $\ml_{\lambda}\boxtimes\mathcal{O}\boxtimes\dots\boxtimes\mathcal{O}$ where $\lambda=\sum a_k\omega_{k}$, the sum taken over $\alpha_k\not\in \Delta(P)$ but possibly with $a_k\in \Bbb{Q}$, to make the action of $Z^0(L)$ trivial (note that $Z^0(L)\times L^{ss}\to L$ is an isogeny, so that any $\gamma$ in the dual of the Lie algebra of $Z^0(L)$ is the restriction of some element of $\frh_{\Bbb{Q}}^*$ that vanishes on the Lie algebra of $T(L^{ss})$). This proves (1).

It is also easy to check that the map $\Pic^{\deg=0}(\Fl_L)\to \Pic(\Fl_{L^{\op{ss}}})$ preserves global sections and (2) follows.

\end{proof}

\section{The induction operation}
Define
\begin{equation}\label{ind_isom}
\Ind=\Ind_L^G: \Pic(\Fl_L-\mathcal{R}_L)\leto{\sim} \Pic(\Fl'_G)
\end{equation}
as a composite of the isomorphism from Proposition \ref{comparison2}(1) applied to $U=\Fl_L-\mathcal{R}_L$ (and Corollary \ref{carol2}):
$$\Pic(\Fl_L-\mathcal{R}_L)\to \Pic(\hat{\mc}'-\hat{\mathcal{R}'})$$
and  the inverse of the isomorphism of Corollary \ref{restricto}. Note that $\left(\tilde{i}^{'}\right)^*\Ind_L^G(\mathcal{M})$ is isomorphic to $\mathcal{M}$ (here $\mathcal{M}\in \Pic(\Fl_L-\mathcal{R}_L)$).
\begin{lemma}\label{ones}
Using notation defined in Definition \ref{doc},
\begin{enumerate}
\item[(a)] The restriction mapping $\Pic(\Fl_L)\to \Pic(\Fl_L-\mathcal{R}_L)$.
\item[(b)] The restriction mapping $\Pic^{\deg=0}(\Fl_L)\to \Pic^{\deg=0}(\Fl_L-\mathcal{R}_L)$ is surjective.
\item[(c)] The isomorphism  $\Ind: \Pic(\Fl_L-\mathcal{R}_L)\leto{\sim} \Pic(\Fl'_G)$ restricts to an isomorphism
$$\Ind: \Pic^{\deg=0}(\Fl_L-\mathcal{R}_L)\leto{\sim} \Pic^{\deg=0}(\Fl'_G).$$
\end{enumerate}
\end{lemma}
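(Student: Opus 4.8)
The plan is to reduce all three parts to two elementary inputs. The first is the standard fact that on a smooth (quotient) stack, restriction of Picard groups to the complement of an effective Cartier divisor is surjective. The second, which is really the heart of the matter, is the observation that the character $\gamma_{\mathcal N}\colon Z^0(L)\to\Bbb C^*$ attached to a line bundle $\mathcal N$ (Definition \ref{invariant}) depends only on the restriction of $\mathcal N$ to any nonempty open substack: $\gamma_{\mathcal N}$ is computed from the $Z(L)$-action on a single fibre and is constant in connected families, so for $\mathcal N$ a line bundle on $\Fl_L$ (or on $\Fl'_G$, or on $\hat{\mc}'$) and $U$ a nonempty open, $\gamma_{\mathcal N}=\gamma_{\mathcal N|_U}$. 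Consequently, membership in a ``$\Pic^{\deg=0}$'' subgroup is detected on any dense open substack.

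For part (a): $\Fl_L=(L/B_L)^s/L$ is a smooth quotient stack, so $\Pic(\Fl_L)=\mathrm{Cl}(\Fl_L)$, and restriction to the complement of the codimension-one closed substack $\mathcal R_L$ is surjective --- any line bundle on $\Fl_L-\mathcal R_L$ is obtained by restricting the closure of an associated Weil divisor. (Its kernel is generated by the classes of the irreducible components of $\mathcal R_L$; in particular it contains $\mathcal O(\mathcal R_L)\in\Pic^{\deg=0}(\Fl_L)$ by Remark \ref{deform}, though this is not needed below.) Part (b) then follows at once: restriction clearly sends $\Pic^{\deg=0}(\Fl_L)$ into $\Pic^{\deg=0}(\Fl_L-\mathcal R_L)$; conversely, given $\mathcal M\in\Pic^{\deg=0}(\Fl_L-\mathcal R_L)$, use (a) to choose $\overline{\mathcal M}\in\Pic(\Fl_L)$ restricting to $\mathcal M$, and note that $\gamma_{\overline{\mathcal M}}=\gamma_{\mathcal M}$ is trivial, so $\overline{\mathcal M}\in\Pic^{\deg=0}(\Fl_L)$ and restricts to $\mathcal M$.

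For part (c): recall from \eqref{ind_isom} that $\Ind$ is the composite of the isomorphism $\tau'^{*}\colon\Pic(\Fl_L-\mathcal R_L)\xrightarrow{\sim}\Pic(\hat{\mc}'-\hat{\mathcal R}')$ of Proposition \ref{comparison2}(1) (with $\hat{\mathcal R}'=\tau'^{-1}\mathcal R_L$, Corollary \ref{carol2}) and the inverse of $\pi'^{*}\colon\Pic(\Fl'_G)\xrightarrow{\sim}\Pic(\hat{\mc}'-\hat{\mathcal R}')$ of Corollary \ref{restricto}; since it is already an isomorphism, it is enough to show $\mathcal M\in\Pic^{\deg=0}(\Fl_L-\mathcal R_L)$ if and only if $\Ind(\mathcal M)\in\Pic^{\deg=0}(\Fl'_G)$. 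Pulling back the defining identity $\pi'^{*}\Ind(\mathcal M)\cong\tau'^{*}\mathcal M$ (on $\hat{\mc}'-\hat{\mathcal R}'$) along $i'\colon\Fl_L-\mathcal R_L\to\hat{\mc}'-\hat{\mathcal R}'$ (Corollary \ref{carol2}(2)), and using $\pi'\circ i'=\tilde{i}'$ and $\tau'\circ i'=\mathrm{id}$ (as for the unprimed maps in \eqref{diagrammo2}), gives
$$(\tilde{i}')^{*}\Ind(\mathcal M)\big|_{\Fl_L-\mathcal R_L}\cong\mathcal M .$$
Now $(\tilde{i}')^{*}\Ind(\mathcal M)$ is a line bundle on all of $\Fl_L$, and by the observation of the first paragraph its $\gamma$ equals $\gamma_{\mathcal M}$. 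On the other hand, unwinding Definition \ref{doc} --- together with the commutativity $\tilde{i}'=(\Fl_G\to\Fl'_G)\circ\tilde{i}$, which follows from the maps in diagram \eqref{diagrammo2} --- we have $\Ind(\mathcal M)\in\Pic^{\deg=0}(\Fl'_G)$ if and only if $(\tilde{i}')^{*}\Ind(\mathcal M)\in\Pic^{\deg=0}(\Fl_L)$, i.e.\ iff $\gamma_{\mathcal M}$ is trivial, i.e.\ iff $\mathcal M\in\Pic^{\deg=0}(\Fl_L-\mathcal R_L)$. This proves (c).

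The argument is essentially formal given the earlier results; the points needing care are (1) asserting the surjectivity in (a) at the level of the quotient stack $\Fl_L$ (handled by smoothness plus the Weil-divisor description of $\Pic$), and (2) the bookkeeping in (c) that identifies the map $\tilde{i}'$ used to build $\Ind$ with the composite $\Fl_L\to\Fl_G\to\Fl'_G$ of Definition \ref{doc}, so that the ``$\deg=0$'' condition on $\Fl'_G$ is genuinely detected by $(\tilde{i}')^{*}$. I do not anticipate a serious obstacle beyond this bookkeeping.
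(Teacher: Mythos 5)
Your proof of part (c) is a correct unwinding of what the paper dismisses as ``follows from the definition,'' and the identification $\tilde{i}'=(\Fl_G\to\Fl'_G)\circ\tilde{i}$ together with the locality of $\gamma$ over a dense open is exactly the right way to see it. Your logic for deriving (b) from (a) (lift, then observe $\gamma$ is detected on a dense open) also coincides with the paper's ``(a) implies (b).''

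There is, however, a genuine gap in your argument for part (a). You assert that $\Fl_L$ is a smooth quotient stack and therefore $\Pic(\Fl_L)=\mathrm{Cl}(\Fl_L)$, with surjectivity of restriction following from the usual Weil-divisor picture. This is not correct here: the diagonal $L$-action on $(L/B_L)^s$ has the positive-dimensional group $Z^0(L)$ inside the stabilizer of every point, so $\Fl_L$ has generic automorphism group containing $Z^0(L)$, and a line bundle on $\Fl_L$ whose character $\gamma$ is nontrivial admits no rational section at all; it is not represented by any Weil divisor. Thus $\Pic(\Fl_L)\ne\mathrm{Cl}(\Fl_L)$ (the discrepancy is precisely the character lattice of $Z^0(L)$), and the ``closure of the associated Weil divisor'' step collapses. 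The surjectivity statement in (a) is still true, but its proof needs an argument adapted to the equivariant setting. The paper, tellingly, does not attempt an excision-type argument at all: it constructs an explicit linear section of the restriction map, sending $\mathcal M\in\Pic(\Fl_L-\mathcal R_L)$ to $(\tilde i')^*\Ind(\mathcal M)\in\Pic(\Fl_L)$, and verifies that this restricts back to $\mathcal M$ over $\Fl_L-\mathcal R_L$ by the diagram chase $\pi'\circ i'=\tilde i'$, $\tau'\circ i'=\mathrm{id}$ (i.e.\ precisely the identity $(\tilde i')^*\Ind(\mathcal M)|_{\Fl_L-\mathcal R_L}\cong\mathcal M$ that you derive inside your proof of (c)). In other words, (a) in the paper is not a free-standing excision fact but is itself a consequence of the induction machinery; your proof should either adopt that route, or replace $\Pic=\mathrm{Cl}$ with a correct equivariant statement --- e.g.\ that every line bundle on $(L/B_L)^s$ is $L$-linearizable and that linearizations of a line bundle defined on an invariant open extend up to twisting by a character of $L$ --- which requires more work than the sentence you wrote.
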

\begin{proof}
Part (c) follows from the definition and part (a) implies (b). For (a), we define a linear section as follows:  a line bundle $\mathcal{M}$ on $\Fl_L-\mathcal{R}_L$ inducts to a line bundle $\ml$ on $\Fl'_G$ which can then be restricted to $\Fl_L$ via $\tilde{i}$, this is the desired lift: via $\pi'$,  $\hat{\mc}'-\hat{\mathcal{R}}'$ is an open substack of $\Fl'_G$. Therefore the pull back of $\ml$ to $\hat{\mc}'-\hat{\mathcal{R}'}$ is isomorphic to ${\tau'}^*\ml$, and since $\tilde{i}'=\pi\circ i'$, and ${\tau'}\circ i'$ is the identity on $\Fl_L$, the result follows.
\end{proof}

Recall that Lemma \ref{lemmE} gives a surjection from $(\Pic^L(L/B_L))^s=\Pic(G/B)^s$ to $\Pic(\Fl_L)$. Therefore a tuple  $(\mu_1,\dots,\mu_s)$ of weights for $G$ gives rise to an element of $\Pic(\Fl_L)$. The following theorem
gives a formula for its image, under induction, in $\Pic(\Fl'_G)$.
\begin{theorem}\label{general_Induction}
The composite induction map
\begin{equation}\label{induct}
\Pic(\Fl_L)\twoheadrightarrow \Pic(\Fl_L-\mathcal{R}_L)\leto{\sim} \Pic(\Fl'_G)\subseteq \Pic(\Fl_G)
\end{equation}
takes $(\mu_1,\dots,\mu_s)$ to $(\lambda_1,\dots,\lambda_s)$ where
\begin{equation}\label{dinner}
(\lambda_1,\dots,\lambda_s)=(w_1\mu_1,w_2\mu_2,\dots, w_s\mu_s) -\sum_{j=1}^s\bigl(\sum'_{v} w_j\mu_j(\alpha_{\ell}^{\vee})\mathcal{O}(D(j,v))\bigr).
\end{equation}
Here the sum is over $v\in W^P$, $j=1,\dots,s$ and simple roots $\alpha_{\ell}$ such that $v\leto{\alpha_{\ell}}w_j$.
\end{theorem}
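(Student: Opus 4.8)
The plan is to realise the tuple $(\lambda_1,\dots,\lambda_s)$ on the right of \eqref{dinner} as a line bundle $\ml'=\ml_{\lambda_1}\boxtimes\dots\boxtimes\ml_{\lambda_s}$, where the subtraction in \eqref{dinner} is read inside $\Pic(\Fl_G)=\Pic(G/B)^s$, and then to verify two things: that $\ml'$ actually lies in the subgroup $\Pic(\Fl'_G)\subseteq\Pic(\Fl_G)$, and that $\ml'$ corresponds, under the induction isomorphism, to the restriction to $\Fl_L-\mathcal{R}_L$ of the line bundle $\mathcal{M}\in\Pic(\Fl_L)$ that is the image of $(\mu_1,\dots,\mu_s)$ under the surjection of Lemma \ref{lemmE}. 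Since the composite \eqref{induct} is by construction $\mathcal{M}\mapsto\Ind(\mathcal{M}|_{\Fl_L-\mathcal{R}_L})$, these two facts together are precisely the statement of the theorem.

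For the first fact, recall that a bundle $\ml_{\nu_1}\boxtimes\dots\boxtimes\ml_{\nu_s}$ descends to $\prod_i (G/Q'_{w_i})$, i.e.\ lies in $\Pic(\Fl'_G)$, exactly when each $\nu_i$ is orthogonal to the coroots $\alpha^\vee$ with $\alpha\in\Delta'_{w_i}$. So fix $i$ and a simple root $\alpha_\ell\in\Delta'_{w_i}$; then $v^\ast:=s_{\alpha_\ell}w_i$ lies in $W^P$ and $v^\ast\leto{\alpha_\ell}w_i$ (Remark \ref{correspondence}). Pairing the $i$-th coordinate of \eqref{dinner} with $\alpha_\ell^\vee$ and invoking Corollary \ref{correspondence2}: part (1) shows the summand $\mathcal{O}(D(i,v^\ast))$, whose coefficient in \eqref{dinner} is $(w_i\mu_i)(\alpha_\ell^\vee)$, contributes exactly $(w_i\mu_i)(\alpha_\ell^\vee)$ to that pairing, while part (2) shows every other summand $\mathcal{O}(D(j,v))$ contributes $0$; hence $\lambda_i(\alpha_\ell^\vee)=(w_i\mu_i)(\alpha_\ell^\vee)-(w_i\mu_i)(\alpha_\ell^\vee)=0$.

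For the second fact I first unwind $\Ind$. By \eqref{ind_isom} it equals $(\pi'^*)^{-1}\circ\tau'^*$, with $\tau'^*\colon\Pic(\Fl_L-\mathcal{R}_L)\leto{\sim}\Pic(\hat{\mc}'-\hat{\mathcal{R}}')$ the isomorphism of Proposition \ref{comparison2}(1) (with inverse $i'^*$) and $\pi'^*\colon\Pic(\Fl'_G)\leto{\sim}\Pic(\hat{\mc}'-\hat{\mathcal{R}}')$ that of Corollary \ref{restricto}; hence $\Ind^{-1}=i'^*\circ\pi'^*$, which is pull-back along $\tilde{i}'=\pi'\circ i'$ restricted to $\Fl_L-\mathcal{R}_L$. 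It therefore suffices to compute $(\tilde{i}')^*\ml'$ and see that it agrees with $\mathcal{M}$ over $\Fl_L-\mathcal{R}_L$. Now $\tilde{i}'$ is the composite of $\tilde{i}\colon\Fl_L\to\Fl_G$ with the natural quotient $q\colon\Fl_G\to\Fl'_G$ — both send $(F,\bar{l}_1,\dots,\bar{l}_s)$ to the $G$-space $E=F\times_L G$ with the points $\overline{l_iw_i^{-1}}$, recorded in $E/B$ respectively $E/Q'_{w_i}$ — and $\Pic(\Fl'_G)\hookrightarrow\Pic(\Fl_G)$ is $q^*$; so $(\tilde{i}')^*\ml'=\tilde{i}^*\ml'$ with $\ml'$ read inside $\Pic(\Fl_G)$. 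Since $\tilde{i}^*$ is additive and its $i$-th coordinate is the map $L/B_L\to G/B$, $\bar{l}\mapsto\overline{lw_i^{-1}}$, the standard line-bundle computation underlying Lemma \ref{ress} shows that $\tilde{i}$ pulls $\ml_\lambda$ on the $i$-th $G/B$-factor back to $\ml_{w_i^{-1}\lambda}$; in particular $\tilde{i}^*(\ml_{w_1\mu_1}\boxtimes\dots\boxtimes\ml_{w_s\mu_s})=\mathcal{M}$. Applying $\tilde{i}^*$ to \eqref{dinner} then yields
$$(\tilde{i}')^*\ml'=\mathcal{M}-\sum_{j=1}^s\sum'_v\,(w_j\mu_j)(\alpha_\ell^\vee)\;\tilde{i}^*\mathcal{O}(D(j,v))\qquad\text{in }\Pic(\Fl_L).$$

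To finish I must show $\tilde{i}^*\mathcal{O}(D(j,v))$ is trivial on $\Fl_L-\mathcal{R}_L$ for each pair $(j,v)$. For this I use the canonical section of $\mathcal{O}(D(j,v))$, which vanishes exactly along $D(j,v)$: by Corollary \ref{carol}(2) the image $i(\Fl_L-\mathcal{R}_L)$ lies in $\hat{\mc}-\hat{\mathcal{R}}$, and by Proposition \ref{malta} the image $\pi(\mathcal{C}-\mathcal{R})$ avoids $D(j,v)$, so $\tilde{i}(\Fl_L-\mathcal{R}_L)$ misses $D(j,v)$; thus the pulled-back section is nowhere vanishing on $\Fl_L-\mathcal{R}_L$, and $\tilde{i}^*\mathcal{O}(D(j,v))|_{\Fl_L-\mathcal{R}_L}$ is trivial. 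Consequently $(\tilde{i}')^*\ml'$ and $\mathcal{M}$ agree over $\Fl_L-\mathcal{R}_L$, so $\Ind^{-1}(\ml')=\mathcal{M}|_{\Fl_L-\mathcal{R}_L}$, i.e.\ $\Ind(\mathcal{M}|_{\Fl_L-\mathcal{R}_L})=\ml'$; this establishes the second fact and with it the theorem. I expect the main obstacle to be the two pieces of bookkeeping — correctly identifying $\Ind^{-1}$ with pull-back along $\tilde{i}'|_{\Fl_L-\mathcal{R}_L}$ through Proposition \ref{comparison2}(1) and Corollary \ref{restricto}, and routing the triviality of $\tilde{i}^*\mathcal{O}(D(j,v))$ back to Proposition \ref{malta} through the chain of maps $i$, $\pi$, $q$; by comparison the combinatorics from Corollary \ref{correspondence2} and the weight computation for $\tilde{i}^*$ are routine.
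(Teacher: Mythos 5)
Your proof is correct, and it uses the same two ingredients the paper's proof hinges on: Corollary~\ref{correspondence2} to check that the right-hand side of \eqref{dinner} lands in $\Pic(\Fl'_G)$, and Lemma~\ref{malta} (together with Corollary~\ref{carol}) to show that the $\mathcal{O}(D(j,v))$ pull back trivially to $\Fl_L-\mathcal{R}_L$ via $\tilde i$. The only substantive divergence is in the endgame: after step (2), the paper deduces a relation $\mathcal{N}=\op{Ind}(\ml)(D)$ on $\Fl_G$ for some divisor $D$ supported on the $D(j,v)$'s and then re-invokes Corollary~\ref{correspondence2} to force $D=0$; you instead observe that $\Ind^{-1}=(\tilde i')^*|_{\Fl_L-\mathcal{R}_L}$ is an isomorphism and simply compute $\Ind^{-1}(\ml')=\mathcal{M}|_{\Fl_L-\mathcal{R}_L}$ directly, which dispenses with the auxiliary divisor and the second use of the corollary. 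This is a genuine, if modest, streamlining --- it avoids needing to know that the codimension-one complement of $\pi(\mathcal{C}-\mathcal{R})$ in $(G/B)^s$ consists precisely of the $D(j,v)$'s --- whereas the paper's phrasing makes visible how the argument parallels Ressayre's irredundancy proof \cite{R1}, as the authors explicitly note.
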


\subsection{The induction operation and global sections}
\begin{lemma}\label{c2}
If $\mathcal{M}\in  \Pic^{\deg=0}(\Fl_L-\mathcal{R}_L)$, then
\begin{equation}\label{vibh1}
H^0(\Fl'_G,\Ind(\mathcal{M}))\leto{\left(\tilde{i}^{'}\right)^*} H^0(\Fl_L-\mathcal{R}_L,\mathcal{M})
\end{equation}
is an isomorphism.
Furthermore,
\begin{equation}\label{vibh2}
H^0(\Fl'_G,\Ind(\mathcal{M}))\leto{\left(\tilde{i}^{'}\right)^*} H^0(\Fl_L,\tilde{i}'^*\Ind(\mathcal{M}))
\end{equation}
is also an isomorphism.
\end{lemma}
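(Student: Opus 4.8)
The plan is to factor $(\tilde i')^{*}$ through cohomology on the universal stack $\hat{\mc}'$ (and its ramification-free locus), exploiting $\tilde i'=\pi'\circ i'$, and then to feed in the Levification comparison (Proposition~\ref{comparison2}) together with the codimension estimate of Lemma~\ref{codim2}.

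First I would prove \eqref{vibh1}. By construction, $\Ind$ is the composite of the isomorphism of Proposition~\ref{comparison2}(1) (applied to $U=\Fl_L-\mathcal R_L$, using $\tau'^{-1}(\Fl_L-\mathcal R_L)=\hat{\mc}'-\hat{\mathcal R'}$ by Corollary~\ref{carol2}(2)) with the inverse of the isomorphism of Corollary~\ref{restricto}; in particular $\pi'^{*}\Ind(\mathcal M)\cong\tau'^{*}\mathcal M$ on $\hat{\mc}'-\hat{\mathcal R'}$. Now $\pi'$ restricts to the open immersion $\hat{\mc}'-\hat{\mathcal R'}\hookrightarrow\Fl'_G$ of Lemma~\ref{codim2} (passed to the quotient stack by $G$), whose complement has codimension $\ge 2$; since $\Fl'_G$ is smooth and $\Ind(\mathcal M)$ is a genuine line bundle on it, restriction gives an isomorphism $H^0(\Fl'_G,\Ind(\mathcal M))\leto{\sim}H^0(\hat{\mc}'-\hat{\mathcal R'},\tau'^{*}\mathcal M)$ — this is the argument used to prove Corollary~\ref{restricto}, now applied to sections rather than to $\Pic$. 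Next, because $\mathcal M\in\Pic^{\deg=0}(\Fl_L-\mathcal R_L)$, the character $\gamma_{\mathcal M}$ is trivial, so Proposition~\ref{comparison2}(2) with $U=\Fl_L-\mathcal R_L$ yields $i'^{*}:H^0(\hat{\mc}'-\hat{\mathcal R'},\tau'^{*}\mathcal M)\leto{\sim}H^0(\Fl_L-\mathcal R_L,\mathcal M)$. Composing the two isomorphisms and using $(\tilde i')^{*}=i'^{*}\circ\pi'^{*}$ gives \eqref{vibh1}.

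For \eqref{vibh2} I would deduce it formally from \eqref{vibh1}. Write $\ml:=(\tilde i')^{*}\Ind(\mathcal M)\in\Pic(\Fl_L)$; restricting to $\Fl_L-\mathcal R_L$ and combining $\pi'^{*}\Ind(\mathcal M)\cong\tau'^{*}\mathcal M$ on $\hat{\mc}'-\hat{\mathcal R'}$ with $\tau'\circ i'=\mathrm{id}$ shows $\ml|_{\Fl_L-\mathcal R_L}\cong\mathcal M$ (this is the remark following \eqref{ind_isom}). Hence the restriction $r:H^0(\Fl_L,\ml)\to H^0(\Fl_L-\mathcal R_L,\mathcal M)$ makes sense, and it is injective since $\Fl_L$ is reduced and irreducible and $\Fl_L-\mathcal R_L$ is dense. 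The composite $r\circ(\tilde i')^{*}$ is exactly the map \eqref{vibh1}, hence an isomorphism; injectivity of $r$ then forces $(\tilde i')^{*}$ in \eqref{vibh2} to be injective, and surjectivity of $r\circ(\tilde i')^{*}$ together with injectivity of $r$ forces it to be surjective. (Alternatively one can run the argument of the previous paragraph verbatim with $\hat{\mc}'-\hat{\mathcal R'}$ replaced by $\hat{\mc}'$ and $\Fl_L-\mathcal R_L$ by $\Fl_L$, invoking Proposition~\ref{comparison2}(1)--(2) for $U=\Fl_L$; this needs the extra observation that $\gamma_{\ml}$ is trivial, which holds since $\gamma$ is constant over the connected stack $\Fl_L$ and is trivial on the dense open $\Fl_L-\mathcal R_L$.)

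The most delicate point I anticipate is the identification $\pi'^{*}\Ind(\mathcal M)\cong\tau'^{*}\mathcal M$ on $\hat{\mc}'-\hat{\mathcal R'}$ and the codimension-$\ge 2$ extension of sections across $\Fl'_G\setminus(\hat{\mc}'-\hat{\mathcal R'})$: one must track the line bundle carefully through the two isomorphisms defining $\Ind$ along the diagram \eqref{diagrammo2}, and confirm that the codimension estimate of Lemma~\ref{codim2} persists after passing to the quotient stacks. The one genuinely substantive ingredient is Proposition~\ref{comparison2}(2) — the Levification statement that an invariant section propagates along the family $E_t$ with neither zero nor pole at $t=0$; the remainder is bookkeeping with pullbacks.
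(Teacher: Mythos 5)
Your proposal follows the same chain of isomorphisms as the paper's proof: both use Lemma~\ref{codim2} to extend sections across the codimension-$\ge 2$ complement, the definition of $\Ind$ to identify $\pi'^{*}\Ind(\mathcal M)$ with $\tau'^{*}\mathcal M$ on $\hat{\mc}'-\hat{\mathcal R}'$, and Proposition~\ref{comparison2}(2) (via triviality of $\gamma_{\mathcal M}$) to descend to $\Fl_L-\mathcal R_L$; and both deduce \eqref{vibh2} by factoring \eqref{vibh1} through the injective restriction $H^0(\Fl_L,\tilde i'^{*}\Ind(\mathcal M))\hookrightarrow H^0(\Fl_L-\mathcal R_L,\mathcal M)$. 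The only divergence is your parenthetical alternative for \eqref{vibh2}, which replaces $\hat{\mc}'-\hat{\mathcal R}'$ by $\hat{\mc}'$; this would require $\pi'$ restricted to $\hat{\mc}'$ to still give an isomorphism on global sections, which is not directly supplied by Lemma~\ref{codim2} (only the ramification-free locus is shown to be an open immersion with small complement), but since this alternative is offered only as an aside, the main argument stands and matches the paper.
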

We note that \eqref{vibh2} also follows from results of Roth \cite{Roth}, and is therefore not new.
\begin{proof}
The first isomorphism \eqref{vibh1} follows from Lemma \ref{codim2} and Proposition \ref{comparison2}:
$$H^0(\Fl'_G,\Ind(\mathcal{M}))=H^0(\hat{\mc'}-\hat{\mathcal{R}'},\pi^*\Ind(\mathcal{M}))=H^0(\hat{\mc'}-\hat{\mathcal{R}'},{\tau'}^*\mathcal{M})=H^0(\Fl_L-\mathcal{R}_L,\mathcal{M}).$$
 \eqref{vibh1} factors through the inclusion  $H^0(\Fl_L,\tilde{i}'^*\Ind(\mathcal{M}))\subseteq H^0(\Fl_L-\mathcal{R}_L,\mathcal{M})$ which gives \eqref{vibh2}.
\end{proof}
\begin{remark}
Suppose $\mathcal{M}\in  \Pic^{\deg=0}(\Fl_L)$, then in general $\tilde{i}'^*\Ind(\mathcal{M})$ and $\mathcal{M}$
may be different. They are identified on $\Fl_L-\mathcal{R}_L$ (even without the condition of action on center
on $\mathcal{M}$).
\end{remark}
\begin{remark}
Taking $\mathcal{M}=\mathcal{O}$ in \eqref{vibh1}, we see that $h^0(\Fl_L-\mathcal{R}_L,\mathcal{O})=h^0(\Fl'_G,\mathcal{O})=1$, and hence we recover the generalization of Fulton's conjecture proved
in \cite{BKR} (this proof is not really a different proof).
\end{remark}
Now note that $\Pic^{+}(\Fl_L)\subseteq \Pic^{\deg=0}(\Fl_L)$.
\begin{lemma}\label{twos}
\begin{enumerate}
\item The restriction mapping $\Pic^+(\Fl_L)\to \Pic^+(\Fl_L-\mathcal{R}_L)$ is surjective, with a linear section.
\item The isomorphism \eqref{ind_isom} restricts to an isomorphism  $\Ind: \Pic^+(\Fl_L-\mathcal{R}_L)\leto{\sim} \Pic^{+,\deg=0}(\Fl'_G)$
\end{enumerate}
\end{lemma}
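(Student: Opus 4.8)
The plan is to deduce both parts formally from Lemmas \ref{ones} and \ref{c2}: the isomorphism $\Ind$ of \eqref{ind_isom} and the linear section $\Pic(\Fl_L-\mathcal{R}_L)\to\Pic(\Fl_L)$ constructed in Lemma \ref{ones}(a) already behave correctly on all of $\Pic$ and on the $\deg=0$ subgroups, so the only thing left to track is the non-vanishing of global sections. The one preliminary point — also the step I expect to require the most care, though it remains minor — is the containment $\Pic^{+}(\Fl_L-\mathcal{R}_L)\subseteq\Pic^{\deg=0}(\Fl_L-\mathcal{R}_L)$, the analogue over the open substack of the containment $\Pic^{+}(\Fl_L)\subseteq\Pic^{\deg=0}(\Fl_L)$ recorded before the lemma: if $\mathcal{M}$ carries a nonzero global section $s$ on $\Fl_L-\mathcal{R}_L$, then at a point where $s$ does not vanish the action of $Z^{0}(L)$, trivial on the base, fixes $s$, so $\gamma_{\mathcal{M}}$ is trivial. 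This is the argument in the proof of Lemma \ref{ress}, and it is legitimate over the open substack precisely because $\gamma_{\mathcal{M}}$ is locally constant (Definition \ref{invariant}) and can therefore be evaluated at a point where $s$ is nonzero. This makes Lemma \ref{c2} applicable whenever we begin with a bundle in $\Pic^{+}$.

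For part (2): the isomorphism $\Ind:\Pic(\Fl_L-\mathcal{R}_L)\to\Pic(\Fl'_G)$ of \eqref{ind_isom} restricts, by Lemma \ref{ones}(c), to an isomorphism $\Pic^{\deg=0}(\Fl_L-\mathcal{R}_L)\leto{\sim}\Pic^{\deg=0}(\Fl'_G)$. It thus suffices to show that, for $\mathcal{M}$ in the source of this restricted isomorphism, $H^{0}(\Fl_L-\mathcal{R}_L,\mathcal{M})\neq 0$ if and only if $H^{0}(\Fl'_G,\Ind(\mathcal{M}))\neq 0$ — but this is exactly the isomorphism \eqref{vibh1} of Lemma \ref{c2}. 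Together with the preliminary containment (so that a bundle in $\Pic^{+}(\Fl_L-\mathcal{R}_L)$ automatically lies in $\Pic^{\deg=0}(\Fl_L-\mathcal{R}_L)$, while conversely $\Ind^{-1}$ of a $\deg=0$ bundle is $\deg=0$), this identifies $\Pic^{+}(\Fl_L-\mathcal{R}_L)$ with $\Pic^{+,\deg=0}(\Fl'_G)$ under $\Ind$; the bijection is an isomorphism of semigroups since $\Ind$ is additive.

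For part (1): Lemma \ref{ones}(a) already supplies the linear section $\sigma=(\tilde{i}')^{*}\circ\Ind$ of the restriction map $\Pic(\Fl_L)\to\Pic(\Fl_L-\mathcal{R}_L)$, so it remains only to check that $\sigma$ carries $\Pic^{+}(\Fl_L-\mathcal{R}_L)$ into $\Pic^{+}(\Fl_L)$. Given $\mathcal{M}\in\Pic^{+}(\Fl_L-\mathcal{R}_L)$, part (2) yields $\Ind(\mathcal{M})\in\Pic^{+,\deg=0}(\Fl'_G)$, hence $H^{0}(\Fl'_G,\Ind(\mathcal{M}))\neq 0$, and then the second isomorphism \eqref{vibh2} of Lemma \ref{c2}, $H^{0}(\Fl'_G,\Ind(\mathcal{M}))\cong H^{0}(\Fl_L,(\tilde{i}')^{*}\Ind(\mathcal{M}))$, shows $\sigma(\mathcal{M})\in\Pic^{+}(\Fl_L)$. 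Since $\sigma$ is additive and is a section of the restriction map, this gives the asserted linear section of $\Pic^{+}(\Fl_L)\to\Pic^{+}(\Fl_L-\mathcal{R}_L)$, and in particular its surjectivity.
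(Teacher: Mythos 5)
Your proof is correct and follows essentially the same route as the paper: part (1) uses the linear section $(\tilde{i}')^{*}\circ\Ind$ together with \eqref{vibh1} and \eqref{vibh2}, and part (2) unwinds to Proposition \ref{comparison2} and Lemma \ref{codim2}, which is exactly what Lemma \ref{c2} packages. The one point you make explicit that the paper leaves implicit — that $\Pic^{+}(\Fl_L-\mathcal{R}_L)\subseteq\Pic^{\deg=0}(\Fl_L-\mathcal{R}_L)$, so that Lemma \ref{c2} is indeed applicable to an arbitrary element of $\Pic^{+}(\Fl_L-\mathcal{R}_L)$ — is a genuine and correct fill-in, justified just as in Lemma \ref{ress}.
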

\begin{proof}
For (1), the lift is just $\tilde{i}^*\Ind(\mathcal{M})$ as in Lemma \ref{c2}, and (1) follows from \eqref{vibh1} and \eqref{vibh2}. (2) follows from \eqref{comparison2}, applied to $U=\Fl_L-\mathcal{R}_L$, and Lemma \ref{codim2}.
\end{proof}

Theorem \ref{general_Induction} has the following corollary
\begin{corollary}\label{InductionC}
The induction map \eqref{induct} restricts to a surjection
$$\Pic^+(\Fl_L)\twoheadrightarrow \Pic^+(\Fl_L-\mathcal{R}_L)\leto{\sim} \Pic^{\deg=0}(\Fl'_G),$$
\end{corollary}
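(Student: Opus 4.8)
The plan is to read the corollary off from the two parts of Lemma~\ref{twos}, and then to record the coordinate form of the resulting surjection using the formula of Theorem~\ref{general_Induction}. By construction the map \eqref{induct} is the composite of the restriction $\Pic(\Fl_L)\twoheadrightarrow\Pic(\Fl_L-\mathcal{R}_L)$ with the isomorphism $\Ind$ of \eqref{ind_isom}. A nonzero global section of a line bundle on $\Fl_L$ restricts to a nonzero section over the dense open substack $\Fl_L-\mathcal{R}_L$, so this restriction carries $\Pic^+(\Fl_L)$ into $\Pic^+(\Fl_L-\mathcal{R}_L)$, and by Lemma~\ref{twos}(1) the restricted map is onto (even split). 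By Lemma~\ref{twos}(2), $\Ind$ then restricts to an isomorphism $\Pic^+(\Fl_L-\mathcal{R}_L)\leto{\sim}\Pic^{+,\deg=0}(\Fl'_G)$. Chaining the two, the restriction of \eqref{induct} to $\Pic^+(\Fl_L)$ is a surjection onto $\Pic^{+,\deg=0}(\Fl'_G)$; modulo Lemma~\ref{twos} this is the whole argument.

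To make the statement concrete I would also spell out this surjection in weight coordinates, which is where Theorem~\ref{general_Induction} and Corollary~\ref{correspondence2} enter. Given $\ml\in\Pic^+(\Fl_L)$, Lemma~\ref{lemmE} exhibits $\ml$ as the image of a tuple of $G$-weights $(\mu_1,\dots,\mu_s)$, and Theorem~\ref{general_Induction} computes $\Ind(\ml)$ as the tuple $(\lambda_1,\dots,\lambda_s)$ of \eqref{dinner}. Fix a pair $(j,v)$ with $v\leto{\alpha_\ell}w_j$ and $v\in W^P$. Only two terms of \eqref{dinner} contribute to $\lambda_j(\alpha_\ell^\vee)$: the summand $w_j\mu_j$, and $w_j\mu_j(\alpha_\ell^\vee)$ times the $j$-th component of $\mathcal{O}(D(j,v))$. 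That component pairs to $1$ with $\alpha_\ell^\vee$ by Corollary~\ref{correspondence2}(1), whereas for every other divisor $D(j',v')$ occurring in the sum the $j$-th component pairs to $0$ with $\alpha_\ell^\vee$ by Corollary~\ref{correspondence2}(2). These two contributions cancel, so $\lambda_j(\alpha_\ell^\vee)=0$ for all such $(j,v)$. Since the simple roots occurring as $v\leto{\alpha_\ell}w_j$ with $v\in W^P$ are precisely the elements of $\Delta'_{w_j}$ (cf.\ Remark~\ref{correspondence}), this says each $\ml_{\lambda_i}$ descends to $G/Q'_{w_i}$ (the converse direction of Lemma~\ref{medic}), i.e.\ $\Ind(\ml)\in\Pic(\Fl'_G)$; it has a nonzero global section by Lemma~\ref{c2}, and it lies in $\Pic^{\deg=0}(\Fl'_G)$ because $\Pic^+(\Fl_L)\subseteq\Pic^{\deg=0}(\Fl_L)$ and $(\tilde{i}')^*\Ind(\ml)$ agrees with $\ml$ over $\Fl_L-\mathcal{R}_L$, hence inherits a trivial central character.

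I expect the genuine obstacle to lie not in the corollary but in Lemma~\ref{twos} (and the apparatus behind it): the inclusions ``$\Pic^+\to\Pic^+$'' in both arrows are immediate, but the reverse surjectivity---that a $G$-invariant section living only over $\Fl_L-\mathcal{R}_L$, equivalently over $\hat{\mc'}-\hat{\mathcal{R}'}$, extends with no new zeros or poles to all of $\Fl'_G$---requires the codimension-$\ge 2$ statement of Lemma~\ref{codim2} together with the section comparisons in Proposition~\ref{comparison2} and Lemma~\ref{c2}. Once that input is in hand, the proof of Corollary~\ref{InductionC} is bookkeeping: tracking positivity through each arrow of \eqref{induct}, and matching the identity $\lambda_j(\alpha_\ell^\vee)=0$ squeezed out of \eqref{dinner} against the condition that defines $\Pic^{\deg=0}(\Fl'_G)$ in Definition~\ref{doc}.
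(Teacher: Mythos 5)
Your argument is correct and matches the paper's own route: Corollary~\ref{InductionC} is obtained simply by composing the surjection of Lemma~\ref{twos}(1) with the isomorphism of Lemma~\ref{twos}(2), landing in $\Pic^{+,\deg=0}(\Fl'_G)$. The weight-coordinate computation in your second paragraph reproduces Step~1 of the (already established) proof of Theorem~\ref{general_Induction} and so is a correct but redundant sanity check rather than a necessary part of the argument.
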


\subsection{Proof of Theorem \ref{IndT}}
Under the identification of $\Pic_{\Bbb{Q}}(\Fl_G)$ with $\frh_{\Bbb{Q}}^s$, it is easy to see that
$\Pic_{\Bbb{Q}}(\Fl'_G)$ corresponds to tuples $(h_1,\dots,h_s)$ such that $\alpha_m(h_j)=0$ whenever $(j,v)$ is such that $v\leto{\alpha_m} w_j$, and $\Pic^{+,\deg=0}_{\Bbb{Q}}(\Fl'_G)$ to the face $\mf_{2,\Bbb{Q}}$.
Similarly, $\Pic^+_{\Bbb{Q}}(\Fl_{L^{\op{ss}}})=\Gamma(s,K(L^{\op{ss}}))$ under the Killing form isomorphism $\frh_{L^{\op{ss}}}^*\to\frh_{L^{\op{ss}}}$ induced from $G$.

Theorem  \ref{IndT} follows immediately from Corollary \ref{InductionC} and the following lemma,
\begin{lemma}\label{lisse}
The Killing form isomorphism takes $\frh_{L^{\op{ss}}}\subseteq \frh$ to $(\frh^*)^{\deg=0}$, the set of $\lambda$ such that
$\lambda(x_k)=0$ for all $\alpha_k\not\in \Delta(P)$.
\end{lemma}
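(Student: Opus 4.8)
The plan is to prove this by giving an explicit description of both subspaces inside the ambient Cartan, which reduces the statement to the standard behaviour of the Killing-form isomorphism $\kappa$ on simple roots.

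First I would identify $(\frh^*)^{\deg=0}$ concretely. Expanding $\lambda\in\frh^*_{\Bbb{Q}}$ in the basis of simple roots, $\lambda=\sum_{i=1}^r a_i\alpha_i$, the duality $\alpha_i(x_k)=\delta_{ik}$ gives $\lambda(x_k)=a_k$; hence the condition defining $(\frh^*)^{\deg=0}$, namely $\lambda(x_k)=0$ for every $\alpha_k\notin\Delta(P)$, is equivalent to $a_k=0$ for all such $k$. Thus $(\frh^*)^{\deg=0}=\spane_{\Bbb{Q}}\{\alpha_k\mid \alpha_k\in\Delta(P)\}$. On the other side, $L^{\op{ss}}=[L,L]$ is semisimple with simple roots $\Delta(P)$, and its Cartan subalgebra inside $\frh$ is the $\Bbb{Q}$-span of the coroots of $R_{\frl}$, equivalently of the simple coroots $\alpha_k^{\vee}$ with $\alpha_k\in\Delta(P)$; so $\frh_{L^{\op{ss}}}=\spane_{\Bbb{Q}}\{\alpha_k^{\vee}\mid \alpha_k\in\Delta(P)\}$. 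In particular both spaces have dimension $|\Delta(P)|$.

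It then remains to recall that $\kappa$ sends each simple root to a positive rational multiple of the corresponding simple coroot, namely $\kappa(\alpha_k)=\tfrac12(\alpha_k,\alpha_k)\,\alpha_k^{\vee}$, where $(\cdot,\cdot)$ denotes the induced form on $\frh^*$; this is immediate from the characterizations $(\kappa(\alpha_k),h)=\alpha_k(h)$ for $h\in\frh$ and $\alpha_k^{\vee}=\tfrac{2}{(\alpha_k,\alpha_k)}\kappa(\alpha_k)$. Consequently $\kappa$ carries the line $\Bbb{Q}\alpha_k$ onto $\Bbb{Q}\alpha_k^{\vee}$ for each $\alpha_k\in\Delta(P)$, hence maps $(\frh^*)^{\deg=0}$ into $\frh_{L^{\op{ss}}}$; since $\kappa$ is injective and the two spaces have equal dimension, this inclusion is an equality, which is exactly the claim.

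I do not expect a serious obstacle: the argument is elementary linear algebra on $\frh$ and $\frh^*$. The only point requiring a little care — and the closest thing to an obstacle — is that one must use the $G$-Killing form throughout, consistently with the normalization appearing in the definition of $\Ind_L^G$ and in Proposition \ref{bij}; this is harmless, since $R_{\frl}\subseteq R$, so passing between the $G$-form and the intrinsic Killing form of $L^{\op{ss}}$ only rescales by a constant and does not change which subspace is hit.
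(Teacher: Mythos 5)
Your proof is correct, and it is essentially the same argument as the paper's: both observe that $\kappa$ carries the $\alpha_k$-coefficient of $\lambda$ (detected by $\lambda(x_k)$) to the $\alpha_k^{\vee}$-component of $\kappa(\lambda)$ (which the paper detects by $\omega_k(\kappa(\lambda))$), and then conclude by identifying $\frh_{L^{\op{ss}}}$ with $\spane\{\alpha_i^{\vee}\mid \alpha_i\in\Delta(P)\}$. Your version merely makes the basis bookkeeping and the dimension count explicit rather than phrasing it as an if-and-only-if chain.
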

\begin{proof}
$\lambda(x_k)=0$ for all $\alpha_k\not\in \Delta(P)$ if and only if $\omega_k(\kappa(\lambda))=0$ for all such $k$, i.e., $\kappa(\lambda)$ is a linear combination of $\alpha_i^{\vee}$ with $i\in \Delta(P)$, i.e., is in $\frh_{L^{\op{ss}}}$.
\end{proof}
\subsection{ Proof of Theorem \ref{general_Induction}}

Denote the right hand side of \eqref{dinner} by $(\nu_1,\dots,\nu_s)$. We divide the proof into steps:
 \begin{enumerate}
 \item We first verify that $(\nu_1,\dots,\nu_s)$ is indeed in $\Pic(\Fl'_G)$, since a priori it is only in $\Pic(\Fl_G)$. Consider a pair $(j,v)$ with $v\leto{\alpha_{\ell}}w_j$. We want $\nu_j(\alpha_{\ell}^{\vee})=0$. This vanishing follows immediately from Corollary \ref{correspondence2}.
 \item Next, we verify that the line bundle $\mathcal{N}$ given by $(\nu_1,\dots,\nu_s)$ agrees with $\op{Ind}(\ml)$, on $\mathcal{C}-\mathcal{R}$, considered an open subset of $\Fl_G$  where $\ml$ is the line bundle on $\Fl_L$ given by $(\mu_1,\dots,\mu_s)$. To do this we only need to show, by Lemma \ref{comparison} that the pullbacks to $\Fl_L-\mathcal{R}_L$ via $i$ agree. By Lemma \ref{malta}, the line bundles  $\mathcal{O}(D(j,v))$ pull back to trivial line bundles on $\Fl_L-\mathcal{R}_L$. It is now easy to verify the desired agreement.
 \item Therefore one has a relation $\mathcal{N}=\op{Ind}(\ml)(D)$ on  $\Fl_G$ with $D$ supported on the complement of the image of $\mathcal{C}-\mathcal{R}$, i.e., a sum of divisors $D(j,v)$.
\item But such a sum of divisors $D$ needs to be zero
because both sides are in $\Pic(\Fl'_G)$, and Corollary \ref{correspondence2}. This concludes the proof of Theorem \ref{general_Induction}.
\end{enumerate}
We note that steps (2) and (3) are very similar to Ressayre's proof \cite{R1} of the irredundancy of inequalities \eqref{kishi} for maximal parabolics $P$. The divisors analogous to $D$ in op. cit. are not determined. We are able to determine it (as zero) because of the enumerative computations of Theorem \eqref{timeticks} (as in Corollary \ref{correspondence2}).
\begin{corollary}\label{apples}
Let $\alpha_k\not\in\Delta(P)$, then induction of the various
$(0,\dots,0,\omega_k,0,\dots,0)$ with $\omega_k$ in the $j$th place coincide, i.e., the following elements of
$\Pic(\Fl'_G)\subseteq \Pic(\Fl_G)$ are the same:
$$(0,\dots,0,w_j\omega_k,0,\dots,0) -\sum_{v} w_j\omega_k(\alpha_{\ell}^{\vee})\mathcal{O}(D(j,v)).$$
Moreover, this element fails the inequality defining $\mathcal{F}$ and is thus not a member of $\Gamma(s,K)$.
\end{corollary}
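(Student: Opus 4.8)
The plan is to obtain the first assertion from Theorem~\ref{general_Induction} together with the kernel description in Lemma~\ref{lemmE}, and the second from Theorem~\ref{Tone}(d) by testing against the linear functional cutting out $\mathcal{F}$.

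For $j = 1,\dots,s$ write $\mu^{(j)} = (0,\dots,0,\omega_k,0,\dots,0) \in \Pic(G/B)^s = (\Pic^L(L/B_L))^s$ with $\omega_k$ in the $j$-th slot. Since the composite induction map \eqref{induct} factors through $\Pic(\Fl_L)$, to see that the inductions of the various $\mu^{(j)}$ agree it suffices to show they all have the same image in $\Pic(\Fl_L)$. By Lemma~\ref{lemmE}(2) the kernel of $(\Pic^L(L/B_L))^s \to \Pic(\Fl_L)$ is the set of tuples with all entries trivial on $T(L^{\op{ss}})$ and with entries summing to $0$. The entries of $\mu^{(j)} - \mu^{(j')}$ are $\omega_k$, $-\omega_k$ and zeros, which sum to $0$; and $\omega_k$ is trivial on $T(L^{\op{ss}})$ since $\omega_k(\alpha_i^\vee) = \delta_{ki} = 0$ for every $\alpha_i \in \Delta(P)$ (as $\alpha_k \notin \Delta(P)$), while the Lie algebra of $T(L^{\op{ss}})$ is spanned by those $\alpha_i^\vee$. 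Hence $\mu^{(j)} - \mu^{(j')}$ lies in the kernel, all $\mu^{(j)}$ descend to one $\mathcal{M} \in \Pic(\Fl_L)$, and substituting $\mu_j = \omega_k$, $\mu_i = 0$ $(i \neq j)$ into \eqref{dinner} identifies $\Ind(\mathcal{M})$ with the element displayed in the statement; in particular that element is independent of $j$, and it lies in $\Pic(\Fl'_G)$ because $\Ind$ lands there.

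For the second assertion, write the displayed element as $(\nu_1,\dots,\nu_s)$, set $h_i = \kappa(\nu_i)$, and consider $\ell(h_1,\dots,h_s) := \sum_{i=1}^s \omega_k(w_i^{-1} h_i)$. This is precisely the left side of the inequality \eqref{fox} attached to $(\vec w, P)$ and $\alpha_k \notin \Delta(P)$ (which is a genuine inequality valid on $\Gamma(s,K)$ by the standing hypothesis \eqref{kishi}), so $\ell \le 0$ on $\Gamma(s,K)$ and $\ell = 0$ on $\mathcal{F}$. Because $\kappa$ is $W$-equivariant and carries the $\frh^* \times \frh$ pairing to the Killing form $(\cdot,\cdot)$ on $\frh^*$, one has $\omega_k(w_i^{-1}\kappa(\nu)) = (w_i\omega_k, \nu)$; by linearity, $\ell$ applied to $(0,\dots,0,w_j\omega_k,0,\dots,0)$ equals $(w_j\omega_k, w_j\omega_k) = (\omega_k,\omega_k) > 0$ (the Killing form is positive definite on the real span of the roots), while $\ell$ applied to each $\mathcal{O}(D(j,v)) = [D(j,v)]$ is $0$ because $[D(j,v)] \in \mathcal{F}$ by Theorem~\ref{Tone}(d). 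Hence $\ell$ of the displayed element equals $(\omega_k,\omega_k) > 0$, which violates \eqref{fox}; therefore the element is not in $\Gamma(s,K)$.

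Both steps are short. The one point requiring care — and the only genuine obstacle I anticipate — is the bookkeeping of normalizations: verifying that under $\kappa$ the functional $\ell$ above is literally the one occurring in \eqref{fox}, and that the coefficient $w_j\mu_j(\alpha_\ell^\vee)$ in \eqref{dinner} matches $w_j\omega_k(\alpha_\ell^\vee)$ in the statement. Once the conventions are pinned down, no further difficulty remains.
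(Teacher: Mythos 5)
Your proof is correct and follows essentially the same route as the paper. The paper's proof is terse and only explicitly addresses the second claim (that the element fails the inequality for $\mathcal{F}$); it makes the same computation you do — the $\mathcal{O}(D(j,v))$ terms vanish on the inequality by Theorem \ref{Tone}(d), and the remaining $w_j\omega_k$ piece contributes $\omega_k(x_k) = c(\omega_k,\omega_k) > 0$, which is precisely your Killing-form evaluation $(\omega_k,\omega_k) > 0$ in different notation (cf.\ Lemma \ref{ress}, which records the equivalence of these two forms of the constraint). For the first claim the paper leaves the argument implicit as an immediate consequence of Theorem \ref{general_Induction}; your explicit verification via the kernel description in Lemma \ref{lemmE}(2) correctly fills this in.
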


\begin{proof}
This last claim is seen from the following: each $\mathcal{O}(D(j,v))$ appearing in the sum vanishes on the inequality for $\mathcal{F}$ by Theorem \ref{Tone}(d), and
$$
w_j^{-1}(w_j\omega_k)(x_k)=\omega_k(x_k)>0,
$$
since $\omega_k(x_k) = c(\omega_k,\omega_k)$ for some $c>0$.
\end{proof}

\begin{remark}
The quantity $w_j\mu_j(\alpha_{\ell}^{\vee})$ in \eqref{dinner} is $\leq 0$ if $\mu_j$ is dominant:
$$w_j\mu_j(\alpha_{\ell}^{\vee})= \frac{2}{(\alpha_{\ell},\alpha_{\ell})}(w_j\mu_j,\alpha_{\ell})= \frac{2}{(\alpha_{\ell},\alpha_{\ell})}(\mu_j, w_j^{-1}\alpha_{\ell})$$
but $w_j^{-1}\alpha_\ell\in R^-$.

If $(\mu_1,\dots,\mu_s)$ is a tuple of  dominant weights for $G$ then the $\lambda_j$ in \eqref{dinner} are also dominant: We
compute $\lambda_j(\alpha_m^{\vee})$: If $w_j\mu_j(\alpha_m^{\vee})\geq 0$, then there is nothing to
show, given the non-negativity from previous paragraph. If $w_j\mu_j(\alpha_m^{\vee})<0$ then clearly
$w^{-1}\alpha_m\in R^-$, and the divisor $D(j,v)$ with $v=s_{\alpha_m}w_j$ appears in the sum, and hence
$\lambda_j(\alpha_m^{\vee})=0.$

We note however that an element in $\Pic^+(\Fl_L)$ may not necessarily be representable by a tuple $(\mu_1,\dots,\mu_s)$ of dominant weights for $G$, and it is therefore a consequence of our results that
the formulas for induction of elements in $\Pic^+(\Fl_L)$ produce  tuples of dominant weights $(\lambda_1,\dots,\lambda_s)$ for $G$ in \eqref{dinner}.
\end{remark}
\begin{remark}
By \cite[Section 3]{BK}, $\mathcal{O}(\mathcal{R}_L)$ is the line bundle on $\Fl_L$ given by (see Lemma \ref{lemmE}(2)) the $s$-tuple $(\chi_{w_1}-\chi_{e},\chi_{w_2},\dots,\chi_{w_s})$ where
$\chi_w=\rho-2\rho^L+ w^{-1}\rho$. Here $\rho$ (resp. $\rho^L$) is the half sum of roots in $R^+$ (resp. in $R^+_{\mathfrak{l}}$). Since the induction of $\mathcal{O}(\mathcal{R}_L)$ is zero, we get an interesting relation  from \eqref{dinner}.

\end{remark}

\section{Related results}\label{compl}
Let $P$ and the $w_j$ be as earlier, and let
\begin{equation}
\mathcal{H}=\{(h_1,\dots,h_s)\in \frh_{\Bbb{Q}}^s\mid \sum_{j=1}^s \omega_{{k}}(w_j ^{-1}h_j) = 0,\  \alpha_{k}\not\in \Delta(P)\}\subseteq \frh_{\Bbb{Q}}^s.
\end{equation}
\begin{lemma}
$\mathcal{H}\subseteq \frh_{\Bbb{Q}}^s$ is of codimension $|\Delta-\Delta(P)|$, and not contained in any root hyperplane $\alpha_m(h_j)=0$.
\end{lemma}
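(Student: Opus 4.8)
The plan is to realize $\mathcal{H}$ as the common zero locus on $\frh_{\Bbb{Q}}^s$ of the $|\Delta-\Delta(P)|$ linear functionals
$$L_k(h_1,\dots,h_s)=\sum_{j=1}^s\omega_k(w_j^{-1}h_j),\qquad \alpha_k\in\Delta-\Delta(P),$$
and to deduce both assertions from a single fact of linear algebra: each $L_k$ is, coordinate by coordinate, built from a Weyl translate of the fundamental weight $\omega_k$, and these remain linearly independent.

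First I would pass to the dual space $(\frh_{\Bbb{Q}}^s)^*=(\frh_{\Bbb{Q}}^*)^s$ and use the identity $\omega_k(w_j^{-1}h_j)=(w_j\omega_k)(h_j)$, where $W$ acts on $\frh_{\Bbb{Q}}^*$; this identifies $L_k$ with the covector $\xi_k=(w_1\omega_k,\dots,w_s\omega_k)$. Since there are exactly $|\Delta-\Delta(P)|$ such indices $k$, this already gives $\codim\mathcal{H}\le|\Delta-\Delta(P)|$. For the opposite inequality I would prove the $\xi_k$ are linearly independent: from a relation $\sum_k c_k\xi_k=0$, reading off the first coordinate gives $\sum_k c_k\,w_1\omega_k=0$, and applying the invertible map $w_1^{-1}$ yields $\sum_k c_k\omega_k=0$; since $\{\omega_1,\dots,\omega_r\}$ is a basis of $\frh_{\Bbb{Q}}^*$, the sub-collection indexed by $\Delta-\Delta(P)$ is linearly independent, whence all $c_k=0$. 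Thus $\codim\mathcal{H}=|\Delta-\Delta(P)|$.

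For the second assertion, suppose for contradiction that $\mathcal{H}\subseteq\{\alpha(h_j)=0\}$ for some root $\alpha$ and some index $j$. Then the covector $\eta$ with $\alpha$ in the $j$-th slot and $0$ elsewhere vanishes on $\mathcal{H}=\bigcap_k\ker\xi_k$, hence lies in the span of the $\xi_k$; write $\eta=\sum_k c_k\xi_k$. Since $s\ge 3$ we may pick an index $i\ne j$, and comparing $i$-th coordinates gives $0=\sum_k c_k\,w_i\omega_k$, which by the argument of the previous paragraph forces every $c_k=0$. Comparing $j$-th coordinates then gives $\alpha=0$, which is absurd. This contradiction shows $\mathcal{H}$ lies in no root hyperplane.

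I do not anticipate any serious obstacle: once the functionals are transported to $(\frh_{\Bbb{Q}}^*)^s$ via Weyl equivariance, everything is elementary. The only points that warrant (routine) care are getting the side right in the identity $(w\lambda)(h)=\lambda(w^{-1}h)$, the observation that a sub-collection of the basis $\{\omega_k\}$ remains linearly independent, and the (harmless) appeal to $s\ge 3$ — in fact $s\ge 2$ suffices — to furnish the auxiliary coordinate $i\ne j$ used in the last step.
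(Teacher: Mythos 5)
Your argument is correct. For the codimension claim you and the paper are really doing the same thing — the paper restricts to $h_2=\dots=h_s=0$ and observes that $\{\omega_k : \alpha_k\notin\Delta(P)\}$ are linearly independent, which is exactly what you extract by looking at the first coordinate of a putative relation $\sum_k c_k\xi_k=0$; yours is just written more explicitly in terms of the covectors $\xi_k=(w_1\omega_k,\dots,w_s\omega_k)$.

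For the root-hyperplane claim the two proofs are genuinely different in flavor. The paper is constructive: it exhibits explicit elements of $\mathcal{H}$, namely $(w_1h,-w_2h,0,\dots,0)$ for arbitrary $h$ (and, implicitly, the analogous tuples with the nonzero entries placed in slots $1$ and $j$ to handle $j\ge 3$), and then notes that $\alpha_m(w_jh)$ cannot vanish for all $h$. You instead argue by duality: if $\mathcal{H}$ lay in a root hyperplane, the corresponding covector $\eta=(0,\dots,\alpha,\dots,0)$ would belong to $\mathrm{span}\{\xi_k\}$, and comparing an $i$-th coordinate with $i\ne j$ forces all coefficients to vanish, hence $\alpha=0$. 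Both are valid; the paper's construction is more concrete and reuses the points it already has in hand, while your duality argument is a little cleaner and dovetails naturally with the linear-independence computation you already carried out. Your side remark that $s\ge 2$ suffices is accurate and matches the paper's (the paper never uses $s\ge 3$ here either, needing only two free coordinates).
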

\begin{proof}
We need to show that these equations are linearly independent: If fewer equations cut out the same set, it will also be the case if we restrict to $h_2=\dots=h_s=0$. Therefore the equations $\omega_k(h)=0$ where $\alpha_{k}\not\in \Delta(P)$ are linearly dependent which is clearly false.  Finally, $(\frh_{\Bbb{Q}})^s_0$ is not contained in any root hyperplane because it contains points of the form $(w_1 h,-w_2h,\dots,0)$, $h$ arbitrary.
\end{proof}

Clearly $\mf_{\Bbb{Q}}\subseteq \mathcal{H}$, and we will show that it generates it as a vector space. This will show that $\mf_{\Bbb{Q}}$ is a regular face of codimension $|\Delta-\Delta(P)|$, a
result first proved by  Ressayre \cite{R2}.

 To do this we define
\begin{equation}
\mathcal{H}[2]=\{(h_1,\dots,h_s)\in \mathcal{H}\mid  \beta(h_j)=0, \forall  (j,v),
 v\leto{\beta}w_j, \beta\in\Delta, v\in W^P.\}
 \end{equation}

Clearly $\mf_{2,\Bbb{Q}}=\mathcal{H}[2]\cap \mf_{\Bbb{Q}} =\Pic^{+,\deg =0}_{\Bbb{Q}}(\Fl'_G)$, and parallel to $\mf_{\Bbb{Q}}=\Bbb{Q}_{\geq 0}^q \times \mf_{2,\Bbb{Q}}$
we have a decomposition $\mathcal{H}= \Bbb{Q}^q\times  \mathcal{H}[2]$, therefore it suffices to show that $\mf_{2,\Bbb{Q}}$ generates $\mathcal{H}[2]$. Therefore we need to show that $\Pic^{\deg =0}_{\Bbb{Q}}(\Fl'_G)$ is generated by $\Pic^{+,\deg =0}_{\Bbb{Q}}(\Fl'_G)$.

There are surjections (Lemmas \ref{ones} and \ref{twos}),
$$\Pic_{\Bbb{Q}}^{\deg=0}(\Fl_L)\twoheadrightarrow \Pic^{\deg =0}_{\Bbb{Q}}(\Fl'_G),\ \ \ \Pic_{\Bbb{Q}}^{+,\deg=0}(\Fl_L)\twoheadrightarrow \Pic^{+,\deg =0}_{\Bbb{Q}}(\Fl'_G),$$
and identifications (Lemma \ref{threes})
$$\Pic_{\Bbb{Q}}^{+,\deg=0}(\Fl_L)=\Pic_{\Bbb{Q}}^{+}(\Fl_{L^{\op{ss}}}),\ \ \Pic_{\Bbb{Q}}^{\deg=0}(\Fl_L)= \Pic_{\Bbb{Q}}^{\deg=0}(\Fl_{L^{\op{ss}}}).$$

The desired statement that $\Pic_{\Bbb{Q}}^{+}(\Fl_{L^{\op{ss}}})$ generates $ \Pic_{\Bbb{Q}}^{\deg=0}(\Fl_{L^{\op{ss}}})$, now follows by reducing to simple factors, and the known fact that $\Gamma_{\Bbb{Q}}(s,K)$ generates $\frh_{\Bbb{Q}}^s$ ($s\geq 3$) for simple, simply connected $G$.

\subsection{Irreducible components}
Consider the inverse image $\mathcal{R}_{(L/B_L)^s}\subseteq (L/B_L)^s$ of the divisor $\mathcal{R}_L\subseteq \Fl_L$ Let $c$ be the number of irreducible components of $\mathcal{R}_{(L/B_L)^s}$.  Let $\mathcal{R}_1,\dots,\mathcal{R}_c$ be the irreducible (reduced) components of this divisor. It is easy to see that each is left invariant by the connected group $L$, and hence they give line bundles $\mathcal{O}(\mathcal{R}_1),\dots,\mathcal{O}(\mathcal{R}_c)$ on $\Fl_L$. Since
$H^0(\Fl_L-\mathcal{R}_L,\mathcal{O})$ is one dimensional, we have that $H^0(\Fl_L,\otimes_{i=1}^c\mathcal{O}(N_i\mathcal{R}_i))$ is one dimensional for  $N_i>0$, $i=1,\dots,c$.
\begin{lemma}
$\mathcal{O}(\mathcal{R}_1),\dots,\mathcal{O}(\mathcal{R}_c)$ give a $\Bbb{Z}$-basis for the kernel of
 $\Pic(\Fl_L)\to \Pic(\Fl_L-\mathcal{R}_L)$
\end{lemma}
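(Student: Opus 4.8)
The plan is to treat $\Pic(\Fl_L)$ as the group of $L$-equivariant line bundles on the smooth projective variety $(L/B_L)^s$, so that class-group arguments apply: a class is recovered from a rational section on the cover, and sections over the stack $\Fl_L$ are the same as $L$-invariant sections over $(L/B_L)^s$. Since each $\mathcal{R}_i$ is an $L$-invariant prime divisor, $\mathcal{O}(\mathcal{R}_i)$ carries a canonical $L$-linearization whose canonical section $1$ is $L$-invariant with divisor $\mathcal{R}_i$; as $\mathcal{R}_i$ is disjoint from $U:=(L/B_L)^s-\mathcal{R}_{(L/B_L)^s}$, this section trivializes $\mathcal{O}(\mathcal{R}_i)$ over $\Fl_L-\mathcal{R}_L$, so each $\mathcal{O}(\mathcal{R}_i)$ already lies in the kernel. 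It then remains to show that these classes generate the kernel and are $\Bbb{Z}$-linearly independent.

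For generation I would start with $\mathcal{M}$ in the kernel, fix an isomorphism $\mathcal{O}\xrightarrow{\sim}\mathcal{M}|_{\Fl_L-\mathcal{R}_L}$, let $s$ be the image of $1$, and pull $s$ back to a nowhere-vanishing $L$-equivariant section $\widetilde{s}$ of $\widetilde{\mathcal{M}}$ over $U$. Regarded as a rational section over all of $(L/B_L)^s$, $\widetilde{s}$ has divisor supported on the complement $\mathcal{R}_{(L/B_L)^s}$, hence equal to $D:=\sum_i n_i\mathcal{R}_i$ for integers $n_i$, because $\mathcal{R}_1,\dots,\mathcal{R}_c$ are exactly the prime divisors there. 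As $\widetilde{s}$ is $L$-equivariant and $D$ is $L$-invariant, the tautological isomorphism $\mathcal{O}(D)\xrightarrow{\sim}\widetilde{\mathcal{M}}$, $t\mapsto t/\widetilde{s}$, is $L$-equivariant, so $[\mathcal{M}]=\sum_i n_i[\mathcal{O}(\mathcal{R}_i)]$ in $\Pic(\Fl_L)$. The point is that no twist by a character of $L$ can intrude, precisely because the trivializing section comes from over the quotient stack and is therefore equivariant.

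For linear independence, suppose $\sum_i n_i[\mathcal{O}(\mathcal{R}_i)]=0$, i.e.\ $\mathcal{O}(\sum_i n_i\mathcal{R}_i)\cong\mathcal{O}$ $L$-equivariantly. Pick $N>0$ with $N+n_i>0$ for all $i$ and tensor by $\mathcal{O}(\sum_i N\mathcal{R}_i)$ to obtain an $L$-equivariant isomorphism $\mathcal{O}\bigl(\sum_i (N+n_i)\mathcal{R}_i\bigr)\cong\mathcal{O}\bigl(\sum_i N\mathcal{R}_i\bigr)$. It carries the canonical section $1$ of the left-hand side, which is $L$-invariant with divisor $\sum_i(N+n_i)\mathcal{R}_i$, to an $L$-invariant section $t$ of the right-hand side with the same divisor. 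But $H^0(\Fl_L,\bigotimes_i\mathcal{O}(N\mathcal{R}_i))$ is one-dimensional --- this is the statement recalled immediately before the Lemma, a consequence of Proposition~\ref{fultonc} --- spanned by its canonical section $1'$, whose divisor is $\sum_i N\mathcal{R}_i$. Hence $t$ is a nonzero scalar multiple of $1'$, forcing $\sum_i(N+n_i)\mathcal{R}_i=\sum_i N\mathcal{R}_i$, and therefore $n_i=0$ for every $i$ since the $\mathcal{R}_i$ are distinct prime divisors.

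The remaining verifications --- that $\Pic(\Fl_L)$ is the $L$-equivariant Picard group of $(L/B_L)^s$, that $\Fl_L$ is smooth so divisor-class arguments are legitimate, that divisors of equivariant rational sections are $L$-invariant and that $\mathcal{R}_{(L/B_L)^s}$ is a (pure codimension one) divisor with $\mathcal{R}_1,\dots,\mathcal{R}_c$ its prime components --- are all routine. The only step carrying genuine content is the linear-independence argument, and there the essential input is already supplied as the one-dimensionality of $H^0(\Fl_L,\bigotimes_i\mathcal{O}(N_i\mathcal{R}_i))$; the one thing to execute carefully is the equivariance bookkeeping in the generation step, so as to rule out a stray character of $L$.
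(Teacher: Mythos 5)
Your proof is correct and follows essentially the same approach as the paper: show each $\mathcal{O}(\mathcal{R}_i)$ trivializes on $\Fl_L-\mathcal{R}_L$, recover a kernel element from the divisor of its ($L$-equivariant) trivializing section on $(L/B_L)^s$, and deduce independence from the one-dimensionality of $H^0(\Fl_L,\bigotimes_i\mathcal{O}(N_i\mathcal{R}_i))$. The only differences are cosmetic bookkeeping: the paper excludes the stray character twist by noting that the $Z^0(L)$-action is already trivial and then comparing bundles non-equivariantly, and phrases independence via disjointly-supported positive combinations, whereas you build the equivariant isomorphism directly from the invariant rational section and clear signs by the uniform shift $N$ — both are valid packagings of the identical underlying ideas.
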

\begin{proof}
It is easy to see that $\mathcal{O}(\mathcal{R}_1),\dots,\mathcal{O}(\mathcal{R}_c)$ are in the kernel. They are linearly independent because any isomorphism of line bundles  $\mathcal{O}(\sum_{i\in I} a_i \mathcal{R}_i)=\mathcal{O}(\sum_{j\in J} b_j \mathcal{R}_j)$, with $I,J$ disjoint and $a_i>0$ and $b_j>0$, produces two linearly independent sections in the the isomorphic line bundles.

They span, because if $\mathcal{L}\in \Pic(\Fl_L)$ maps to zero, then first the action of $Z^0(L)$ is trivial, and hence we have to show that $\mathcal{L}$ is isomorphic to a linear combination of the pull backs of the line bundles  $\mathcal{O}(\mathcal{R}_i)$, when pulled back to $(L/B_L)^s$, without any equivariance conditions. Let $s$ be a non-zero section of $\mathcal{L}$ on  $\Fl_L-\mathcal{R}_L$; clearly the pull back of $s$ to $(L/B_L)^s$ has associated divisor supported on the union of $\mathcal{R}_i\subseteq (L/B_L)^s$ which completes the argument.
\end{proof}
\begin{proposition}\label{countem}
$c=q-(s-1)|\Delta-\Delta(P)|$. Recall that $q$ is the number of divisors $D(j,v)$.
\end{proposition}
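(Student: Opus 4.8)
The plan is to prove the equality by a rank count on Picard groups, assembling the following already-established inputs: the lemma immediately above, which exhibits $\mathcal{O}(\mathcal{R}_1),\dots,\mathcal{O}(\mathcal{R}_c)$ as a $\Bbb{Z}$-basis of $\ker\bigl(\Pic(\Fl_L)\to\Pic(\Fl_L-\mathcal{R}_L)\bigr)$; the induction isomorphism \eqref{ind_isom}, which identifies $\Pic(\Fl_L-\mathcal{R}_L)$ with $\Pic(\Fl'_G)$; Lemma \ref{lemmE}, which presents $\Pic(\Fl_L)$; and the description of $\Pic_{\Bbb{Q}}(\Fl'_G)$ inside $\Pic_{\Bbb{Q}}(\Fl_G)=\frh^s_{\Bbb{Q}}$ used in the proof of Theorem \ref{IndT}. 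Throughout I would write $r=\rk G$ and $a=|\Delta-\Delta(P)|$, and pass to $\Bbb{Q}$-coefficients so that only ranks are at issue.

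First, from the lemma above the kernel $\ker\bigl(\Pic(\Fl_L)\to\Pic(\Fl_L-\mathcal{R}_L)\bigr)$ has rank $c$, so since the restriction map is surjective, $\dim_{\Bbb{Q}}\Pic_{\Bbb{Q}}(\Fl_L-\mathcal{R}_L)=\dim_{\Bbb{Q}}\Pic_{\Bbb{Q}}(\Fl_L)-c$. Next I would compute $\dim_{\Bbb{Q}}\Pic_{\Bbb{Q}}(\Fl_L)$ from Lemma \ref{lemmE}: the surjection $(\Pic^L(L/B_L))^s\twoheadrightarrow\Pic(\Fl_L)$ has source of rank $sr$ (since $\Pic^L(L/B_L)=X(T)$ has rank $r$), and its kernel consists of tuples $(\mu_1,\dots,\mu_s)$ in which each $\mu_i$ is trivial on $T(L^{\op{ss}})$ — i.e. $\mu_i$ lies in the $a$-dimensional $\Bbb{Q}$-span of the $\omega_k$ with $\alpha_k\notin\Delta(P)$ — subject to the single vector relation $\sum_i\mu_i=0$, which cuts the rank down by a further $a$. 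Hence this kernel has rank $(s-1)a$, so $\dim_{\Bbb{Q}}\Pic_{\Bbb{Q}}(\Fl_L)=sr-(s-1)a$, and therefore $\dim_{\Bbb{Q}}\Pic_{\Bbb{Q}}(\Fl_L-\mathcal{R}_L)=sr-(s-1)a-c$.

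On the other hand, by \eqref{ind_isom} this number equals $\dim_{\Bbb{Q}}\Pic_{\Bbb{Q}}(\Fl'_G)$, which I would compute directly. Inside $\Pic_{\Bbb{Q}}(\Fl_G)=\frh^s_{\Bbb{Q}}$, of dimension $sr$, the subspace $\Pic_{\Bbb{Q}}(\Fl'_G)$ is cut out (proof of Theorem \ref{IndT}) by the equations $\alpha_m(h_j)=0$, one for each of the $q$ pairs $(j,v)$ with $v\leto{\alpha_m}w_j$. I would then check that these $q$ equations are linearly independent: for a fixed $j$ the simple roots $\alpha_m$ that occur are pairwise distinct (this is precisely Remark \ref{correspondence}; equivalently, by Lemma \ref{outside} they run over $\Delta'_{w_j}$), so they define linearly independent functionals on the $j$th copy of $\frh$, and the constraints for distinct $j$ involve disjoint blocks of coordinates. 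Hence $\dim_{\Bbb{Q}}\Pic_{\Bbb{Q}}(\Fl'_G)=sr-q$, and equating the two computations of this dimension gives $sr-q=sr-(s-1)a-c$, i.e. $c=q-(s-1)|\Delta-\Delta(P)|$.

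Every step here is routine once the cited results are in hand; the only points requiring a sentence of justification are the two linear-independence claims — for the $q$ equations defining $\Pic_{\Bbb{Q}}(\Fl'_G)$ and for the ``sum-zero'' relation among the $\mu_i$ in Lemma \ref{lemmE} — and both reduce to independence of distinct simple roots, respectively of the relevant fundamental weights. I do not anticipate a genuine obstacle; the main care needed is bookkeeping the various Picard groups and making sure the induction isomorphism and the basis lemma are applied after tensoring with $\Bbb{Q}$.
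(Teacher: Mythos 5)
Your proof is correct and follows essentially the same strategy as the paper's: a rank count along the induction isomorphism, using the $\Bbb{Z}$-basis $\mathcal{O}(\mathcal{R}_1),\dots,\mathcal{O}(\mathcal{R}_c)$ for the kernel of restriction. The one technical difference is that you count in the full rational Picard groups $\Pic_{\Bbb{Q}}(\Fl_L)$, $\Pic_{\Bbb{Q}}(\Fl'_G)$ via \eqref{ind_isom}, computing $\dim\Pic_{\Bbb{Q}}(\Fl_L)=sr-(s-1)a$ from Lemma \ref{lemmE} and $\dim\Pic_{\Bbb{Q}}(\Fl'_G)=sr-q$ by checking the $q$ constraints $\alpha_m(h_j)=0$ are independent, whereas the paper works with the $\deg=0$ subgroups (Lemma \ref{ones}(c)), using $\dim\Pic^{\deg=0}_{\Bbb{Q}}(\Fl'_G)=\dim\mathcal H-q$ and $\dim\Pic^{\deg=0}_{\Bbb{Q}}(\Fl_L)=sr-sa$. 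Both counts are parallel and yield the same identity; your independence check for the $q$ equations (pairwise distinct simple roots in each $\Delta'_{w_j}$, disjoint coordinate blocks across $j$) is the right justification and corresponds to the $\mathcal H=\Bbb{Q}^q\times\mathcal H[2]$ splitting the paper invokes implicitly.
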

\begin{proof}
We count dimensions in the isomorphism $\Pic^{\deg=0}_{\Bbb{Q}}(\Fl_L-\mathcal{R}_L)=\Pic^{\deg=0}_{\Bbb{Q}}(\Fl'_G)$. The right hand side has dimension $\dim \mh -q= \dim(G/B)^s-|\Delta-\Delta(P)| -q$.

Using the surjection $\Pic^{deg=0}(\Fl_L)\to \Pic^{\deg=0}(\Fl_L-\mathcal{R}_L)$, we see that the left hand side has dimension equal to $\dim \Pic^{\deg=0}_{\Bbb{Q}}(\Fl_L)-c=\dim(G/B)^s -s|\Delta-\Delta(P)|-c$  (see Lemma \ref{twos}(2)).
The result follows.
\end{proof}
 \begin{lemma}
$\mathcal{O}(\mathcal{R}_1),\dots,\mathcal{O}(\mathcal{R}_c)$ give (some) extremal rays of $\Pic_{\Bbb{Q}}^{+,\deg=0}(\Fl_L)$.
\end{lemma}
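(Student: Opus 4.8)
The plan is to run the same kind of ``one invariant section up to scalars'' argument that proves Theorem \ref{Tone}(c), with the generalized Fulton conjecture Proposition \ref{fultonc}(2) playing the role it plays there. The crucial preliminary observation will be that for each $i$ and each $m\geq 0$, $H^0(\Fl_L,\mathcal{O}(m\mathcal{R}_i))$ is one dimensional, spanned by $s_i^m$, where $s_i$ is the canonical ($L$-invariant) section of $\mathcal{O}(\mathcal{R}_i)$. Indeed, since $\mathcal{R}_i$ is one of the irreducible components of $\mathcal{R}_{(L/B_L)^s}$, there is an inclusion of open substacks $\Fl_L-\mathcal{R}_L\subseteq \Fl_L-\mathcal{R}_i$; given $t\in H^0(\Fl_L,\mathcal{O}(m\mathcal{R}_i))$, the quotient $t/s_i^m$ is an $L$-invariant rational function regular on $\Fl_L-\mathcal{R}_i$, hence constant on the dense open $\Fl_L-\mathcal{R}_L$ by Proposition \ref{fultonc}(2), so it is globally constant. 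In particular $\mathcal{O}(\mathcal{R}_i)\in\Pic^+(\Fl_L)\subseteq\Pic^{\deg=0}(\Fl_L)$, so it does lie in the cone $\Pic^{+,\deg=0}_{\Bbb{Q}}(\Fl_L)$, and it is nonzero there since the $\mathcal{O}(\mathcal{R}_i)$ are linearly independent in $\Pic(\Fl_L)$ by the previous lemma.

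For extremality, fix $i$ and suppose $[\mathcal{O}(\mathcal{R}_i)]=v_1+v_2$ in $\Pic_{\Bbb{Q}}(\Fl_L)$ with $v_1,v_2\in\Pic^{+,\deg=0}_{\Bbb{Q}}(\Fl_L)$. Clearing denominators and replacing everything by a suitable common power, I may assume there are honest line bundles $\mathcal{A},\mathcal{B}\in\Pic^+(\Fl_L)$ with $[\mathcal{A}]=Nv_1$, $[\mathcal{B}]=Nv_2$ for some $N>0$ and $\mathcal{A}\otimes\mathcal{B}\cong\mathcal{O}(N\mathcal{R}_i)$ as line bundles on the stack $\Fl_L$. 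Choosing nonzero $s_A\in H^0(\Fl_L,\mathcal{A})$ and $s_B\in H^0(\Fl_L,\mathcal{B})$, the product $s_As_B$ is a nonzero element of $H^0(\Fl_L,\mathcal{O}(N\mathcal{R}_i))=\Bbb{C}\,s_i^N$, so on $(L/B_L)^s$ one gets $\operatorname{div}(s_A)+\operatorname{div}(s_B)=N\mathcal{R}_i$. Since $\mathcal{R}_i$ is a prime divisor on the smooth irreducible variety $(L/B_L)^s$ and both summands are effective, necessarily $\operatorname{div}(s_A)=a\mathcal{R}_i$ and $\operatorname{div}(s_B)=b\mathcal{R}_i$ with $a,b\in\Bbb{Z}_{\geq 0}$, $a+b=N$.

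It then remains to promote this to $[\mathcal{A}]=a[\mathcal{O}(\mathcal{R}_i)]$ in $\Pic(\Fl_L)$: the quotient $s_A/s_i^{a}$ is an $L$-invariant nowhere-vanishing section of $\mathcal{A}\otimes\mathcal{O}(\mathcal{R}_i)^{\otimes(-a)}$ (both $s_A$ and $s_i$ are $L$-invariant, the latter because $\mathcal{R}_i$ is an $L$-invariant divisor), which gives an $L$-equivariant trivialization and hence the claimed identity. Therefore $v_1=(a/N)[\mathcal{O}(\mathcal{R}_i)]\in\Bbb{Q}_{\geq 0}[\mathcal{O}(\mathcal{R}_i)]$, and likewise $v_2$, so $\Bbb{Q}_{\geq 0}[\mathcal{O}(\mathcal{R}_i)]$ is an extremal ray. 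The only point requiring care is the translation between line bundles on the quotient stack $\Fl_L$ and $L$-equivariant line bundles on $(L/B_L)^s$, together with the passage from an equality in $\Pic_{\Bbb{Q}}$ to an honest equivariant isomorphism after a power; once that bookkeeping is arranged, everything reduces to Proposition \ref{fultonc}(2) and the fact that effective divisors supported on an irreducible divisor are its non-negative integral multiples. Note finally that this argument yields precisely the $c$ distinct rays spanned by the $\mathcal{O}(\mathcal{R}_i)$ and says nothing about the remaining extremal rays of $\Pic^{+,\deg=0}_{\Bbb{Q}}(\Fl_L)\cong\Gamma_{\Bbb{Q}}(s,K(L^{\op{ss}}))$, which is why the statement claims only ``some'' extremal rays.
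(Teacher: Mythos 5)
Your proof is correct and follows essentially the same route the paper indicates: the one-dimensionality of $H^0(\Fl_L,\mathcal{O}(m\mathcal{R}_i))$ via Proposition \ref{fultonc}(2), followed by the ``unique invariant section forces effective divisors to be supported on a fixed prime divisor'' argument that the paper cites as the (b)$\Rightarrow$(c) step of Theorem \ref{Tone}. You have merely spelled out the bookkeeping (clearing denominators, $L$-equivariant trivialization) that the paper leaves implicit.
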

\begin{proof}
This follows the same method of proof as of Theorem \ref{Tone}, (b) implies (c), using the fact noted above that
$H^0(\Fl_L,\mathcal{O}(N\mathcal{R}_i))$ is one dimensional if $N\geq 0$ for any $i$..
\end{proof}
\subsection{The face $\mf=\mf(\vec w,P)$ when $P=B$}
Clearly $L^{\op{ss}}=\{e\}$ when $P=B$ and hence $\mf_{2,\Bbb{Q}}=0$, and $\mf_{\Bbb{Q}}$ is the
cone spanned by the linearly independent $\delta_1,\dots,\delta_q$. Therefore the dimension of $\mf_{\Bbb{Q}}$ is $q$, while at the same time it is $sr-r=(s-1)r$, $r=|\Delta|$, and $c=0$.
\begin{corollary}
The regular faces of $\Gamma(s,K)$ of codimension $|\Delta|$ (the maximum possible) are simplicial cones.
\end{corollary}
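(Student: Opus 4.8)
The plan is to reduce the statement to the case $P=B$, which has just been treated, and then read off the conclusion from Theorem \ref{conebij}.

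First I would invoke the classification of regular faces: by Ressayre \cite{R2} (together with the codimension computation established just above, that $\mf(\vec w, P)$ is a regular face of codimension $|\Delta-\Delta(P)|$), every regular face of $\Gamma(s,K)$ is of the form $\mf(\vec w, P)$ for a standard parabolic $P$ and $\vec w\in (W^P)^s$ satisfying \eqref{kishi}, with $\codim\mf(\vec w, P)=|\Delta-\Delta(P)|$. Since $|\Delta-\Delta(P)|\le |\Delta|$ with equality if and only if $\Delta(P)=\emptyset$, this simultaneously justifies the parenthetical ``the maximum possible'' and shows that a regular face of codimension $|\Delta|$ is necessarily $\mf(\vec w, B)$ for some such $\vec w$.

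Next, for $P=B$ I would note that $L=T$, hence $L^{\op{ss}}=\{e\}$ and $\Gamma_{\Bbb{Q}}(s,K(L^{\op{ss}}))=0$, so Theorem \ref{IndT} forces $\mf_{2,\Bbb{Q}}=0$. Feeding this into Theorem \ref{conebij}, the natural map $\prod_{b=1}^q \Bbb{Q}_{\ge 0}\delta_b\to \mf_{\Bbb{Q}}$ is an isomorphism of pointed rational cones; as already recorded in the $P=B$ discussion, this exhibits $\mf_{\Bbb{Q}}$ as the cone on the linearly independent rays $\delta_1,\dots,\delta_q$ (of dimension $q=(s-1)|\Delta|$), i.e. a simplicial rational cone. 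Finally, since $\Gamma(s,K)$ is a rational polyhedral cone, the real face $\mf$ is the $\Bbb{R}$-cone generated by its rational extremal rays, namely the $\delta_b$; as these are linearly independent, $\mf$ is simplicial, as claimed.

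The only genuine content is the first step — identifying an arbitrary codimension-$|\Delta|$ regular face with one of the faces $\mf(\vec w, B)$ studied here, rather than some face not obviously of that shape. This rests on Ressayre's parametrization of regular faces; an alternative would be to observe that a regular face lies on no Weyl-chamber wall of $\frh_+^s$, hence is cut out by regular facets only and equals their intersection, but turning an intersection of the codimension-one faces $\mf(\vec u^{(i)},P_i)$ (each $P_i$ maximal) into a single $\mf(\vec w, B)$ ultimately amounts to the same parametrization. Once this reduction is in place, everything else is formal, using only Theorems \ref{conebij} and \ref{IndT} and the $P=B$ analysis preceding the corollary.
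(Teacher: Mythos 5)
Your proof is correct and essentially matches the argument the paper intends. The paper states the corollary immediately after the $P=B$ analysis without spelling out the reduction; your write-up supplies exactly the missing step (Ressayre's parametrization of regular faces by pairs $(\vec w,P)$, with codimension $|\Delta-\Delta(P)|$, forcing $P=B$ at maximal codimension) and then reads off simpliciality from Theorems \ref{conebij} and \ref{IndT} in the $P=B$ case, just as the paper does.
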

\begin{remark}\label{especial}
The following types of surjection of cones $\mathcal{C}\twoheadrightarrow \overline{\mathcal{C}}$ can be considered special: Let $V=\Bbb{Q}^n$, and $\mathcal{C}\subseteq V$ a (spanning) cone that has the basis vectors $e_1,\dots,e_c$ among its extremal rays. Let  $V\to \Bbb{Q}^{n-c}$ be the projection to the remaining $n-c$ coordinates. Let $\overline{\mathcal{C}}\subseteq \Bbb{Q}^{n-c}$ be the image of $\mathcal{C}$.

The surjection of cones \eqref{indsurj} is of the above special type (take $V=\Pic_{\Bbb{Q}}^{\deg=0}(\Fl_L)$, and $e_1,\dots,e_c$ the elements $\mathcal{O}(\mathcal{R}_1),\dots,\mathcal{O}(\mathcal{R}_c)$, using the bijection  of Proposition \ref{bij}, and Lemma \ref{threes}).

Under the bijection of Proposition \ref{bij}, the surjection of cones \eqref{indsurj}  becomes
$\Pic_{\Bbb{Q}}^{+,\deg=0}(\Fl_L)\twoheadrightarrow \Pic_{\Bbb{Q}}^{+, \deg=0}(\Fl'_G)$. We note that
this has a section arising from Lemma \ref{twos}.

\end{remark}

\section{Examples}\label{app1}

In the following, we examine several facets of the $D_4$ tensor cone ($s=3$), producing type I and type II rays according to the formulas given earlier. All rays produced here can also be found in the (complete up to symmetrization) list of $81$ extremal rays for $D_4$ in \cite{KKM}. In fact, all $81$ extremal rays are type I on {\it some} face. Type I extremal rays, under the bijection of Proposition \ref{bij}, have the property that any multiple has an exactly  one dimensional space of invariant  global sections, see  Theorem \ref{Tone}, (b).  There are examples in type $A$ in \cite{BHermit}, due to Derksen-Weyman \cite[Example 7.13]{DW} and Ressayre, of extremal rays for $\op{SL}(8)$ and  $\op{SL}(9)$ respectively,  which do not have  this property, and  give  examples of   extremal rays which are not type I on any face. There are similar examples which do not have this property for $D_5$ in \cite{Kiers}. 

Some rudimentary computer code, written using the free math software {\tt Sage}, was used to find tuples $(u,v,w,P)$ giving rise to facets and to implement the formulas found in Theorem \ref{timeticks} and Theorem \ref{general_Induction}.

The details of these computer algorithms will appear in \cite{Kiers}.

\subsection{A face coming from $P_2$}\label{subbie}
Let $G$ be of type $D_4$, with simple roots $\alpha_1,\alpha_2,\alpha_3,\alpha_4$ and corresponding simple reflections $s_i$. Let $P=P_2$ and $u,v,w$ be specified as in the example in \ref{ex1}. On the corresponding face $\mathcal{F}$, there are $7$ type I extremal rays, generated by:
$$
\begin{array}{c}
(\omega_1,\omega_4,\omega_3),\\
(\omega_3,0,\omega_3)\text{ and }(\omega_4,\omega_4,0),\\
(\omega_2,\omega_3,\omega_3)\text{ and }(\omega_2,\omega_4,\omega_4),\\
(\omega_2,\omega_1+\omega_4,\omega_3)\text{ and }(\omega_2,\omega_4,\omega_1+\omega_3).
\end{array}
$$

One may note that under the operation of switching entries $2$ and $3$ (i.e., $(u,v,w)$ becomes $(u,w,v)$, $(\lambda,\mu,\nu)$ becomes $(\lambda,\nu,\mu)$) while simultaneously switching indices $3$ and $4$ (on all simple roots, fundamental dominant weights, simple reflections; this is a Dynkin diagram automorphism), the specific $(u,v,w,P)$ of the example remains unchanged. Therefore the face $\mathcal{F}$ is also invariant under the induced cone automorphism; the above type I rays are listed in pairs according to this (order 2) automorphism (the first is fixed).

The induction map gives the following four type II rays:
$$
\begin{array}{c}
(\omega_2,\omega_2,0)\text{ and }(\omega_2,0,\omega_2),\\
(\omega_2,\omega_2,2\omega_3)\text{ and }(\omega_2,2\omega_4,\omega_2),
\end{array}
$$
again given in pairs. The Levi associated to $P$ is of type $A_1\times A_1\times A_1$. The tensor cone for type $A_1$ is generated (over $\mathbb{Z}$ as well as over $\mathbb{Q}$) by three extremal rays: $(\omega,\omega,0)$ and its two permutations, where $\omega$ is the single dominant fundamental weight. The dominant fundamental weights for $L$ are $\omega_1$, $\omega_3$, and $\omega_4$, each representing a copy of $A_1$. Extremal rays for the type $A_1\times A_1\times A_1$ subcone are therefore given by the permutations of $(\omega_i,\omega_i,0)$, where $i$ runs through $1,3,4$, yielding a total of $9$.

These $9$ rays are shifted by a multiple of $\omega_2$ in each entry so that the result evaluates to $0$ against $x_2$ (in each entry); i.e., each ray is shifted to become degree $0$ (see Lemma \ref{lisse}). The formula for induction (\ref{dinner}) is then applied, with the following results:

$$
\begin{array}{cclcccl}
(\omega_1,\omega_1,0) & \mapsto & \vec 0& &
(\omega_3,\omega_3,0) & \mapsto & \vec 0\\
(\omega_4,\omega_4,0) & \mapsto & (\omega_2,\omega_2,0)&&
(0,\omega_1,\omega_1) & \mapsto & \vec 0\\
(0,\omega_3,\omega_3) & \mapsto & (\omega_2,2\omega_4,\omega_2)&&
(0,\omega_4,\omega_4) & \mapsto & (\omega_2,\omega_2,2\omega_3)\\
(\omega_1,0,\omega_1) & \mapsto & \vec 0&&
(\omega_3,0,\omega_3) & \mapsto & (\omega_2,0,\omega_2)\\
(\omega_4,0,\omega_4) & \mapsto & \vec 0. &&&&
\end{array}
$$

These $11$ rays are indeed all of the extremal rays on $\mathcal{F}$. Notice that $c = \#\text{ irreducible components of }\mathcal{R}_L = 5$, the number of extremal rays going to $0$ under induction. Here $q = 7$, $s=3$, and $|\Delta - \Delta(P)|=1$, so $5=7-(3-1)(1)$ illustrates Proposition \ref{countem}.

Finally, in this example, any extremal ray for $L$ which does not go to $\vec 0$ is induced to a type II ray. This is not always the case:

\subsection{Illustration of Corollary \ref{apples}} Maintaining $P=P_2$ and $u,v,w$ as above, we examine the induction operation (without any shifting) applied to $(u\cdot\omega_2,0,0)$, $(0,v\cdot\omega_2,0)$, and $(0,0,w\cdot\omega_2)$.

First $(u\cdot\omega_2,0,0)$: one may check that $u\cdot\omega_2 = 2\omega_2 - \omega_1 - \omega_3 - \omega_4$;
$$
s_4s_3s_1s_2(\epsilon_1+\epsilon_2) = s_4s_3s_1(\epsilon_1+\epsilon_3) = s_4s_3(\epsilon_2+\epsilon_3) = s_4(\epsilon_2+\epsilon_4) = \epsilon_2-\epsilon_3,
$$
and indeed
\begin{align*}
2\omega_2-\omega_1-\omega_3-\omega_4 &= 2(\epsilon_1+\epsilon_2) - \epsilon_1- \frac{1}{2}(\epsilon_1+\epsilon_2+\epsilon_3-\epsilon_4) - \frac{1}{2}(\epsilon_1+\epsilon_2+\epsilon_3+\epsilon_4)\\ &= \epsilon_2-\epsilon_3.
\end{align*}

The type I rays and coefficients coming from divisors $D(j,v)$ for $j=1$ are:
\begin{center}
\begin{tabular}{c|c|c}
$\ell$ & $\mathcal{O}(D(j,v))$ & $u\cdot\omega_2(\alpha_\ell^\vee)$ \\\hline
$1$ & $(\omega_1,\omega_4,\omega_3)$ & $-1$\\
$3$ & $(\omega_3,0,\omega_3)$ & $-1$\\
$4$ & $(\omega_4,\omega_4,0)$ & $-1$
\end{tabular}
\end{center}

Therefore $(u\cdot\omega_2,0,0)$ is mapped to
\begin{align*}
(2\omega_2-\omega_1-\omega_3-\omega_4,0,0)+(\omega_1,\omega_4,\omega_3)+(\omega_3,0,\omega_3)+(\omega_4,\omega_4,0)&=(2\omega_2,2\omega_4,2\omega_3),
\end{align*}
and one may check that $\left(u^{-1}\cdot 2\omega_2 + v^{-1}\cdot 2\omega_4 + w^{-1}\cdot 2\omega_3\right)(x_2) = 2 \not\le 0$, so this induced triple is not in the cone.

Second $(0,v\cdot \omega_2,0)$: $v\cdot \omega_2 = -\omega_1-\omega_3+\omega_4$. The type I rays and coefficients coming from divisors $D(j,v)$ with $j=2$ are
\begin{center}
\begin{tabular}{c|c|c}
$\ell$ & $\mathcal{O}(D(j,v))$ & $u\cdot\omega_2(\alpha_\ell^\vee)$ \\\hline
$1$ & $(\omega_2,\omega_1+\omega_4,\omega_3)$ & $-1$\\
$3$ & $(\omega_2,\omega_3,\omega_3)$ & $-1$
\end{tabular}
\end{center}

Therefore $(0,v\cdot \omega_2,0)$ is mapped to
$$
(0, -\omega_1-\omega_3+\omega_4,0) + (\omega_2,\omega_1+\omega_4,\omega_3) + (\omega_2,\omega_3,\omega_3) = (2\omega_2,2\omega_4,2\omega_3)
$$
as well.

Finally $w\cdot\omega_2 = -\omega_1+\omega_3-\omega_4$. The type I rays and coefficients coming from divisors $D(j,v)$ with $j=3$ are
\begin{center}
\begin{tabular}{c|c|c}
$\ell$ & $\mathcal{O}(D(j,v))$ & $u\cdot\omega_2(\alpha_\ell^\vee)$ \\\hline
$1$ & $(\omega_2,\omega_4,\omega_1+\omega_3)$ & $-1$\\
$4$ & $(\omega_2,\omega_4,\omega_4)$ & $-1$
\end{tabular}
\end{center}

Therefore $(0,v\cdot \omega_2,0)$ is mapped to
$$
(0,0, -\omega_1+\omega_3-\omega_4) + (\omega_2,\omega_4,\omega_1+\omega_3) + (\omega_2,\omega_4,\omega_4) = (2\omega_2,2\omega_4,2\omega_3)
$$
yet again.

\subsection{The faces coming from $P_4$}
Take $P=P_4$. The Levi associated to $P$ is of type $A_3$, whose tensor cone is generated by $18$ extremal rays: $(\omega_1,\omega_3,0)$, $(\omega_2,\omega_2,0)$, $(\omega_2,\omega_3,\omega_3)$, $(\omega_2,\omega_2,\omega_1+\omega_3)$, $(\omega_2,\omega_1,\omega_1)$, and permutations: To apply induction, we want to get them in $\deg=0$ part of $\Pic(FL_L)$ (as in Section \ref{subbie}). We shift each entry by a multiple of $\omega_4$  so that the result evaluates to $0$ against $x_4$ (in each entry), see Lemma \ref{lisse}.

It is possible for an induced ray to be non-zero and non-extremal (call such a ray ``exotic''); this happens on several faces arising from $P_4$. For instance, on the face $\mf(s_2s_4,s_3s_1s_2s_4,s_4s_2s_3s_1s_2s_4,P_4)$, the extremal ray $(\omega_2,\omega_2,\omega_1+\omega_3)$ for $A_3$ is induced to $(\omega_1+\omega_3+\omega_4,\omega_2+\omega_4,\omega_1+\omega_3)$, which is not an extremal ray for $\mf$ because it can be expressed as the sum of two distinct extremal rays of $\mf$:
$$
(\omega_1+\omega_3+\omega_4,\omega_2+\omega_4,\omega_1+\omega_3) = (\omega_1+\omega_4,\omega_2,\omega_3) + (\omega_3,\omega_4,\omega_1).
$$

The following table summarizes some characteristics of the $7$ faces (up to symmetrization) coming from $P_4$:
\\
\begin{center}
\begin{tabular}{|c|c|c|c|c|}\hline
Weyl triple & $q$ & $c$ & exotic induced rays & total rays $(q+18-c-e)$\\\hline\hline
$(1,s_4s_2s_3s_1s_2s_4,s_4s_2s_3s_1s_2s_4)$ & 2 & 0 & none & 20 \\\hline
$(s_4,s_2s_3s_1s_2s_4,s_4s_2s_3s_1s_2s_4)$ & 3 & 1 & none & 20 \\\hline
$(s_2s_4,s_3s_1s_2s_4,s_4s_2s_3s_1s_2s_4)$ & 4 & 2 & 1 & 19 \\\hline
$(s_2s_4,s_2s_3s_1s_2s_4,s_2s_3s_1s_2s_4)$ & 3 & 1 & 6 & 14 \\\hline
$(s_3s_2s_4,s_1s_2s_4,s_4s_2s_3s_1s_2s_4)$ & 3 & 1 & 1 & 19 \\\hline
$(s_3s_2s_4,s_3s_1s_2s_4,s_2s_3s_1s_2s_4)$ & 4 & 2 & 3 & 17 \\\hline
$(s_1s_2s_4,s_3s_1s_2s_4,s_2s_3s_1s_2s_4)$ & 4 & 2 & 3 & 17 \\\hline
\end{tabular}
\end{center}
~\\

\begin{bibdiv}

\begin{biblist}
\bib{BLocal}{article} {
    AUTHOR = {Belkale, P.},
     TITLE = {Local systems on {$\Bbb P^1-S$} for {$S$} a finite set},
   JOURNAL = {Compositio Math.},
  FJOURNAL = {Compositio Mathematica},
    VOLUME = {129},
      YEAR = {2001},
    NUMBER = {1},
     PAGES = {67--86},
}
\bib{belkaleIMRN}{article}{
    AUTHOR = {Belkale, P.},
     TITLE = {Invariant theory of {${\rm GL}(n)$} and intersection theory of
              {G}rassmannians},
   JOURNAL = {Int. Math. Res. Not.},
  FJOURNAL = {International Mathematics Research Notices},
      YEAR = {2004},
    NUMBER = {69},
     PAGES = {3709--3721},
}
\bib{BHermit}{article} {
    AUTHOR = {Belkale, P.}
   TITLE = {Extremal rays in the hermitian eigenvalue problem},
   note={arXiv:1705.10580},
		year={2017}

}

\bib{BK}{article} {
   AUTHOR = {Belkale, P.}
    AUTHOR =  {Kumar, S.},
     TITLE = {Eigenvalue problem and a new product in cohomology of flag
              varieties},
   JOURNAL = {Invent. Math.},
  FJOURNAL = {Inventiones Mathematicae},
    VOLUME = {166},
      YEAR = {2006},
    NUMBER = {1},
     PAGES = {185--228},
}
\bib{BKq}{article} {
   AUTHOR = {Belkale, P.}
    AUTHOR =  {Kumar, S.},
     TITLE = {The multiplicative eigenvalue problem and deformed quantum
              cohomology},
   JOURNAL = {Adv. Math.},
  FJOURNAL = {Advances in Mathematics},
    VOLUME = {288},
      YEAR = {2016},
}

\bib{BKR}{article} {
    AUTHOR = {Belkale, P.}
    AUTHOR = {Kumar, S.}
    AUTHOR =  {Ressayre, N.},
     TITLE = {A generalization of {F}ulton's conjecture for arbitrary
              groups},
   JOURNAL = {Math. Ann.},
  FJOURNAL = {Mathematische Annalen},
    VOLUME = {354},
      YEAR = {2012},
    NUMBER = {2},
     PAGES = {401--425},
}

\bib{BeS}{article} {
    AUTHOR = {Berenstein, A.}
    AUTHOR =  {Sjamaar, R.},
     TITLE = {Coadjoint orbits, moment polytopes, and the
              {H}ilbert-{M}umford criterion},
   JOURNAL = {J. Amer. Math. Soc.},
  FJOURNAL = {Journal of the American Mathematical Society},
    VOLUME = {13},
      YEAR = {2000},
    NUMBER = {2},
     PAGES = {433--466},
}

\bib{BGG}{article} {
    AUTHOR = {Bern\v ste\u\i n, I. N.}
    AUTHOR ={Gel\cprime fand, I. M.}
    AUTHOR=  {Gel\cprime fand, S. I.},
     TITLE = {Schubert cells, and the cohomology of the spaces {$G/P$}},
   JOURNAL = {Uspehi Mat. Nauk},
  FJOURNAL = {Akademiya Nauk SSSR i Moskovskoe Matematicheskoe Obshchestvo.
              Uspekhi Matematicheskikh Nauk},
    VOLUME = {28},
      YEAR = {1973},
    NUMBER = {3(171)},
     PAGES = {3--26},
}

\bib{Borel}{book} {
    AUTHOR = {Borel, A.},
     TITLE = {Linear algebraic groups},
    SERIES = {Graduate Texts in Mathematics},
    VOLUME = {126},
 PUBLISHER = {Springer-Verlag, New York},
      YEAR = {1991},
}

\bib{Bourbaki}{book} {
    AUTHOR = {Bourbaki, N.},
     TITLE = {Lie groups and {L}ie algebras. {C}hapters 4--6},
    SERIES = {Elements of Mathematics (Berlin)},
      NOTE = {Translated from the 1968 French original by Andrew Pressley},
 PUBLISHER = {Springer-Verlag, Berlin},
      YEAR = {2002},
}
	
\bib{brion}{article} {
    AUTHOR = {Brion, M.},
     TITLE = {Restriction de repr\'esentations et projections d'orbites
              coadjointes (d'apr\`es {B}elkale, {K}umar et {R}essayre)},
      NOTE = {S\'eminaire Bourbaki. Vol. 2011/2012. Expos\'es 1043--1058},
   JOURNAL = {Ast\'erisque},
  FJOURNAL = {Ast\'erisque},
    NUMBER = {352},
      YEAR = {2013},
     PAGES = {Exp. No. 1043, vii, 1--33},
}

\bib{BPo}{article} {
    AUTHOR = {Brion, M.}
    AUTHOR= {Polo, P.},
     TITLE = {Generic singularities of certain {S}chubert varieties},
   JOURNAL = {Math. Z.},
  FJOURNAL = {Mathematische Zeitschrift},
    VOLUME = {231},
      YEAR = {1999},
    NUMBER = {2},
     PAGES = {301--324},
}

\bib{DW}{article}{
    AUTHOR = {Derksen, H.}
    AUTHOR =  {Weyman, J.},
     TITLE = {The combinatorics of quiver representations},
   JOURNAL = {Ann. Inst. Fourier (Grenoble)},
  FJOURNAL = {Universit\'e de Grenoble. Annales de l'Institut Fourier},
    VOLUME = {61},
      YEAR = {2011},
    NUMBER = {3},
     PAGES = {1061--1131},
}

\bib{Fint}{book} {
    AUTHOR = {Fulton, W.},
     TITLE = {Intersection theory},
    SERIES = {Ergebnisse der Mathematik und ihrer Grenzgebiete. 3. Folge. A
              Series of Modern Surveys in Mathematics [Results in
              Mathematics and Related Areas. 3rd Series. A Series of Modern
              Surveys in Mathematics]},
    VOLUME = {2},
   EDITION = {Second},
 PUBLISHER = {Springer-Verlag, Berlin},
      YEAR = {1998},
     PAGES = {xiv+470},
}

\bib{FH}{book} {
    AUTHOR = {Fulton, W.}
    AUTHOR =  {Harris, J.},
     TITLE = {Representation theory},
    SERIES = {Graduate Texts in Mathematics},
    VOLUME = {129},
      NOTE = {A first course,
              Readings in Mathematics},
 PUBLISHER = {Springer-Verlag, New York},
      YEAR = {1991},
}

\bib{KKM}{article} {
    AUTHOR = {Kapovich, M.}
    AUTHOR=  {Kumar, S.}
    AUTHOR=   {Millson, J. J.},
     TITLE = {The eigencone and saturation for {S}pin(8)},
   JOURNAL = {Pure Appl. Math. Q.},
  FJOURNAL = {Pure and Applied Mathematics Quarterly},
    VOLUME = {5},
      YEAR = {2009},
    NUMBER = {2, Special Issue: In honor of Friedrich Hirzebruch. Part
              1},
     PAGES = {755--780},
}

\bib{KLM}{article}{
    AUTHOR = {Kapovich, M.}
    AUTHOR =  {Leeb, B.}
    AUTHOR = {Millson, J.},
     TITLE = {Convex functions on symmetric spaces, side lengths of polygons
              and the stability inequalities for weighted configurations at
              infinity},
   JOURNAL = {J. Differential Geom.},
  FJOURNAL = {Journal of Differential Geometry},
    VOLUME = {81},
      YEAR = {2009},
    NUMBER = {2},
     PAGES = {297--354},
}
\bib{Kiers}{article} {
    AUTHOR = {Kiers, J.}
   TITLE = {On the saturation conjecture for Spin groups},
   note={In preparation.},
		year={2018}

}
\bib{Kly}{article} {
    AUTHOR = {Klyachko, A. A.},
     TITLE = {Stable bundles, representation theory and {H}ermitian
              operators},
   JOURNAL = {Selecta Math. (N.S.)},
  FJOURNAL = {Selecta Mathematica. New Series},
    VOLUME = {4},
      YEAR = {1998},
    NUMBER = {3},
     PAGES = {419--445},
}
\bib{KTW}{article} {
    AUTHOR = {Knutson, A.}
    AUTHOR =  {Tao, T.}
    AUTHOR=  {Woodward, C.},
     TITLE = {The honeycomb model of {${\rm GL}_n(\Bbb C)$} tensor products.
              {II}. {P}uzzles determine facets of the
              {L}ittlewood-{R}ichardson cone},
   JOURNAL = {J. Amer. Math. Soc.},
  FJOURNAL = {Journal of the American Mathematical Society},
    VOLUME = {17},
      YEAR = {2004},
    NUMBER = {1},
     PAGES = {19--48},
}

\bib{kumar}{article} {
    AUTHOR = {Kumar, S.},
     TITLE = {A survey of the additive eigenvalue problem},
      NOTE = {With an appendix by M. Kapovich},
   JOURNAL = {Transform. Groups},
  FJOURNAL = {Transformation Groups},
    VOLUME = {19},
      YEAR = {2014},
    NUMBER = {4},
     PAGES = {1051--1148},
}

\bib{Ram}{article} {
    AUTHOR = {Ramanathan, A.},
     TITLE = {Moduli for principal bundles over algebraic curves. {I}},
   JOURNAL = {Proc. Indian Acad. Sci. Math. Sci.},
  FJOURNAL = {Indian Academy of Sciences. Proceedings. Mathematical
              Sciences},
    VOLUME = {106},
      YEAR = {1996},
    NUMBER = {3},
     PAGES = {301--328},
}
	
\bib{R1}{article} {
    AUTHOR = {Ressayre, N.},
     TITLE = {Geometric invariant theory and the generalized eigenvalue
              problem},
   JOURNAL = {Invent. Math.},
  FJOURNAL = {Inventiones Mathematicae},
    VOLUME = {180},
      YEAR = {2010},
    NUMBER = {2},
     PAGES = {389--441},
}

\bib{R2}{article} {
    AUTHOR = {Ressayre, N.},
     TITLE = {Geometric invariant theory and generalized eigenvalue problem
              {II}},
   JOURNAL = {Ann. Inst. Fourier (Grenoble)},
  FJOURNAL = {Universit\'e de Grenoble. Annales de l'Institut Fourier},
    VOLUME = {61},
      YEAR = {2011},
    NUMBER = {4},
     PAGES = {1467--1491 (2012)},
}

\bib{Roth}{article} {
    AUTHOR = {Roth, M.},
     TITLE = {Reduction rules for {L}ittlewood-{R}ichardson coefficients},
   JOURNAL = {Int. Math. Res. Not. IMRN},
  FJOURNAL = {International Mathematics Research Notices. IMRN},
      YEAR = {2011},
    NUMBER = {18},
     PAGES = {4105--4134},
}
\bib{Sjamaar}{article}{
    AUTHOR = {Sjamaar, R.},
     TITLE = {Convexity properties of the moment mapping re-examined},
 JOURNAL = {Adv. Math.}
 FJOURNAL = {Advances in Mathematics}
 VOLUME = {138}
     PAGES = {46--91},
      YEAR = {1998},
}

\end{biblist}
\end{bibdiv}
\vspace{0.05 in}

\noindent
Department of Mathematics, University of North Carolina, Chapel Hill, NC 27599\\
{{email: belkale@email.unc.edu (PB),  jokiers@live.unc.edu (JK)}}

\end{document}